\newcommand{\ff}{\mathcal{F}}
\newcommand{\hh}{\mathcal{H}}
\newcommand{\cG}{\mathcal{G}}
\newcommand{\cS}{\mathcal{S}}
\newcommand{\cR}{\mathcal{R}}
\newcommand{\cA}{\mathcal{A}}
\newcommand{\cB}{\mathcal{B}}
\newcommand{\cQ}{\mathcal{Q}}
\newcommand{\cP}{\mathcal{P}}
\newcommand{\cT}{\mathcal{T}}
\newcommand{\cW}{\mathcal{W}}
\newcommand{\cC}{\mathcal{C}}
\newcommand{\cU}{\mathcal{U}}
\newcommand{\cI}{\mathcal{I}}
\newcommand{\bH}{\mathbf{H}}
\newcommand{\bX}{\mathbf{X}}
\newcommand{\bh}{\mathbf{h}}
\newcommand{\indicator}{\mathds{1}}
\renewcommand{\ge}{\geqslant}
\renewcommand{\le}{\leqslant}
\renewcommand{\mid}{\,:\,}
\newcommand{\coveringNumber}{\tau}
\newcommand{\PP}{\mathbb{P}}
\newcommand{\EE}{\mathbb{E}}
\newcommand{\RR}{\mathbb{R}}
\newcommand{\bW}{\mathbf{W}}
\newcommand{\Stab}{\operatorname{Stab}}
\newcommand{\Sname}{core}
\DeclareMathOperator{\support}{supp}
\DeclareMathOperator{\poly}{poly}
\newcounter{assumptionCounterNLarge}
\renewcommand{\theassumptionCounterNLarge}{\arabic{assumptionCounterNLarge}$'$}
{}
\newtheorem{theorem}{Theorem}
\newtheorem{proposition}[theorem]{Proposition}
\newtheorem{lemma}[theorem]{Lemma}
\newtheorem{corollary}[theorem]{Corollary}
\newtheorem{definition}{Definition}
\newtheorem{claim}[theorem]{Claim}
\newtheorem{assumption}{Assumption}
\newtheorem{example}{Example}
\newtheorem{problem}{Probem}
\newtheorem{observation}[theorem]{Observation}
\title{Linear dependencies, polynomial factors in the Duke--Erd\H os forbidden sunflower problem}
\author{Andrey Kupavskii and Fedor Noskov}
\date{September 2024}
\begin{document}

\maketitle
\begin{abstract}
    We call a family of $s$ sets $\{F_1, \ldots, F_s\}$  a \textit{sunflower with $s$ petals} if, for any distinct $i, j \in [s]$, one has $F_i \cap F_j = \cap_{u = 1}^s F_u$. The set $C = \cap_{u = 1}^s F_u$ is called the {\it core} of the sunflower. It is a classical result of Erd\H os and Rado that there is a function $\phi(s,k)$ such that any family of $k$-element sets contains a sunflower with $s$ petals. In 1977, Duke and Erd\H os asked for the size of the largest family $\ff\subset{[n]\choose k}$ that contains no sunflower with $s$ petals and core of size $t-1$. In 1987, Frankl and F\" uredi asymptotically solved this problem for $k\ge 2t+1$ and $n>n_0(s,k)$. Their paper is one of the pinnacles of the so-called Delta-system method.

    In this paper, we extend the result of Frankl and F\"uredi to a much broader range of parameters:  $n>poly(s,t) k$, when additionally $k>poly(s,t)\log n$ or $n>n_0(s,t)$. We also extend this result to other domains, such as $[n]^k$ and ${n\choose k/w}^w$ and obtain even stronger and more general results for forbidden sunflowers with core at most $t-1$ (including results for families of permutations and subfamilies of the $k$-th layer in a simplicial complex). 

    The methods of the paper, among other things, combine the spread approximation approach, introduced by Zakharov and the first author, with the Delta-system approach of Frankl and F\"uredi and the hypercontractivity  for global functions approach, developed by Keller, Lifshitz and coauthors. Previous works in extremal set theory relied on at most one of these approaches. Creating such  a unified view was one of the goals for the paper. 
\end{abstract}\newpage
\tableofcontents
\newpage
\section{Introduction}
We employ notation, standard for Extremal Set Theory: $[n]$ denotes the set $\{1,\ldots, n\}$, and, given a set $X$ and an integer $k$, we denote by $2^X$ ($\binom{X}{k}$) the family of all subsets of $X$ (all subsets of $X$ of size $k$). If a family $\ff\subset {[n]\choose k}$ then we say that $\ff$ is {\it $k$-uniform}.
We call a family of $s$ sets $\{F_1, \ldots, F_s\}$ a {\it $\Delta(s)$-system} or a \textit{sunflower with $s$ petals} if, for any distinct $i, j \in [s]$, one has $F_i \cap F_j = \cap_{u = 1}^s F_u$. The set $C = \cap_{u = 1}^s F_u$ is called the {\it core} or {\it kernel} of the sunflower. %We say that a family $\ff$ contains a sunflower with $s$ petals and the core of size $t - 1$, if there exist $F_1, \ldots, F_s \in \ff$ that form such a sunflower. 

One of the founding papers for extremal set theory (and probably the second paper written on the subject) studies $\Delta$-systems in  families of sets. In 1960, Erd\H os and Rado \cite{erdos1960intersection} proved a result of Ramsey-theoretic flavor that states that a $k$-uniform family  of size bigger than $k!(s-1)^k$ contains a $\Delta(s)$-system. Let $\phi(s, k)$ be the maximum size of a $k$-uniform family $\cT$ that does not contain a $\Delta(s)$-system. Erd\H os and Rado also showed that $\phi(s,k)> (s-1)^{k+1}$ and stated that ``it is not improbable that'' $\phi(s, k) \le (C s)^{k+1}$, where $C$ is some absolute constant. This is one of the most important unsolved problems in Extremal Combinatorics and one of the  Paul Erd{\H{o}}s' 1000\$ problems. 

Over a long period of time, the best known upper bound on $\phi(s,t)$ was of the form $t^{t (1 + o(1))}$, where the $o(1)$ term depends on $s$, until Alweiss, Lovett, Wu and Zhang in a breakthrough paper~\cite{alweiss2021improved} have proved that $\phi(s, t) \le (C s^3 (\ln t) \ln \ln t)^t$ for some absolute constant $C$. %\textcolor{red}{The following part should be cleaner} 
This result was  pushed a bit further  in several papers~\cite{rao_coding_2020, tao_sunflower_2020, bell_note_2021}. The current record $\phi(s, t) \le (C s \ln t)^t$ with an absolute constant $C$ is   due to Bell, Chueluecha and Warnke~\cite{bell_note_2021}. %:  They showed that 
%. The constant was also improved several times ~\cite{hu_entropy_2021, stoeckl_lecture_nodate}. 

%In 1960, Erd{\H{o}}s and Rado~\cite{erdos1960intersection} showed that $(s - 1)^t \le \phi(s, t) \le t! (s - 1)^t$. 

This paper is devoted to the study of the  following closely related problem, which dates back to Duke and Erd{\H{o}}s~\cite{duke1977systems}.

\begin{problem}[Duke and Erd\H os, 1977]
\label{problem: sunflower problem}
Suppose that a family $\ff \subset \binom{[n]}{k}$ does not contain a sunflower with $s$ petals and  core of size $t - 1$. What is the maximum size of $\ff$?
\end{problem}

In 1970s, Erd\H os and coauthors posed several very general problems that became defining for the field of Extremal Set Theory. Problem~\ref{problem: sunflower problem} generalizes  the famous forbidden one intersection problem of Erd{\H{o}}s and S\'os. %Namely, the latter is the $s = 2$ case of the former. 
In 1971, Erd{\H{o}}s and S\'os (see~\cite{erdos1975problems}) conjectured\footnote{The original conjecture of Erd\H os and S\'os is for families avoiding intersection size $1$, the general case is due to Erd\H os.} that for $k \ge 2t$ and $n>n_0(k)$, the largest family $\ff\subset {[n]\choose k}$ with no $A,B\in \ff$ such that $|A\cap B|=t-1$  has the form
\begin{align}
\label{eq: star construction}
    \ff_T = \left \{ F \in \binom{[n]}{k} \mid T \subset F \right \}
\end{align}
for some $T$ of size $t$. The conjecture was resolved by Frankl and F\"uredi in a highly influential paper~\cite{frankl_forbidding_1985}. %The case $k>2t$ is connected to the existence of Steiner systems and was studied, in particular, in  Frankl~\cite{frankl1983extremal}. %studied the case $k < 2t$, and obtained the connection to Steiner systems. 
%The case $t = 2$ and $k = 4$ was solved by Frankl~\cite{frankl1977families} for sufficienly large $n$ in 1977, and it was completely settled by Keevash, Mubayi and Wilson~\cite{keevash2006set} for any $n$ in 2006. 
Great progress was made by Keller and Lifshitz~\cite{keller2021junta} who solved the Erd\H os-S\'os problem for $n$  \textit{linear} in $k$, but with unspecified dependence on $t$. Their result was further refined  by Ellis, Keller and Lifshitz~\cite{ellis2024stability}, who managed to extend it to the regime $k \le (1/2 - \varepsilon)n$ for arbitrary small $\varepsilon$ and $n>n_0(t,\varepsilon)$. Recently, Kupavskii and Zakharov~\cite{kupavskii_spread_2022} showed that any extremal example in the Erd\H{o}s-S\'os problem is isomorphic to~\eqref{eq: star construction} under \textit{polynomial} dependencies between $n, k$ and $t$: i.e.,  for $n =k^\alpha$ and $t=k^\beta$ with $0<\beta<1/2$ and $\alpha>1+2\beta$, provided $k>k_0$.
A part of the reason the problem of Erd{\H{o}}s and S\'os received a lot of attention is because of its numerous applications: in discrete geometry~\cite{frankl1990partition}, communication complexity~\cite{sgall1999bounds} and quantum computing~\cite{buhrman1998quantum}. We remark that, in the applications, it is important to allow $t$ to grow with $n$. 

In 1978, Deza,  Erd\H os and Frankl~\cite{deza_intersection_1978} studied  the following  generalization of the Erd\H os--Ko--Rado theorem \cite{erdos1961intersection} and the Erd\H os--S\'os problem. For positive integers $n,k$ and a set $L\subset \{0,\ldots,k-1\}$, an {\it $(n,k,L)$-system} is a family 
$\ff\subset {[n]\choose k}$ such that $|A\cap B|\in L$ for any $A,B\in L$. Deza,  Erd\H os and Frankl proved a general bound on the size of the largest $(n,k,L)$-system with $|L|=r$, provided $n>n_0(k).$ In this paper, they introduced the Delta-system method, that makes use of large sunflowers in the family in order to discern its lower-uniformity structure.  The method was later developed in the works by Frankl and F\"uredi and  had a huge impact on Extremal Set Theory, see~\cite{Frankl1987, frankl_forbidding_1985, Fur1991, mubayi2016survey}. We discuss some aspects of the Delta-system method in Section~\ref{section: delta-system method}. We note here that  `forbidden intersections' or `allowed intersections' problems often have algebraic flavour. In particular, there are numerous results that use  linear-algebraic methods~\cite{babai_linear_1992}, as well as many of these questions are directly related to  the existence of designs of certain type (cf., e.g., \cite{frankl1983extremal}).  

Another particular case of Problem~\ref{problem: sunflower problem} of great importance is as follows. We say that sets $F_1, \ldots, F_s$ form a \textit{matching} of size $s$ if they are pairwise disjoint. A matching is a sunflower with $s$ petals and empty core. That is, if we substitute $t = 1$ in Problem~\ref{problem: sunflower problem}, we get the following famous problem. The Erd{\H{o}}s Matching Conjecture \cite{erdos1965problem} states that any family $\ff\subset {[n]\choose k}$ with no matching of size $s$ satisfies
\begin{align*}
    |\ff| \le \max \left \{ \binom{k s - 1}{k}, \binom{n}{k} - \binom{n - (s - 1)}{k} \right \},
\end{align*}
provided $n \ge k s - 1$. Erd\H{o}s~\cite{erdos1965problem} proved the conjecture for $n \ge f(k) s$. After numerous improvements~\cite{bollob1976sets, huang2012size, frankl2012matchings,frankl_improved_2013}, it was shown~\cite{frankl2022erdHos} that the conjecture holds for $n \ge \frac{5}{3} sk$ and $s>s_0$. In \cite{kolupaev2023erdHos} the conjecture was proved in the regime when $n$ is only slightly bigger than $sk$.

%In what follows, we assume $t \ge 2$. 

Let us return to the question of Duke and Erd\H os. In their seminal paper~\cite{Frankl1987}, Frankl and F\"uredi derived the following asymptotic bound on a sunflower-free  family $|\ff|$ when $s, k, t$ are fixed and $n$ tends to infinity. 

\begin{theorem}
\label{theorem: asymptotic bound due to Frankl}
Let $s, k, t$ be integers, $k \ge 2t + 1$. Suppose that $s, k, t$ are fixed and $n\to \infty$. Then
\begin{align*}
    |\ff| \le (\phi(s, t) + o(1)) \binom{n}{k - t},
\end{align*}
where $\phi(s, t)$ is the maximal size of a $t$-uniform family $\cT$ that does not contain a sunflower (with arbitrary core).
\end{theorem}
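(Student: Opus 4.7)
The plan is to follow the Delta-system (kernel) method of Frankl and F\"uredi. First, apply an iterated form of the Erd\H{o}s--Rado sunflower lemma to decompose $\ff$, up to a subfamily of size bounded independently of $n$, according to sunflower kernels. For each $F \in \ff$, attach a minimal kernel $K(F) \subseteq F$ of some size $j \in \{0, 1, \ldots, k-1\}$ such that the subfamily $\{G \in \ff \mid K(G) \supseteq K(F)\}$ contains a sunflower with core $K(F)$ and at least $r$ petals, where $r = r(s,k,t)$ is chosen sufficiently large. Let $\mathcal{K}_j = \{K(F) \mid F \in \ff,\, |K(F)| = j\}$. Because $\ff$ forbids sunflowers of $s$ petals with core of size $t-1$ and $r \geq s$, no kernel has size $t-1$; that is, $\mathcal{K}_{t-1} = \emptyset$. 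This yields
\[
|\ff| \leq \sum_{\substack{0 \leq j \leq k-1 \\ j \neq t-1}} |\mathcal{K}_j| \binom{n-j}{k-j} + O_{s,k,t}(1).
\]

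The heart of the argument is to establish $|\mathcal{K}_t| \leq \phi(s,t)$, which requires showing that $\mathcal{K}_t$ contains no $\Delta(s)$-system of any core size. Suppose $T_1, \ldots, T_s \in \mathcal{K}_t$ formed a sunflower with core $C$ of size $j'$. Each $T_i$ is the core of a sunflower $\cS_i \subseteq \ff$ with at least $r$ petals, each of size $k-t$; since $r$ greatly exceeds $s$, a greedy procedure selects $F_i \in \cS_i$ so that the petals $F_i \setminus T_i$ are pairwise disjoint and avoid $T_1 \cup \cdots \cup T_s$. Then $F_1, \ldots, F_s$ form a sunflower in $\ff$ with core exactly $C$. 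If $j' = t-1$, this directly contradicts the hypothesis. If $j' < t-1$, a more delicate argument is needed: one re-enters the kernel lemma on the structure induced by $C$ and its many extensions in $\ff$ to produce either a forbidden sunflower with core of size $t-1$ or a contradiction with the minimality of $K(F_i) = T_i$ (since $C$, or some intermediate set, would have served as a smaller valid kernel).

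Finally, the terms for $j > t$ and $j \leq t-2$ must each contribute $o\bigl(\binom{n}{k-t}\bigr)$. For $j > t$, the binomial $\binom{n-j}{k-j} = O_{k,t}(n^{k-j})$ is polynomially smaller than $\binom{n}{k-t}$, while a secondary application of the kernel lemma bounds $|\mathcal{K}_j|$ tightly enough, using the hypothesis $k \geq 2t+1$ to ensure petals of the right size fit. For $j \leq t-2$, the minimality of kernels combined with iterated Delta-system arguments (in the same vein as the $j' < t-1$ case above) shows that $|\mathcal{K}_j|$ is bounded by a constant depending only on $s,k,t$, hence these terms are negligible. The main obstacle is the precise execution of the minimality argument in the $j' < t-1$ subcase: one must carefully tune the kernel-lemma parameter $r$ so that both the greedy disjointness extraction succeeds and the subsequent contradiction via iterated extraction goes through. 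Combining all bounds with $\binom{n-t}{k-t} = (1+o(1))\binom{n}{k-t}$ gives $|\ff| \leq (\phi(s,t) + o(1))\binom{n}{k-t}$.
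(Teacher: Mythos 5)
Your overall strategy (the Delta-system/kernel method) is the same as the paper's, which proves the quantitative version of this statement (Theorem~\ref{theorem: delta-system solution}) via Frankl and F\"uredi's structural lemma (Lemma~\ref{lemma: delta-system method}). However, there are two genuine gaps. The central one is the step you yourself flag as ``the main obstacle'': padding a sunflower of $t$-sets with core $C$ of size $j' < t-1$ up to a forbidden sunflower with core of size exactly $t-1$. Knowing only that each $T_i$ is the core of a large sunflower in $\ff$ is not enough: after choosing a set $X$ of $t-1-j'$ extra elements, you need each $T_i \cup X$ to \emph{again} be the core of a sunflower with many petals, so that a greedy extraction yields $s$ pairwise disjoint petals through the enlarged core. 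This is exactly what Lemma~\ref{lemma: delta-system method} supplies: for each $F$ in the good subfamily $\ff^*$ there is a $t$-set $T \subset F$ such that \emph{every} intermediate set $E$ with $T \subset E \subsetneq F$ is the core of a sunflower with $p$ petals in $\ff^*$. Your decomposition attaches to each $F$ only one kernel with a one-level property, which does not provide the intermediate sets, and ``re-entering the kernel lemma'' does not obviously produce them either, since the new kernels need not contain $C$.

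The second gap is the accounting by kernel size. For $j \le t-2$ you claim $|\mathcal{K}_j| = O_{s,k,t}(1)$ suffices, but each such kernel contributes $\binom{n-j}{k-j} = \Theta(n^{k-j})$, which dominates $\binom{n}{k-t}$; even a single kernel of size at most $t-2$ destroys the bound, so you would need $\mathcal{K}_j = \varnothing$, which again requires the padding argument. Similarly, for $j \ge t+1$ you need $|\mathcal{K}_j| = o(n^{j-t})$, whereas the natural sunflower-freeness bounds only give $O(n^{j-t})$. The paper's proof avoids this bookkeeping entirely: every $F \in \ff^*$ carries a $t$-element $T(F)$, one sums over $D = F \setminus T(F) \in \binom{[n]}{k-t}$, shows each fiber $\ff_D$ is a $t$-uniform sunflower-free family of size at most $\phi(s,t)$, and bounds the exceptional part by $c(s,k)\binom{n}{k-t-1} = o\bigl(\binom{n}{k-t}\bigr)$.
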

Frankl and F\"uredi also showed that this bound is asymptotically the best possible. The extremal example is given in Section~\ref{section: main result}.
Theorem~\ref{theorem: asymptotic bound due to Frankl} holds even though we know little about the behavior of the function $\phi(s, t)$, which actually is a major complication for the study of the problem. The proof of the theorem is one of the pinnacles of the Delta-system method. (The authors use sunflowers in order to find sunflowers!)  Previous to \cite{Frankl1987}, partial cases  $t \in \{2, 3\}$, $k = 3$ and arbitrary $s$  were studied by Duke and Erd\H{o}s~\cite{duke1977systems}, Frankl~\cite{frankl1978extremal}, Chung~\cite{chung1983unavoidable} and Frankl and Chung~\cite{chung1987maximum}. Recently, the case $k = 4$ was studied by Buci{\'c} et al. in~\cite{bucic2021unavoidable}. Later, Brada{\v{c}}, Buci{\'c} and Sudakov~\cite{bradavc2023turan} proved that an extremal family in Problem~\ref{problem: sunflower problem} has size $O_k(n^{k - t} s^t)$ when $n$ tends to infinity, $s$ grows arbitrarily with $n$, and $k$ is fixed.

%Let us discuss the previous studies of Problem~\ref{problem: sunflower problem}. Except Theorem~\ref{theorem: asymptotic bound due to Frankl} and mentioned above cases $s = 2$ and $t = 1$, there are not so much papers on it.

In this paper, we deal with the case when $n$ is potentially  {\it linear} in $k$, while $s$ is small. The following is a coarse version of our main result, stated and discussed in  Section~\ref{section: main result}.

\begin{theorem}
\label{theorem: simplified version of the main result}
    Fix integers $n, k, t$, $t \ge 2$. There exists a function $f_0(s, t)$, polynomial in both $s$ and $t$, such that if $n \ge f_0(s, t) \cdot k$, $k \ge 2t + 1$ and $n \ge n_0(s, t)$, then the following holds. Let $\ff \subset \binom{[n]}{k}$ be a family without a sunflower with $s$ petals and the core of size $t - 1$. Then, we have
    \begin{align*}
        |\ff| \le \left ( 1 + \sqrt[3]{\frac{f_0(s, t) \cdot k}{n}}\right )\phi(s, t) \binom{n - t}{k - t},
    \end{align*}
\end{theorem}

Moreover, we prove similar results for subfamilies of permutations and subfamilies of $\binom{[n]}{k/w}^w$, see Section~\ref{section: erdos-duke for spread domains}.

The following question has been raised as one of the initial problems in the proposal by Kalai for the Polymath project \cite{Poly} on the Erd\H os--Rado Delta-system  conjecture. In their paper~\cite{bradavc2023turan}, Brada{\v{c}} et al. reiterated it. How big could a family $\ff$ be if we forbid sunflowers with the core \textit{at most} $t - 1$? While Problem~\ref{problem: sunflower problem} generalizes the Erd\H{o}s Matching Conjecture and the Erd\H{o}s-S\'{o}s forbiden one intersection problem, this question generalizes the Erd\H{o}s Matching Conjecture and the Erd\H{o}s--Ko--Rado Theorem~\cite{erdos1961intersection}. In this paper, we answer this question.

\begin{theorem}
\label{theorem: no sunflower with down-closed core}
Suppose that $n/k \ge 4 s^2 (t + 1)^3 \log_2^3 (n/k)$ and $n \ge 2^{10} s k \log_2 k$. Suppose that a family $\ff \subset \binom{[n]}{k}$ does not contain a sunflower with $s$ petals and  core of size at most $t - 1$. Then, the following bound holds:
\begin{align*}
    |\ff| \le \left ( 1 +  \frac{2^{3} s^2 (t + 1)^3 \log_2^3 (n/k)}{n/k} \right ) \phi(s, t ) \cdot \binom{n - t}{k - t}.
\end{align*}
\end{theorem}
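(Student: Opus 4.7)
The plan is to follow the unified spread-approximation plus Delta-system framework advertised in the abstract, specializing the structural analysis in a way that crucially uses the stronger ``core of size at most $t - 1$'' hypothesis. The essential point is that forbidding \emph{all} core sizes up to $t - 1$ (rather than just exactly $t - 1$) lets me force the approximating kernel to be $t$-uniform, after which $\phi(s,t)$ enters automatically.

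First, I would apply a spread approximation procedure to $\ff$ to produce a ``kernel'' $\mathcal{K} \subset \bigcup_{j \le t} \binom{[n]}{j}$ together with an exceptional subfamily $\ff^{*} \subset \ff$ of size at most
\begin{align*}
    (2^{14} s \log_2 t)^t \cdot \frac{2^{19} s(t + 1) \log_2(n/k)}{n/k} \cdot \binom{n-t}{k-t},
\end{align*}
such that every $F \in \ff \setminus \ff^*$ contains some $K \in \mathcal{K}$ and, for every $K \in \mathcal{K}$, the link $\ff(K) := \{F \setminus K \mid F \in \ff,\, K \subset F\}$ is $r$-spread in $\binom{[n] \setminus K}{k - |K|}$ for a parameter $r$ of order $\Theta(s k \log(n/k))$. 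Calibrating $r$ against the hypotheses $n/k \ge 2^{18} s (t + 1) \log_2(n/k)$ and $n \ge 2^{15} s k \log_2 k$ is what produces exactly the stated error.

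Next I would use the ``at most $t - 1$'' hypothesis twice. First, to force every kernel set to have size exactly $t$: suppose some $K \in \mathcal{K}$ has $|K| = s' < t$; then by the Kupavskii--Zakharov spread lemma applied to the $r$-spread link $\ff(K)$, one can extract pairwise disjoint $G_1, \ldots, G_s \in \ff(K)$, and the sets $K \cup G_1, \ldots, K \cup G_s$ form a sunflower in $\ff$ with $s$ petals and core $K$ of size $s' < t$, a contradiction. Sets of size larger than $t$ can be truncated to a $t$-subset and absorbed into $\ff^*$, so $\mathcal{K}$ may be taken to be $t$-uniform. Second, to rule out a $\Delta(s)$-system in $\mathcal{K}$: any sunflower $K_1, \ldots, K_s \in \mathcal{K}$ with core $C$, $|C| \le t - 1$, can be lifted greedily using spreadness of each link $\ff(K_i)$ to representatives $F_i \in \ff$, $F_i \supset K_i$, with $F_i \cap F_j = C$ for $i \ne j$, again giving a forbidden sunflower. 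Hence $|\mathcal{K}| \le \phi(s, t)$, and combining with Step~1,
\begin{align*}
    |\ff| \le |\ff^*| + |\mathcal{K}| \binom{n - t}{k - t} \le \phi(s, t) \binom{n - t}{k - t} + (2^{14} s \log_2 t)^t \cdot \frac{2^{19} s (t + 1) \log_2(n/k)}{n/k} \binom{n - t}{k - t}.
\end{align*}

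The hardest part, I expect, is the quantitative bookkeeping in Step~1: one must iterate the spread approximation while keeping the exceptional set within the stated bound and simultaneously ensuring that $r$ stays large enough for the matching extraction in Step~2 to succeed in \emph{every} link along the way. The double hypothesis on $n/k$ and on $n$ versus $k \log k$ is presumably calibrated so that both constraints hold at once, and matching the prefactor $(2^{14} s \log_2 t)^t$ requires a careful iteration that tracks how $\phi(s,t)$-type losses compound through the junta approximation. By contrast, the structural Steps~2 and~3 are where the stronger core hypothesis pays off cleanly: in the analogue of Theorem~\ref{theorem: simplified version of the main result}, one cannot force the kernel to be $t$-uniform by this direct argument, which is why that theorem requires the more delicate Delta-system/hypercontractivity machinery.
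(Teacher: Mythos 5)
Your overall architecture (spread approximation to extract a kernel, lift sunflowers from the kernel back into $\ff$ via spreadness of the links, then count $t$-sets against $\phi(s,t)$) matches the paper's, and your Step~2 arguments for kernel sets of size $<t$ and for lifting a kernel sunflower are essentially the paper's Claim inside the proof of Theorem~\ref{theorem: general theorem for down-closed sunflowers}. However, there is a genuine gap at the point you dismiss in one clause: ``sets of size larger than $t$ can be truncated to a $t$-subset and absorbed into $\ff^*$.'' The spread approximation cannot produce a kernel with sets of size at most $t$ and an exceptional family of the stated size; to make the remainder small one must run the extraction until the restricting set has size about $(t+1)\log_2(n/k)$, so the kernel $\cS$ genuinely contains sets of all sizes between $t$ and roughly $(t+1)\log_2(n/k)$. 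Truncating such a set $K$ to a $t$-subset $T$ does not move $\ff[K]$ into $\ff^*$ (it moves it into $\ff[T]$), and the resulting collection of $t$-subsets need not be sunflower-free: $t$-subsets of distinct large kernel sets can form a $\Delta(s)$-system with an arbitrary core even though the large sets themselves form no sunflower with core of size $\le t-1$. If instead you try to discard all $\ff[K]$ with $|K|>t$ into $\ff^*$, you have no bound on how much mass that carries.

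This reduction from a $\le q$-uniform sunflower-free kernel to a $t$-uniform one is the technical heart of the paper's proof (Lemma~\ref{lemma: sunflower simplification} and Procedure~\ref{algo: simplification procedure I}). It proceeds layer by layer from uniformity $q$ down to $t$: at each layer it re-extracts $\alpha$-spread sub-collections $\cT_{i,T}$ rooted at smaller sets $T$, proves by a covering-number/greedy argument (Claim~\ref{claim: simlification consistency}) that the new kernel still avoids sunflowers with core $\le t-1$ and has no set of size $<t$, and bounds the unstructured part of each layer by $(2^{14}s\log_2(q-i))^t\alpha^{q-i-t}$ (Claim~\ref{claim: low layers bound in simplification argument}) via a two-stage spreadness extraction. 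That layer bound is where the prefactor $(2^{14}s\log_2 t)^t$ actually originates; it is not, as you suggest, a compounding of $\phi(s,t)$-type losses through the approximation. Without this lemma (or an equivalent), your proof does not close.
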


In Section~\ref{section: proof of down-closed core theorem} we show that the theorem is tight up to second-order terms. Moreover, we prove that similar results hold if $\ff$ is a subfamily of any of the following domains:
\begin{itemize}
    \item $[n]^k$;
    \item $\prod_{i = 1}^w \binom{[n]}{k_i}$;
    \item the set of permutations $\Sigma_n$ on $n$ elements;
    \item the $k$-th layer of a simplicial complex with rank at least $n$.
\end{itemize}
All these cases can be included in the general framework developed in~\cite{kupavskii_spread_2022, kupavskii2023erd, kupavskii2023intersection}, which we further develop in this paper. See Section~\ref{section: proof of down-closed core theorem} for the general statement and proofs. In the remaining part of the introduction, we discuss the methods of this and other works.

%The upper bound on $\phi(s, t)$ is one of central questions in Extremal Set Theory. This problem is known as the Erd{\H{o}}s-Rado Delta-system Probem. The famous Erd{\H{o}}s Sunflower Conjecture states that $\phi(s, t) \le (C s)^t$, where $C_s$ is some constant depending on $s$.  to those who resolved it. Unfortunately, the conjecture is still open.

The starting point for us is the result of Alweiss, Lovett, Wu and Zhang~\cite{alweiss2021improved}. The core of their proof is the so-called spread lemma. Very roughly, the lemma states that a ``pseudorandom'' family $\ff \subset \binom{[n]}{k}$ is likely to contain a matching of size $s$ that with good probability respects a random partition, for an appropriate notion of pseudorandomness. The authors show that for any large enough family $\ff$ there is a set $A$, $|A| < k$, such that the restriction
\begin{align*}
    \ff(A) = \{F \setminus A \mid F \in \ff \text{ and } A \subset F\}
\end{align*}
is ``pseudorandom'', and, therefore, contains a matching $F_1, \ldots, F_s$. Thus, $A \cup F_1, \ldots, A \cup F_s \in \ff$ is a sunflower in $\ff$ of size $s$. We discuss this argument and its applications in our setting in Section~\ref{subsection: spread lemma}. Based on the spread lemma and the notion of {\it $\tau$-homogeneity} that they introduced, Kupavskii and Zakharov~\cite{kupavskii_spread_2022} developed the {\it spread approximation} technique and used it to obtain progress in several hypergraph Tur\'an-type  
problems. The technique was refined in the follow-up papers~\cite{kupavskii2023erd, kupavskii2023intersection, kupavskii2024almost}. That was the first method giving polynomial bounds in the Erd{\H{o}}s--S\'os problem. The method works when $n \ge \poly(t) k \log k$ and $k \ge \poly(t) \log n$. %, where $\poly(\cdot)$ stands for a function polynomially growing in its arguments. 
In this paper,  we add several new structural ingredients and merge their approach with the Delta-system technique and the method of Brada\v{c} et al~\cite{bradavc2023turan} to extend the spread approximation results beyond $k \ge \poly(t) \log n$, see Section~\ref{section: proof of theorem for large uniformity}. %\textcolor{red}{Can we say that  `in particular, we strengthen the results of Kupavskii and Zakharov on Erdos-Sos'?}
We should say that for $k<\sqrt n$, we no longer need to rely on the spread lemma, and develop other combinatorial techniques instead, which, nevertheless, are in line with the general spread approximations framework.

One fruitful point of view on the spread lemma is as on a \textit{sharp threshold} result, see Section~\ref{subsection: boolean analysis} for the definition of sharp thresholds and its applications in our setting. This connection was used to settle Talagrand's conjecture on fractional-expectation thresholds in random graphs~\cite{frankston2021thresholds} and the well-known Kahn---Kalai conjecture~\cite{park2024proof}. Spread lemma, however, is not the only way to establish sharp thresholds in pseudorandom families. In~\cite{keller2021junta}, Keller and Lifshitz showed that a type of pseudorandom families called {\it uncapturable} admits sharp threshold phenomenon. Based on this fact, they extended the  \textit{junta method} of Dinur and Friedgut~\cite{dinur_intersecting_2009}, and using it they obtained  significant progress in the famous Chv\'atal Simplex conjecture. Later, their method was refined by Keevash et al.~\cite{keevash2021global} and applied to a number of hypergraph Turan-type problems. Finally, Keller et al.~\cite{keller_sharp_2023} using a variant of the notion of $\tau$-homogeneity of Zakharov and the author as the notion of pseudorandomness, showed that such pseudorandom Boolean functions admit \textit{hypercontractivity} property (see Section~\ref{subsection: boolean analysis} for definitions). This lead to new results in sharp thresholds for pseudorandom families and  has found several consequences in Extremal Combinatorics and beyond. We refer the reader to the follow-up papers of Keller et al.~\cite{keller_t-intersecting_2024, keller2023improved}. We apply their results to study Problem~\ref{problem: sunflower problem} when $n = \poly(s, t) k$.

 At this point of development of these methods, it seems that none of the Delta-system, spread approximation  and hypercontractivity approaches can be fully replaced by the others. One of the goals of this paper is to take the best of each of these methods and combine them to investigate Problem~\ref{problem: sunflower problem}.

The paper is organized as follows. In Section~\ref{section: notation}, we introduce the notation used in our paper. In Section~\ref{section: main result}, we discuss Theorem~\ref{theorem: simplified version of the main result} and present its version with the dependencies stated explicitly. In Section~\ref{section: proof of down-closed core theorem}, we give a proof of Theorem~\ref{theorem: no sunflower with down-closed core} and present a key tool for the proof of  Theorem~\ref{theorem: simplified version of the main result}. In Section~\ref{section: preliminaris-ii}, we introduce certain preliminary results necessary for the proof of Theorem~\ref{theorem: simplified version of the main result}. In Section~\ref{section: proof of the main result}, we present the reduction of Theorem~\ref{theorem: simplified version of the main result} to several cases, which are treated in Section~\ref{section: proof of theorem for large uniformity} and Section~\ref{section: proof for small k theorem}. In Section~\ref{section: erdos-duke for spread domains}, we generalize our results for subfamilies of permutations and $[n]^k, \binom{[n]}{k/w}^w$. Section~\ref{section: proof sketch for product case -- small k uniformity} contains some proofs omitted in Section~\ref{section: erdos-duke for spread domains}.

\section{Notation}
\label{section: notation}

Given a set $X$ and an integer $h$, we denote by $\binom{X}{h}$ the family of subsets of $X$ of size $h$. Families of tuples $\binom{[n]}{k/w}^w$ and $[n]^k$ are understood as subfamilies of $\binom{[wn]}{k}$ and $\binom{[kn]}{k}$ respectively.

For a family $\ff$ and sets $A, B$, we define
\begin{align*}
    \ff(A, B) = \{F \setminus B \mid F \in \ff \text{ and } F \cap B = A\}. 
\end{align*}
If $A = B$, we denote $\ff(B) = \ff(B, B)$. If $B$ is a singleton $\{x\}$, we omit braces and use $\ff(x) = \ff(\{x\})$. 
Next, if $\cB$ is a family of sets, then we define
\begin{align*}
    \ff[\cB] = \{F \in \ff \mid \exists B \in \cB \text{ such that } B \subset F\}.
\end{align*}
If $\cB = \{B\}$ is a singleton of a set $B$, we omit braces and use $\ff[B] = \ff[\{B\}]$. We define the support of a family $\ff$ as $\support(\ff) = \bigcup_{F\in \ff}F$. For families $\ff$ and $\cB$, we define
\begin{align*}
    \ff \vee \cB = \{F \cup B\mid F \in \ff \text{ and } B \in \cB\}.
\end{align*}

A family $\ff$ is  {\it $k$-uniform} ({\it $\le k$-uniform}), if all sets in $\ff$ have size exactly $k$ (at most $k$).% We say that a family $\ff$ has uniformity at most $k$, if all sets of $\ff$ have size at most $k$.
Given a family $\ff$ and a number $h$, we define its $h$-th layer $\ff^{(h)}$ as follows:
\begin{align*}
    \ff^{(h)} = \{F \in \ff \mid |F| = h \}.
\end{align*}
The {\it shadow} of a family $\ff$ on the layer $h$ is defined as follows:
\begin{align*}
    \partial_h \ff = \bigcup_{F \in \ff} \binom{F}{h}.
\end{align*}
We put $\partial_{\le h} \ff = \bigcup_{h' \le h} \partial_{h'} \ff$. For a family $\ff \subset 2^{[n]}$, we define the {\it upper-closure} of a family $\ff$ as follows:
\begin{align*}
    \ff^\uparrow = \left \{S \subset [n] \mid \exists F \text{ such that } F \subset S \right \}.
\end{align*}

We say that a set $T$ is a {\it transversal} of a family $\ff$, if each $F \in \ff$ intersects $T$. The {\it covering number} $\coveringNumber(\ff)$ of a family $\ff$ is the minimal cardinality of its transversal. This notation should not be confused with $\tau$-homogeneity, defined below.

\section{Main result}
\label{section: main result}

Here we present our main result with the parameter dependencies stated explicitly. Theorem~\ref{theorem: simplified version of the main result} is its obvious corollary.

\begin{theorem}
\label{theorem: main theorem}
    For integers $n, k, t$, $t \ge 2$, suppose that 
    \begin{align*}
        n \ge 2^{11} s k \cdot \min \left \{  \log_2 k  + 2^{22} s^9 t^{11}, 2^{379} s^{42} t^{82} + 2^{23000}\right \},
    \end{align*}
    $n \ge n_0(s, t)$ and $k \ge 2t + 1$, where $n_0(s, t)$ is some function depending on $s, t$ only. Let $\ff \subset \binom{[n]}{k}$ be a family without a sunflower with $s$ petals and the core of size $t - 1$. Then it holds  that
    \begin{align}
    \label{eq: upper bound on F in main thm}
        |\ff| \le \left (1 + \sqrt[3]{\frac{2^{54} s^{15} t^{18}}{n/k}} \right ) \phi(s, t) \binom{n - t}{k - t}.
    \end{align}
    Moreover, if $k \ge 5t$, then $n_0(s, t)$ can be chosen to have the form $s^{O(t)} \cdot g_0(t)$, where $g_0(t)$ is some function depending on $t$ only.
\end{theorem}

The minimum in the lower bound on $n$ corresponds to two different strategies of obtaining bound~\eqref{eq: upper bound on F in main thm}. If $n = \Omega(sk \log k)$, then one can prove the result using the spread lemma. Otherwise, if $n \ge \poly(s, t) \cdot k$, the result can be proved using Boolean analysis. Note the latter leads to worse dependencies on $s, t$.

The proof is moved to Section~\ref{section: proof of the main result}. Intriguingly, the analysis is significantly simpler when  $k \ge 33 s t^2 \ln n$. A similar effect was observed in~\cite{keevash2021global} and~\cite{keller2021junta}. In this regime, the result holds without the assumption $n \ge n_0(s, t)$, and the lower bound of the form $n \ge \poly(s, t) \cdot k$ is enough. In the follow-up of this paper~\cite{kupavskii_noskov_followup}, we will show that one can prove Theorem~\ref{theorem: main theorem} in the regime when $n \ge \poly(s, t) \cdot k$ and $k \ge \poly(s, t)$ without the assumption that $n \ge n_0(s, t)$, i.e. the dependencies between all parameters $n, k, s, t$ are polynomial and there is no lower bound on $n$.

Compared to our theorem, Theorem~\ref{theorem: asymptotic bound due to Frankl} requires $n \ge n_0(s, k)= s^{2^{k}} \cdot \hat g_0(k)$ for some function $\hat g_0(k)$ depending on $k$ only. We use  Theorem~\ref{theorem: asymptotic bound due to Frankl} only if $k < 5t$, which requires $n \ge s^{2^{5t}} \cdot \hat g_0(t)$ in this case. We mentioned the case $k \ge 5t$ separately, since then we obtain a significant improvement in the lower bound on $n$. Unfortunately, both functions $g_0(t), \hat g_0(t)$ are doubly-exponential in $t$. %\red{Well, the expression in front of it is also doubly-exponential in $t$.}

 Note that Theorem~\ref{theorem: main theorem} is tight up to second-order terms, as the following example demonstrates.

\begin{example}[Example 2.3~\cite{Frankl1987}]
\label{example: basic example}
Suppose that $n > t \phi(s, t)$. Choose a family $\cT \subset \binom{[n]}{t}$ that does not contain a sunflower with $s$ petals and $|\cT| = \phi(s, t)$. Then, define a family $\ff$ as follows:
\begin{align*}
    \ff = \left \{F \in \binom{[n]}{k} \mid F \cap \support(\cT) \in \cT \right \}.
\end{align*}
The family $\ff$ does not contain a sunflower with $s$ petals and the core of size $t - 1$.
\end{example}

The family $\cT$ has support of size at most $t \phi(s, t)$.  Therefore, for the family $\ff$ from Example~\ref{example: basic example}, we have
\begin{align*}
    |\ff| & \ge \phi(s, t) \binom{n - t \phi(s, t)}{k - t} \ge \phi(s, t) \binom{n - t}{k - t} - t \phi^2(s, t) \binom{n - t - 1}{k - t - 1} \\
    & \ge \left (1 - \frac{t \phi(s, t) (k - t)}{n - t} \right )\phi(s, t) \binom{n - t}{k - t}.
\end{align*}

Theorem~\ref{theorem: no sunflower with down-closed core}  is closely related to Theorem~\ref{theorem: main theorem}, and even can be considered as an intermediate step in the proof of Theorem~\ref{theorem: main theorem}. Therefore, before we establish Theorem~\ref{theorem: main theorem}, we present the proof of Theorem~\ref{theorem: no sunflower with down-closed core}.

\section{Theorem~\ref{theorem: no sunflower with down-closed core} for spread domains}
\label{section: proof of down-closed core theorem}

In order to we state the promised generalizations of Theorem~\ref{theorem: no sunflower with down-closed core}, we need some preliminaries.

\subsection{Preliminaries I}

\subsubsection{Spread lemma}
\label{subsection: spread lemma}

We start this section with the following definitions:
\begin{definition}
    A probability measure $\mu$ on $2^{[n]}$ is said to be {\em $R$-spread}, if for any $X \in 2^{[n]}$ one has
    \begin{align*}
        \mu \left ( X \right ) \le R^{-|X|}.
    \end{align*}
\end{definition}

\begin{definition}
    A random set $\bW$ is called a {\em $p$-random subset of $[n]$} if elements $j \in [n]$ belong to $\bW$ independently with probability $p$.
\end{definition}

One of the main reasons why the definition of $R$-spreadness is so important for us is the following theorem called ``the Spread lemma'':
\begin{theorem}[Spread lemma]
\label{theorem: r-spread theorem}
Say $\bW$ is $m \delta$-random subset of $[n]$ if elements $i \in [n]$ belong to $\bW$ independently with probability $m \delta$. If for some $n, R \ge 1$ a measure $\mu: 2^{[n]} \to [0, 1]$ is $R$-spread and $\mathbf{W}$ is an $m \delta$-random subset of $[n]$, where $m$ is integer, then
\begin{align*}
    \PP \left [\exists F \in \support(\mu) \mid F \subset \mathbf{W} \right ] \ge 1 - \left ( \frac{2}{\log_2 (R \delta)} \right )^m |\mu|.
\end{align*}
\end{theorem}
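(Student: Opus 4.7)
The plan is to prove the spread lemma by the iterative encoding argument of Alweiss--Lovett--Wu--Zhang, in the refined form of Bell--Chueluecha--Warnke that yields the constant $5$. I will reduce to a one-step dichotomy and iterate it $m$ times. Since $1-(1-\delta)^m \le m\delta$, I can couple $\mathbf{W}$ so that it contains $\bigcup_{i=1}^m \mathbf{W}_i$, where $\mathbf{W}_1, \dots, \mathbf{W}_m$ are i.i.d.\ $\delta$-random samples; this reduces the problem to bounding the probability that no $F \in \support(\mu)$ is contained in $\bigcup_i \mathbf{W}_i$. Setting $\mu_0 := \mu$, I then apply a one-step dichotomy to $(\mu_{i-1}, \mathbf{W}_i)$ in each round $i$: either some $F \in \support(\mu)$ is already contained in $\bigcup_{j \le i}\mathbf{W}_j$, or I pass to a new $R$-spread residual measure $\mu_i$ supported on strictly smaller sets. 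After $m$ rounds the per-round failure events are independent and multiply to at most $(5/\log_2(R\delta))^m$ per initial thread; a union bound over the $|\mu|$ threads (one per $F \in \support(\mu)$) yields the claimed bound.

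\textbf{One-step dichotomy.} The heart of the argument is the claim: for any $R$-spread probability measure $\nu$ supported on nonempty subsets of $[n]$ and any $\delta$-random $\mathbf{W}$, with probability at least $1-5/\log_2(R\delta)$ over $\mathbf{W}$, either some $F \in \support(\nu)$ satisfies $F \subseteq \mathbf{W}$, or the residual measure $\nu_{\mathbf{W}}$ obtained by removing $\mathbf{W}$ from each surviving $F$ and renormalizing is again $R$-spread, but on strictly smaller sets. The proof is a deterministic encoding argument: when the dichotomy fails, a canonical rule (for example, lexicographic minimality) extracts a bad subset $X_{\mathbf{W}}$ with $\nu(\{F : X_{\mathbf{W}} \subseteq F\}) > R^{-|X_{\mathbf{W}}|}$; summing over possible witnesses $X$ using $\Pr[X \subseteq \mathbf{W}] \le \delta^{|X|}$ and the $R$-spread bound produces a total weight of $(R\delta)^{-|X|}$ per witness, and a Kraft-type prefix-code estimate over the possible witness sizes collapses this to $5/\log_2(R\delta)$.

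\textbf{Main obstacle.} The technical difficulty is the one-step dichotomy, and specifically the deterministic extraction of $X_{\mathbf{W}}$ from $\mathbf{W}$ alone (so that the encoding is uniquely decodable) combined with the tight combinatorial bookkeeping needed to produce the sharp constant $5$ rather than an unspecified absolute constant; this is the refinement provided by~\cite{bell_note_2021}. The coupling, iteration, and union bound are essentially routine once the one-step lemma is available.
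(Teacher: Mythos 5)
You are attempting to prove a result that the paper itself does not prove: Theorem~\ref{theorem: r-spread theorem} is quoted verbatim from Tao's entropy-based note (the paper explicitly attributes the statement, and the constant $5$, to that source), so there is no in-paper argument to compare against. Judged on its own terms, your sketch reproduces the correct opening move (coupling the $m\delta$-random set with the union of $m$ i.i.d.\ $\delta$-random sets), but the iteration bookkeeping that follows is wrong, and the gap is structural rather than cosmetic.

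Your one-step dichotomy asserts that with probability $1-\varepsilon$, $\varepsilon = 5/\log_2(R\delta)$, either some $F$ is covered or the residual measure is again $R$-spread on \emph{strictly smaller} sets. Even granting this, $m$ rounds of it give failure probability at most $m\varepsilon$ \emph{plus} the probability of surviving $m$ shrinking steps, and since each step only guarantees a decrease of at least one element, you would need $m\ge k$ rounds to force coverage. This cannot produce the claimed bound $\varepsilon^m\,|\mu|$. Your alternative accounting --- ``per-round failure events are independent and multiply to $\varepsilon^m$ per thread, then union bound over $|\mu|$ threads, one per $F\in\support(\mu)$'' --- misidentifies the event structure twice: overall failure does not require every round to fail individually, and a per-set union bound is hopeless anyway since for any fixed $F$ one has $\PP[F\subset\mathbf W]=(m\delta)^{|F|}$, which is tiny (the entire point of the lemma is that the witness $F$ is chosen adaptively from $\mathbf W$). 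The mechanism that actually yields the $m$-th power is a per-round \emph{multiplicative contraction of the expected size of the uncovered remainder}: one shows $\EE|F_i\setminus \bigcup_{j\le i}\mathbf W_j|\le \varepsilon\,\EE|F_{i-1}\setminus\bigcup_{j<i}\mathbf W_j|$ for a suitably re-chosen $F_i\in\support(\mu)$, iterates to get $\EE[\cdot]\le\varepsilon^m\cdot\EE_{F\sim\mu}|F|$, and finishes with Markov's inequality $\PP[|F_m|\ge 1]\le\EE|F_m|$. This is also the correct reading of the factor $|\mu|$ in the statement (the expected or maximal set size, as the application in Proposition~\ref{proposition: coloring trick} with factor $k$ confirms), not the number of sets in the support. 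Two further points: the residual measure after conditioning on one exposure is \emph{not} automatically $R$-spread, and handling this is a genuine part of all known proofs (ALWZ restrict to a good subfamily; Rao and Tao avoid re-normalizing and instead re-select $F$ against the original measure); and your witness-extraction step as written ($\nu(\{F: X_{\mathbf W}\subseteq F\})>R^{-|X_{\mathbf W}|}$ for the original $\nu$) contradicts the hypothesis that $\nu$ is $R$-spread, so the encoding sub-argument is not coherent as stated either.
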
 

The spread lemma was initially established in the seminal paper by Alweis et al.~\cite{alweiss2021improved} with non-optimal dependices. The proof relies on the coding techniques in the spirit of Razborov's proof of the H{\aa}stad switching lemma (see Chapter 14.1 of~\cite{arora_computational_2009}). It was refined by Rao in~\cite{rao_coding_2020}. Later, Tao~\cite{tao_sunflower_2020} provided a new proof based on the entropy,  but his derivations contain a mistake. Later, the proof was fixed and simplified by Hu~\cite{hu_entropy_2021}. Stoeckl~\cite{stoeckl_lecture_nodate} established Theorem~\ref{theorem: r-spread theorem} with $1 + h_2(\delta) \le 2$ instead of constant $2$, where $h_2(\delta) = - \delta \log_2 \delta - (1 - \delta) \log_2 (1 - \delta)$ is the binary entropy of $\delta$.

Note that Theorem~\ref{theorem: r-spread theorem} can be applied to families of sets. We say that a family $\ff$ is {\it $R$-spread} if the measure induced by uniform sampling from $\ff$ is $R$-spread. In other words, $\ff$ is $R$-spread if and only if for each $X \in 2^{[n]}$, one has
\begin{align*}
    |\ff(X)| \le R^{-|X|} |\ff|.
\end{align*}

Note that for a $k$-uniform family $\ff$, one may obtain a restriction $X$ such that $\ff(X)$ is $R$-spread (assuming that the family consisting only of the empty set is arbitrarily spread).
\begin{observation}[Lemma 2 from~\cite{tao_sunflower_2020}]
\label{observation: R-spread restriction}
    Let $X$ be the largest set such that $|\ff(X)| \ge R^{-|X|} |\ff|$ holds. Then, the family $\ff(X)$ is $R$-spread.
\end{observation}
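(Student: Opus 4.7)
The plan is to verify the $R$-spread condition for $\ff(X)$ directly by unfolding the definition and then applying the maximality of $X$. Concretely, I would start by recording the two things the hypothesis on $X$ gives me: first, the inequality $|\ff| \le R^{|X|}|\ff(X)|$ (just a rearrangement of $|\ff(X)| \ge R^{-|X|}|\ff|$); second, that for every strictly larger set $X' \supsetneq X$ one has $|\ff(X')| < R^{-|X'|}|\ff|$. Then the goal is to show, for an arbitrary $Y$, that $|\ff(X)(Y)| \le R^{-|Y|}|\ff(X)|$.

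Next I would dispose of two trivial cases. If $Y=\varnothing$ the inequality is $|\ff(X)| \le |\ff(X)|$, which holds. If $Y \cap X \ne \varnothing$, then $\ff(X)(Y)$ is empty: every member of $\ff(X)$ is disjoint from $X$ by construction, so no element of $\ff(X)$ can contain $Y$. This reduces the problem to $Y \ne \varnothing$ with $Y \cap X = \varnothing$.

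In that remaining case, unfolding the definitions gives the identity $\ff(X)(Y) = \ff(X \cup Y)$, because $G \in \ff(X)$ with $Y \subset G$ corresponds exactly to $F \in \ff$ with $X \cup Y \subset F$ via $G = F \setminus X$. Since $|X \cup Y| > |X|$, the maximality of $X$ yields
\begin{align*}
|\ff(X \cup Y)| < R^{-|X|-|Y|}|\ff|,
\end{align*}
and combining this with $|\ff| \le R^{|X|}|\ff(X)|$ yields $|\ff(X)(Y)| < R^{-|Y|}|\ff(X)|$, completing the verification.

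There is no real obstacle here; the whole argument is a two-line manipulation whose only delicate point is checking the identity $\ff(X)(Y) = \ff(X \cup Y)$ (for $Y \cap X = \varnothing$) and the two small case distinctions on $Y$, both of which are settled by the observation that sets in $\ff(X)$ are disjoint from $X$.
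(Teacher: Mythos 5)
Your argument is correct and is exactly the standard proof of this fact (the paper itself gives no proof, citing Tao's Lemma 2, whose argument is the same maximality computation): the identity $\ff(X)(Y)=\ff(X\cup Y)$ for $Y$ disjoint from $X$, plus $|\ff(X\cup Y)|<R^{-|X|-|Y|}|\ff|\le R^{-|Y|}|\ff(X)|$ from the maximality of $X$. The case distinctions on $Y$ are handled properly, so nothing is missing.
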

The observation is non-trivial, provided $|\ff| > R^k$. In that case, we have $|\ff(X)| > R^{-|X|} R^k = R^{k - |X|}$. In particular, this implies $|X| < k$.

For families of sets, Theorem~\ref{theorem: r-spread theorem} can be efficiently combined with the following lemma by Keevash et al.~\cite{keevash2021global}. We say that a family $\cG$ is \textit{upward closed} if for any $A \in \cG$ and $B, A \subset B,$ one has $B \in \cG$.

\begin{lemma}
\label{lemma: matching lemma}
    Let $X_1, \ldots, X_i$ be $p_i$-random subsets of $[n]$. Let $\cG_1, \ldots, \cG_s$ be upward closed subfamilies of $2^{[n]}$ such that
    \begin{align*}
        \PP (X_i \in \cG_i) \ge 3 s p_i.
    \end{align*}
    Then there exist disjoint sets $G_1 \in \cG_1, \ldots, G_s \in \cG_s$.
\end{lemma}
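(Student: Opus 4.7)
The plan is to prove the matching lemma via coupling combined with induction on $s$. First, I would construct the standard disjoint coupling $(Y_1, \ldots, Y_s)$: independently for each $j \in [n]$, assign $j$ to a color in $\{0, 1, \ldots, s\}$, with color $i \ge 1$ chosen with probability $p_i$ and color $0$ chosen with the remaining probability $1 - \sum_{i=1}^s p_i \ge 1/2$; let $Y_i$ be the set of color-$i$ elements. By construction, the $Y_i$ are pairwise disjoint, and each $Y_i$ has the same Bernoulli$(p_i)$ marginal as $X_i$, so $\PP(Y_i \in \cG_i) = \PP(X_i \in \cG_i) \ge 3sp_i$. The lemma reduces to showing
\[
\PP\bigl(\forall i \in [s]\colon\ Y_i \in \cG_i\bigr) > 0,
\]
since we may then simply take $G_i := Y_i$.

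I would prove this by induction on $s$. The base case $s = 1$ is immediate from $\PP(X_1 \in \cG_1) \ge 3p_1 > 0$, which forces $\cG_1 \ne \emptyset$. For the inductive step, I would first expose $Y_s$; on the event $\{Y_s \in \cG_s\}$, which has probability at least $3sp_s$, set $G_s := Y_s$ and apply the induction hypothesis on the restricted universe $U := [n] \setminus G_s$, with restricted families $\cG_i' := \cG_i \cap 2^U$ (still upward closed in $2^U$) and independent Bernoulli$(p_i)$ random subsets of $U$ for $i < s$. The restricted inclusion probabilities still sum to at most $1/2$, so the structural hypothesis of the lemma is preserved.

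The main technical point, and the principal obstacle, is verifying that for a positive-probability choice of $Y_s$ satisfying $Y_s \in \cG_s$, the restricted hypotheses $\PP(X_i \cap U \in \cG_i') \ge 3(s-1) p_i$ hold simultaneously for all $i < s$. Writing $X_i \cap U = X_i \setminus Y_s$ and using upward-closedness of $\cG_i$, one controls the defect
\[
\PP(X_i \in \cG_i) - \PP(X_i \setminus Y_s \in \cG_i) \le \PP(X_i \cap Y_s \ne \emptyset),
\]
and the overlap term is handled by averaging over $Y_s$, using the slack $\sum p_i \le 1/2$ to simultaneously preserve the event $\{Y_s \in \cG_s\}$ and the $3(s-1)p_i$ lower bound. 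The delicate case arises when the marginal probability $p_s$ is close to its maximal allowed value $1/(3s)$, so the available slack $3sp_i - 3(s-1)p_i = 3p_i$ is tight against the overlap term; resolving this will require a refined conditional averaging argument rather than a naive union bound over the restricted events.
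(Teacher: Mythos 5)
The paper does not actually prove this lemma: it is imported verbatim from Keevash et al.~\cite{keevash2021global}, so there is no internal proof to compare against. Judged on its own terms, your proposal is an outline rather than a proof, and its decisive step is left open.

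The coupling you set up (the disjoint random colour classes $Y_1,\dots,Y_s$, with $Y_0$ the uncoloured part) is the standard and correct first move, and the reduction to showing $\PP(\forall i\colon Y_i\in\cG_i)>0$ is fine, as is the base case. The gap is the inductive step. You need some outcome $Z$ of $Y_s$ with $Z\in\cG_s$ such that $\PP(X_i\setminus Z\in\cG_i)\ge 3(s-1)p_i$ for all $i<s$, and you propose to control the loss via $\PP(X_i\in\cG_i)-\PP(X_i\setminus Z\in\cG_i)\le\PP(X_i\cap Z\ne\emptyset)$, hoping this is at most the available slack $3p_i$. That bound is correct but quantitatively useless: $Y_s$ is a typical $p_s$-random set of size about $p_sn$, so $\PP(X_i\cap Z\ne\emptyset)=1-(1-p_i)^{|Z|}$ is close to $1$ whenever $p_ip_sn\gg1$, and averaging over $Y_s$ gives $1-(1-p_ip_s)^n$, again close to $1$; the slack $\sum_i p_i\le 1/2$ plays no role in this computation. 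Moreover, the restricted hypothesis can genuinely fail for a non-negligible set of outcomes $Z\in\cG_s$ (for instance when $\cG_i=\{A\mid A\cap S_i\ne\emptyset\}$ for a set $S_i$ of size a bit more than $3s$ and $Z\supseteq S_i$, in which case $\cG_i\cap 2^{[n]\setminus Z}$ is empty), so one must prove that a good $Z$ exists by exploiting the measure hypothesis and upward-closedness of the $\cG_i$ structurally, not merely through the overlap event. That argument is the actual content of the lemma; your final sentence names the obstacle but does not supply the ``refined conditional averaging'' needed to overcome it, so the proof is incomplete.
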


Together, Theorem~\ref{theorem: r-spread theorem} and Lemma~\ref{lemma: matching lemma} imply the following proposition which is the main tool in our analysis.

\begin{proposition}
    \label{proposition: coloring trick}
    Let $\cG_1, \ldots, \cG_s$ be $R$-spread uniform families of uniformity at most $k$. Suppose that $R > 2^{7} s \lceil \log_2 k \rceil$. Then, there exist pairwise disjoint sets $F_1 \in \cG_1, \ldots, F_s \in \cG_s$.
\end{proposition}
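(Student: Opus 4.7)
The plan is to apply the spread lemma (Theorem~\ref{theorem: r-spread theorem}) to each family $\cG_i$ separately, thereby showing that a sparse random subset $X_i \subset [n]$ contains some $G \in \cG_i$ with probability close to one, and then use the matching lemma (Lemma~\ref{lemma: matching lemma}) to extract pairwise disjoint representatives across the $s$ families.

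Concretely, set $p_i = p := \tfrac{1}{4s}$, so that $\sum_{i=1}^s p_i = \tfrac14 \le \tfrac12$ and the first hypothesis of Lemma~\ref{lemma: matching lemma} is met. For each $i$, let $X_i$ be the $p$-random subset of $[n]$ and consider the upward closure
\begin{align*}
    \cG_i^{\uparrow} = \bigl\{ H \subset [n] \mid G \subset H \text{ for some } G \in \cG_i \bigr\},
\end{align*}
which is trivially upward closed. It suffices to verify the second hypothesis $\PP(X_i \in \cG_i^\uparrow) \ge 3sp = \tfrac34$; Lemma~\ref{lemma: matching lemma} will then yield pairwise disjoint sets $H_1 \in \cG_1^\uparrow,\dots,H_s \in \cG_s^\uparrow$, and picking any $F_i \in \cG_i$ with $F_i \subset H_i$ completes the proof. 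To check the probability bound, apply Theorem~\ref{theorem: r-spread theorem} to the uniform probability measure on $\cG_i$ (so $|\mu| = 1$) with integer parameter $m := \lceil \log_2 k\rceil$ and density $\delta := 1/(4sm)$, so that $m\delta = p$. The hypothesis $R > 2^{13} s \lceil \log_2 k\rceil$ then gives $R\delta > 2^{11}$, i.e.\ $\log_2(R\delta) > 11$, and with Stoeckl's improved constant (the $5$ in Theorem~\ref{theorem: r-spread theorem} can be replaced by $1 + h_2(\delta) \le 2$, as noted in the excerpt) the spread lemma yields $\PP(X_i \in \cG_i^\uparrow) \ge 1 - (2/11)^m > \tfrac34$, as desired.

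The main subtle point is the calibration of $m$ and $\delta$ against two competing constraints: $m\delta$ must equal the density $p$ dictated by the matching lemma, while $R\delta$ must be a sufficiently large constant so that the base of the spread lemma's geometric decay dominates the constant $1 + h_2(\delta)$. The choice $m = \lceil \log_2 k\rceil$ is exactly what converts the hypothesis $R > 2^{13}s\lceil\log_2 k\rceil$ into $R\delta > 2^{11}$, the factor $2^{13}$ decomposing as $4 \cdot 2^{11}$, where $4$ comes from $p = 1/(4s)$. Using the original constant $5$ instead of Stoeckl's $2$ would force $m \ge 2$ and fail at $k = 2$ (for which $\lceil\log_2 k\rceil = 1$), so the genuine technical subtlety is that the refinement of the spread lemma's constant is essential in the small-$k$ regime to attain the clean hypothesis $R > 2^{13} s \lceil \log_2 k\rceil$.
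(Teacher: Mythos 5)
Your overall strategy is exactly the paper's: apply the spread lemma to each $\cG_i$, pass to upward closures, and finish with Lemma~\ref{lemma: matching lemma}. The one genuine gap is your reading of the factor $|\mu|$ in Theorem~\ref{theorem: r-spread theorem}. It is not the total mass of $\mu$ (which would always be $1$ for a probability measure, making the factor vacuous); it is the maximum cardinality of a set in $\support(\mu)$. This is how the paper itself instantiates the theorem in its proof of Proposition~\ref{proposition: coloring trick} (it writes $k$ in place of $|\mu|$), and it is where the factor comes from in the entropy proof: the final step is Markov's inequality applied to $\EE|F \setminus \bW|$, whose initial value is bounded by the uniformity, not by $1$. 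With the correct factor your bound reads
\begin{align*}
    \PP\left(X_i \in \cG_i^{\uparrow}\right) \ge 1 - \left(\tfrac{2}{11}\right)^{m} k,
\end{align*}
and with $m = \lceil \log_2 k\rceil$ this is fine for $k \ge 3$ (since $(2/11)^{\log_2 k} k \le k^{-1.4} < 1/4$), but at $k = 2$ it gives only $1 - 4/11 = 7/11$, which falls short of the threshold $3sp = 3/4$ forced by your choice $p = 1/(4s)$. So the verification of the matching lemma's hypothesis is incomplete as written.

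The repair is the paper's parameter choice: take $p = 1/(8s)$, which lowers the required threshold to $3sp = 3/8$ while still giving $R\delta > 2^{10}$, so that $1 - (2/10)^m k \ge 1 - 2k\cdot 5^{-\lceil \log_2 k\rceil} > 3/8$ for all $k \ge 2$ (alternatively, keep $p=1/(4s)$ and treat $k=2$ separately). Relatedly, your closing remark that Stoeckl's constant is ``essential'' at $k=2$ rests on the same misreading of $|\mu|$; with $p = 1/(8s)$ and the correct factor $k$, the argument goes through with only a modest improvement over the constant $5$, and the real reason the small-$k$ case is delicate is the competition between $2^{-m}$-type decay and the multiplicative factor $k$, not the matching-lemma threshold.
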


\begin{proof}
    Put $p = 1/8s$ and consider a $p$-random set $\bW$ of $[n]$. Choose $m = \lceil \log_2 k \rceil $. Due to Theorem~\ref{theorem: r-spread theorem}, for each $i = 1, \ldots, s$, we have
    \begin{align*}
        \PP \left (\exists F \in \cG_i \text{ such that } F_i \subset \bW \right ) \ge 1 - \left ( \frac{2}{\log_2 \left (\frac {pR} {m} \right )} \right )^m k > 1 - k 2^{-m} > \frac{1}{2}.
    \end{align*}
    We get
    \begin{align*}
        \PP \left ( \bW \in \cG_i^\uparrow \right ) > \frac{1}{2} > 3 s p.
    \end{align*}
    By Lemma~\ref{lemma: matching lemma}, there exist pairwise disjoint sets $G_i \in \cG_i^{\uparrow}$, and so for some disjoint sets $F_i \subset G_i$, $i = 1, \ldots, s$, we have $F_1 \in \cG_1, \ldots, F_s \in \cG_s$.
\end{proof}

Proposition~\ref{proposition: coloring trick} yields the following bound on $\phi(s, t)$:
\begin{claim}
\label{claim: sunflower bound}
     We have $\phi(s, t) \le (2^{7} s \log_2 t)^t$.
\end{claim}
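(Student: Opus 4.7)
The plan is to combine Observation~\ref{observation: R-spread restriction} with Proposition~\ref{proposition: coloring trick}, exactly in the spirit of the argument that Alweiss--Lovett--Wu--Zhang used to bound the classical sunflower function (and whose quantitative form here is the recent improvement).

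Set $R := 2^{14} s \log_2 t$. Suppose for contradiction that $\ff \subset \binom{[n]}{t}$ has no sunflower with $s$ petals yet $|\ff| > R^t$. The first step is to apply Observation~\ref{observation: R-spread restriction}: let $X$ be a set of maximum size satisfying $|\ff(X)| \ge R^{-|X|}|\ff|$; then $\ff(X)$ is $R$-spread. From $|\ff| > R^t$ we get $|\ff(X)| > R^{t-|X|}$, and since $\ff(X)$ is $(t-|X|)$-uniform this forces $|X| \le t-1$.

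The second step is to extract $s$ pairwise disjoint members of $\ff(X)$ by applying Proposition~\ref{proposition: coloring trick} with $\cG_1 = \dots = \cG_s = \ff(X)$, whose uniformity is at most $t$. One has to check the spread hypothesis $R > 2^{13} s \lceil \log_2 t \rceil$, which is immediate for $t \ge 2$ since $\lceil \log_2 t \rceil \le 2 \log_2 t$. (The boundary case $|X| = t-1$, where $\ff(X)$ is $1$-uniform, is even easier: then $|\ff(X)| > R > s$, so any $s$ distinct singletons do the job and we do not even need Proposition~\ref{proposition: coloring trick}.) This produces disjoint $F_1,\dots,F_s \in \ff(X)$.

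The third step is to lift these back to $\ff$. By definition of $\ff(X)$, the sets $F_1 \cup X,\dots,F_s\cup X$ lie in $\ff$. Because the $F_i$ are pairwise disjoint and each $F_i \subset [n]\setminus X$, the pairwise intersection $(F_i\cup X)\cap (F_j\cup X)$ equals $X$ for all $i\neq j$, so these sets form a sunflower with $s$ petals and core $X$. This contradicts the assumption on $\ff$, and so $\phi(s,t) \le R^t = (2^{14}s\log_2 t)^t$.

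There is no serious obstacle: the argument is a three-line application of the two tools already proved in this subsection. The only thing that needs a moment of care is the constant bookkeeping --- verifying $R > 2^{13}s\lceil \log_2 t\rceil$ so that Proposition~\ref{proposition: coloring trick} is applicable with uniformity bounded by $t$, and separately handling the (trivial) case $|X|=t-1$ where the proposition is not strictly needed.
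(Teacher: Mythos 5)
Your proof is correct and is essentially the paper's own argument: take the maximal $X$ with $|\ff(X)|\ge R^{-|X|}|\ff|$, invoke Observation~\ref{observation: R-spread restriction} to get $R$-spreadness, apply Proposition~\ref{proposition: coloring trick} to extract a matching, and lift it to a sunflower with core $X$. The only (harmless) difference is your choice $R=2^{14}s\log_2 t$ versus the paper's $R=2^{13}s\lceil\log_2 t\rceil$, and your explicit handling of the $1$-uniform boundary case.
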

\begin{proof} 
To obtain it, one should consider a family $|\ff| > (2^{7} s \log_2 t)^t$ of uniformity $t$, choose $R = 2^{6} s \lceil \log_2 t \rceil$ and find a maximal $X$ such that $|\ff(X)| \ge R^{-|X|} |\ff|$. Note that $|X| < t$. Observation~\ref{observation: R-spread restriction} guarantees that $\ff(X)$ is $R$-spread, and Proposition~\ref{proposition: coloring trick} implies that $\ff(X)$ contains a matching $F_1, \ldots, F_s$ of size $s$. Thus, $\ff$ contains a sunflower $X \cup F_1, \ldots, X \cup F_s$ of size $s$.
\end{proof}

Note that constant $2^{7}$ in Claim~\ref{claim: sunflower bound} is not optimal. For example, one may obtain $\phi(s, t) \le (2^6 s \log_2 t)^t$ (see~\cite{stoeckl_lecture_nodate}). For a short self-contained proof of Claim~\ref{claim: sunflower bound}, we refer the reader to a recent paper of Rao~\cite{rao2023sunflowers}.

In what follows, we will also use the following property of $R$-spread families.
\begin{claim}
\label{claim: spread removing intersections}
    Let $\cG$ be an $R$-spread family. Consider an arbitrary $X$ of size less than $R$. Then, the family $\cG(\varnothing, X)$ is $\hat R$-spread, where
    \begin{align*}
        \hat R = R - |X|
    \end{align*}
    and has size at least $\left (1 - \frac{|X|}{R} \right ) |\cG|$.
\end{claim}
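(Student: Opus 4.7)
The plan is to deduce both statements directly from the definition of $R$-spread, using nothing more than a union bound and an elementary algebraic manipulation.

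For the size lower bound, I would apply $R$-spreadness of $\cG$ to each singleton: for every $x \in X$, the definition gives $|\cG(\{x\})| \leq R^{-1}|\cG|$. Summing over $x \in X$ shows that at most $|X|\cdot|\cG|/R$ sets of $\cG$ meet $X$, and since $\cG(\varnothing, X)$ is by definition the subfamily of $\cG$ consisting of sets disjoint from $X$, this gives $|\cG(\varnothing, X)| \geq (1 - |X|/R)|\cG|$, which is non-negative thanks to the hypothesis $|X| \leq R$.

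For the spreadness, fix any $Y$ disjoint from $X$ (otherwise $\cG(\varnothing, X)(Y)$ is empty and the bound is trivial). I would estimate $|\cG(\varnothing, X)(Y)|$, which counts $F \in \cG$ with $Y \subseteq F$ and $F \cap X = \varnothing$, by the cruder count $|\cG(Y)| \leq R^{-|Y|}|\cG|$. Dividing by the size estimate above gives
\[
\frac{|\cG(\varnothing, X)(Y)|}{|\cG(\varnothing, X)|} \;\leq\; \frac{R^{-|Y|}}{1 - |X|/R},
\]
and the required inequality $R^{-|Y|}/(1 - |X|/R) \leq \hat R^{-|Y|}$ then reduces, after a short algebraic rearrangement, to $(1 - |X|/R)^{|Y|-1} \leq 1$, which is immediate for $|Y| \geq 1$ since $0 \leq 1 - |X|/R \leq 1$.

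The only delicate point is matching the precise form of $\hat R$ quoted in the claim. The union-bound route sketched above naturally yields the spreadness constant $R(1 - |X|/R) = R - |X|$; sharpening this to the stronger value $\hat R = R/(1 - |X|/R)$ stated in the claim would require replacing the trivial bound $|\cG(\varnothing, X)(Y)| \leq |\cG(Y)|$ by the inclusion–exclusion identity $|\cG(\varnothing, X)(Y)| = \sum_{X' \subseteq X}(-1)^{|X'|}|\cG(Y \cup X')|$ and controlling the alternating sum using $R$-spread estimates on each $|\cG(Y \cup X')|$. I expect this refined cancellation argument (or, alternatively, a re-reading of the quoted formula as a typographical variant of $\hat R = R(1-|X|/R)$) to be the only genuinely non-routine step.
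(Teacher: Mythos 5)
Your argument is essentially identical to the paper's: the size bound is a union bound over the singletons of $X$ using $R$-spreadness, and the spreadness bound comes from $|\cG(\varnothing,X)(A)| \le |\cG(A)| \le R^{-|A|}|\cG|$ combined with the size lower bound. Your closing suspicion is also correct, and you should act on the ``typographical variant'' reading rather than attempt the inclusion--exclusion refinement: the natural output of this argument is $\hat R = R\left(1-|X|/R\right) = R-|X|$, and the printed $\hat R = R/(1-|X|/R)$ exceeds $R$, so it would assert that deleting elements \emph{improves} spreadness. That is false in general --- if $A$ witnesses near-equality $|\cG(A)| \approx R^{-|A|}|\cG|$ and every member of $\cG$ containing $A$ happens to avoid $X$, then $\cG(\varnothing,X)(A) = \cG(A)$ while $|\cG(\varnothing,X)| \le |\cG|$, so $\cG(\varnothing,X)$ cannot be $\hat R$-spread for any $\hat R > R$. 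Indeed, the final inequality in the paper's own proof is reversed for the printed value, and every downstream application (e.g.\ the deduction ``$\hat R \ge R/2$ since $R \ge 2sq$,'' or the $R/2\tau^{t}$-type spreadness used before invoking Proposition~\ref{proposition: coloring trick}) only parses with $\hat R = R - |X|$, which is all that is ever needed. So your proof is correct and complete once $\hat R$ is read as $R(1-|X|/R)$; the stronger printed constant is not provable.
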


\begin{proof}
    We have
    \begin{align*}
        |\cG(\varnothing, X)| \ge |\cG| - \sum_{x \in X} |\cG(x)| \ge \left (1 - \sum_{x \in X} R^{-1} \right ) |\cG| \ge \left (1 - \frac{|X|}{R} \right) |\cG|.
    \end{align*}
    Let $A$ be an arbitrary non-empty set disjoint from $X$. Then, we have
    \begin{align*}
        |\cG(\varnothing, X) (A)| \le |\cG(A)| \le R^{-|A|} |\cG| \le R^{-|A|}\left(1 - \frac{|X|}{R}\right) |\cG(\varnothing, X)| \le \hat{R}^{-|A|} |\cG(\varnothing, X)|,
    \end{align*}
    so $\cG(\varnothing, X)$ is indeed $\hat{R}$-spread.
\end{proof}

Claim~\ref{claim: spread removing intersections} yields the following simple but important corollary.

\begin{claim}
\label{claim: covering number}
Let $\cG$ be an arbitrary $R$-spread family. Then the covering number of $\cG$ is at least $R$. 
\end{claim}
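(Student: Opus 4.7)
The plan is to argue by contradiction via a simple union bound, exactly as suggested by Claim~\ref{claim: spread removing intersections}. Suppose, toward a contradiction, that $\cG$ admits a transversal $T$ with $|T| < R$. Every $F \in \cG$ intersects $T$, so
\begin{align*}
    |\cG| \le \sum_{x \in T} |\cG(x)|.
\end{align*}
By $R$-spreadness, $|\cG(x)| \le R^{-1} |\cG|$ for each singleton $\{x\}$, so the right-hand side is at most $(|T|/R)|\cG| < |\cG|$, a contradiction.

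Equivalently, one can invoke Claim~\ref{claim: spread removing intersections} with $X = T$: since $|T| < R$, the family $\cG(\varnothing, T)$ has size at least $(1 - |T|/R)|\cG| > 0$, so there is some $F \in \cG$ disjoint from $T$, contradicting that $T$ is a transversal. Either way, any transversal must have size at least $R$, which is exactly the claim. The only step worth double-checking is the edge case $|T| = 0$, which is vacuous since $\cG$ is nonempty (a spread family with an empty set as a member would still have $\tau(\cG) = 0$, but the definition of $R$-spreadness combined with $|\cG| \ge 1$ forces any meaningful $\cG$ to consist of nonempty sets when $R > 1$; for $R \le 1$ the statement is trivial). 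There is no real obstacle here — the entire content of the claim is the one-line union bound enabled by $R$-spreadness.
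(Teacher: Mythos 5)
Your proof is correct and is essentially the paper's argument: the paper also assumes a transversal $X$ with $|X| < R$ and invokes Claim~\ref{claim: spread removing intersections} to get $|\cG(\varnothing, X)| \ge (1 - |X|/R)|\cG| > 0$, contradicting transversality. Your direct union bound is just an inlined version of that same step, so there is nothing to add.
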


\begin{proof}
    Let $X = \{x_1, \ldots, x_\ell\}$ be a transversal of $\cG$. If $\ell < R$, then, by Claim~\ref{claim: spread removing intersections}, we have
    \begin{align*}
        |\cG(\varnothing, X)| \ge \left (1 - \frac{\ell}{R} \right ) |\cG| > 0,
    \end{align*}
    contradicting the definition of a transversal.
\end{proof}

\subsubsection{Spread domains}
\label{subsection: spread domains}

An important topic in Extremal Set Theory is to study families $\ff$, satisfying some restrictions in domains $\cA$ different from $\binom{[n]}{k}$. There is a wide range of such works, for example, generalizations of Erd\H{o}s--Ko--Rado theorem for vector fields~\cite{frankl1986erdos}, integer sequences~\cite{frankl_erdos-ko-rado_1999}, layers of simplicial complexes~\cite{borg2009extremal} and permutations~\cite{frankl1977maximum}, generalizations of Kruskal--Katona Theorem for $[n]^k$~\cite{frankl_shadows_1988}, generalizations of Erd\H{o}s-S\'os problem for codes~\cite{keevash2023forbidden} and linear maps~\cite{ellis2023forbidden}.

Zakharov and the first author~\cite{kupavskii_spread_2022} developed the machinery of spread approximations that can be used to deduce extremal set theory results for subfamilies of domains different from  $\binom{[n]}{k}$. To discuss these results, we shall need the following definition.

\begin{definition}
    We say that a family $\cA$ is $(r, t)$-spread, if for any $T \in \partial_{\le t} \cA$, the family $\cA(T)$ is $r$-spread.
\end{definition}

Here, we illustrate the definition of $(r, t)$-spreadness with several examples, including $\binom{[n]}{k}$.

\begin{proposition}
\label{proposition: spread families examples}
    Fix integers $n, k, t$ such that $k \ge t$ and $n \ge k$. Then the following holds:
    \begin{enumerate}
        \item \label{item: n choose k is (r,t) spread} The family $\binom{[n]}{k}$ is $\left (\frac{n}{k}, t \right )$-spread;
        \item \label{item: n to the pow k is (r, t) spread} The family $[n]^k$ is $(n, t)$-spread;
        \item \label{item: product of families are (r,t) spread} Fix $w\le k$ and $k_1, \ldots, k_w$ such that $\sum_{i} k_i = k$. The family $\binom{[n]}{k_1} \times \ldots \times \binom{[n]}{k_w}$ is $(r, t)$-spread for $r = \min_i \frac{n}{k_i}$.
    \end{enumerate}
\end{proposition}

\begin{proof}
    It is enough to prove \ref{item: product of families are (r,t) spread}, since~\ref{item: n choose k is (r,t) spread} and~\ref{item: n to the pow k is (r, t) spread} follow from it. Set
    \begin{align*}
        \cA = \binom{I_1}{k_1} \times \ldots \times \binom{I_w}{k_w},
    \end{align*}
    where $|I_i| = n$. 
    Fix some set $T$ of size at most $t$, and let $T_i = T \cap I_i$. Since $T \in \partial_t \cA$, we have $|T_i| \le k_i$. We aim to show that the family
    \begin{align*}
        \cA(T) = \prod_{i = 1}^w \binom{I_i \setminus T_i}{k - |T_i|}
    \end{align*}
        is $r$-spread. Consider an arbitrary $S$ disjoint from $T$, and let $S_i = S \cap (I_i \setminus T_i)$. We may assume that $|S_i| \le k_i - |T_i|$, since otherwise $|\cA(T)(S)| = 0$ and the spreadness definition is trivially satisfied.  We have
    \begin{align*}
        |\cA(T)(S)| = \prod_{i = 1}^w \left | \binom{I_i \setminus T_i}{k - |T_i|} (S_i) \right | = \prod_{i = 1}^w \left | \binom{I_i \setminus (T_i \cup S_i)}{k - |T_i| - |S_i|} \right |.
    \end{align*}
    Then, we have
    \begin{equation*}
        |\cA(T)(S)|  \le \prod_{i %\mid S_i \neq \varnothing
        } \left ( \frac{n - |T_i|}{k_i - |T_i|} \right )^{-|S_i|} \binom{|I_i \setminus T_i|}{k_i - |T_i|}% \cdot \prod_{i \mid S_i = \varnothing }  \binom{|I_i \setminus T_i|}{k_i - |T_i|}  
        \le \left ( \min_i \frac{n}{k_i} \right )^{-|S|} |\cA(T)|. \qedhere
    \end{equation*}
\end{proof}

Next, we give some more sophisticated examples of families that are $(r, t)$-spread.

Consider the family $\Sigma_n$ of permutations on $[n]$. One can consider $\Sigma_n$ as a subfamily of $\binom{[n]^2}{n}$. For each $\sigma \in \Sigma_n$, correspond the following set $F_{\sigma} \in \binom{[n]^2}{n}$:
\begin{align*}
    F_{\sigma} = \left \{(i, j) \mid \sigma(i) = j \right \}.
\end{align*}
We denote by $\mathfrak{S}_n$ the family of sets $\{F_{\sigma} \mid \sigma \in \Sigma\}$. Clearly, the family $\mathfrak{S}_n$ is $n$-uniform. The following proposition ensures that it is $(r, t)$-spread for a suitable $r$.
\begin{proposition}[Section 3.4 from~\cite{kupavskii_spread_2022}]
\label{proposition: r-t spreadness of permutations}
    Suppose that $n > 4t$. Then, the family $\mathfrak{S}_n$ is $(n/4, t)$-spread.
\end{proposition}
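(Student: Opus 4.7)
The plan is to check the $(n/4)$-spreadness of $\mathfrak{S}_n(T)$ directly, for every $T \in \partial_{\le t}\mathfrak{S}_n$. Such a $T$ is a partial matching in $[n]^2$, i.e., a set of pairs with pairwise distinct first coordinates and pairwise distinct second coordinates; the elements of $\mathfrak{S}_n(T)$ are then exactly the sets $F_\sigma \setminus T$ for permutations $\sigma \in \Sigma_n$ extending $T$, and there are $(n-|T|)!$ such extensions, giving $|\mathfrak{S}_n(T)| = (n-|T|)!$. Writing $m := n - |T|$, the hypothesis $n > 4t$ ensures $m > 3n/4$, since $|T| \le t < n/4$.

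Next, fix any $S \subseteq [n]^2 \setminus T$ and set $s := |S|$. First I would dispose of the trivial case: if $T \cup S$ is not itself a partial matching, then no permutation has $T \cup S$ inside its graph, so $|\mathfrak{S}_n(T)(S)| = 0$ and the spreadness bound is immediate. Otherwise $T \cup S$ is a partial matching of size $|T| + s$, the number of permutations extending it equals $(m - s)!$, and so $|\mathfrak{S}_n(T)(S)| = (m - s)!$. The spreadness condition $|\mathfrak{S}_n(T)(S)| \le (n/4)^{-s}|\mathfrak{S}_n(T)|$ thus reduces to
\[
\frac{m!}{(m-s)!} \ge \left(\frac{n}{4}\right)^s.
\]
To close, I would factor $\frac{m!}{(m-s)!} = \binom{m}{s} \cdot s!$ and combine the elementary bound $\binom{m}{s} \ge (m/s)^s$ (each factor $(m-i)/(s-i)$ in the standard product expansion of $\binom{m}{s}$ is at least $m/s$) with Stirling's lower bound $s! \ge (s/e)^s$ to obtain
\[
\frac{m!}{(m-s)!} \ge \left(\frac{m}{s}\right)^s \left(\frac{s}{e}\right)^s = \left(\frac{m}{e}\right)^s > \left(\frac{3n}{4e}\right)^s > \left(\frac{n}{4}\right)^s,
\]
where the last inequality uses $e < 3$.

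The whole verification is essentially mechanical; the only delicate point is that the slack provided by $n > 4t$ (i.e., $m > 3n/4$) must be enough to absorb the factor $1/e$ coming from Stirling. Since $3/e > 1$, the numerical slack is just sufficient, and this is also precisely the place where the hypothesis would need to be strengthened if one wanted a larger spreadness parameter than $n/4$.
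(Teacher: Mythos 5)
Your verification is correct: the counting $|\mathfrak{S}_n(T)|=(n-|T|)!$ and $|\mathfrak{S}_n(T)(S)|=(m-s)!$ (or $0$ when $T\cup S$ is not a partial matching) is exactly right, and the chain $\frac{m!}{(m-s)!}\ge\binom{m}{s}\,s!\ge\left(\frac{m}{e}\right)^s>\left(\frac{3n}{4e}\right)^s>\left(\frac{n}{4}\right)^s$ is valid for all $0\le s\le m$, with the hypothesis $n>4t$ used precisely where you say it is. Note that the paper itself gives no proof of this proposition --- it is imported from Section~3.4 of the cited reference --- and your argument is the standard direct computation used there, so there is nothing further to compare.
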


The restriction that a family $\ff \subset \Sigma_n$ obeys are translated into the restrictions on the corresponding family $\mathfrak{F} = \{F_\sigma \mid \sigma \in \ff\}$. For example, if for any $\sigma_1, \sigma_2 \in \ff$ there exists $x$ such that $\sigma_1(x) = \sigma_2(x)$, then the family $\mathfrak{F}$ is intersecting, i.e. for any $F_1, F_2 \in \mathfrak{F}$ we have $|F_1 \cap F_2| \ge 1$. To formulate the counterpart of Theorem~\ref{theorem: no sunflower with down-closed core} for permutations, we need to elaborate the corresponding sunflower restriction for them.

\begin{definition}
\label{definition: sunflower of permutation}
    Consider $s$ permutations $\sigma_1, \ldots, \sigma_s$. If
\begin{enumerate}
    \item there exists a set $X = \{x_1, \ldots, x_{m}\}$ such that $\sigma_1(x_i) = \ldots = \sigma_s(x_i)$ for each $i = 1, \ldots, m$,
    \item for any $x \in [n] \setminus X$, all values $\sigma_1(x), \ldots, \sigma_s(x)$ are distinct,
\end{enumerate}
then we say that $\sigma_1, \ldots, \sigma_s$ form a sunflower with $s$ petals and the core of size $m$.
\end{definition}

Note that $\ff \subset \Sigma_n$ does not contain a sunflower with $s$ petals and the core of size $t - 1$ if and only if the corresponding family of sets $\mathfrak{F} \subset \mathfrak{S}_n$ does not contain a sunflower of $s$ petals and the core of size $t - 1$. Using this correspondence, we can establish the counterpart of Theorem~\ref{theorem: no sunflower with down-closed core} for permutations.

Another non-obvious example of an $(r, t)$-spread family is the $k$-th layer of a simplicial complex of large rank. The {\it rank} of a simplicial complex is the size of its smallest inclusion-maximal set.
Consider a simplicial complex $\cC$ of rank $n$. Let $n \ge k \ge t$. Consider the $k$-th layer $\cC^{(k)}$ of $\cC$:
\begin{align*}
    \cC^{(k)} = \{F \in \cC \mid |F| = k\}.
\end{align*}
The following lemma states that the family $\cC^{(k)}$ is $(r, t)$-spread for a suitable $r$.
\begin{lemma}[Lemma 4 from~\cite{kupavskii2023intersection}]
\label{lemma: r-t spreadness of simplicial complex}
    The family $\cC^{(k)}$ is $(n/k, t)$-spread.
\end{lemma}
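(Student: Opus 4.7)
The plan is to prove $(n/k, t)$-spreadness by induction on $|X|$, where the goal is to show
\begin{align*}
    |\{F \in \cC^{(k)} : T \cup X \subset F\}| \le (k/n)^{|X|} \cdot |\{F \in \cC^{(k)} : T \subset F\}|
\end{align*}
for every $T \in \partial_{\le t} \cC^{(k)}$ and every $X$ disjoint from $T$. If $T \cup X$ is not contained in any $k$-face, the left-hand side is zero and the inequality is trivial; otherwise $T \cup X$ is itself a face of $\cC$, so every intermediate set encountered during the induction is also a face. The inductive step reduces everything to the base inequality
\begin{align*}
    n \cdot |\{F \in \cC^{(k)} : T' \cup \{x\} \subset F\}| \le k \cdot |\{F \in \cC^{(k)} : T' \subset F\}|,
\end{align*}
which I would prove for an arbitrary face $T' \cup \{x\}$ of $\cC$ (the size of $T'$ plays no role here).

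For the base inequality I would use a swap-type double counting. For each $F' \in \cC^{(k)}$ containing $T' \cup \{x\}$, the rank hypothesis furnishes a facet $M(F') \supset F'$ with $|M(F')| \ge n$; fix such a choice once and for all. The collection of pairs $(F', y)$ with $y \in M(F')$ has size at least $n \cdot |\{F' : T' \cup \{x\} \subset F'\}|$. Define the swap
\begin{align*}
    \Psi(F', y) = (F, y), \qquad F = \begin{cases} F' & \text{if } y \in F', \\ (F' \setminus \{x\}) \cup \{y\} & \text{if } y \notin F'. \end{cases}
\end{align*}
In either branch $F \in \binom{M(F')}{k} \subset \cC^{(k)}$, $T' \subset F$ (using $T' \cap \{x\} = \varnothing$), and $y \in F$, so $\Psi$ takes values in the set $\{(F, z) : F \in \cC^{(k)},\, T' \subset F,\, z \in F\}$, which has size exactly $k \cdot |\{F : T' \subset F\}|$.

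It remains to check that $\Psi$ is at most one-to-one. Given a target pair $(F, z)$, exactly one of the two branches could have produced it: the first requires $x \in F$, the second requires $x \notin F$. In the former case the only possible preimage is $(F' = F,\, y = z)$; in the latter it is $(F' = (F \setminus \{z\}) \cup \{x\},\, y = z)$, both uniquely determined by $(F, z)$. Comparing the two counts yields the base inequality, and iterating $|X|$ times delivers the claimed $(k/n)^{|X|}$ ratio, establishing $(n/k, t)$-spreadness. The main obstacle is really just the ad hoc choice of $M(F')$---there is no canonical facet above a given face---but the double counting is insensitive to which facet is picked, using only the rank lower bound $|M(F')| \ge n$. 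Once the choice is fixed, checking that $\Psi$ is well-defined and multiplicity-one in both branches is mechanical.
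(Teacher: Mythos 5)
Your proof is correct. The paper does not prove this lemma itself---it imports it as Lemma 4 from \cite{kupavskii2023intersection}---so there is no in-paper argument to compare against; your reduction to the single-element step followed by the swap $(F',y)\mapsto (F'\setminus\{x\})\cup\{y\}$ inside a chosen facet $M(F')\supset F'$ of size at least $n$ is the standard double-counting proof of this fact, and your injectivity check via the case split on whether $x\in F$ is sound.
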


Thus, there is a wide range of $(r, t)$-spread families $\cA$. In Section~\ref{subsection: proof of down-closed theorem}, we show that for each such $\cA$ and $\ff \subset \cA$ some counterpart of Theorem~\ref{theorem: no sunflower with down-closed core} holds.

\subsection{Theorem~\ref{theorem: no sunflower with down-closed core} for spread domains}
\label{subsection: proof of down-closed theorem}

In order to formulate a generalization of Theorem~\ref{theorem: no sunflower with down-closed core} for $\ff$ being a subfamily of $(r, t)$-spread domain (or ambient family) $\cA$, we require some natural equivalents of the quantities used in the bound on $|\ff|$. One of such quantities replaces $\binom{n - t}{k - t}$. We denote by $A_t$ the following quantity:
\begin{align*}
    A_t = \max_{T \in \partial_t \cA} |\cA(T)|.
\end{align*}
Using this notation, we present the main result of this section.

\begin{theorem}
\label{theorem: general theorem for down-closed sunflowers}
    Let $\cA$ be a $k$-uniform and $(r, t)$-spread ambient family. Assume that $r \ge 4 s^2 (t + 1)^3 \log_2^3 r$ and $r \ge 2^{10} s \log_2 k$. Suppose that a family $\ff \subset \cA$ does not contain a sunflower with $s$ petals and the core of size at most $t - 1$. Then, there exists a family $\cT \subset \partial_{t} \cA$ that does not contain a sunflower with $s$ petals such that
    \begin{align*}
        |\ff \setminus \ff[\cT]| \le \phi(s, t) \cdot \frac{2^{3} s^2 (t + 1)^3 \log_2^3 r}{r} \cdot A_t.
    \end{align*}
\end{theorem}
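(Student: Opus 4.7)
The approach is a greedy spread-approximation construction in the style of Kupavskii and Zakharov. Fix a spread parameter $R$ that simultaneously enables Proposition~\ref{proposition: coloring trick} (requiring $R \ge 2^{13} s \log_2 k$) and Claim~\ref{claim: spread removing intersections} applied to the removal of $(s-1)$ $t$-sets (requiring $R > (s-1) t$); the assumptions $r \ge 2^{15} s \log_2 k$ and $r \ge 2^{18} s (t+1) \log_2 r$ permit such a choice. Initialize $\ff_0 := \ff$ and $\cT := \varnothing$. While $|\ff_i|$ exceeds the target residue $\Delta := \phi(s, t) \cdot \frac{2^{19} s (t+1) \log_2 r}{r} \cdot A_t$, apply Observation~\ref{observation: R-spread restriction} to $\ff_i$ to obtain the largest set $X_i$ with $|\ff_i(X_i)| \ge R^{-|X_i|} |\ff_i|$, so that $\ff_i(X_i)$ is an $R$-spread family of uniformity at most $k$. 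If $|X_i| < t$, Proposition~\ref{proposition: coloring trick} applied to $\ff_i(X_i)$ yields a matching of size $s$ which, extended by $X_i$, forms a sunflower in $\ff_i \subseteq \ff$ with $s$ petals and core of size strictly less than $t$, contradicting the hypothesis. Hence $|X_i| \ge t$; pick any $T_i \in \binom{X_i}{t}$, add it to $\cT$, and set $\ff_{i+1} := \ff_i \setminus \ff_i[T_i]$. At termination $|\ff \setminus \ff[\cT]| = |\ff_{\mathrm{final}}| \le \Delta$, the desired bound.

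The crux is to verify that $\cT$ is sunflower-free. Suppose to the contrary that $T_{j_1}, \ldots, T_{j_s} \in \cT$ form a sunflower with core $C$, $|C| \le t - 1$. For each $u$, the family $\cG_u := \ff_{j_u - 1}(X_{j_u})$ is $R$-spread with $T_{j_u} \subseteq X_{j_u}$. Apply Claim~\ref{claim: spread removing intersections} to restrict each $\cG_u$ to $\cG_u' := \cG_u(\varnothing, \bigcup_{u' \neq u} T_{j_{u'}})$; since the excluded set has size at most $(s-1) t < R$, each $\cG_u'$ remains $R$-spread (with a comparable parameter) and retains a positive fraction of $|\cG_u|$. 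Proposition~\ref{proposition: coloring trick}, applied to the $s$ spread families $\cG_1', \ldots, \cG_s'$, then produces pairwise disjoint sets $G_u \in \cG_u'$. The sets $F_u := X_{j_u} \cup G_u$ lie in $\ff_{j_u - 1} \subseteq \ff$, contain $T_{j_u}$, and satisfy $G_u \cap T_{j_{u'}} = \varnothing$ as well as $G_u \cap G_{u'} = \varnothing$ for $u \neq u'$. From this one verifies that $F_1, \ldots, F_s$ form a sunflower in $\ff$ with $s$ petals and core of size at most $t - 1$, contradicting the standing hypothesis on $\ff$.

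The main technical hurdle occurs in this last verification when $|X_{j_u}| > t$: the intersection $X_{j_u} \cap X_{j_{u'}}$ can then strictly exceed $C$, potentially inflating the common core of the candidate sunflower $\{F_u\}$ beyond $t - 1$, in which case no contradiction is obtained. Handling this requires either (i) refining the choice of $T_i$ inside $X_i$ so that $\ff_i(T_i)$ is itself $R$-spread (via a secondary application of Observation~\ref{observation: R-spread restriction} inside $\ff_i[X_i]$), after which taking $F_u := T_{j_u} \cup G_u$ yields a sunflower with core exactly $C$; or (ii) exploiting the full $(r, t)$-spread structure of $\cA$ together with the stronger assumption $r \ge 2^{18} s (t+1) \log_2 r$ to further restrict each $\cG_u'$ to avoid the larger sets $X_{j_{u'}}$ rather than only $T_{j_{u'}}$, thereby forcing $X_{j_u} \cap X_{j_{u'}} \subseteq C$ in the resulting family. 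This case analysis is the main technical task of the proof.
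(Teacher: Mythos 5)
Your first paragraph (the greedy peeling, the use of Observation~\ref{observation: R-spread restriction}, and the argument that $|X_i|\ge t$ via Proposition~\ref{proposition: coloring trick}) matches the first stage of the paper's proof, and your second paragraph correctly reproduces the standard argument showing that the collection of \emph{full} restriction sets $X_i$ contains no sunflower with core of size at most $t-1$. But the proof has a genuine gap exactly where you flag it, and neither of your proposed repairs works. The theorem needs a $t$-uniform family $\cT$, and passing from the sets $X_i$ (which can have size up to $k$) to arbitrary $t$-subsets $T_i\subset X_i$ destroys the sunflower argument: the candidate petals $F_u=X_{j_u}\cup G_u$ pairwise intersect in at least $X_{j_u}\cap X_{j_{u'}}$, which varies with the pair and can strictly contain the core $C$ of the $T_{j_u}$'s, so $\{F_u\}$ is simply not a sunflower and no contradiction is obtained. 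Your fix (i) presupposes that some $t$-subset $T_i\subset X_i$ has $\ff_i(T_i)$ $R$-spread; this is false in general, since the mass of $\ff_i(T_i)$ may be entirely concentrated on sets containing $X_i\setminus T_i$ (maximality of $X_i$ in Observation~\ref{observation: R-spread restriction} is maximality of size, and gives no spreadness for proper subsets). Your fix (ii) only restricts the petals $G_u$ away from the other $X_{j_{u'}}$'s, which cannot change the fact that the fixed cores $X_{j_u}$ themselves overlap too much; no restriction of the petals forces $X_{j_u}\cap X_{j_{u'}}\subseteq C$.

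The missing step is precisely the content of the paper's Lemma~\ref{lemma: sunflower simplification}, which occupies its own subsection. The paper's route is: (a) run the peeling with threshold $R=r/2$ but stop only when the restriction set exceeds size $w=(t+1)\log_2 r$, so that the residual family has size at most $r^{-1}A_t$ by $(r,t)$-spreadness of $\cA$; (b) prove, as you do, that the family $\cS=\{S_1,\dots,S_{m-1}\}$ of restriction sets (of sizes between $t$ and $w$) has no $s$-sunflower with core of size $\le t-1$; and then (c) apply a separate layer-by-layer simplification to $\cS$ itself -- Procedure~\ref{algo: simplification procedure I} -- which repeatedly extracts $\alpha$-spread restrictions from the top layer of $\cS$, uses the covering-number bound of Claim~\ref{claim: covering number} to show the resulting family of kernels still avoids small-core sunflowers (Claim~\ref{claim: simlification consistency}), and bounds the uncovered part via Claim~\ref{claim: low layers bound in simplification argument}; this is where the factor $(2^{14}s\log_2 t)^t$ and the geometric series in $\varepsilon$ come from. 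Stage (c) is a nontrivial, multi-round argument and cannot be replaced by ``pick any $t$-subset of $X_i$,'' so as written your proposal does not establish the theorem.
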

We prove Theorem~\ref{theorem: general theorem for down-closed sunflowers} in Section~\ref{subsection: proof of the general spread theorem for down-closed sunflowers}.

Obviously, the theorem yields the following upper bound on $\ff$:
\begin{align}
\label{eq: upper bound on down-closed sunflower free families}
    |\ff| \le \max_{\cT} |\cA[\cT]| +\phi(s, t) \cdot \frac{2^{3} s^2 (t + 1)^3 \log_2^3 r}{r} \cdot A_t,
\end{align}
where the maximum is taken over all families $\cT \subset \partial_t \cA$ that do not contain a sunflower with $s$ petals. If $\cA = \binom{[n]}{k}$, then we have $r = n/k$ due to Proposition~\ref{proposition: spread families examples}, and the following bound holds:
\begin{align*}
    |\ff| \le  \left ( 1 +  \frac{2^{3} s^2 (t + 1)^3 \log_2^3 (n/k)}{n/k} \right ) \phi(s, t) \cdot \binom{n - t}{k - t}.
\end{align*}
Thus, Theorem~\ref{theorem: no sunflower with down-closed core} follows.

We claim that the bound $\eqref{eq: upper bound on down-closed sunflower free families}$ is tight up to remainder terms of order $\log_2^3r/r$. The corresponding example below generalizes that of~\cite{Frankl1987}. Consider a family $\cT^* \subset \partial_t \cA$ that maximizes $|\cA[\cT]|$ and does not contain a sunflower with $s$ petals. Denote
\begin{align*}
    \support \cT^* = \bigcup_{T \in \cT^*} T.
\end{align*}
Then, define
\begin{align*}
    \ff^* = \bigcup_{T \in \cT^*} \cA(T, \support \cT^*) \vee \{T\}.
\end{align*}
We claim that $\ff^*$ does not contain a sunflower with $s$ petals and the core of size at most $t - 1$. Indeed, if such a sunflower $F_1, \ldots, F_s$ exists, then $F_1 \cap \support \cT^*, \ldots, F_s \cap \support \cT^*$ form a sunflower with $s$ petals and the core of size at most $t - 1$. By construction, $F_1 \cap \support \cT^*, \ldots, F_s \cap \support \cT^*$ belongs to $\cT^*$, a contradiction.

Next, we have
\begin{align*}
    |\cA[\cT^*]| - |\ff^*| \le \sum_{T \in \cT^*} |\cA(T)| - |\cA(T, \support \cT^*)| \le \sum_{T \in \cT^*} \sum_{x \in (\support \cT^*) \setminus T} |\cA(T \sqcup \{x\})|.
\end{align*}
Since $\cA$ is $(r, t)$-spread, we can bound $|\cA(T \sqcup \{x\})| \le r^{-1} |\cA(T)|$. Therefore, we have
\begin{align*}
    |\cA[\cT^*]| - |\ff^*| \le \frac{|\cT^*| |\support \cT^*|}{r} \cdot A_t \le \frac{t \phi^2(s, t)}{r} A_t,
\end{align*}
so $|\ff^*| \ge \max_{\cT} |\cA[\cT]| - \frac{t \phi^2(s, t)}{r} A_t$, which matches bound~\eqref{eq: upper bound on down-closed sunflower free families} up to second-order terms.

Next, we present a corollary of Theorem~\ref{theorem: general theorem for down-closed sunflowers} for subfamilies of a simplicial complex. Using Lemma~\ref{lemma: r-t spreadness of simplicial complex}, we obtain the following.

\begin{corollary}
    Consider a simplicial complex $\cC$ with the rank at least $n$. Fix numbers $s, k, t$ such that $k \ge t$, $n/k \ge 4 s^2 (t + 1)^3 \log_2^3(n/k)$ and $n \ge 2^{10} s k \log_2 k$. Suppose that a family $\ff \subset \cC^{(k)}$ does not contain a sunflower with $s$ petals and the core of size at most $t - 1$. Then, there exists a family $\cT \subset \cC^{(t)}$ that does not contain a sunflower with $s$ petals such that
    \begin{align*}
        |\ff \setminus \ff[\cT]| \le \phi(s, t) \cdot \frac{2^{3} s^2 (t + 1)^3 \log_2^3 (n/k)}{n/k} \cdot \max_{T \in \cC^{(t)}} |\cC(T)|.
    \end{align*}
\end{corollary}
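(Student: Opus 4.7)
The strategy is a direct invocation of Theorem~\ref{theorem: general theorem for down-closed sunflowers} with the ambient family $\cA := \cC^{(k)}$; essentially all of the work is verifying hypotheses and matching notation, so I do not expect a substantive obstacle.

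First I would verify the spreadness assumption.  The layer $\cC^{(k)}$ is $k$-uniform by construction, and Lemma~\ref{lemma: r-t spreadness of simplicial complex} shows that it is $(n/k, t)$-spread, so one may take $r = n/k$.  The two numerical conditions required by Theorem~\ref{theorem: general theorem for down-closed sunflowers}, namely $r \ge 2^{18} s(t+1) \log_2 r$ and $r \ge 2^{15} s \log_2 k$, then translate into $n/k \ge 2^{18} s (t+1) \log_2(n/k)$ and $n \ge 2^{15} s k \log_2 k$, which are precisely the hypotheses of the corollary.

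Second, I would identify the objects appearing in the theorem with those in the conclusion.  Because $\cC$ has rank at least $n \ge k \ge t$ and is downward closed, every $T \in \cC^{(t)}$ is contained in some facet of size at least $n \ge k$, hence is contained in some $F \in \cC^{(k)}$; conversely the $t$-shadow of any $F \in \cC^{(k)}$ lies in $\cC^{(t)}$.  This gives $\partial_t \cC^{(k)} = \cC^{(t)}$, so the family $\cT$ produced by the theorem already lies in $\cC^{(t)}$ as desired.  For the same reason $\cA(T) = \{F \setminus T \,:\, F \in \cC^{(k)},\ F \supset T\}$ for every $T \in \cC^{(t)}$, so $A_t = \max_{T \in \cC^{(t)}} |\cC^{(k)}(T)|$, which is exactly the quantity $\max_{T \in \cC^{(t)}} |\cC(T)|$ appearing on the right-hand side of the corollary (reading $\cC(T)$ in its $k$-uniform sense, which is the only layer relevant here).

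With these identifications the bound produced by Theorem~\ref{theorem: general theorem for down-closed sunflowers} is verbatim the claimed inequality, and the corollary follows.  Since this is a genuine specialization of the general result rather than a new argument, the only care needed is consistent bookkeeping of the notation; the combinatorial content is entirely absorbed into Lemma~\ref{lemma: r-t spreadness of simplicial complex} and Theorem~\ref{theorem: general theorem for down-closed sunflowers}.
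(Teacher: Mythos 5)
Your proposal is correct and is exactly the paper's (implicit) argument: the corollary is stated as a direct specialization of Theorem~\ref{theorem: general theorem for down-closed sunflowers} to $\cA = \cC^{(k)}$, with the $(n/k,t)$-spreadness supplied by Lemma~\ref{lemma: r-t spreadness of simplicial complex} and the identifications $\partial_t\cC^{(k)} = \cC^{(t)}$ and $A_t = \max_{T\in\cC^{(t)}}|\cC(T)|$ that you spell out. The only caveat is in the statement rather than the proof: the hypothesis should read ``core of size \emph{at most} $t-1$'' to match the general theorem, as in the surrounding results of that section.
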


To state the corollary of Theorem~\ref{theorem: general theorem for down-closed sunflowers} for permutations, we need one more definition. We say that a function $\nu$ is a \textit{partial permutation} of size $t$, if $\nu$ is an injection from some set $T \subset [n]$ of size $t$ to $[n]$. We define the family of partial permutations of size $t$ by $\Sigma^{(t)}_n$. Definition~\ref{definition: sunflower of permutation} of a sunflower generalizes trivially for partial permutations. Since any such injection can be extended to bijection from $[n]$ to $[n]$, there is one-to-one correspondence between elements of $\Sigma^{(t)}_n$ and the shadow $\partial_t \mathfrak{S}_n$. Using it and Proposition~\ref{proposition: r-t spreadness of permutations}, we deduce the following corollary (note that in our case the uniformity $n$ and the spreadness $n/4$ are the same up to multiplicative constants, so we have only one inequality on $n$ instead of two inequalities on $r$ in Theorem~\ref{theorem: general theorem for down-closed sunflowers}).
\begin{corollary}
    Fix numbers $n, s, t$ such that $n \ge 2^{20} s (t + 1) \log_2 n$. Suppose that a family $\ff \subset \Sigma_n$ of permutations does not contain a sunflower with $s$ petals and the core of size at most $t - 1$. Then, there exists a family $\cT \subset \Sigma^{(t)}_n$ of partial permutations that does contain a sunflower with $s$ petals and the core of size at most $t - 1$, such that
    \begin{align*}
        |\ff \setminus \ff[\cT]| \le \phi(s, t) \cdot \frac{2^{5} s^2 (t + 1)^3 \log_2^3 (n/4)}{n} \cdot (n - t)!
    \end{align*}
\end{corollary}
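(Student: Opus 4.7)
The plan is to reduce to Theorem~\ref{theorem: general theorem for down-closed sunflowers} via the map $\sigma \mapsto F_\sigma$ from permutations to their graphs. Setting $\mathfrak{F} = \{F_\sigma \mid \sigma \in \ff\} \subset \mathfrak{S}_n$, the remark just before the corollary tells us that $\mathfrak{F}$ inherits the absence of any sunflower with $s$ petals and core of size at most $t-1$, since such configurations correspond bijectively under $\sigma \mapsto F_\sigma$.

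Next I would verify the hypotheses of Theorem~\ref{theorem: general theorem for down-closed sunflowers} applied to the ambient family $\cA = \mathfrak{S}_n$, which has uniformity $k = n$. By Proposition~\ref{proposition: r-t spreadness of permutations}, $\mathfrak{S}_n$ is $(n/4, t)$-spread provided $n > 4t$, so I take $r = n/4$. The hypothesis $n \ge 2^{20} s (t+1) \log_2 n$ forces $n > 4t$ and also gives
\begin{align*}
    n/4 \ge 2^{18} s (t+1) \log_2 n \ge 2^{18} s (t+1) \log_2(n/4),
\end{align*}
while $n/4 \ge 2^{18} s \log_2 n \ge 2^{15} s \log_2 k$; both conditions on $r$ in the theorem are therefore met. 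Applying the theorem produces a family $\cT^* \subset \partial_t \mathfrak{S}_n$ containing no sunflower with $s$ petals, such that
\begin{align*}
    |\mathfrak{F} \setminus \mathfrak{F}[\cT^*]| \le (2^{14} s \log_2 t)^t \cdot \frac{2^{19} s (t+1) \log_2(n/4)}{n/4} \cdot A_t.
\end{align*}

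It remains to evaluate $A_t$ and translate back to permutations. Each $T \in \partial_t \mathfrak{S}_n$ is the graph of an injection from some $t$-subset of $[n]$ into $[n]$, and its extensions to a bijection $[n] \to [n]$ are in one-to-one correspondence with $\mathfrak{S}_n(T)$; hence $A_t = (n-t)!$. Substituting and rewriting $(n/4)^{-1}$ as $4/n$ produces the factor $2^{21} s (t+1) \log_2(n/4)/n$. Under the bijection $\partial_t \mathfrak{S}_n \leftrightarrow \Sigma_n^{(t)}$, the family $\cT^*$ corresponds to a family $\cT \subset \Sigma_n^{(t)}$ of partial permutations of size $t$ which is sunflower-free in the sense of Definition~\ref{definition: sunflower of permutation} (again by the correspondence of the two sunflower notions, now restricted to size-$t$ partial objects), and $\mathfrak{F}[\cT^*]$ corresponds exactly to $\ff[\cT]$, delivering the claimed inequality.

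I do not foresee a substantive obstacle: the corollary is essentially a direct specialization of Theorem~\ref{theorem: general theorem for down-closed sunflowers}, packaged with the spreadness input from Proposition~\ref{proposition: r-t spreadness of permutations}. The only finicky point is matching the two sunflower definitions at the $t$-shadow level, but this matching is already made explicit in the discussion following Definition~\ref{definition: sunflower of permutation}.
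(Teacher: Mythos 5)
Your proposal is correct and is exactly the paper's route: the paper also obtains this corollary by applying Theorem~\ref{theorem: general theorem for down-closed sunflowers} to $\mathfrak{S}_n$ with $k = n$ and $r = n/4$ (via Proposition~\ref{proposition: r-t spreadness of permutations}), noting that both conditions on $r$ collapse to the single hypothesis on $n$, computing $A_t = (n-t)!$, and translating through the bijection between $\partial_t \mathfrak{S}_n$ and $\Sigma_n^{(t)}$. Your verification of the hypotheses and the constant bookkeeping ($2^{19}/(n/4) = 2^{21}/n$) are accurate.
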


Here $\ff[\cT]$ is a family of all permutations $\sigma$ in $\ff$ such that there exists a partial permutation $\nu \in \cT$ which coincides with $\sigma$ on the support of $\nu$.

\subsection{Proof of Theorem~\ref{theorem: general theorem for down-closed sunflowers}}
\label{subsection: proof of the general spread theorem for down-closed sunflowers}

Set $w = (t + 1) \log_2 r$. Consider two cases. 

\textbf{Case 1. We have $k \le w$}. In this case, we apply the following lemma with $\ff$ in place of $\cS$ with $q = k$.

\begin{lemma}
\label{lemma: sunflower simplification}
    Let $\cA$ be a $k$-uniform $(r, t)$-spread family. Fix some $q \le k$ and $\varepsilon \in (0, 1)$. Assume that $\varepsilon r \textcolor{red}{\ge} s^2 q^3$. Suppose that a family $\cS \subset \partial_{\le q} \cA$ 
    \begin{itemize}
        \item does not contain an $s$-sunflower with the core of size $\le t - 1$;
        \item consists of sets of cardinality at least $t$.
    \end{itemize} 
    Then, there exists a family $\cT \subset \partial_{t} \cA$ which does not contain any sunflower with $s$ petals and
    \begin{align*}
        |\cA[\cS] \setminus \cA[\cT]| \le \left |\cA\left [\cS \setminus \cS[\cT] \right ] \right | \le \phi(s, t) \cdot \frac{2\varepsilon}{1 - \varepsilon} A_t.
    \end{align*}
    % Moreover, if $|\cT| = \phi(s, t)$, then $\cA[\cS] \subset \cA[\cT]$.
\end{lemma}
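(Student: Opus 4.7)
The plan is to construct $\cT$ by an iterative greedy procedure that processes $\cS$ layer by layer. Initialize $\cT := \cS^{(t)}$, which is automatically $s$-sunflower-free since any sunflower among $t$-sets has core of size $\le t-1$, and so $|\cS^{(t)}| \le \phi(s,t)$ by Claim~\ref{claim: sunflower bound}. For each subsequent layer $i = t+1, \ldots, q$, process the residual $\tilde\cR^{(i)} := \cS^{(i)} \setminus \cS^{(i)}[\cT]$ as follows. Setting $R := \varepsilon r/2$, which satisfies $R > 2^{13}s\lceil\log_2 q\rceil$ by $\varepsilon r > 2^{17}sq$, apply Observation~\ref{observation: R-spread restriction} to locate the largest $X$ with $|\tilde\cR^{(i)}[X]| \ge R^{-|X|}|\tilde\cR^{(i)}|$. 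If $|X| \le t-1$, then $\tilde\cR^{(i)}(X)$ is $R$-spread of uniformity $i-|X| \ge 1$, so Proposition~\ref{proposition: coloring trick} yields $s$ pairwise disjoint sets whose lifts form an $s$-sunflower in $\tilde\cR^{(i)} \subseteq \cS$ with core of size $\le t-1$, contradicting the hypothesis. Hence $|X| \ge t$: pick $T \in \binom{X}{t}$ (preferring $T = X$ when $|X| = t$), add $T$ to $\cT$, and delete $\tilde\cR^{(i)}[T]$. Iterate until $\tilde\cR^{(i)} = \varnothing$ or the loop cannot proceed.

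The main obstacle is verifying that $\cT$ remains $s$-sunflower-free throughout. Suppose toward contradiction that $T_1, \ldots, T_s \in \cT$ form an $s$-sunflower with core $C$, $|C| \le t-1$, listed in their order of insertion. At the moment $T_j$ was added, there was an associated witness $X_j \supseteq T_j$ and residual $\cR_j$ with $\cR_j(X_j)$ being $R$-spread. Set $U := T_1 \cup \cdots \cup T_s$; since $|U \setminus T_j| \le (s-1)t \le sq \le R/2^{16}$, Claim~\ref{claim: spread removing intersections} ensures the trimmed family $\cR_j(X_j)(\varnothing, U \setminus T_j)$ is $\hat R$-spread with $\hat R \ge R/2 > 2^{13}s\lceil\log_2 q\rceil$. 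Apply Proposition~\ref{proposition: coloring trick} to the $s$ trimmed spread families to extract pairwise disjoint $V_j$, each disjoint from $U \setminus T_j$ and from $X_j$. The sets $S_j := X_j \cup V_j \in \cR_j \subseteq \cS$ then form an $s$-sunflower with core of size $\le t-1$, provided the choice of $T_j$ inside $X_j$ together with the trimming enforces $X_j \cap X_k = T_j \cap T_k = C$ (which is automatic when $|X_j| = t$, and is handled in the remaining case by absorbing $X_j \setminus T_j$ into the trimming set $U \setminus T_j$). This contradicts the hypothesis on $\cS$.

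At termination, $|\cT| \le \phi(s,t) \le (2^{14}s\log_2 t)^t$ by Claim~\ref{claim: sunflower bound}. The first inclusion $\cA[\cS] \setminus \cA[\cT] \subseteq \cA[\cS \setminus \cS[\cT]]$ is immediate: if $F \supseteq S$ for some $S \in \cS$ and no $T \in \cT$ is contained in $F$, then no $T \in \cT$ is contained in $S$, so $S \in \cS \setminus \cS[\cT]$. For the quantitative bound, the $(r,t)$-spreadness of $\cA$ gives $|\cA[S]| \le r^{-(|S|-t)}A_t$, so
\[
|\cA[\cS \setminus \cS[\cT]]| \le A_t \sum_{S \in \cR} r^{-(|S|-t)}.
\]
The algorithm controls this sum: each insertion of $T \in \cT$ removes residual weight at least an $\varepsilon$-fraction of the local $A_t$, and summing over the $\le \phi(s,t)$ insertions together with the geometric series $\sum_{j \ge 1} \varepsilon^j = \varepsilon/(1-\varepsilon)$ from the iterative shrinking gives $\sum_{S \in \cR} r^{-(|S|-t)} \le \phi(s,t) \cdot \varepsilon/(1-\varepsilon)$, producing the required bound.

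The delicate point throughout is the lifting step: the trimmed witness families must remain spread enough for Proposition~\ref{proposition: coloring trick}, and the resulting $S_j$'s must combine with the $X_j$'s into a sunflower with core of size at most $t-1$. The numerical hypothesis $\varepsilon r > 2^{17}sq$ is calibrated precisely to absorb both the spreadness loss from Claim~\ref{claim: spread removing intersections} (bounded by $|U|/R \le 1/2^{16}$) and the threshold $2^{13}s\lceil\log_2 q\rceil$ of Proposition~\ref{proposition: coloring trick}, with ample slack for the book-keeping.
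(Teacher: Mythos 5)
There is a genuine gap, and it sits exactly at the point you flag as the ``main obstacle'': the sunflower-freeness of $\cT$. In your procedure the witness $X_j$ produced by Observation~\ref{observation: R-spread restriction} may strictly contain the $t$-set $T_j$ you put into $\cT$, and the only sets of $\cS$ you can control via spreadness are those containing \emph{all} of $X_j$: every member of $\cR_j[X_j]$ contains the fixed excess $E_j=X_j\setminus T_j$. ``Absorbing $X_j\setminus T_j$ into the trimming set'' only constrains the free parts $V_j$; it cannot remove $E_j$ from the lifted sets $S_j=X_j\cup V_j$ themselves. If some $E_j$ meets $T_k$ or $E_k$ for $k\neq j$, the sets $S_1,\dots,S_s$ do not form a sunflower with core $T_1\cap\dots\cap T_s$, and nothing forces them to form any sunflower with core of size $\le t-1$. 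Nor can you instead lift inside $\cR_j(T_j)$: maximality of $X_j$ gives no lower bound on the covering number of $\cR_j(T_j)$ (it can be $1$, witnessed by any $x\in E_j$), so Claim~\ref{claim: covering number}/Proposition~\ref{proposition: coloring trick} are unavailable there. This is precisely why the paper's Procedure~\ref{algo: simplification procedure I} does \emph{not} collapse to $t$-sets in one shot: it peels off one layer of $\cS$ per round, keeps the extracted cores $T$ at whatever size ($\ge t$, $\le q-i-1$) they come out at, and records the full link $\cT_{i,T}=\cW_i(T)$, whose covering number $\ge\alpha\ge sq$ \emph{as a family of small sets} is what makes the inductive lift in Claim~\ref{claim: simlification consistency} go through (the obstruction to avoid has size $<sq\le\alpha$). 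Your one-round reduction discards exactly the information needed for that lift.

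The quantitative part is also unsupported. As written, your loop always finds a witness $X$ with $|X|\ge t$ and strictly shrinks the residual, so it terminates only when $\tilde\cR^{(i)}=\varnothing$; the final remainder $\cS\setminus\cS[\cT]$ is then empty and the displayed sum $\sum_{S\in\cR}r^{-(|S|-t)}$ is a sum over nothing. The statements ``each insertion removes at least an $\varepsilon$-fraction of the local $A_t$'' and the appearance of $\sum_{j\ge1}\varepsilon^j$ do not correspond to any quantity in your construction. In the paper the factor $\varepsilon/(1-\varepsilon)$ arises concretely: the uncovered sets at round $i$ form $\cW_i$, Claim~\ref{claim: low layers bound in simplification argument} bounds $|\cW_i|\le(2^{14}s\log_2(q-i))^t\alpha^{q-i-t}$, and $(r,t)$-spreadness of $\cA$ gives $A_{q-i}\le r^{-(q-i-t)}A_t$, so the total is a genuine geometric series in $\alpha/r\le\varepsilon/e$ coming from the hypothesis $\varepsilon r>2^{17}sq$. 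You would need both to introduce an actual leftover family at each round (rather than covering everything) and to prove a counting bound of the type of Claim~\ref{claim: low layers bound in simplification argument} for it; neither is present in the proposal.
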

We prove Lemma~\ref{lemma: sunflower simplification} in Section~\ref{subsection: proof of simplification argument}. We apply it with $\varepsilon = \frac{s^2q^3}r\le \frac{s^2(t+1)^3\log_2^3 r}{r}$. Note that the condition on $r$ from the theorem implies that $\varepsilon<1.$

The lemma gives suboptimal bounds if $k \ge (t + 1) \log_2 r$. Indeed, if we apply it with $k$ in place of $q$, it gives the result for $r > s^2 k^3$, while we can establish the theorem provided $r \ge 2 s^2 (t + 1)^3 \log^3 r$ and $r \ge 2^{10} s \log_2 k$.

\textbf{Case 2. We have $k > (t + 1) \log_2 r$.} In this case, we use the following iterative process to obtain some structure within the family $\ff$, cf.~\cite{kupavskii2023erd, kupavskii2023intersection}.

Set $\ff_0 = \ff$. Given a family $\ff_{i}$, we obtain a family $\ff_{i + 1}$ by the following procedure.

Set $R = r/2$. Let $S_i$ be the maximal set such that $|\ff_{i}(S_i)| \ge R^{-|S_i|} |\ff_i|$. Note that $\ff_i(S_i)$ is $R$-spread due to Observation~\ref{observation: R-spread restriction}. If $|S_i| > w$, then stop. Otherwise, set $\ff_{i + 1} = \ff_i \setminus \ff_i[S_i].$

Suppose that the procedure above stops at step $m$. It means that $|S_m| \ge w + 1$ and
\begin{align*}
    |\ff_m| & \le R^{|S_m|} |\ff(S_m)| \le R^{|S_m|} |\cA(S_m)| \\&
    \overset{{\color{teal} (\cA \text{ is $(r, t)$-spread})}}{\le} R^{|S_m|} r^{t - |S_m|} |\cA(T)| \le  r^t \left ( \frac{R}{r} \right )^{w + 1} |\cA(T)| \le r^t2^{-w-1} |\cA(T)| \le \frac{1}{r} |\cA(T)|.
\end{align*}
for some $T \subset S_m$ of size $t$. Define $\cS = \{S_1, \ldots, S_{m - 1}\}$ and $\ff_{S_i} = \ff_i(S_i)$. Note that $\ff$ admits the following decomposition:
\begin{align*}
    \ff = \ff_{m} \sqcup \bigsqcup_{S \in \cS} \ff_S \vee \{S\},
\end{align*}
and each $\ff_S$ is $R$-spread.

We claim the following.
\begin{claim}
The family $\cS$ possesses two properties:
    \begin{enumerate}
    \item \label{item: core at most t - 1 for down-closed sunflowers}it does not contain a sunflower with $s$ petals and the core of size at most $t - 1$;
    \item \label{item: no sets less than t for down-closed sunflowers}it consists of sets of cardinality at least $t$.
\end{enumerate}
\end{claim}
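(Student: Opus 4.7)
The plan is to prove both properties by contradiction: if either fails, I will construct a forbidden sunflower inside $\ff$. The two tools I will lean on are Proposition~\ref{proposition: coloring trick}, which produces disjoint representatives from sufficiently spread families, and Claim~\ref{claim: spread removing intersections}, which preserves spreadness after puncturing at a small set. The hypothesis $r \ge 2^{15} s \log_2 k$ yields $R = r/2 \ge 2^{13} s \lceil \log_2 k \rceil$, so Proposition~\ref{proposition: coloring trick} applies to any $R$-spread subfamily of uniformity at most $k$; the other hypothesis $r \ge 2^{18} s (t+1) \log_2 r$ will guarantee that $|U|$ below stays well under $R$.

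For Property 2, suppose $|S_i| < t$ for some $i$. By Observation~\ref{observation: R-spread restriction}, the family $\ff_i(S_i)$ is $R$-spread, and it has uniformity $k - |S_i|$ since $\ff_i \subset \cA$. Proposition~\ref{proposition: coloring trick} then provides a matching $F_1, \ldots, F_s$ in $\ff_i(S_i)$, and the sets $S_i \cup F_1, \ldots, S_i \cup F_s$ form a sunflower with $s$ petals and core $S_i$ of size at most $t - 1$ lying in $\ff_i \subset \ff$, contradicting the assumption on $\ff$.

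For Property 1, assume some $s$-subfamily $S_{i_1}, \ldots, S_{i_s}$ of $\cS$ forms a sunflower with core $C$ of size at most $t - 1$. Set $U = \bigcup_j S_{i_j}$, and note $|U| \le t + s w$, which is much smaller than $R$ by the choice of $w$ and the hypothesis on $r$. For each $j$, apply Claim~\ref{claim: spread removing intersections} to the $R$-spread family $\ff_{S_{i_j}}$ with $X = U \setminus S_{i_j}$; this produces an $\hat R$-spread family $\cG_j := \ff_{S_{i_j}}(\varnothing, U \setminus S_{i_j})$ with $\hat R \ge R \ge 2^{13} s \lceil \log_2 k \rceil$ and of uniformity at most $k$. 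Applying Proposition~\ref{proposition: coloring trick} to $\cG_1, \ldots, \cG_s$ yields pairwise disjoint $G_j \in \cG_j$. I claim that the sets $S_{i_j} \cup G_j$ form a sunflower with $s$ petals and core $C$ inside $\ff$: they lie in $\ff_{i_j} \subset \ff$ by construction; for $j \ne j'$ we have $S_{i_j} \cap S_{i_{j'}} = C$ by the sunflower property, $S_{i_j} \cap G_{j'} = \varnothing$ because $G_{j'}$ is disjoint from $U \setminus S_{i_{j'}} \supset S_{i_j} \setminus C$ and also from $S_{i_{j'}} \supset C$, and symmetrically for $S_{i_{j'}} \cap G_j$, while $G_j \cap G_{j'} = \varnothing$ by the matching. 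This contradicts the hypothesis on $\ff$.

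The main obstacle is the cross-intersection check in Property 1: one must verify that puncturing $\ff_{S_{i_j}}$ exactly at $U \setminus S_{i_j}$ (rather than at all of $U$) is simultaneously sufficient to kill every unwanted crossing and small enough to preserve spreadness via Claim~\ref{claim: spread removing intersections}. The spreadness arithmetic itself is routine once one notes that each $|S_{i_j}| \le w$ in Case 2 and that the hypothesis $r \ge 2^{18} s(t+1)\log_2 r$ is comfortably stronger than what Claim~\ref{claim: spread removing intersections} and Proposition~\ref{proposition: coloring trick} require.
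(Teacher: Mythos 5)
Your proof is correct and follows essentially the same route as the paper: puncture each $\ff_{S_{i_j}}$ at the union of the other petals, use Claim~\ref{claim: spread removing intersections} to keep the families spread, apply Proposition~\ref{proposition: coloring trick} to extract disjoint completions, and lift to a forbidden sunflower in $\ff$. The only (cosmetic) difference is that the paper handles both properties in one stroke by viewing $s$ copies of a small $S_i$ as a degenerate sunflower with empty petals, whereas you treat Property~2 separately with the same matching argument.
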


\begin{proof}
    Arguing indirectly, assume that $\cS$ contains a sunflower $S_1, \ldots, S_s$ with $s$ petals and the core of size at most $t - 1$. Note that one may consider $s$ copies of a set $S$ of size at most $t - 1$ as a sunflower with $s$ empty petals and the core of size at most $t - 1$. The proof below does not require $S_1, \ldots, S_s$ being distinct, so it implies both~\ref{item: core at most t - 1 for down-closed sunflowers} and~\ref{item: no sets less than t for down-closed sunflowers}. 

    The families $\ff_{S_1}, \ldots, \ff_{S_s}$ are $R$-spread. Put $S = \cup_j S_j$. Consider the families $\cG_i = \ff_{S_i}(\varnothing, S\setminus S_i)$, $j\in[s]$. Due to Claim~\ref{claim: spread removing intersections},  $\cG_i$ are $\hat R$-spread for
    \begin{align*}
        \hat R = R-|S|\ge R-sw \ge R/2,
    \end{align*}
    since $R \ge 2 s w$. Next, $\hat R \ge R/2 \ge 2^{8} s \log_2 k \ge 2^{7} s \lceil \log_2 k \rceil$, so Proposition~\ref{proposition: coloring trick} implies that there are disjoint $G_1 \in \cG_1, \ldots, G_s \in \cG_s$. Therefore, $\ff$ contains a sunflower $S_1 \cup G_1, \ldots, S_s \cup G_s$ with $s$ petals and the core of size at most $t - 1$, a contradiction.
\end{proof}

Note that $\mathcal S\subset \partial_{\le w} \mathcal A$ by definition. Then, we apply Lemma~\ref{lemma: sunflower simplification} with $q = w$ and $\varepsilon = \frac{s^2w^3}r=\frac{s^2(t+1)^3\log_2^3 r}{r}$ to $\cS$, and obtain
\begin{align*}
    |\ff[\cS] \setminus \cA[\cT]| & \le |\cA[\cS] \setminus \cA[\cT]| \le \phi(s, t) \cdot \frac{2 \varepsilon}{1 - \varepsilon} A_t \\
    & \overset{{\color{teal}(\varepsilon \le 0.5)}}{\le} \phi(s, t) \cdot\frac{4 s^2 (t + 1)^3 \log_2^3 r}{r}A_t.
\end{align*}
Therefore, we have
\begin{align*}
    |\ff \setminus \ff[\cT]| \le |\ff_m| + |\ff[\cS] \setminus \cA[\cT]| \le \phi(s, t) \cdot\frac{2^{3} s^2 (t + 1)^3 \log_2^3 r}{r}A_t. & \qedhere
\end{align*}
This completes the proof of Theorem~\ref{theorem: general theorem for down-closed sunflowers} modulo Lemma~\ref{lemma: sunflower simplification}.

\subsection{Proof of Lemma~\ref{lemma: sunflower simplification}: a simplification argument}
\label{subsection: proof of simplification argument}

Lemma~\ref{lemma: sunflower simplification} is the key ingredient in the proofs of Theorems~\ref{theorem: main theorem} and~\ref{theorem: general theorem for down-closed sunflowers}. For a reminder, let us restate the lemma.

\vspace{0.5cm}
\noindent \textbf{Lemma~\ref{lemma: sunflower simplification}.} {\it
Let $\cA$ be a $k$-uniform $(r, t)$-spread family. Fix some $q \le k$ and $\varepsilon \in (0, 1)$. Assume that $\varepsilon r \ge s^2 q^3$. Suppose that a family $\cS \subset \partial_{\le q} \cA$ 
    \begin{itemize}
        \item does not contain an $s$-sunflower with the core of size $\le t - 1$;
        \item consists of sets of cardinality at least $t$.
    \end{itemize} 
    Then, there exists a family $\cT \subset \partial_{t} \cA$ which does not contain any sunflower with $s$ petals and
    \begin{align*}
        |\cA[\cS] \setminus \cA[\cT]| \le \left |\cA\left [\cS \setminus \cS[\cT] \right ] \right | \le \phi(s, t) \cdot \frac{2 \varepsilon}{1 - \varepsilon} A_t.
    \end{align*}
}

We will obtain $\cT$ at the end of the following process. Set $\cT_0 = \cS$. Given a family $\cT_{i}$, we will construct a family $\cT_{i + 1}$ by Procedure~\ref{algo: simplification procedure I} starting from $i = 0$.
\begin{algorithm}
    \begin{algorithmic}[1]
        \State Set $\cW_i \leftarrow \cT_i^{(q - i)}$, $\cT_{i + 1}' \leftarrow \varnothing$;
        \State Set \[
            \alpha \leftarrow 
                 sq;
        \]
        \State Find a maximal set $T$ such that $|\cW_i(T)| \ge \alpha^{-|T|} |\cW_i|$;
        \While{$|T| \le q - i - 1$}
            \State Define $\cT_{i, T} := \cW_i(T)$;
            \State Remove sets containing $T$ from $\cW_i$: $$\cW_i \leftarrow \cW_i \setminus \cW_i[T];$$
            \State Add $T$ to $\cT_{i + 1}'$: $$\cT_{i + 1}' \leftarrow \cT_{i + 1}' \cup \{T\};$$
            \State Find a maximal set $T'$ such that $|\cW_i(T')| \ge \alpha^{-|T'|} |\cW_i|$;
            \State Set $T \leftarrow T'$;
        \EndWhile
        \State Set $\cT_{i + 1} \leftarrow \cT_{i + 1}' \cup \cT_i^{(\le q - i - 1)}$.
    \end{algorithmic}
    \caption{Simplification procedure}
    \label{algo: simplification procedure I}
\end{algorithm}

In what follows, we define families $\cT_{i + 1}, \cW_i, \cT_{i, T}, \cT_{i + 1}'$ as obtained after this procedure. The desired family $\cT$ is $\cT_{q - t}$.
Note that each set in $\cT_i$ has size at most $q - i$. 
Finally, for any $T \in \cT_{i + 1}'$, there exists an $\alpha$-spread family $\cT_{i, T}$ such that
\begin{align*}
    \cT_{i}^{(q - i)} \setminus \cW_i = \bigsqcup_{T \in \cT_{i + 1}'} \{T\} \vee \cT_{i, T}.
\end{align*}

In order to establish Lemma~\ref{lemma: sunflower simplification}, we need the following three claims.

\begin{claim}
\label{claim: simlification consistency}
    For each $i = 0, \ldots, q - t$, $\cT_i$ admits two properties:
    \begin{itemize}
        \item does not contain a sunflower with the core of size $\le t - 1$ and $s$ petals;
        \item each set of $\cT_i$ has size at least $t$.
    \end{itemize}
\end{claim}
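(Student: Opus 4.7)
The plan is to establish Claim~\ref{claim: simlification consistency} by induction on $i \in \{0, 1, \ldots, q-t\}$. The base case $i = 0$ is immediate, since $\cT_0 = \cS$ satisfies both properties by the hypothesis of Lemma~\ref{lemma: sunflower simplification}. For the inductive step, fix $i$ and assume both properties hold for $\cT_i$. Recall the decomposition $\cT_{i+1} = \cT_{i+1}' \sqcup \cT_i^{(\le q-i-1)}$; sets in the second piece already belong to $\cT_i$, so they inherit $|F| \ge t$ from the inductive hypothesis, and only the newly produced sets in $\cT_{i+1}'$ require attention.

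For the size bound on a new set $T \in \cT_{i+1}'$, note that $T$ is by construction maximal with $|\cW_i(T)| > \alpha^{-|T|}|\cW_i|$ at its moment of selection, so Observation~\ref{observation: R-spread restriction} guarantees that $\cT_{i,T} = \cW_i(T)$ is a nonempty $\alpha$-spread family of uniformity $q - i - |T| \le q$. Since the choice $\alpha \ge 2^{14} s \log_2 t$ is made precisely to exceed the threshold required by Proposition~\ref{proposition: coloring trick}, the latter yields $s$ pairwise disjoint sets $F_1, \ldots, F_s \in \cT_{i,T}$. Then $T \cup F_1, \ldots, T \cup F_s \in \cW_i \subset \cT_i$ is a sunflower with core $T$, and the inductive hypothesis on $\cT_i$ forces $|T| \ge t$.

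For the sunflower-free property, suppose for contradiction that $T_1, \ldots, T_s \in \cT_{i+1}$ form a sunflower with core $C$ of size at most $t-1$. Partition the index set into $J_{\mathrm{old}} = \{j : T_j \in \cT_i^{(\le q-i-1)}\}$ and $J_{\mathrm{new}} = \{j : T_j \in \cT_{i+1}'\}$. If $J_{\mathrm{new}} = \varnothing$, the entire sunflower lies in $\cT_i$, contradicting the inductive hypothesis. Otherwise, for each $j \in J_{\mathrm{new}}$ let $X_j = \bigcup_{l \ne j}(T_l \setminus T_j)$ and define $\cG_j = \cT_{i,T_j}(\varnothing, X_j)$. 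Since $|X_j| \le (s-1)q < \alpha$ (using $\alpha \ge sq$), Claim~\ref{claim: spread removing intersections} implies that each $\cG_j$ is nonempty and still at least $\alpha$-spread, so Proposition~\ref{proposition: coloring trick} produces pairwise disjoint sets $F_j \in \cG_j$ for $j \in J_{\mathrm{new}}$. Now set $S_j := T_j$ for $j \in J_{\mathrm{old}}$ and $S_j := T_j \cup F_j \in \cW_i \subset \cT_i$ for $j \in J_{\mathrm{new}}$. Because each $F_j$ is disjoint from $T_j$ itself, from every element of $X_j$, and from every other $F_l$, a direct expansion shows $S_j \cap S_l = C$ for all $j \ne l$. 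Thus $S_1, \ldots, S_s$ is a sunflower in $\cT_i$ with core $C$ of size $\le t-1$, contradicting the inductive hypothesis.

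The main obstacle is the delicate calibration of the spread parameter $\alpha$: the condition $\alpha \ge sq$ is required so that Claim~\ref{claim: spread removing intersections} applies after deleting up to $(s-1)q$ elements from each $\cT_{i,T_j}$ and still leaves $\cG_j$ sufficiently spread, while $\alpha \ge 2^{14} s \log_2 t$ is required so that Proposition~\ref{proposition: coloring trick} can succeed both in producing the matching inside $\cT_{i,T}$ (size step) and inside the $\cG_j$'s (sunflower step). The definition $\alpha = \max\{sq, 2^{14} s \log_2 t\}$ in Procedure~\ref{algo: simplification procedure I} is exactly what simultaneously accommodates both requirements, and verifying the inductive step amounts to tracking how these two inequalities propagate.
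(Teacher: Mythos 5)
Your induction scheme, the split of the sunflower into ``old'' sets from $\cT_i^{(\le q-i-1)}$ and ``new'' sets from $\cT_{i+1}'$, and the final assembly $S_j = T_j \cup F_j$ with the core computation $S_j \cap S_l = C$ all match the paper's argument. The gap is in how you produce the pairwise disjoint petals $F_j$. You invoke Proposition~\ref{proposition: coloring trick}, asserting that $\alpha \ge 2^{14} s \log_2 t$ ``is made precisely to exceed the threshold required'' by it. That is not so: the hypothesis of Proposition~\ref{proposition: coloring trick} is $R > 2^{13} s \lceil \log_2 u \rceil$ where $u$ is the \emph{uniformity} of the families being matched, and here the families $\cT_{i,T_j} = \cW_i(T_j)$ have uniformity $q-i-|T_j|$, which can be as large as $q-i-t$. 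The parameter $\alpha = \max\{sq,\, 2^{14} s \log_2 t\}$ need not exceed $2^{13} s \lceil \log_2 (q-i) \rceil$: if, say, $t=2$ and $q=100$, then $\alpha = 2^{14}s$ while the threshold is $7 \cdot 2^{13} s$. In general the application fails whenever $t^2 \ll q \ll 2^{17}$, a range the lemma must cover (it only assumes $q \le k$ and $\varepsilon r > 2^{17} s q$). The same unjustified invocation occurs in both your size-bound step and your sunflower step. Note that in Claim~\ref{claim: low layers bound in simplification argument} the paper does use the coloring trick on families of this uniformity, but there it deliberately works with $\beta = 2^{14} s \log_2(q-i)$, not with $\alpha$.

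The paper closes exactly this step differently, and the fix is what you should adopt: by Claim~\ref{claim: covering number}, an $\alpha$-spread family has covering number at least $\alpha \ge sq$, and since every set in play ($T_p$, and $F_p \cup T_p \in \cW_i$) has size at most $q-i$, a greedy argument applies --- if only $j < \ell$ disjoint petals avoiding $\bigcup_p T_p$ have been found, the union of the chosen $F_p \cup T_p$ and the remaining $T_p$ has size at most $(s-1)(q-i) < \alpha$, so it cannot be a transversal of $\cT_{i,T_{j+1}}$, yielding the next petal. This only uses $\alpha \ge sq$ and is why the procedure's $\alpha$ is calibrated against $sq$ rather than against a logarithm of the uniformity. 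With that substitution (your restriction to $\cG_j$ via Claim~\ref{claim: spread removing intersections} is compatible with it), the rest of your argument is correct.
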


\begin{proof}
The proof is by induction on $i$. For $i = 0$ the statement holds by the assumptions of Lemma~\ref{lemma: sunflower simplification}. Assume that  $\cT_{i}$ satisfies both properties. We prove that $\cT_{i + 1}$ does also.

\textbf{Assume that $\cT_{i + 1}$ contains a sunflower with the core of size $\le t - 1$ and $s$ petals.} Consider such a sunflower $T_1, \ldots, T_s \in \cT_{i + 1}$. Recall that $\cT_{i + 1} = \cT_{i}^{(\le q - i - 1)}\cup \cT_{i + 1}'$. Thus, we may assume that $T_1, \ldots, T_\ell \in \cT_{i +1}'$ for some $\ell$, $0\le \ell \le s$, and the remaining sets $T_{\ell + 1}, \ldots, T_s \in \cT_{i}^{(\le q - i - 1)}$.

Families $\cT_{i, T_{w}}, w = 1, \ldots, \ell,$ defined by the procedure, are $\alpha$-spread. Claim~\ref{claim: covering number} implies that their covering number is at least $\alpha =  s q$. We claim that there are disjoint $F_1 \in \cT_{i, T_1}, \ldots, F_\ell \in \cT_{i, \ell}$ that do not intersect $\bigcup_{p = 1}^s T_p$. Assume that the opposite holds and let $j$ be the maximum integer such that there are disjoint $F_1 \in \cT_{i, T_1}, \ldots, F_j \in \cT_{i, T_j}$ that do not intersect $\bigcup_{p = 1}^s T_p$. It means that $\bigcup_{p = 1}^j (F_j \cup T_j) \cup \bigcup_{p = j + 2}^s T_p$ is a transversal for $\cT_{i, T_{j + 1}}$, so the covering number of $\cT_{i, T_j}$ is at most $j (q - i) + (s - 1 - j) (q - i) < s (q - i)$, a contradiction. Therefore, $\cT_{i}$ contains sets $T_1 \cup F_1, \ldots, T_\ell \cup F_\ell, T_{\ell + 1}, \ldots, T_s$, which form a sunflower with the core of size $\le t - 1$, a contradiction.

\textbf{Next, assume that $\cT_{i + 1}$ contains a set $T$ of size at most $t - 1$.} By the induction hypothesis, we may assume that $T \in \cT_{i + 1}'$. Then, the family $\cT_{i, T}$ is $\alpha$-spread, and, thus, its covering number is at least $\alpha_i$. Arguing as before, we find disjoint sets $F_1, \ldots, F_s \in \cT_{i, T}$. Thus, $\cT_{i}$ contains a sunflower $T \cup F_1, \ldots, T \cup F_s$ with the core $T$, $|T| \le t - 1$, a contradiction.
\end{proof}

\begin{claim}
\label{claim: low layers bound in simplification argument}
    We have $|\cW_i| \le 2 \binom{q - i}{t} \phi(s, q - i) \alpha^{q - i - t}$.
\end{claim}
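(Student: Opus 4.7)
The plan is to exploit the spread structure that the procedure leaves on $\cW_i$. I first claim that after the while loop exits, either $|\cW_i|<\alpha^{q-i}$ directly, or $|\cW_i(T)|\le\alpha^{-|T|}|\cW_i|$ for every $T$ of size at most $q-i-1$. Indeed, if some $T$ of size $\le q-i-1$ were still $\alpha$-heavy at exit, then a maximal-by-inclusion $\alpha$-heavy superset $T^*\supseteq T$ would have to satisfy $|T^*|\ge q-i$ (otherwise the loop would have iterated on it); and since $T^*$ must be contained in some $(q-i)$-set of $\cW_i$, one has $|T^*|=q-i$ and $T^*\in\cW_i$, whence $|\cW_i(T^*)|=1>\alpha^{-(q-i)}|\cW_i|$ forces $|\cW_i|<\alpha^{q-i}$.

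In the nontrivial regime $|\cW_i|\ge\alpha^{q-i}$, the trivial identity $|\cW_i(F)|=1\le\alpha^{-(q-i)}|\cW_i|$ at $F\in\cW_i$ combines with the above to give full $\alpha$-spreadness of $\cW_i$. Claim~\ref{claim: covering number} then yields $\coveringNumber(\cW_i)\ge\alpha\ge sq$, and the standard inequality $\coveringNumber(\cH)\le u\cdot(\text{matching number of }\cH)$ for $u$-uniform hypergraphs produces a matching in $\cW_i$ of size at least $\alpha/(q-i)\ge s$. That matching is a sunflower with $s$ petals and empty core (of size $0\le t-1$ since $t\ge 1$), contradicting Claim~\ref{claim: simlification consistency} applied to $\cT_i\supseteq\cW_i$. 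Hence in every case $|\cW_i|<\alpha^{q-i}$.

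Set $\beta:=2^{14}s\log_2(q-i)$. If $\alpha\le\beta$, then $|\cW_i|<\alpha^{q-i}=\alpha^t\alpha^{q-i-t}\le\beta^t\alpha^{q-i-t}$, as required. If $\alpha>\beta$, the $\alpha$-spread condition at levels $\le q-i-1$ automatically implies the weaker $\beta$-spread condition at those levels; if additionally $|\cW_i|\ge\beta^{q-i}$, then $\cW_i$ would be fully $\beta$-spread, and Proposition~\ref{proposition: coloring trick} (applicable because $\beta\ge 2^{13}s\lceil\log_2(q-i)\rceil$ when $q-i\ge 2$) would produce $s$ pairwise disjoint sets in $\cW_i$, once more contradicting Claim~\ref{claim: simlification consistency}. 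Thus $|\cW_i|<\beta^{q-i}=\beta^t\beta^{q-i-t}\le\beta^t\alpha^{q-i-t}$. The main obstacle is the structural first step: the greedy loop removes only one maximal heavy chain per iteration, so some care is needed to verify that at termination no $\alpha$-heavy set of size $\le q-i-1$ can remain unless the alternative bound $|\cW_i|<\alpha^{q-i}$ has already been triggered. Once that is in hand, the covering/matching versus spread dichotomy above closes the claim in both regimes $\alpha\le\beta$ and $\alpha>\beta$.
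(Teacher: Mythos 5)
Your route is genuinely different from the paper's and is workable in substance, but as written there is one logical gap, concentrated in the case $\alpha>\beta$. You establish the disjunction ``either $|\cW_i|<\alpha^{q-i}$, or $|\cW_i(T)|\le\alpha^{-|T|}|\cW_i|$ for every $T$ with $|T|\le q-i-1$,'' and you then prove that the \emph{first} disjunct always holds. In the case $\alpha>\beta$, however, you invoke the \emph{second} disjunct (low-level $\alpha$-spreadness, hence $\beta$-spreadness) as if it were established; from ``A or B'' together with ``A'' one cannot conclude ``B.'' Your own justification of the disjunction shows why B is not automatic: once $|\cW_i|<\alpha^{q-i}$, every member $F\in\cW_i$ is itself an $\alpha$-heavy set of size $q-i$, so the loop's exit test is met regardless of whether heavy sets of size $\le q-i-1$ survive; moreover, heaviness is not preserved under one-element extensions, so an inclusion-maximal heavy superset $T^*$ of a surviving heavy $T$ need not have size $q-i$. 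The fix is to read off from Procedure~\ref{algo: simplification procedure I} directly that the while-loop runs until \emph{no} set of size $\le q-i-1$ is $\alpha$-heavy --- this is precisely the exit invariant the paper's own proof uses in the step ``$|Z|=q-i-|X|$ by Procedure~\ref{algo: simplification procedure I}'' --- after which your argument closes.

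Granting that invariant, your proof is correct and takes a different path from the paper's. The paper takes the maximal $X$ with $|\cW_i(X)|\ge\beta^{-|X|}|\cW_i|$ (so that $\cW_i(X)$ is $\beta$-spread by Observation~\ref{observation: R-spread restriction}), shows $|X|\ge t$ by applying Proposition~\ref{proposition: coloring trick} to $\cW_i(X)$ --- if $|X|\le t-1$ this produces a forbidden sunflower with core $X$ --- and then bounds $|\cW_i(X)|\le\alpha^{q-i-|X|}$ via the procedure, arriving at $\beta^{|X|}\alpha^{q-i-|X|}\le\beta^{t}\alpha^{q-i-t}$, a final step that needs $\alpha\ge\beta$. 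You instead show that $\cW_i$ itself is globally $\min\{\alpha,\beta\}$-spread unless $|\cW_i|<\min\{\alpha,\beta\}^{q-i}$, and you only ever need empty-core sunflowers (matchings, obtained either via $\coveringNumber(\cW_i)\ge\alpha\ge sq$ or via Proposition~\ref{proposition: coloring trick}). This yields the at-least-as-strong bound $\min\{\alpha,\beta\}^{q-i}\le\beta^{t}\alpha^{q-i-t}$, never uses the core-of-size-between-$1$-and-$t-1$ case, and, through the explicit case split, avoids the paper's unremarked reliance on $\alpha\ge\beta$ (which is not evident from the definitions when $sq<2^{14}s\log_2(q-i)$).
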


\begin{proof}
    We define a graph $G = (\cW_i, E)$ such that two sets in $W_1, W_2$ are connected by an edge if they intersect in at least $t$ elements. We prove below that the maximal degree in this graph is at most $\binom{q - i}{t} \alpha^{q - i - t}$. Consider a set $W \in \cW_i$. Then all neighbors of $W$ in the graph $G$ belong to a set
    \begin{align*}
        \bigcup_{T \in \binom{W}{t}} \cW_i[T].
    \end{align*}
    We claim that $|\cW_i(T)| \le \alpha^{q - i - t}$. Consider two cases.

    \textbf{Case 1. We have $|\cW_i(T)| \le \alpha^{-|T|} |\cW_i|$.} By the construction of $\cW_i$, the bound $\alpha^{-|X|}|\cW_i| \le |\cW_i(X)|$ holds for some $X$ of size $q - i$. But $|\mathcal W_i(X)| = 1$, therefore, $|\cW_i| \le \alpha^{q - i}$, and so $|\cW_i(T)| \le \alpha^{q - i - t}$.

    \textbf{Case 2. We have $|\cW_i(T)| > \alpha^{-|T|} |\cW_i|$.} Consider the maximal $X$ disjoint from $T$ such that $|\cW_i(X \sqcup T)| \ge \alpha^{-|X|} |\cW_i(T)|$. Since we have $|\cW_i(X \sqcup T)| > \alpha^{-|X| - |T|} |\cW_i|$ by the assumption of the case, the size of $X$ should be $q - i - t$ by the construction of $\cW_i$. Hence, we have $1 = |\cW_i(X \sqcup T)| \ge \alpha^{-(q - i - t)} |\cW_i(T)|$, and so $|\cW_i(T)| \le \alpha^{q - i - t}$.

    Therefore, the set $W$ has at most $\sum_{T\in{W\choose t}}|\mathcal W_i(T)|\le \binom{q - i}{t} \alpha^{q - i - t}$ neighbors. It implies that the maximal degree of a graph $G$ is at most $\binom{q - i}{t} \alpha^{q - i - t}$. Using a standard greedy argument, one can show that the size of the largest independent set in $G$ is at least
    \begin{align}
    \label{eq: independent set size}
        \frac{|\cW_i|}{1 +\binom{q - i}{t} \alpha^{q - i - t}} \ge \frac{|\cW_i|}{2 \binom{q - i}{t} \alpha^{q - i - t}}.
    \end{align}

    Therefore, there exists a family $\cI \subset \cW_i$ of size at least the RHS of~\eqref{eq: independent set size}, such that any two distinct sets $F_1, F_2 \in \cI$ intersect in at most $t - 1$ elements. 
    
    We claim that $\cI$ does not contain a sunflower with $s$ petals. Suppose that such a sunflower $F_1, \ldots, F_s \in \cI$ exists. Then, the core of this sunflower must have size at most $t - 1$. Since $\cW_i$ is a subset of $\cT_i$, the sets $F_1, \ldots, F_s$ belong to $\cT_i$, contradicting Claim~\ref{claim: simlification consistency}. Therefore, $\eqref{eq: independent set size} \le |\cI| \le \phi(s, q - i)$, and so
    \begin{align*}
        |\cW_i| \le \phi(s, q - i) \cdot 2 \binom{q - i}{t} \alpha^{q - i - t}.
    \end{align*}
    The claim follows.
\end{proof}

Next, we adapt the proof of Erd\H{o}s--Rado Theorem~\cite{erdos1960intersection} to bound $\phi(s, q - i)$ via $\phi(s, t)$.

\begin{claim}
\label{claim: phi(s,q) bound}
    For any $a \le b$, we have $\phi(s, b) \le \phi(s, a) \cdot (sb)^{b - a}$.
\end{claim}
\begin{proof}
    Let $\cG$ be a $b$-uniform family without a sunflower with $s$ petals and such that $|\cG| = \phi(s, b)$. Consider the maximal matching $F_1, \ldots, F_m$ in $\cG$. Since $\cG$ does not contain a sunflower with $s$ petals, we have $m < s$. Since the matching is maximal, each set $G \in \cG$ intersects $M = \bigcup_{j = 1}^m F_m$, where $|M| \le (s - 1)b$. Therefore, the size of $|\cG|$ is at most $\sum_{x \in M} |\cG(x)|$. Each family $\cG(x)$ in the latter sum does not contain a sunflower with $s$ petals, so $|\cG(x)| \le \phi(s, b - 1)$. It yields the bound
    \begin{align*}
        \phi(s, b) = |\cG| \le (s - 1) b \cdot \phi(s, b - 1).
    \end{align*}
    Iterating the inequality above, we obtain 
    \begin{align*}
        \phi(s, b) \le (s - 1)^{b - a} \cdot \phi(s, a) \cdot \prod_{j = a + 1}^b j \le (sb)^{b - a} \cdot \phi(s, a). & \qedhere
    \end{align*}
\end{proof}

Finally, we are ready to prove Lemma~\ref{lemma: sunflower simplification}.

\begin{proof}[Proof of Lemma~\ref{lemma: sunflower simplification}]    
    Decomposing $\cA[\cS \setminus \cS[\cT]]$ into the union of $\cA[\cW_i], i = 0, \ldots, q - t - 1$, we obtain
    \allowdisplaybreaks
    \begin{align*}
        |\cA[\cS \setminus \cS[\cT]]| & \le \sum_{i = 0}^{q - t - 1} |\cW_i| \max_{W \in \cW_i} |\cA(W)|  \\
        & \le \sum_{i = 0}^{q - t- 1} |\cW_i| A_{q - i} \\
        & \overset{\text{Claim~\ref{claim: low layers bound in simplification argument}}}{\le} { 2 \sum_{i = 0}^{q - t - 1} \phi(s, q - i) \binom{q - i}{q - i - t}\left (\frac{\alpha}{r} \right )^{q - i - t} A_t} \\
        & \overset{\text{Claim~\ref{claim: phi(s,q) bound}}}{\le} 2 \sum_{i = 0}^{q - t - 1} \phi(s, t) (s \cdot (q - i))^{q - i - t} \cdot (q - i)^{q - i - t} \cdot \left (\frac{\alpha}{r} \right)^{q - i - t} A_t.
    \end{align*}
    Since $\varepsilon r \ge \alpha sq^2$, the last sum is at most
    \begin{align*}
        2 \phi(s, t) \sum_{i = 0}^{q - t - 1} \varepsilon^{q - i - t} A_t \le \frac{2 \varepsilon}{1 - \varepsilon} \cdot \phi(s, t) A_t. & \qedhere
    \end{align*}
\end{proof}

\section{Preliminaries II}
\label{section: preliminaris-ii}

\subsection{Delta-systems}
\label{section: delta-system method}

The original method of Frankl and F\"uredi heavily relies on the following lemma (see Lemma 7.4 from~\cite{Frankl1987}).

\begin{lemma}
\label{lemma: delta-system method}
    Let $\ff \subset \binom{n}{k}$. Fix numbers $p, t$, such that $p \ge k \ge 2 t + 1$. Suppose that $\ff$ does not contain a sunflower with $p$ petals and the core of size $t - 1$. Then, there exists a subfamily $\ff^* \subset \ff$ that admits the following properties:
    \begin{enumerate}
        \item for any $F \in \ff^*$, there exists a subset $T$ of size $t$ such that each set $E$, $T \subset E \subsetneq F$, is a core of a sunflower with $p$ petals in $\ff^*$;
        \item $|\ff \setminus \ff^*| \le c(p, k) \binom{n}{k - t - 1}$,
    \end{enumerate}
    where $c(p, k)$ is some function that can be bounded by $p^{2^k} \cdot 2^{2^{Ck}}$ for some absolute constant $C$. 
\end{lemma}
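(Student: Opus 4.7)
The plan is iterative pruning. Set $\ff^{(0)} = \ff$; at each step form $\ff^{(j+1)}$ by removing every $F \in \ff^{(j)}$ that violates property (1) with respect to the current $\ff^{(j)}$, i.e.\ every $F$ for which every $T \in \binom{F}{t}$ admits some $E$ with $T \subseteq E \subsetneq F$ that is not the core of any $p$-sunflower in $\ff^{(j)}$. Since $\ff$ is finite, this terminates at some $\ff^* = \ff^{(J)}$, which satisfies property (1) by construction.

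The quantitative heart is a no-matching observation: $E$ is a core of a $p$-sunflower in a family $\cG$ iff $\cG(E)$ contains a matching of size $p$, so ``$E$ not a core'' means $\cG(E)$ is a $(k-|E|)$-uniform family with no $p$-matching. A maximum matching of size $<p$ yields a transversal of size at most $(p-1)(k-|E|)$, and hence
\[
|\cG(E)| \le (p-1)(k-|E|)\cdot \max_{x\notin E}|\cG(E\cup\{x\})|.
\]
Applied iteratively along a chain $E \subsetneq E_1 \subsetneq \cdots \subsetneq E_m$ in which every intermediate set is bad, extended up to size $k-1$ (where $|\cG(E')|\le p-1$ trivially), this yields $|\cG(E)| \le M_{|E|}$ with $M_\ell := (p-1)^{k-\ell}(k-\ell)!$.

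To bound $|\ff\setminus\ff^*|$, assign to each deleted $F$ a canonical bad witness $E(F)\subsetneq F$ (say, the lexicographically minimal bad set above the first $T$ for which the bad-witness condition fires). Since ``not a core'' propagates as the family shrinks, $E(F)$ remains bad in $\ff^*$, and by greedy extension inside $F$ one obtains a maximal bad chain starting at $E(F)$, so the $M_\ell$ bound applies. Charging each $F$ to $E(F)$ gives $|\ff\setminus\ff^*| \le \sum_{\ell=t}^{k-1}\binom{n}{\ell}\cdot M_\ell$; the hypothesis $k\ge 2t+1$ yields $\binom{n}{\ell}\le \binom{n}{k-t-1}$ for $t\le \ell\le k-t-1$, producing the target form for low-level witnesses, while high-level witnesses ($\ell>k-t-1$) are lifted down to low-level ones via the bad chain structure and absorbed into $c(s,k)$. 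The main obstacle is precisely this chain-chasing: a bad $E$ does not automatically sit inside a chain of bad sets reaching size $k-1$, so the pruning procedure must be carefully coordinated with the greedy chain extraction, and one must show that the collection of ``non-chainable'' bad witnesses is itself controlled. It is this recursion, of depth at most $k-t$ and branching factor exponential in $k$, combined with the $\binom{k}{t}$ choices of $T$, that forces the double-exponential form $c(s,k)=s^{2^k}\cdot 2^{2^{Ck}}$.
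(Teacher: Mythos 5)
First, a point of reference: the paper does not prove this lemma at all --- it is quoted as Lemma~7.4 of Frankl and F\"uredi \cite{Frankl1987} and used as a black box --- so there is no in-paper proof to match yours against. The parts of your sketch that are sound are the easy ones: iterative pruning terminates and yields property~(1), and if $E$ is not the core of a $p$-sunflower in a family $\cG$, then the union $U$ of a maximal matching of $\cG(E)$ is a transversal of size at most $(p-1)(k-|E|)$, whence $|\cG(E)|\le\sum_{x\in U}|\cG(E\cup\{x\})|$. Everything after that is where the lemma actually lives, and it is not there.

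Two gaps. (i) The recursion $|\cG(E)|\le M_{|E|}$ needs the set $E\cup\{x\}$, for the $x\in U$ achieving the maximum, to again be a non-core, and so on up to level $k-1$. Nothing guarantees this: $x$ ranges over a transversal determined by the whole family, not over $F$, so ``greedy extension inside $F$'' is beside the point; and if $E\cup\{x\}$ is a core, its link can be as large as $\binom{n-|E|-1}{k-|E|-1}$ and the recursion dies immediately. You flag this yourself, but controlling the ``non-chainable'' witnesses is the content of the lemma, not a detail to be deferred. (ii) Even granting $|\cG(E)|\le M_{|E|}$ for every witness, the sum $\sum_{\ell=t}^{k-1}\binom{n}{\ell}M_\ell$ is dominated by its top terms: $\ell=k-1$ alone contributes $(p-1)\binom{n}{k-1}$, which exceeds $\binom{n}{k-t-1}$ by a factor of order $n^{t}$; the ``lifting down'' of high-level witnesses is asserted, never argued. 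Symptomatically, your counting uses none of the hypotheses $p\ge k$, $k\ge 2t+1$, or the absence of $p$-sunflowers with $(t-1)$-core. The classical argument replaces the bare transversal bound by the full Erd\H{o}s--Rado sunflower lemma applied to the link: if $E$ is a non-core and $|\cG(E)|>(k-|E|)!\,(p-1)^{k-|E|}$, then $\cG(E)$ contains a $p$-sunflower whose core $C$ must be nonempty (an empty core would make $E$ itself a core), so $E\cup C$ is a core, i.e.\ an ``open'' proper superset of $E$. This dichotomy --- bounded link, or an open proper superset --- organized level by level is what makes the count close at order $\binom{n}{k-t-1}$ and produces the doubly exponential $c(s,k)$; it is absent from your sketch, and without it (or a substitute) the proposal does not establish the bound.
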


While the paper~\cite{Frankl1987} omits precise numerical dependencies between $n, k$ and $t$, we need them stated explicitly. For the sake of completeness, we present here a proof of the sunflower theorem by Frankl and F\"uredi. Moreover, the core idea will be used in the proof of our result.

\begin{theorem}
\label{theorem: delta-system solution}
    Let $k \ge 2t + 1$. Suppose that a family $\ff$ does not contain a sunflower with $s$ petals and the core of size $t - 1$. Then
    \begin{align*}
        |\ff| \le \phi(s, t) \binom{n}{k - t} + \frac{k \cdot c(sk, k)}{n - k} \binom{n}{k - t}.
    \end{align*}
\end{theorem}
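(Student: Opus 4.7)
The plan is to combine Lemma~\ref{lemma: delta-system method} with a bound on the associated family of kernels. First, apply Lemma~\ref{lemma: delta-system method} with the parameter $p$ chosen as a sufficiently large polynomial in $s$ and $k$ (say $p > sk$), obtaining $\ff^* \subseteq \ff$ with $|\ff \setminus \ff^*| \le c(s,k)\binom{n}{k-t-1}$, and for each $F \in \ff^*$ a distinguished $t$-element kernel $T(F) \subseteq F$ such that every $E$ with $T(F) \subseteq E \subsetneq F$ is the core of a $p$-sunflower in $\ff^*$. Let $\cT := \{T(F) : F \in \ff^*\} \subseteq \binom{[n]}{t}$. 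Since every $F \in \ff^*$ is determined by $T(F) \in \cT$ together with the $(k-t)$-subset $F \setminus T(F)$ of $[n]\setminus T(F)$, we have $|\ff^*| \le |\cT|\cdot \binom{n-t}{k-t}$.

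The heart of the proof is to show $|\cT| \le \phi(s,t)$, i.e., $\cT$ contains no sunflower with $s$ petals. Suppose for contradiction that $T_1,\ldots,T_s \in \cT$ form a sunflower in $\cT$ with core $C$ of size $c \le t-1$. Applying the lemma with $E = T_i$ to a representative $F_i \in \ff^*$ with $T(F_i) = T_i$, the kernel $T_i$ is the core of a $p$-sunflower $\{G_{i,1},\ldots,G_{i,p}\}$ in $\ff^*$, so the ``outer'' parts $G_{i,l}\setminus T_i$ are pairwise disjoint in $l$. The goal is to select indices $l_1,\ldots,l_s$ so that $G_{1,l_1},\ldots,G_{s,l_s}$ form a sunflower in $\ff^*$ with $s$ petals and core of size exactly $t-1$, contradicting the hypothesis on $\ff$. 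The selection is iterative: at step $i$, at most $(s-1)k$ choices of $l$ are excluded, because either $G_{i,l}$ intersects a previously chosen $G_{j,l_j}$ outside $C$, or $G_{i,l}$ fails to contain a designated $(t-1-c)$-element extension $D$ of $C$ needed to boost the core to size $t-1$. When $c<t-1$, the elements of $D$ are forced into the chosen petal by working with intermediate cores of the form $T_i \subsetneq E' \subseteq T_i \cup D$ (themselves $p$-sunflower cores by the lemma); the pairwise disjointness of the petals ensures each added constraint rules out only a few values of $l$. Since $p > sk$, a valid $l_i$ exists at every step.

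Combining the estimates yields $|\ff^*| \le \phi(s,t)\binom{n-t}{k-t} \le \phi(s,t)\binom{n}{k-t}$, while the inequality $\binom{n}{k-t-1}/\binom{n}{k-t} = (k-t)/(n-k+t+1) \le k/(n-k)$ gives $|\ff\setminus\ff^*| \le \frac{k\cdot c(s,k)}{n-k}\binom{n}{k-t}$. Summing the two bounds gives the claimed inequality.

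The main obstacle is the iterative lifting in the middle step, especially the case $c < t-1$: one must simultaneously enforce pairwise disjointness of the $G_{i,l_i}$'s outside $C$ and force $t-1-c$ additional common elements into every chosen petal to raise the core of the resulting sunflower to size exactly $t-1$. This delicate balancing is the technical heart of the Frankl--F\"uredi argument and forces the polynomial-in-$k$ choice of $p$, which in turn determines the function $c(s,k)$ appearing in the error term.
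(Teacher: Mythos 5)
Your overall skeleton (apply Lemma~\ref{lemma: delta-system method}, absorb the exceptional part via $c(s,k)\binom{n}{k-t-1}\le \frac{k\cdot c(s,k)}{n-k}\binom{n}{k-t}$, and bound $|\ff^*|$ through a sunflower-free family of $t$-element kernels) matches the paper, but the way you organize the kernels is genuinely different, and that is exactly where the argument breaks. You work with the single global family $\cT=\{T(F):F\in\ff^*\}$ and claim $|\cT|\le\phi(s,t)$. The paper instead partitions $\ff^*$ according to $D=F\setminus T(F)\in\binom{[n]}{k-t}$ (equivalently, $D\subset F$ and $T(F)=F\setminus D$) and only proves that each fibre $\ff_D$ of kernels is sunflower-free, which suffices since $|\ff^*|\le\sum_D|\ff_D|$. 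Your global claim is strictly stronger, and your proof of it has a gap at precisely the step you flag as delicate: when the hypothetical sunflower $T_1,\dots,T_s\in\cT$ has core $C$ of size $c<t-1$, you must produce a sunflower in $\ff^*$ with core of size \emph{exactly} $t-1$ (a core of size $c$ is not forbidden by the hypothesis), so all chosen petals must contain a common $(t-1-c)$-set $D$ beyond $C$. But Lemma~\ref{lemma: delta-system method} guarantees that $E$ is the core of a $p$-sunflower only for $T(F_i)\subset E\subsetneq F_i$; hence $T_i\cup D$ is usable only if $D\subset F_i$ for the particular representative $F_i$ of $T_i$. Distinct representatives need not share a single element outside $C$, so no common $D$ need exist, and your ``intermediate cores $T_i\subsetneq E'\subseteq T_i\cup D$'' are not cores supplied by the lemma. (Note also that a petal $G\supset T_i$ of the sunflower around $T_i$ need not satisfy $T(G)=T_i$, so you cannot re-apply the lemma to petals in order to grow the core one element at a time.)

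The paper's partition by $D$ is exactly the device that repairs this: within $\ff_D$ every set, hence every representative $F_i$, contains the \emph{same} $(k-t)$-set $D$, so any $X\subset D$ with $|X|=t-1-c$ satisfies $T_i\cup X\subsetneq F_i$ for all $i$ simultaneously (here $k\ge 2t+1$ guarantees $|T_i\cup X|<k$), and each $T_i\cup X$ is the core of an $sk$-sunflower in $\ff^*$; the greedy disjointification you describe then goes through verbatim and yields a forbidden sunflower with core $C\cup X$ of size exactly $t-1$. Your case $c=t-1$ is fine as stated, and your counting and final arithmetic are fine; to repair the proof, replace the global $\cT$ by the fibres $\ff_D$ and sum the bound $|\ff_D|\le\phi(s,t)$ over $D\in\binom{[n]}{k-t}$.
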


\begin{proof}
Applying Lemma~\ref{lemma: delta-system method} with $p = sk$, we infer that there exists $\ff^* \subset \ff$, $|\ff \setminus \ff^*| \le c(sk, k) \binom{n}{k - t - 1}$, such that for any $F \in \ff^*$ there is a subset $T \subset F$ of size $t$ such that any $E$, $T \subset E \subsetneq F$, is a core of a sunflower with $sk$ petals in $\ff^*$. For each $F \in \ff^*$, fix one such $T = T(F)$. Then, we have
\begin{align*}
    |\ff^*| = \sum_{D \in \binom{[n]}{k - t}} |\{F \in \ff^* \mid F \setminus D = T(F) \}|.
\end{align*}
Define $\ff_D = \{F \in \ff^* \mid F \setminus D = T(F) \}$. We claim that $\ff_D$ does not contain a sunflower with $s$ petals and arbitrary core. Otherwise choose such sunflower $T_1, \ldots, T_s$. Choose $X \subset D$, $|X| = t - 1 - |\cap_i T_i|$. Each $T_i \cup X$ is the core of a sunflower in $\ff^*$ with $sk$ petals. Let $m$ be the maximum number such that there are disjoint $F_1 \in \ff^*(T_1 \cup X), \ldots, F_m \in \ff^*(T_m \cup X)$ that do not intersect $\cup_{i = 1}^m T_i$. Since $\ff^*$ does not contain a sunflower with $s$ petals and the core of size $t - 1$, $m$ is strictly less than $s$. Consider a family $\ff^*(T_{m + 1} \cup X)$. By the definition of $\ff^*$, it contains pairwise disjoint sets $G_1, \ldots, G_{sk}$. At least one such set do not intersect $\cup_{i = 1}^m (T_i \cup X \cup F_i)$ since $|\cup_{i = 1}^m (T_i \cup X \cup F_i)| \le (s - 1) k < sk$. Hence, we can choose $F_{m + 1} \in \ff^*(T_i \cup X)$, that do not intersect $\cup_{i = 1}^{m + 1} T_i \cup X$, contradicting with the maximality of $m$. 

Therefore, $\ff_D \subset \binom{n}{t}$ does not contain a sunflower with $s$ petals, and so $|\ff_D| \le \phi(s, t)$. Since 
\begin{align*}
    |\ff^*| \le \sum_{D \in \binom{[n]}{k - t}} |\ff_D|,
\end{align*}
we have
\begin{align*}
    |\ff| \le \phi(s,t) \binom{n}{k - t} + c(sk, k) \binom{n}{k - t - 1} \le \phi(s,t) \binom{n}{k - t} + \frac{k \cdot c(sk, k)}{n - k} \binom{n}{k - t}. & \qedhere
\end{align*}
\end{proof}

We shall use Theorem~\ref{theorem: delta-system solution} for $k < 5t$. For larger $k$, we use different techniques, that are built on the results presented in the following sections.

\subsection{Sunflower Tur\'an number: the result of Brada\v{c} et al.}

Theorem~\ref{theorem: delta-system solution} is quite restrictive, as it requires the lower bound on $n$ of the form $(st)^{2^{\Omega(t)}}$. To relax this restriction, we employ the following result due to Brada\v{c} et al.~\cite{bradavc2023turan}.

\begin{theorem}
\label{theorem: bucic upper bound}
    Let $\ff \subset \binom{[n]}{k}$ be a family that does not contain a sunflower with $s$ petals and the core of size exactly $t - 1$. Then, for each $k$ there is some constant $C_k$ depending on $k$ only, such that
    \begin{align*}
        |\ff| \le \begin{cases}
            C_k n^{k - t} s^t & \text{ if } k \ge 2t - 1 \\
            C_k n^{t - 1} s^{k - t + 1} & \text{ if } t \le k < 2t - 1.
        \end{cases}
    \end{align*}
\end{theorem}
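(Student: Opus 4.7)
Our plan is to combine Lemma~\ref{lemma: delta-system method}, which produces a refined subfamily $\ff^* \subseteq \ff$ with explicit $t$-kernels at the cost of $c(s,k)\binom{n}{k-t-1}$ removed sets, with the polynomial bound $\phi(s,t) = O_k(s^t)$ from Claim~\ref{claim: sunflower bound}, and then dispatch the leftover error by a case split based on the size of $n$ relative to $s$. Because $t \le k$ and $k$ is treated as fixed, any polylogarithmic factor in $t$ gets absorbed into the constant $C_k$ throughout.

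For $k \ge 2t+1$, I would apply Lemma~\ref{lemma: delta-system method} with $p = sk$ and repeat the proof of Theorem~\ref{theorem: delta-system solution}: the kernel image $\cT^* = \{T(F) : F \in \ff^*\}$ must be $(s,t)$-sunflower-free, for otherwise the greedy petal-extension argument of that proof yields an $(s,t-1)$-sunflower in $\ff$. Hence $|\cT^*| \le \phi(s,t) = O_k(s^t)$ and $|\ff^*| \le O_k(s^t\, n^{k-t})$. It remains to absorb $c(s,k)\,n^{k-t-1}$, which is automatic when $n \ge c(s,k)/s^t$. For $n = O_k(s)$ the trivial bound $\binom{n}{k} = O_k(s^k) = O_k(s^t\, n^{k-t})$ suffices, and the intermediate range of $n$ I would handle by induction on $k$: the link $\ff(v)$ is $(k-1)$-uniform and avoids sunflowers with $s$ petals and exact core size $t-2$, and the identity $k|\ff| = \sum_v |\ff(v)|$ converts the inductive hypothesis into the desired bound. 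The borderline case $k = 2t-1$ requires a variant of Lemma~\ref{lemma: delta-system method} that extracts $(t-1)$-kernels rather than $t$-kernels, which should follow from a straightforward modification of the Frankl--F\"uredi extraction.

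For $t \le k < 2t-1$, I would argue by double counting over $(t-1)$-subsets. For any $C \in \binom{[n]}{t-1}$, the family $\ff(C) \subseteq \binom{[n] \setminus C}{k-t+1}$ has matching number at most $s-1$, else it produces an $(s,t-1)$-sunflower in $\ff$ with core $C$. In the regime $n = O_k(s)$ the known bounds on matching-free hypergraphs give $|\ff(C)| \le O_k(s^{k-t+1})$, so summing $\binom{k}{t-1}|\ff| = \sum_C |\ff(C)|$ together with $|\binom{[n]}{t-1}| = O_k(n^{t-1})$ produces the desired inequality. For $n \gg s$ the naive double count yields only $O_k(s\, n^{k-1})$, which already exceeds the target; here I would peel off $(t-1)$-sets with large codegree iteratively, reducing to a cleaner subfamily of lower effective uniformity on which the induction of the previous paragraph applies.

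The main obstacle is the fast growth $c(s,k) = s^{\Omega(2^k)}$ inherited from the Erd\H os--Rado sunflower bound, which dwarfs the $s^t$ target coming from $\phi(s,t)$ for every $s \ge 2$; the three-regime split in $n$ (small, medium, large) patched together by induction through links is what keeps the final constant depending only on $k$. A secondary subtlety is the $k < 2t-1$ case at $n \gg s$, where the naive double count is too weak and an iterative peeling argument seems necessary. I expect these two points, together with making the induction interact cleanly across the transition $k = 2t-1$, to be the most delicate parts of the argument.
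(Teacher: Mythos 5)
The paper does not prove this statement at all: Theorem~\ref{theorem: bucic upper bound} is imported verbatim from Brada\v{c}, Buci\'c and Sudakov~\cite{bradavc2023turan}, whose argument is an encoding/compression argument quite different from anything you propose. So your attempt must stand on its own, and it does not: there is a genuine gap in the intermediate range of $n$.

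Concretely, in the case $k\ge 2t+1$ your Delta-system step controls $\ff$ only once $n\gtrsim c(s,k)/s^{t}$, and since $c(s,k)=s^{\Theta(2^{k})}$ this threshold is astronomically larger than $s$. You propose to bridge the window $C_k s\le n\le c(s,k)/s^{t}$ by induction through links, but the arithmetic does not close: the link $\ff(v)$ is $(k-1)$-uniform and avoids $s$-sunflowers with core of size exactly $t-2$, so the inductive hypothesis gives $|\ff(v)|\le C_{k-1}n^{(k-1)-(t-1)}s^{t-1}=C_{k-1}n^{k-t}s^{t-1}$, and hence
\begin{align*}
k\,|\ff|=\sum_{v}|\ff(v)|\le C_{k-1}\,n^{k-t+1}s^{t-1},
\end{align*}
which matches the target $C_k n^{k-t}s^{t}$ only when $n=O_k(s)$ --- exactly the regime already covered by the trivial bound $\binom{n}{k}\le n^{k}$. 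Each pass through a link loses a factor $n/s$ against the target, so no amount of iterating links recovers the exponent $s^{t}$ when $n\gg s$; the entire window $s\ll n\ll s^{2^{k}}$ (nonempty for every $s\ge 2$) is left uncovered. The same defect undermines the case $t\le k<2t-1$: your double count over $(t-1)$-sets $C$ is sound for $n=O_k(s)$ (where the Erd\H{o}s matching bound gives $|\ff(C)|=O_k(s^{k-t+1})$), but for larger $n$ the matching-free bound for $\ff(C)$ is of order $s\,n^{k-t}$, the double count gives only $O_k(s\,n^{k-1})$, and the proposed ``iterative peeling'' falls back on the same link induction that does not close. This intermediate regime is precisely the content of the theorem and the reason the authors of~\cite{bradavc2023turan} needed a new argument rather than the Delta-system method; it cannot be dismissed as a delicate detail.
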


{The bounds provided by Theorem~\ref{theorem: bucic upper bound} are tight up to the dependence on $k$. The example can be contructed using a nice probabilistic argument, see Lemmas 10 and 11 of~\cite{bradavc2023turan}.}

We will use this theorem for $k \le 2t + 1$. In that regime, $C_k \sim t! \cdot t^{\binom{2t - 1}{t}}$, so $C_k$ is double-exponential in $k$. In what follows, we assume that $C_k$ is increasing in $k$ and larger than $1$.

\subsection{Homogeneity and general domains}
\label{subsection: homogeneity and general domains}

While the main theorem of our paper deals with subfamilies of $\binom{[n]}{k}$, some parts of the analysis can be extended to other domains. As in Theorem~\ref{theorem: general theorem for down-closed sunflowers}, we consider families $\ff$ that are subfamilies of some $k$-uniform family $\cA$ which satisfies a number of assumptions.

When dealing with the uniform families $\ff \subset \cA$, it is convenient to replace the notion of $R$-spreadness with the notion of $\tau$-homogeneity, introduced in \cite{kupavskii_spread_2022}.

\begin{definition}
    A family $\ff$ is {\em $\tau$-homogeneous} if for each $X$, we have
    \begin{align*}
        \frac{|\ff(X)|}{|\cA(X)|} \le \tau^{|X|} \frac{|\ff|}{|\cA|}.
    \end{align*}
\end{definition}

In what follows, we use the notation $\mu(\ff) = |\ff| / |\cA|$. Using this, we can rewrite the condition above as follows: $\mu(\ff(X))\le \tau^{|X|}\mu(\ff)$. A variant of the definition of $\tau$-homogeneity, called $\tau$-globalness, was used in~\cite{keller_sharp_2023}, see Definition~\ref{deftaugl}. If the domain $\cA$ is $r$-spread, a $\tau$-homogeneous family $\ff$ is $r/\tau$-spread. Indeed, we have
\begin{align*}
    |\ff(X)| \le \tau^{|X|} |\ff| \frac{|\cA(X)|}{|\cA|} \le (r/\tau)^{-|X|} |\ff|.
\end{align*}
In general, we require $\cA(S)$ to be $r$-spread for any $S$ that has size smaller than some integer $q$, $q \ge t$. That implies the first assumption on the domain $\cA$, which strengthens the corresponing assumption of Theorem~\ref{theorem: general theorem for down-closed sunflowers}.

\begin{assumption}
\label{assumption: spreadness - small r}
    Let $q$ be an integer. The family $\cA$ is $k$-uniform and $(r, q + t - 1)$-spread. 
\end{assumption}
The necessary $q$ will be chosen later. 

Solution of hypergraph Turan-type problems in $\binom{[n]}{k}$ often involve the binomial coefficient $\binom{n - t}{k - t}$, which is the number of sets in $\binom{[n]}{k}$ containing some set of size $t$. We recall natural equivalent for this quantity in arbitrary domains:
\begin{align*}
    A_t = \max_{T \in \partial_t \cA} |\cA(T)|.
\end{align*}
The second assumption on $\cA$ requires $A_t$ to be not too small.

\begin{assumption}
\label{assumption: large t-link}
Let $\eta \ge 1$ be a real number. We have $A_t \ge r^{-\eta t} |\cA|$.
\end{assumption}

The two next assumptions concern the shadow of $\cA$.

\begin{assumption}
\label{assumption: regularity}
    For any set $S \in \partial_{\le q} \cA$, a family $\ff \subset \cA(S)$, and $h \le t - 1$, we have
    \begin{align*}
        \frac{|\ff|}{|\cA(S)|} = \frac{1}{|{\partial_h} (\cA(S))|} \sum_{H \in \partial_{h } (\cA(S))} \frac{|\ff(H)|}{|\cA(H \cup S)|},
    \end{align*}
    or, equivalently, for a random set $\bH$ uniformly distributed on $\partial_h (\cA(S)) $, we have
    $
        \mu(\ff) = \EE \mu(\ff(\bH)).$
\end{assumption}

\begin{assumption}
\label{assumption: weak shadow consistency - small r}
    For any set $R \in \partial_{\le q} \cA$, an integer $h$, $h \le t$, and some $\mu > 1$, it holds
    \begin{align*}
        \frac{|\partial_h (\cA(R))|}{|\partial_h \cA|} \ge \left ( 1 - \frac{q}{\mu k}\right )^h.
    \end{align*}
\end{assumption}

The natural question is when these assumptions are satisfied. The following propositions show that some interesting domains fit these assumptions.

\begin{proposition}
\label{proposition: assumptions satisified}
    Suppose and $n, k \ge  q + t$, $q \ge t$ and $n > 8k$. Then, the following holds:
    \begin{enumerate}
        \item Suppose additionally that $n \ge 2k t$. Then $\binom{n}{k}$ satisfies Assumptions~\ref{assumption: spreadness - small r}-\ref{assumption: weak shadow consistency - small r} with $r = n/k, \mu = n/2k, \eta = 2$;
        \item Suppose that  $n \ge 2t, k 
        \ge 2 q$. Then $[n]^k$ satisfies Assumptions~\ref{assumption: spreadness - small r}-\ref{assumption: weak shadow consistency - small r} with $r = n, \mu = 1/2, \eta = 2$;
        \item Fix an integer $w$, such that $n\ge 2kt/w$ and $w|k$. Then $\binom{[n]}{k / w}^w$ satisfies Assumptions~\ref{assumption: spreadness - small r}-\ref{assumption: regularity} with $r =  \frac{wn}{k}$ and $\eta = 2$. Moreover, if  $k \ge w (q + t)$, then $\binom{[n]}{k/w}^w$ satisfies Assumption~\ref{assumption: weak shadow consistency - small r} with $\mu = n/2k$.
    \end{enumerate}
\end{proposition}

\begin{proof}
    By taking $w = 1$ and $w = k$ respectively, satisfiability of Assumptions~\ref{assumption: spreadness - small r}-\ref{assumption: regularity} for $[n]^k, \binom{[n]}{k}$ follows from the third statement, so we consider only the family $\binom{[n]}{k/w}^w$ for these assumptions.  Assumption~\ref{assumption: spreadness - small r} is satisfied due to Proposition~\ref{proposition: spread families examples}, which is used with $q + t - 1$ in place of $t$. 
    
    Next, we check Assumption~\ref{assumption: large t-link}. Consider a set $T$ of size $t$, and set $t_i = |T \cap I_i|$. Then, we have
    \begin{align}
        \frac{|\cA(T)|}{|\cA|} \ge \prod_{i = 1}^w \frac{\binom{n - t_i}{k/w - t_i}}{\binom{n}{k/w}} = \prod_{i = 1}^w \frac{\binom{k/w}{t_i}}{\binom{n}{t_i}} \ge \prod_{i = 1}^w \left ( \frac{k/w + 1 - t_i}{n} \right )^{t_i} \label{eq: product for A_t lower bound}
    \end{align}
    If $t \le k/2w$, then the latter product is at least
    \begin{align*}
        \prod_{i = 1}^w \left ( \frac{k}{2wn} \right )^{t_i} = \left ( \frac{k}{2wn} \right )^t \ge r^{-2t},
    \end{align*}
    provided $n \ge 2 k/w$. In particular, this bound implies that Assumption~\ref{assumption: large t-link} holds for $\binom{[n]}{k}$, since $k \ge 4q \ge 2t$. If $t \ge k/2w$, then we bound the RHS of~\eqref{eq: product for A_t lower bound} as follows:
    \begin{align*}
        \prod_{i = 1}^w \left ( \frac{k/w + 1 - t_i}{n} \right)^{t_i} \ge n^{-t} = (2t)^{-t} \left (2t \right )^t n^{-t} \ge (2t)^{-t} \left ( \frac{k}{wn} \right )^t \ge \left ( \frac{k}{wn} \right )^{2t}= r^{-2t},
    \end{align*}
    where we used $w n \ge 2t k$ in the last inequality. Hence, Assumption~\ref{assumption: large t-link} holds with $\eta = 2$.

    Similarly to Proposition~\ref{proposition: spread families examples}, we assume that $\cA$ has the form
    \begin{align*}
        \cA = \binom{I_1}{k / w} \times \ldots \binom{I_w}{k / w},
    \end{align*}
    where disjoint sets $I_1, \ldots, I_w$ have size $n$. The following claim ensures that Assumption~\ref{assumption: regularity} holds for any domain of the form
    \begin{align*}
        \cA = \binom{I_i}{k_i} \times \ldots \binom{I_w}{k_w}.
    \end{align*}
     We need this claim stated separately, because we use it in what follows.
    \begin{claim}
    \label{claim: regularity for general families}
    Let $I_1, \ldots, I_w$ be arbitrary disjoint sets and $k_1 \le |I_1| , \ldots, k_w \le |I_w|$ be integer numbers. Then the family
    \begin{align*}
        \cA = \binom{I_i}{k_i} \times \ldots \binom{I_w}{k_w}
    \end{align*}
    satisfies Assumption~\ref{assumption: regularity}.
    \end{claim}
    \begin{proof}
    Fix $S$ of size at most $q$, and consider a set $\bH$ uniformly distributed on $\partial_h (\cA(S))$. Let $\bh_1 = |\bH \cap I_1 \setminus S|, \ldots, \bh_w = |\bH \cap I_w \setminus S|$ and $\bh = (\bh_1, \ldots, \bh_w)$. Let $\partial_{\bh}(\cA(S))$ be a family of sets $H$ from $\partial_h (\cA(S))$ such that $|H \cap I_i| = \bh_i$.  Then, for a family $\ff \subset \cA(S)$, we have
    \begin{align*}
        \EE [\mu(\ff(\bH)) \,|\, \bh] = \frac{1}{|\partial_{\bh} (\cA(S))|} \sum_{H \in \partial_{\bh} (\cA(S))} \frac{|\ff(H)|}{|\cA(H \cup S)|}.
    \end{align*} 
    Each set $F \in \ff$ is counted $\prod_{i = 1}^w \binom{k_i - |S \cap I_i|}{\bh_i}$ times, so we have
    \begin{align*}
        \EE [\mu(\ff(\bH)) \,|\, \bh] = \frac{|\cA(S)| \prod_{i = 1}^w \binom{k_i - |S \cap I_i|}{\bh_i}}{|\partial_{\bh} \cA(S)| \prod_{i = 1}^w \binom{n - \bh_i - |I_i \cap S|}{k_i - \bh_i - |S \cap I_i|}} \cdot \mu(\ff),
    \end{align*}
    where $\mu(\ff) = |\ff| / |\cA(S)|$.
    The multiplicative factor of $\mu(\ff)$ in the right-hand side equals
    \begin{align*}
        \frac{\prod_{i = 1}^w \binom{n - |I_i \cap S_i|}{k_i - |S \cap I_i|}  \binom{k_i - |S \cap I_i|}{\bh_i}}{\prod_{i = 1}^w \binom{n - |S \cap I_i|}{\bh_i} \binom{n - \bh_i - |S \cap I_i|}{k_i - \bh_i - |S \cap I_i|}} = 1,
    \end{align*}
    so $\EE [\mu(\ff(\bH)) \,|\, \bh] = \mu(\ff)$, and, by taking the expectation of both sides, Assumption~\ref{assumption: regularity} follows. 
    \end{proof}
    
    Then, we check Assumption~\ref{assumption: weak shadow consistency - small r} for $\binom{[n]}{k/w}^w$. By taking $w = 1$, it implies the first statement of the proposition. Consider arbitrary $R , |R| \le q$. If $k / w \ge  q + t$, then $k \ge w (|R| + t)$. For a partition of $h$ into non-negative ordered summands $h_1, \ldots, h_w$, define $\overline{h} = (h_1, \ldots, h_w)$, and note that $h_i \le t \le k / w - |R|$.  Thus, we have
    \begin{align*}
        \frac{|\partial_{h} (\cA(R))|}{|\partial_h \cA|}  & = \frac{
            \sum_{\overline{h}}  \prod_{i = 1}^w \binom{n - |R \cap I_i|}{h_i}
        }{
            \sum_{\overline{h}} \prod_{i = 1}^w \binom{n}{h_i}
        } \ge 
        \min_{\overline{h}}
        \prod_{i = 1}^w 
            \frac{
                \binom{n - |R \cap I_i|}{h_i}
            }{\binom{n}{h_i}} \\
        & \ge \min_{(h_1, \ldots, h_w), \sum_i h_i = h} \prod_{i = 1}^w \left ( \frac{n - |R| - h}{n} \right )^{h_i} \ge \left ( 1 - \frac{|R| + h}{n}\right )^h \\
        & \ge \left (1 - \frac{q + t}{n}\right )^h \ge \left ( 1 - \frac{2q}{n}\right )^h,
    \end{align*}
    so Assumption~\ref{assumption: weak shadow consistency - small r} is satisfied with $\mu = n/2k$.

    % \begin{lemma}
    % \label{lemma: effectiveness of lowest uniformity}
    %  One should take $R$ to be contained in parts with the least uniformity.
    % \end{lemma}

    Thus, the first and the third statements of the proposition are proved. It remains to check that $[n]^k$ satisfies Assumption~\ref{assumption: weak shadow consistency - small r} with $\mu = 1/2$. For any $R$, we have
    \begin{align*}
        \left | \partial_h ([n]^k(R)) \right | = \binom{k - |R|}{h} n^h,
    \end{align*}
    so
    \begin{align*}
        \frac{\left | \partial_h ([n]^k(R))\right |}{|\partial_h [n]^k|} \ge \frac{\binom{k - |R|}{h}}{\binom{k}{h}} \ge \left ( \frac{k - |R| - h}{k} \right )^h \ge \left ( 1 - \frac{q + h}{k}\right )^h \ge \left (1 - \frac{2q}{k} \right )^h,
    \end{align*}
    provided $k \ge 2 q$. It implies the second statement of the proposition.
\end{proof}

It turns out that under mild restrictions on $n, t, q$ the family of permutations $\Sigma_n$ as the family of sets $\mathfrak{S}_n$ (see Section~\ref{subsection: spread domains}) also satisfies Assumptions~\ref{assumption: spreadness - small r}-\ref{assumption: weak shadow consistency - small r}. Note that $n$ plays the role of uniformity of the family $\Sigma_n$.

\begin{proposition}
\label{proposition: permutations satisfy assumptions}
Suppose that $n > 4( q + t)$, $n \ge 16$ and $q \ge t$. Then, $\mathfrak{S}_n$ satisfies Assumptions~\ref{assumption: spreadness - small r}-\ref{assumption: weak shadow consistency - small r} with $k = n, r = n/4$, $\eta = 2$ and $\mu = 1/4$.
\end{proposition}

\begin{proof}
Since $n > 4(q + t)$, Proposition~\ref{proposition: r-t spreadness of permutations} implies that $\mathfrak{S}_n$ is $(r, q + t - 1)$-spread for $r = n/4$, so Assumption~\ref{assumption: spreadness - small r} is satisfied. 

Next, we check Assumption~\ref{assumption: large t-link}. We have
\begin{align*}
    (n - t)! \ge n^{-t} n! =  4^{-t} (n/4)^{-t} n! \ge (n/4)^{-2t} n!,
\end{align*}
since $n \ge 16$. Therefore, Assumption~\ref{assumption: large t-link} holds with $\eta = 2$.

Then, we check Assumption~\ref{assumption: regularity}. We may consider a set $S \in \partial_{\le q} \mathfrak{S}_n$ as an injection that maps some set $\operatorname{Dom} S$ of size $|S|$ to a subset $\operatorname{Im} S$ of $[n]$. Thus, $\mathfrak{S}_n(S)$ can be considered as a family of such permutations that coincide with $S$ on the domain $\operatorname{Dom} S$ of $S$. The shadow $\partial_h (\mathfrak{S}_n)$ can be considered as a number of bijections from a subset of $[n] \setminus \operatorname{Dom} S$ of size $h$ to a subset of $[n] \setminus \operatorname{Im} S$. It yields
\begin{align*}
    |\mathfrak{S}_n(S)| & = (n - |S|)! \\
    |\partial_h (\mathfrak{S}_n(S))| & = \binom{n - |S|}{h}^2 h! \\
    |\mathfrak{S}_n(S \cup H)| & = (n - |S| - h)!
\end{align*}
for any $H \in \partial_h (\mathfrak{S}_n(S))$. Finally, we have
\begin{align*}
    \sum_{H \in \partial_h (\mathfrak{S}_n(S))} |\ff(H)| = \binom{n - |S|}{h} |\ff|.
\end{align*}
It implies
\begin{align*}
    \frac{1}{|\partial_h(\mathfrak{S}_n(S))|}\sum_{H \in \partial_h (\mathfrak{S}_n(S))} \frac{|\ff(H)|}{|\mathfrak{S}_n(S \cup H)|} & = \frac{1}{\binom{n - |S|}{h}^2 h!} \cdot \frac{\binom{n - |S|}{h}}{(n - |S| - h)!} \cdot |\ff| = \frac{|\ff|}{(n - |S|)!} \\
    & = \frac{|\ff|}{|\mathfrak{S}_n(S)|},
\end{align*}
and Assumption~\ref{assumption: regularity} follows.

Finally, we check Assumption~\ref{assumption: weak shadow consistency - small r}. Fix a set $R \in \partial_{\le q} \mathfrak{S}_n$, and bound
\begin{align*}
    \frac{|\partial_h (\mathfrak{S}_n(R))|}{|\partial_h \mathfrak{S}_n|} = \frac{\binom{n - |R|}{h}^2 h!}{\binom{n}{h}^2 h!} \ge \left ( 1 - \frac{|R| + h}{n}\right )^{2h} \ge \left (1 - \frac{4 q}{n} \right )^h,
\end{align*}
so Assumption~\ref{assumption: weak shadow consistency - small r} holds with $\mu = 1/4$.
\end{proof}

We complete this section with three statements about $\tau$-homogeneous families. The first one shows that most restrictions of a $\tau$-homogeneous family $\ff$ are also homogeneous.

\begin{proposition}
\label{proposition: most of subsets are homogeneous}
Suppose that Assumption~\ref{assumption: regularity} holds. Let $\ff \subset \cA(S)$, $S \in \partial_{\le q}\cA$, be a $\tau$-homogeneous family. Let $h$ be a positive integer less than $k$ and $\alpha \in (0, 1)$. Suppose that $\tau \le (1 - \alpha \rho)^{-1/h}$. Then, for all but $\alpha \left | \partial_h \cA \right |$ sets $H \in \partial_h \cA $, the family $\ff(H)$ is $\hat{\tau}$-homogeneous for $\hat \tau =  \tau / (1 - \rho)$ and $\mu(\ff(H)) \ge (1 - \rho) \cdot \tau^h \cdot \mu(\ff)$.
\end{proposition}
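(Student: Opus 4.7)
The proposition wants two things for most $H$: a lower bound on $\mu(\ff(H))$, and an upgraded homogeneity constant. The first is a concentration statement obtained by a reverse Markov argument combining Assumption~\ref{assumption: regularity} (which pins the mean of $\mu(\ff(\bH))$) with $\tau$-homogeneity (which caps the maximum). The second is then a purely algebraic manipulation of the defining inequality of $\tau$-homogeneity, using the already-established lower bound on $\mu(\ff(H))$ to renormalize.

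\textbf{Step 1 (reverse Markov).} Let $\bH$ be uniform on $\partial_h(\cA(S))$. Assumption~\ref{assumption: regularity} yields $\EE\,\mu(\ff(\bH)) = \mu(\ff)$, while $\tau$-homogeneity of $\ff$ inside $\cA(S)$ gives the almost sure upper bound $\mu(\ff(\bH)) \le \tau^h\mu(\ff)$. Apply Markov's inequality to the nonnegative variable $X := \tau^h\mu(\ff) - \mu(\ff(\bH))$, whose expectation is $(\tau^h-1)\mu(\ff)$, to get
\[
    \PP\bigl(\mu(\ff(\bH)) < (1-\rho)\tau^h\mu(\ff)\bigr) = \PP\bigl(X > \rho\tau^h\mu(\ff)\bigr) \le \frac{\tau^h-1}{\rho\tau^h} = \frac{1-\tau^{-h}}{\rho}.
\]
The hypothesis $\tau \le (1-\alpha\rho)^{-1/h}$ means $\tau^{-h}\ge 1-\alpha\rho$, so the right-hand side is at most $\alpha$. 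Thus for all but $\alpha|\partial_h(\cA(S))| \le \alpha|\partial_h\cA|$ sets $H\in\partial_h(\cA(S))$ we have $\mu(\ff(H)) \ge (1-\rho)\tau^h\mu(\ff)$, giving the second conclusion. The remaining sets $H\in\partial_h\cA\setminus\partial_h(\cA(S))$ are irrelevant since $\ff(H)=\varnothing$ for them and can be absorbed into the exceptional set (the allowance $\alpha|\partial_h\cA|$ rather than $\alpha|\partial_h(\cA(S))|$ is comfortable).

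\textbf{Step 2 (upgraded homogeneity).} Fix any ``good'' $H$ from Step~1, and any nonempty $Y$ disjoint from $S\cup H$. Since $\ff(H)(Y) = \ff(H\cup Y)$ and $\cA(H\cup S)(Y) = \cA(S\cup H\cup Y)$, the $\tau$-homogeneity of $\ff$ applied to the set $H\cup Y$ inside $\cA(S)$ gives
\[
    \mu(\ff(H)(Y)) = \frac{|\ff(H\cup Y)|}{|\cA(S\cup H\cup Y)|} \le \tau^{h+|Y|}\mu(\ff).
\]
Dividing by the Step~1 bound $\mu(\ff(H)) \ge (1-\rho)\tau^h\mu(\ff)$, and using $(1-\rho)\ge(1-\rho)^{|Y|}$ for $|Y|\ge 1$, we obtain
\[
    \frac{\mu(\ff(H)(Y))}{\mu(\ff(H))} \le \frac{\tau^{|Y|}}{1-\rho} \le \left(\frac{\tau}{1-\rho}\right)^{|Y|} = \hat\tau^{|Y|},
\]
which is exactly the $\hat\tau$-homogeneity of $\ff(H)$.

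\textbf{Main obstacle.} There is no serious obstacle; the only point that requires care is the direction of Markov's inequality. One might be tempted to bound the exceptional set by concentration of $\mu(\ff(H))$ around its mean $\mu(\ff)$, but that is not what is needed---the conclusion demands $\mu(\ff(H))$ to be close to the \emph{upper} envelope $\tau^h\mu(\ff)$, not the mean. The two can both be forced only because the hypothesis $\tau\le(1-\alpha\rho)^{-1/h}$ squeezes max and mean to within a factor $1/(1-\alpha\rho)$ of each other, so the reverse Markov estimate is tight enough to yield the claim.
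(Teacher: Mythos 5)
Your proof is correct and follows essentially the same route as the paper's: both arguments combine the mean identity $\EE\,\mu(\ff(\bH))=\mu(\ff)$ from Assumption~3 with the pointwise cap $\mu(\ff(\bH))\le\tau^h\mu(\ff)$ to bound the fraction of $H$ with $\mu(\ff(H))<(1-\rho)\tau^h\mu(\ff)$ by $\frac{1-\tau^{-h}}{\rho}\le\alpha$ (the paper writes this as a direct averaging over the split $\hh$ vs.\ its complement rather than as reverse Markov, and notes that failure of $\hat\tau$-homogeneity forces the small measure, i.e.\ the contrapositive of your Step~2 — the computations are identical). The only caveat, inherited from the paper's own statement, is that the bound really lives on $\partial_h(\cA(S))$ rather than $\partial_h\cA$; your remark that the sets outside $\partial_h(\cA(S))$ can always be ``absorbed'' is not literally true, but this does not affect how the proposition is used.
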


\begin{proof}
    If $\ff(H)$ is not $\hat \tau$-homogeneous, then there exists $Z$ such that
    \begin{align*}
        \hat \tau^{-|Z|} \mu(\ff(H \cup Z)) \ge \mu(\ff(H)).
    \end{align*}
    Since $\ff$ is $\tau$-homogeneous, we have
    \begin{align*}
        \tau^{|H| + |Z|} \hat \tau^{-|Z|} \mu(\ff) \ge \hat \tau^{-|Z|} \mu(\ff(H \cup Z))  \ge \mu(\ff(H)),
    \end{align*}
    so $\mu(\ff(H)) \le \left (\frac{\tau}{\hat \tau} \right )^{|Z|} \cdot \tau^{|H|} \mu(\ff) \le \tau / \hat{\tau} \cdot \tau^{|H|} \mu(\ff)$. Define 
    \begin{align*}
        \hh = \{H \in \partial_h (\cA(S)) \mid \ff(H) \text{ is not } \hat \tau\text{-homogeneous or } \mu(\ff(H)) \le \frac{\tau}{\hat \tau} \cdot \tau^h \mu(\ff)\}
    \end{align*}

    Due to Assumption~\ref{assumption: regularity}, we have
    \begin{align*}
        \frac{1}{|\partial_h (\cA(S)) |} \sum_{H \in \partial_h (\cA(S)) } \mu(\ff(H)) = \mu(\ff),
    \end{align*}
     For each $H \in \hh$, we have $\mu(\ff(H)) \le \frac{\tau}{\hat \tau} \cdot \tau^h \mu(\ff)$ by the reasoning above. If $H \not \in \hh$, we bound $\mu(\ff(H)) \le \tau^{|H|} \mu(\ff)$ by the $\tau$-homogeneity of $\ff$,
    and obtain
    \begin{align*}
         \frac{1}{|\partial_h (\cA(S)) |} \sum_{H \in \partial_h \cA} \left (  \frac{\tau}{\hat \tau} \cdot \tau^{|H|} \mu(\ff)\indicator\{H  \in \hh\} + \tau^{|H|} \mu(\ff) \indicator\{H \not \in \hh\} \right ) & \ge \mu(\ff), \\
         \mu(\hh) \cdot \frac{\tau}{\hat \tau} \cdot \tau^{h} + \tau^h (1 - \mu(\hh)) & \ge 1,
    \end{align*}
    where $\mu(\hh) = |\hh| / |\partial_h (\cA(S))|$. Rearranging the terms, we get
    \begin{align*}
        \mu(\hh) \le \frac{1 - \tau^{-h}}{1 - \tau / \hat \tau} \le \frac{1 - (1 - \alpha \rho)}{1 - (1 - \rho)} = \alpha. & \qedhere
    \end{align*}
\end{proof}

Next, we show that $\tau$-homogeneous families have large shadows.

\begin{proposition}
\label{proposition: shadow of tau-homogeneous family}
    Suppose that Assumption~\ref{assumption: regularity} holds. Let $\ff \subset \cA(S)$, $S \in \partial_{\le q} \cA,$ be a non-empty $\tau$-homogeneous family. Then, we have $\mu(\partial_h \ff) \ge \tau^{-h}$ for any $h \le k$.
\end{proposition}
\begin{proof}
    Due to Assumption~\ref{assumption: regularity}, we have
    \begin{align*}
        \mu(\ff) = \frac{1}{|\partial_h (\cA(S))|} \sum_{P \in \partial_h \ff} \mu(\ff(P)) \le \frac{|\partial_h \ff|}{|\partial_h (\cA(S))|} \cdot \tau^h \mu(\ff).
    \end{align*}
    Rearranging the terms, we obtain the claim.
\end{proof}

Finally, we show that homogeneity can be preserved after removing elements from the support of a $\tau$-homogeneous families.

\begin{claim}
    \label{claim: removing intersections} 
    If $\cG \subset \cA$ is $\tau$-homogeneous for some $r$-spread $\cA$, and $|X| < r / \tau$, then $\cG(\varnothing, X)$ is non-empty and $\tau'$-homogeneous for 
    \begin{align*}
        \tau' = \frac{\tau}{1 - \frac{|X| \tau}{r}}.
    \end{align*}
\end{claim}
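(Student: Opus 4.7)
The plan has two parts: lower-bound $|\cG(\varnothing,X)|$ so it is nonempty, then verify the homogeneity inequality for arbitrary $Y$.

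First I would show nonemptiness by a union bound over $X$. Since $\cG(\varnothing, X) = \{G \in \cG : G \cap X = \varnothing\}$, we have
\begin{equation*}
    |\cG| - |\cG(\varnothing, X)| \le \sum_{x \in X} |\cG(\{x\})|.
\end{equation*}
By $\tau$-homogeneity of $\cG$ in $\cA$ and the $r$-spreadness of $\cA$,
\begin{equation*}
    |\cG(\{x\})| \le \tau \cdot \frac{|\cA(\{x\})|}{|\cA|} \cdot |\cG| \le \frac{\tau}{r} |\cG|.
\end{equation*}
Summing yields $|\cG(\varnothing, X)| \ge (1 - |X|\tau / r)|\cG|$, which is strictly positive by the hypothesis $|X| < r/\tau$.

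Second, I would check $\tau'$-homogeneity directly from the definition. Fix a test set $Y$. If $Y \cap X \neq \varnothing$ then $\cG(\varnothing, X)(Y) = \varnothing$ and there is nothing to prove; otherwise every set counted by $\cG(\varnothing, X)(Y)$ is also counted by $\cG(Y)$, so $|\cG(\varnothing, X)(Y)| \le |\cG(Y)|$. Applying $\tau$-homogeneity of $\cG$ and substituting the lower bound on $|\cG(\varnothing, X)|$ from the first step,
\begin{equation*}
    \frac{|\cG(\varnothing, X)(Y)|}{|\cA(Y)|} \le \tau^{|Y|} \frac{|\cG|}{|\cA|} \le \frac{\tau^{|Y|}}{1 - |X|\tau/r} \cdot \frac{|\cG(\varnothing, X)|}{|\cA|}.
\end{equation*}
For $|Y| \ge 1$, since $(1 - |X|\tau/r)^{-1} \ge 1$, we may absorb the denominator into the exponent and bound the factor $\tau^{|Y|}(1 - |X|\tau/r)^{-1}$ by $(\tau')^{|Y|}$ with $\tau' = \tau/(1-|X|\tau/r)$; for $|Y|=0$ the inequality is trivial.

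There is no real obstacle here: the argument is a direct computation. The only thing worth flagging is to ensure that $\cG(\varnothing, X)$ is regarded as a subfamily of the same ambient $\cA$ (which is legitimate because $\cG \subset \cA$ and restricting to sets disjoint from $X$ keeps them as elements of $\cA$), so the homogeneity inequality is measured against the unchanged $|\cA|$ and $|\cA(Y)|$.
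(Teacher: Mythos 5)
Your proof is correct and follows essentially the same route as the paper: a union bound over $x\in X$ (which the paper packages as ``$\tau$-homogeneous implies $r/\tau$-spread'' plus Claim~\ref{claim: spread removing intersections}) to get $|\cG(\varnothing,X)|\ge(1-|X|\tau/r)|\cG|$, then the chain $|\cG(\varnothing,X)(Y)|\le|\cG(Y)|\le\tau^{|Y|}\frac{|\cA(Y)|}{|\cA|}|\cG|$ combined with that lower bound and the absorption of the factor $(1-|X|\tau/r)^{-1}$ into $(\tau')^{|Y|}$ for $|Y|\ge1$. No gaps.
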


\begin{proof}
    If $\cG$ is $\tau$-homogeneous, then it is $r/ \tau$-spread, so
    \begin{align}
    \label{eq: excluding X, intersection removing lemma, tau-hom.}
        |\cG(\varnothing, X)| \ge \left ( 1 - \frac{|X| \tau}{r} \right) |\cG|
    \end{align}
    due to Claim~\ref{claim: spread removing intersections}.
    It yields
    \begin{align*}
        \frac{|\cG(\varnothing, X)(Y)|}{|\cA(Y)|} & = \frac{|\cG(Y, X \cup Y)|}{|\cA(Y)|} \le \frac{|\cG(Y)|}{|\cA(Y)|} \\
        & \overset{{\color{teal} (\cG \text{ is $\tau$-homogeneous})}}{\le} \tau^{|Y|} \frac{|\cG|}{|\cA|} \overset{\eqref{eq: excluding X, intersection removing lemma, tau-hom.}}{\le} \tau^{|Y|} \frac{|\cG(\varnothing, X)|}{(1 - |X| \tau / r) |\cA|} \\ & \le \left (\tau / (1 - |X| \tau / r) \right )^{|Y|} \frac{|\cG(\varnothing, X)|}{|\cA|}. \qedhere
    \end{align*}
    
\end{proof}

\subsection{Spread approximation}
\label{subection: spread approximation Kupavskii}

There is a natural counterpart of Observation~\ref{observation: R-spread restriction} for the $\tau$-homogeneity property. 
\begin{observation}[Observation~1 from~\cite{kupavskii_spread_2022}]
\label{observation: homogeneous restriction}
Let $\ff \subset \cA$ be arbitrary subfamily of a $k$-uniform family $\cA$. Fix some $\tau > 1$. Let $\cS$ be the maximal set such that $\mu(\ff(S)) \ge \tau^{-|S|} \mu(\ff)$. Then, the family $\ff(S)$ is $\tau$-homogeneous as a subfamily of $\cA(S)$.
\end{observation}

Using it, one may obtain the following lemma which allows applying the $\tau$-homogeneity machinery in an arbitrary Turan-type problem.

\begin{lemma}[\cite{kupavskii_spread_2022}]
    \label{lemma: spread approximation}
    Fix an integer $q$ and a real number $\tau > 1$. Let $\cA$ be a $k$-uniform $r$-spread family. Let $\ff \subset \cA$. Then exists a family $\cS \subset \partial_{\le q} \cA$ and a partition of $\ff$ into families $\cR$ and $\ff_S \subset \cA(S), S \in \cS$:
    \begin{align*}
        \ff = \cR \sqcup \bigsqcup_{S \in \cS} \ff_S \vee  \{S\},
    \end{align*}
    such that each $\ff_S$ is $\tau$-homogeneous as a subfamily of $\cA(S)$ and $|\cR| \le \tau^{- (q + 1)} |\cA|$.
\end{lemma}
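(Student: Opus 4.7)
The plan is a greedy peeling procedure. I extract, one at a time, sets $S_i$ for which $\ff_i(S_i) \subseteq \cA(S_i)$ is $\tau$-homogeneous, choosing each $S_i$ so that it is automatically of size at most $q$.

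\emph{Construction.} Put $\ff_0 := \ff$ and $\cS := \varnothing$. At step $i$, check whether $\mu(\ff_i) := |\ff_i|/|\cA| \le \tau^{-(q+1)}$. If so, terminate with $\cR := \ff_i$. Otherwise, let $S_i$ be a set in $\partial \cA$ maximizing $h_i(S) := \mu(\ff_i(S))/\tau^{|S|}$. Append $S_i$ to $\cS$, put $\ff_{S_i} := \ff_i(S_i)$, and update $\ff_{i+1} := \ff_i \setminus \ff_i[S_i]$.

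\emph{Three verifications.} (i) $\ff_{S_i}$ is $\tau$-homogeneous in $\cA(S_i)$: for every $Y$ disjoint from $S_i$, the maximality of $S_i$ gives $h_i(S_i \cup Y) \le h_i(S_i)$, which rearranges to $\mu(\ff_i(S_i \cup Y)) \le \tau^{|Y|}\mu(\ff_i(S_i))$---this is exactly the conclusion of Observation~\ref{observation: homogenous restriction} applied to the argmax choice of $S_i$. (ii) $|S_i|\le q$: comparing $h_i$ at $S_i$ and at $\varnothing$ yields $\mu(\ff_i(S_i)) \ge \tau^{|S_i|}\mu(\ff_i)$; combined with $\mu(\ff_i(S_i)) \le 1$ and the loop invariant $\mu(\ff_i) > \tau^{-(q+1)}$, this forces $\tau^{|S_i|} < \tau^{q+1}$, so $|S_i|\le q$. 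Additionally $S_i \in \partial \cA$ because $\ff_i(S_i)\neq\varnothing$. (iii) Termination: each step removes $|\ff_i[S_i]| = |\ff_i(S_i)| \ge \tau^{|S_i|}\mu(\ff_i)\,|\cA(S_i)| \ge 1$ sets, so the process halts after finitely many iterations.

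\emph{Assembly.} The partition $\ff = \cR \sqcup \bigsqcup_{S \in \cS}(\ff_S \vee \{S\})$ follows because the block removed at step $i$ equals $\ff_i[S_i] = \ff_{S_i} \vee \{S_i\}$, and subsequent residuals exclude any set containing a previously extracted $S_j$; the bound $|\cR| \le \tau^{-(q+1)}|\cA|$ is built into the stopping criterion. The main delicate point is to select $S_i$ as an argmax of $h_i$---this strengthening of Observation~\ref{observation: homogenous restriction} simultaneously secures $\tau$-homogeneity and the size bound $|S_i| \le q$; a merely inclusion-maximal choice would not control $|S_i|$. No other substantial obstacle arises; in particular, the $r$-spread hypothesis on $\cA$ does not enter the construction.
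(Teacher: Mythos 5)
Your proof is correct and follows essentially the same greedy peeling scheme that the paper uses for its variant Lemma~\ref{lemma: spread approximation with large measure} (and that the cited source uses for this lemma): iteratively extract an optimal restriction set, invoke the argmax/maximality property to get $\tau$-homogeneity, and stop when the residual density drops below $\tau^{-(q+1)}$. The only cosmetic difference is that you phrase the stopping rule in terms of the density of the residual rather than the size of the extracted set; the two are equivalent via your observation that $\mu(\ff_i(S_i))\ge\tau^{|S_i|}\mu(\ff_i)$ forces $|S_i|\le q$.
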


Informally, we refer to the family $\cS$ as the spread approximation of $\ff$. Depending on the problem, $\cS$ possess numerous useful properties. For example, if $\ff$ does not contain two sets intersecting in exactly $t - 1$ elements, then each two elements of $\cS$ intersects in at least $t$ elements under some assumptions on $r, k$ and $t$~\cite{kupavskii_spread_2022}. The accurate analysis of $\cS$ depending on $n, k$ and $t$ is the core of our approach.

However, Lemma~\ref{lemma: spread approximation} is not the only way to obtain a spread approximation of $\ff$. Later, we present other ways, so we require a precise definition of such families $\cS$.

\begin{definition}
    A family $\cS$ is an $R$-spread ($\tau$-homogeneous) {\Sname} of $\ff$, if $\ff$ admits a decomposition
    \begin{align*}
        \ff = \bigsqcup_{S \in \cS} \ff_S \vee \{S\},
    \end{align*}
    where families $\ff_S \subset \ff(S)$ are $R$-spread ($\tau$-homogeneous).
\end{definition}

\subsection{Boolean analysis}
\label{subsection: boolean analysis}

When $k$ is extremely large (namely, $k \ge 2^{(st)^C}$ for some large constant $C$), techniques based on $\tau$-homogeneous approximation give suboptimal results. The reason is  that the spread lemma works for the families without any assumption on their measure. In general, the factor $\log k$ in applications of Theorem~\ref{theorem: r-spread theorem} is unavoidable, as was shown in \cite{alweiss2021improved}, but in our special case it can be removed.

The idea is to use recent results in Boolean Analysis from papers~\cite{keevash2021global} and~\cite{keller_sharp_2023}. Before we start, we give a number of definitions.

We consider the Boolean cube $\{0, 1\}^n$ with the $p$-biased measure $\mu_p$, $\mu_p(x) = p^{\sum_{i} x_i} (1 - p)^{n - \sum_i x_i}, x \in \{0, 1\}^n$. Next, for the linear space of functions $f : \{0, 1\}^n \to \RR$, we define the scalar product 
\begin{align*}
    \langle f, g\rangle = \EE_{x \sim \mu_p} f(x) g(x).
\end{align*}
We denote the obtained Hilbert space  by $L_2(\{0, 1\}^n, \mu_p)$. We define the $q$-norm of the function $f$ as
\begin{align*}
    \Vert f \Vert_q = \EE^{1/q}_{x \sim \mu_p} |f(x)|^q.
\end{align*}
We denote expectation $\EE_{x \sim \mu_p} f(x)$ of $f$ by $\mu_p(f)$. There is a natural correspondence between the Boolean functions $f: \{0, 1\}^n \to \{0, 1\}$ and families of subsets $\ff$ of the Boolean cube, defined as follows: $f(x) = 1$ if and only if for the corresponding family $\ff$ holds that $\support(x) \in \ff$. Then, the measure of $\mu_p(\ff)$ is just the expectation $\mu_p(f)$ of the corresponding Boolean function and can be obtained as follows:
\begin{align*}
    \mu_p(\ff) = \sum_{F \in \ff} p^{|F|} (1 - p)^{n - |F|}.
\end{align*}
One can interpret $\mu_p(\ff)$ as the probability that $\ff$ contains a $p$-random subset $\bX$ of $[n]$.

When dealing with the uniform families $\ff \subset \binom{[n]}{k}$, one can use techniques from Boolean Analysis, by applying them to the upper-closure $\ff^\uparrow$ of the family $\ff$. 
There is a simple inequality, which relates the $p$-biased measure of $\ff^\uparrow$ and the size of $\ff$:
\begin{claim}[Lemma IV.2.2.~\cite{keevash2021global}]
\label{claim: p-biased measure bound on uniform measure}
    Let $\ff$ be a subfamily of $\binom{[n]}{k}$ and set $p = \frac{k}{n}$, then
    $
        \mu_p(\ff^\uparrow) \ge \frac{|\ff|}{4 \binom{n}{k}}.
    $
\end{claim}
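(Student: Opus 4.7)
The plan is to express $\mu_p(\ff^\uparrow)$ as $\PP[\bX \in \ff^\uparrow]$ for a $p$-biased random subset $\bX \subseteq [n]$ with $p = k/n$, and then condition on the size of $\bX$. For each integer $m \ge k$, conditionally on $|\bX| = m$ the set $\bX$ is uniform on $\binom{[n]}{m}$, so the conditional probability equals $|\ff^\uparrow \cap \binom{[n]}{m}|/\binom{n}{m}$. I will bound this ratio below uniformly in $m \ge k$, and then account for the probability that $|\bX| \ge k$.

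The key step is a double-counting argument on pairs $(F,S)$ with $F \in \ff$, $F \subseteq S$, $|S| = m$. Counting from the $\ff$-side there are exactly $|\ff| \binom{n-k}{m-k}$ such pairs, while each $S \in \ff^\uparrow \cap \binom{[n]}{m}$ participates in at most $\binom{m}{k}$ pairs. Rearranging and using the elementary identity $\binom{n-k}{m-k}\binom{n}{k} = \binom{n}{m}\binom{m}{k}$ gives
\begin{equation*}
\PP\bigl[\bX \in \ff^\uparrow \,\bigm|\, |\bX| = m\bigr] \ge \frac{|\ff|}{\binom{n}{k}}
\quad \text{for every } m \ge k.
\end{equation*}

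To finish, I would multiply by a lower bound on $\PP[|\bX| \ge k]$. Since $|\bX| \sim \operatorname{Bin}(n, k/n)$ has integer mean $k$, the standard fact that the mean of a binomial with integer mean is also a median yields $\PP[|\bX| \ge k] \ge 1/2$. Combining the two ingredients gives $\mu_p(\ff^\uparrow) \ge |\ff|/(2\binom{n}{k})$, which is even stronger than the claimed bound with constant $1/4$. I do not see any real obstacle in this plan; the only place demanding a moment of care is invoking the median property of the binomial, which relies on $np = k$ being an integer, as is the case here. Trivial edge cases ($\ff = \varnothing$ or $n = k$) can be checked directly.
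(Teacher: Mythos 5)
Your argument is correct, and in fact it proves a slightly stronger bound. The paper itself gives no proof of this claim — it is quoted as Lemma IV.2.2 of \cite{keevash2021global} — so there is nothing to compare against line by line; but your write-up is a complete, self-contained justification. The two ingredients both check out: conditioning a $p$-biased set $\bX$ on $|\bX|=m$ does give the uniform distribution on $\binom{[n]}{m}$, and the double count of pairs $(F,S)$ with $F\in\ff$, $F\subseteq S$, $|S|=m$ together with the identity $\binom{n}{m}\binom{m}{k}=\binom{n}{k}\binom{n-k}{m-k}$ yields $\PP[\bX\in\ff^\uparrow\mid |\bX|=m]\ge |\ff|/\binom{n}{k}$ uniformly for $k\le m\le n$ (note that restricting to the single layer $m=k$ would not suffice, since $\PP[|\bX|=k]$ is only of order $k^{-1/2}$, so using all layers is essential). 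The final step, $\PP[\Bin(n,k/n)\ge k]\ge 1/2$ because a binomial with integer mean has that mean as a median, is a standard fact (Kaas--Buhrman) and is legitimately applicable here since $np=k$ is an integer; this is exactly where you gain the constant $1/2$ over the $1/4$ in the cited lemma, whose proof uses a cruder lower bound on this tail probability. The edge cases you flag are indeed harmless.
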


When dealing with the restrictions $\ff(A, B)$ of a family $\ff$, one should be careful with the $p$-biased measure of $\ff(A, B)$. Indeed, for a set $B$, one can treat $\ff(\varnothing, B)$ either as a subfamily of the Boolean cube $2^{[n]}$ or as a subfamily of the Boolean cube $2^{[n] \setminus B}$. Both cases arise in our proofs, so we define $\mu_p^{-B}$ for the $p$-biased measure on  $2^{[n] \setminus B}$ and $\mu_p^{[n]}$ for $2^{[n]}$. We may omit the superscript if the measure and the domain are clear from the context. For restrictions of the form $\ff(B)$, we assume that the measure $\mu_p^{-B}(\ff(B))$ is considered by default.

The next definition, which is a variant of $\tau$-homogeneity, is the core notion of this section.
\begin{definition}\label{deftaugl}
    A family $\ff \subset 2^{[n]}$ is $\tau$-global with respect to the measure $\mu_p^{[n]}$, if for any set $B$ and a subset $A \subset B$, the following holds:
    \begin{align*}
        \mu_p^{-B}(\ff(A, B)) \le \tau^{|B|} \mu_p^{[n]}(\ff).
    \end{align*}
\end{definition}

In terms of the Boolean functions $f : \{0, 1\}^n \to \{0, 1\}$,  $\tau$-globalness is defined as follows:
\begin{align*}
    \mu_p^{-S}(f_{S \to x}) \le \tau^{|S|} \mu_p(f)
\end{align*}
for any $S \subset [n]$ and any $x \in \{0, 1\}^S$, where the function $f_{S \to x}: \{0, 1\}^{[n] \setminus S} \to \{0, 1\}$ is the restriction of $f$ on $S$, that is, it equals $f(x, y)$ for any $y \in \{0, 1\}^{[n] \setminus S}$. In the set families notation, $f_{S\to x}$ vs $f$ corresponds to $\ff(A,S)$ vs $\ff,$ where $A = \{i\in S: x_i=1\}.$

Importantly, given a family $\ff$, it is relatively simple to obtain a restriction $B$, $A \subset B$, such that $\ff(A, B)$ is $\tau$-global.

\begin{observation}[Section 2.3~\cite{keller_t-intersecting_2024}]
\label{observation: tau globalness restriction}
    For a family $\ff \subset 2^{[n]}$, consider a pair of sets $A, B$, $A \subset B$, that maximizes the quantity $\tau^{-|B|} \mu_p^{-B}(\ff(A, B))$. Then, $\ff(A, B)$ is $\tau$-global with respect to $\mu_p$. Additionally, if $\frac{1}{1 - p} < \tau$, then the maximum is attained when $A = B$.
\end{observation}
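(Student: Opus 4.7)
My plan is a direct verification from the maximality condition, in the spirit of the Keller--Lifshitz restriction lemma.

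For the first assertion, I would fix a maximizing pair $(A,B)$ (the max exists since the domain is finite) and write $\cG=\ff(A,B)\subset 2^{[n]\setminus B}$. To verify $\tau$-globalness I need
\[
\mu_p^{-(B\cup B')}(\cG(A',B'))\le\tau^{|B'|}\mu_p^{-B}(\cG)
\]
for every $A'\subset B'\subset[n]\setminus B$. First I would unfold the double restriction: a set $G=F\setminus B\in\cG$ satisfies $G\cap B'=A'$ iff $F\cap(B\cup B')=A\cup A'$, and then $G\setminus B'=F\setminus(B\cup B')$. This yields the identity
\[
\cG(A',B') \;=\; \ff(A\cup A',\,B\cup B').
\]
Consequently $\tau^{-|B|-|B'|}\mu_p^{-(B\cup B')}(\cG(A',B'))$ is just the maximand evaluated at another admissible pair, so by maximality it is at most $\tau^{-|B|}\mu_p^{-B}(\cG)$; rearranging gives precisely the desired globalness inequality.

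For the second assertion, I would assume $\ff\neq\varnothing$ (otherwise the claim is trivial) so that the maximum value is strictly positive, and argue by contradiction. Suppose the maximum is attained at $(A,B)$ with $A\subsetneq B$, and fix $i\in B\setminus A$. I would compare against $(A,B\setminus\{i\})$ by splitting the restriction according to whether the underlying $F\in\ff$ contains $i$:
\[
\ff(A,B\setminus\{i\}) \;=\; \ff(A,B)\,\sqcup\,\bigl(\{i\}\vee\ff(A\cup\{i\},B)\bigr).
\]
Sets in the first piece avoid $i$, while those in the second contain $i$; keeping track of the factors $(1-p)$ and $p$ arising from the change of ground set from $[n]\setminus B$ to $[n]\setminus(B\setminus\{i\})$, one obtains
\[
\mu_p^{-(B\setminus\{i\})}(\ff(A,B\setminus\{i\}))
=(1-p)\,\mu_p^{-B}(\ff(A,B))+p\,\mu_p^{-B}(\ff(A\cup\{i\},B)).
\]
Discarding the non-negative second summand and multiplying by $\tau^{-|B|+1}$, the maximand at $(A,B\setminus\{i\})$ is at least $\tau(1-p)$ times its value at $(A,B)$. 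Since $\tau(1-p)>1$ by hypothesis and the value at $(A,B)$ is strictly positive, this strictly exceeds the maximum, a contradiction; hence $A=B$.

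There is no substantive obstacle here. The only subtlety is careful bookkeeping of which ground set carries the $p$-biased measure, and the correct decomposition of $\ff(A,B\setminus\{i\})$ along the membership of the removed element $i$.
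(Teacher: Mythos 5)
Your proof is correct, and the paper itself gives no argument for this observation (it only cites Section~2.3 of Keller et al.); your two steps --- the identity $\cG(A',B')=\ff(A\cup A',B\cup B')$ combined with maximality, and the decomposition $\mu_p^{-(B\setminus\{i\})}(\ff(A,B\setminus\{i\}))=(1-p)\,\mu_p^{-B}(\ff(A,B))+p\,\mu_p^{-B}(\ff(A\cup\{i\},B))$ --- are exactly the standard argument behind the cited statement. No gaps.
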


Before we will state the main theorem of this section, we need several other definitions. For a vector $x \in \{0, 1\}^n$, we say a random vector $y$ is $\rho$-correlated with $x$, if it can be obtained as follows: independently for all $i$, with probability $\rho$ set $y_i = x_i$ and with probability $1 - \rho$ sample $y_i$ from the measure $\mu_p$ on $\{0, 1\}$. We denote the resulting distribution as $N_\rho(x)$.

Next, we define the following linear operator on $L_2(\{0,1 \}^n, \mu_p)$. For a function $f \in L_2(\{0,1 \}^n, \mu_p)$, set $(T_\rho f)(x) = \EE_{y \sim N_{\rho}(x)} f(y)$. The operator $T_\rho$ is called the {\it noise operator}.

In general, for small enough $\rho$, the operator $T_\rho$ admits the following property: $\Vert T_\rho f \Vert_q \le \Vert f \Vert_2$ for $q>2$. Such property is referred to as the \textit{hypercontractivity}. For an extensive introduction into  Boolean Analysis and  hypercontractivity on the Boolean cube, we refer the reader to the seminal book by O'Donnel~\cite{o2014analysis}. For general hypercontractivity and its connections to Markov chains and functional analysis, we recommend the book~\cite{bakry_analysis_2014} by Bakry, Gentil and Ledoux.

For a function $f$, one may also consider the following quantity, which is called the {\it stability} of $f$:
\begin{align*}
    \Stab_\rho(f) = \langle T_\rho f, f \rangle.
\end{align*}
In general, if the $q$-th norm of $T_{\rho}f$ is efficiently bounded, one can control $\Stab_{\rho}(f)$ via the H\"older inequality, see the proof of Lemma~\ref{lemma: one step sharp threshold}.

The main result of the paper~\cite*{keller_sharp_2023} is that the hypercontractivity phenomenon happens for  $\tau$-global functions $f$.

\begin{theorem}[Theorem 1.4~\cite*{keller_sharp_2023}]
\label{theorem: global hypercontractivity}
    Let $f: \{0, 1\}^n \to \{0, 1\}$ be a $\tau$-global Boolean function, i.e.
    \begin{align*}
        \mu_p(f_{S \to x}) \le \tau^{|S|} \mu_p(f)
    \end{align*}
    for all $S \subset [n]$ and all $x \in \{0,1\}^S$. Then for any $q > 2$ and $\rho \le \frac{\ln q}{16 \tau q}$ we have $\Vert T_{\rho} f \Vert_q \le \Vert f \Vert_2$.
\end{theorem}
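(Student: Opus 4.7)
The plan is to prove the inequality by induction on $n$, using the tensor decomposition $T_\rho = T_\rho^{(1)} \otimes T_\rho^{(2,\ldots,n)}$ to reduce the general statement to a two-point step on a single coordinate. A direct tensoring of the classical two-point Bonami--Beckner inequality yields only the threshold $\rho \le 1/\sqrt{q-1}$, which is far weaker than the target $\rho \le \log q/(16\tau q)$. The globalness hypothesis is what pushes the threshold from $1/\sqrt{q}$ down to $(\log q)/(\tau q)$, and it must be invoked at every layer of the induction.

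At each inductive step, I would split $f$ along coordinate $1$ as $f(x_1, x') = (1-x_1)g_0(x') + x_1 g_1(x')$ with $g_a = f_{1 \to a}$. Both $g_0$ and $g_1$ remain $\tau$-global on $\{0,1\}^{n-1}$ with respect to $\mu_p$: any restriction $(g_a)_{S\to y}$ coincides with $f_{S \cup \{1\} \to (y, a)}$, so the globalness bound for $f$ transfers directly. The inductive hypothesis then yields $\|T_\rho' g_a\|_q \le \|g_a\|_2$ for $a \in \{0,1\}$, where $T_\rho'$ acts on the remaining coordinates.

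The heart of the argument is a two-point inequality showing that if $g_0, g_1$ obey the desired bound on $n-1$ coordinates then $f$ obeys it on $n$. Using the identity
\begin{equation*}
T_\rho f \;=\; T_\rho'\bigl[(1-p)g_0 + p g_1\bigr] \;+\; \rho\,(x_1 - p)\cdot T_\rho'(g_1 - g_0),
\end{equation*}
and raising to the $q$th power, the task is to control the $L_q$-norm of the deviation term. Here globalness enters through a level-$d$ inequality: for a $\tau$-global function $g$ with $p$-biased Fourier expansion $g = \sum_d g^{=d}$, one has $\|g^{=d}\|_2^2 \le (C\tau d)^d \mu_p(g)^2$ for a suitable constant $C$. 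Combined with the Parseval identity $\|T_\rho g\|_2^2 = \sum_d \rho^{2d}\|g^{=d}\|_2^2$ and a H\"older conversion from $L_2$ to $L_q$, this controls the deviation in the required form.

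The main obstacle is matching the exact threshold $\rho \le \log q/(16\tau q)$. This shape arises from optimizing a series of the form $\sum_d (\rho^2\tau q d)^d$, whose geometric convergence demands precisely $\rho\tau q \lesssim \log q$; the $\log q$ rather than $\sqrt{q}$ reflects that the dominant contributions come from levels $d \approx \log q$, not from constant $d$. Carefully tracking the constant $1/16$ through the induction without deterioration of $\tau$ from layer to layer, and verifying that the global level-$d$ inequality can be applied with the required tight constants, is the most delicate technical part of the argument.
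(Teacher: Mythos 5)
You should note at the outset that the paper does not prove this statement at all: it is imported verbatim as Theorem~1.4 of \cite{keller_sharp_2023} and used as a black box, so there is no in-paper proof to compare your argument against. Your proposal therefore has to stand as a self-contained proof of the cited hypercontractivity theorem, and as written it does not close.

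The structural problem is that your induction is set up only for Boolean functions, but the very first splitting step leaves the Boolean world: in the identity $T_\rho f = T_\rho'\,[(1-p)g_0+pg_1] + \rho(x_1-p)\,T_\rho'(g_1-g_0)$, the averaged part $(1-p)g_0+pg_1$ and the derivative $g_1-g_0$ are $[0,1]$-valued and $\{-1,0,1\}$-valued respectively, so the inductive hypothesis (stated for Boolean $f$ with globalness measured by $\mu_p(f_{S\to x})$) cannot be applied to them. This is precisely why the actual proofs in the literature establish a real-valued statement in which globalness is measured by $2$-norms of generalized derivatives $\|D_S f\|_2$ (the Boolean version quoted here is then a corollary); the paper itself remarks that the theorem holds for real-valued $f$. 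Without formulating and propagating such a real-valued inductive statement, the induction does not go through. Two further gaps: the ``level-$d$ inequality'' you invoke, $\|g^{=d}\|_2^2 \le (C\tau d)^d\mu_p(g)^2$, is not the correct form (the global level-$d$ inequality involves $\log(1/\mu_p(g))$ and is decreasing in the relevant range of $d$), and in \cite{keller_sharp_2023} and its predecessors it is \emph{derived from} global hypercontractivity, so using it as an ingredient is circular unless you supply an independent proof. Finally, the series $\sum_d (\rho^2\tau q d)^d$ that you propose to optimize diverges for every fixed $\rho>0$ (its $d$-th term tends to infinity), so the claimed mechanism producing the threshold $\rho\le \log q/(16\tau q)$ is not coherent as stated; the genuine source of the $\log q$ gain is a more delicate interplay between the moment computation and the decay of the derivative norms, which your sketch does not capture.
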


The theorem holds for any function $f : \{0, 1\}^n \to \RR$, but we do not use this fact. The result has numerous applications in combinatorics, as it was shown by the authors in the follow-up papers~\cite{keller_t-intersecting_2024, keller2023improved}. Intriguingly, in some cases, techniques based on the spread approximation and techniques based on the hypercontractivity can be used to obtain almost the same result, see papers~\cite{kupavskii_spread_2022, keller_t-intersecting_2024, kupavskii2024almost}.

To apply Theorem~\ref{theorem: global hypercontractivity} in our special case, we notice the following. We use the spread Lemma~\ref{theorem: r-spread theorem} to prove that $R$-spread families has large $\mu_{1/8s}$ measure, and, therefore, they contain a matching. In Boolean Analysis, the phenomenon when the $p$-biased measure $\mu_p(f)$ of a function $f$ is rapidly increasing when $p$ is slightly increasing is known as the \textit{sharp threshold} phenomenon. The fact that pseudorandom families (such as spread or global) admit sharp thresholds was exploited before to obtain Chv\'atal Simplex conjecture in the seminal paper~\cite{keller2021junta} by Ellis, Keller and Lifshitz. Later, this tool was further developed in paper~\cite{keevash2021global}, where the following proposition is presented.

\begin{proposition}[Proposition III.3.4~\cite{keevash2021global}]
\label{proposition: sharp threshold Keevash}
    Let $f: \{0, 1\}^n \to \{0, 1\}$ be a monotone Boolean function. Let $0 < p < \widetilde{p} < 1$ and $\rho = \frac{p (1 - \widetilde p)}{\widetilde p ( 1- p)}$. Then
    \begin{align*}
        \mu_{\widetilde p}(f) \ge \frac{\mu_p^2(f)}{\Stab_{\rho}(f)},
    \end{align*}
    where $\Stab_{\rho}$ is w.r.t. the measure $\mu_p$.
\end{proposition}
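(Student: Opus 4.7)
The plan is to construct an explicit coupling of three variables $x, y, z$ and then combine monotonicity with a Cauchy--Schwarz step. The coupling is the heart of the argument; everything else is bookkeeping.

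First I would define the coupling. Sample $z \in \{0,1\}^n$ from $\mu_{\widetilde p}$. Conditional on $z$, sample two independent vectors $a, b \in \{0,1\}^n$ of i.i.d.\ $\mathrm{Ber}(p/\widetilde p)$ coordinates, and set $x = z \wedge a$ and $y = z \wedge b$ (coordinatewise AND). Then I would verify three things: (a) marginally, $x \sim \mu_p$ and $y \sim \mu_p$, because $\Pr[x_i=1] = \widetilde p \cdot (p/\widetilde p) = p$; (b) conditional on $z$, $x$ and $y$ are independent; and (c) the pair $(x,y)$ is exactly $\rho$-correlated under $\mu_p$ for the $\rho$ given in the statement. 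For (c) it suffices to compute
\[
\Pr[x_i = 1, y_i = 1] = \widetilde p \cdot (p/\widetilde p)^2 = p^2/\widetilde p,
\]
and to match this against the formula $\rho p + (1-\rho)p^2$ for the $\rho$-correlated joint law, which forces $\rho = \frac{p(1-\widetilde p)}{\widetilde p(1-p)}$.

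Next I would use monotonicity. Since $x \le z$ and $y \le z$ coordinatewise and $f$ is monotone, $f(x) \le f(z)$ and $f(y) \le f(z)$. In particular $f(x)=1$ implies $f(z)=1$, so $f(x) = f(x)\cdot \mathbf{1}[f(z)=1]$, and similarly for $y$. Let $g(z) := \mathbb{E}[f(x) \mid z]$; by the implication just noted, $g(z) = 0$ whenever $f(z)=0$, so $g(z) = g(z)\cdot \mathbf{1}[f(z)=1]$.

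Now the Cauchy--Schwarz step. I would write
\[
\mu_p(f) = \mathbb{E}_z[g(z)] = \mathbb{E}_z[g(z)\cdot \mathbf{1}[f(z)=1]],
\]
and apply Cauchy--Schwarz with respect to $\mu_{\widetilde p}$:
\[
\mu_p(f)^2 \le \mathbb{E}_z[g(z)^2] \cdot \mathbb{E}_z[\mathbf{1}[f(z)=1]] = \mathbb{E}_z[g(z)^2] \cdot \mu_{\widetilde p}(f).
\]
Conditional independence of $x$ and $y$ given $z$ gives $g(z)^2 = \mathbb{E}[f(x)\mid z]\mathbb{E}[f(y)\mid z] = \mathbb{E}[f(x)f(y)\mid z]$, so
\[
\mathbb{E}_z[g(z)^2] = \mathbb{E}[f(x) f(y)] = \Stab_\rho(f),
\]
where the last equality uses part (c) of the coupling. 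Combining yields $\mu_p(f)^2 \le \mu_{\widetilde p}(f)\cdot \Stab_\rho(f)$, which rearranges to the claim.

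The main obstacle is guessing the right coupling: it is the unique (up to symmetry) way to realize a $\rho$-correlated pair of $\mu_p$-vectors jointly with a dominating $\mu_{\widetilde p}$-vector, and the specific value $\rho = p(1-\widetilde p)/(\widetilde p(1-p))$ arises precisely from the second-moment calculation in step (c). Once that coupling is written down, the monotonicity observation $f(x) \le f(z)$ and a single Cauchy--Schwarz do the rest; no further analytic input is needed.
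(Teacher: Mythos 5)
Your proof is correct: the coupling $x=z\wedge a$, $y=z\wedge b$ with $z\sim\mu_{\widetilde p}$ does realize a $\rho$-correlated pair of $\mu_p$-samples dominated by $z$ (matching marginals plus the $(1,1)$-probability pins down the joint law coordinatewise), and the monotonicity-plus-Cauchy--Schwarz step then gives exactly $\mu_p(f)^2\le \Stab_\rho(f)\,\mu_{\widetilde p}(f)$. The paper imports this proposition by citation without proof, and your argument is essentially the standard one from the cited source, so there is nothing to compare beyond noting that your reconstruction is faithful.
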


This result together with the hypercontractivity bound will be used in Section~\ref{subsection: sharp threshold and matchings} to reprove the results of Sections~\ref{subsection: homogeneity and general domains} and~\ref{subsection: spread approximation} for $\mu_p$ in the case when $n$ is linear in $k$.

\subsection{Sharp thresholds and matchings}
\label{subsection: sharp threshold and matchings}

We start by the following lemma, which will be a building block for the $\mu_p$-counterpart of Theorem~\ref{theorem: r-spread theorem}.

\begin{lemma}
\label{lemma: one step sharp threshold}
    Suppose that an upper-closed family $\ff$ is $\tau$-global. For a number $p \in (0; 2^{-7}/\tau)$, define $\widetilde p = 2^{6} \tau p$. Then, the following holds:
    \begin{align*}
        \mu_{\widetilde{p}}(\ff) \ge \mu_p^{3/4}(\ff).
    \end{align*}
\end{lemma}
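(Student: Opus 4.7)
The plan is to combine Proposition~\ref{proposition: sharp threshold Keevash} with the hypercontractive estimate of Theorem~\ref{theorem: global hypercontractivity}. Let $f:\{0,1\}^n\to\{0,1\}$ be the indicator of $\ff$; since $\ff$ is upper-closed, $f$ is monotone, and it inherits $\tau$-globalness from $\ff$.

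First I would apply Proposition~\ref{proposition: sharp threshold Keevash} with the prescribed $\widetilde p = 2^6\tau p$. The correlation parameter satisfies
$$\rho \;=\; \frac{p(1-\widetilde p)}{\widetilde p(1-p)} \;\le\; \frac{p}{\widetilde p} \;=\; \frac{1}{2^6\tau},$$
and the proposition reduces the problem to an upper bound
$\mu_{\widetilde p}(\ff) \ge \mu_p(\ff)^2/\Stab_\rho(f)$. The assumption $p<2^{-7}/\tau$ guarantees $\widetilde p<1/2$, keeping the formulas well-defined.

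The main step is to bound the stability $\Stab_\rho(f) = \langle T_\rho f,\, f\rangle$ by splitting it via H\"older with exponents $q=4$ and $q'=4/3$. A direct numerical check shows $\rho\le 1/(64\tau)\le\tfrac{\log q}{16\tau q}$ for $q=4$, so Theorem~\ref{theorem: global hypercontractivity} yields $\|T_\rho f\|_4 \le \|f\|_2$. Using that $f$ is $\{0,1\}$-valued, $\|f\|_q = \mu_p(\ff)^{1/q}$ for every $q$, so
$$\Stab_\rho(f) \;\le\; \|T_\rho f\|_4 \cdot \|f\|_{4/3} \;\le\; \|f\|_2\,\|f\|_{4/3} \;=\; \mu_p(\ff)^{1/2+3/4} \;=\; \mu_p(\ff)^{5/4}.$$
Substituting back gives $\mu_{\widetilde p}(\ff) \ge \mu_p(\ff)^{2-5/4} = \mu_p(\ff)^{3/4}$, as desired.

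The only non-routine aspect is matching the constants: the factor $2^6$ in the definition of $\widetilde p$ is dictated by the requirement that $\rho \le \tfrac{\log q}{16\tau q}$ hold for some $q>2$, and simultaneously that the H\"older exponent $q' = q/(q-1)$ be large enough that $1/2 + 1/q' > 1$, yielding an exponent strictly less than $1$ on $\mu_p(\ff)$ in the final bound. The choice $q=4$ is essentially the smallest that makes both conditions compatible and yields the exponent $3/4$ in the conclusion; any other valid choice would only alter the constants $2^6$ and $2^{-7}$ in the hypotheses.
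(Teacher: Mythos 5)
Your proof is correct and is essentially identical to the paper's: both pass to the monotone indicator $f$, bound $\rho\le 1/(2^6\tau)$ so that Theorem~\ref{theorem: global hypercontractivity} applies with $q=4$, estimate $\Stab_\rho(f)\le\|f\|_2\|f\|_{4/3}=\mu_p^{5/4}(f)$ via H\"older, and conclude with Proposition~\ref{proposition: sharp threshold Keevash}. Your bound $\rho\le p/\widetilde p$ is in fact slightly cleaner than the paper's $\rho\le 2p/\widetilde p$.
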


\begin{proof}
    We consider $\ff$ as a monotone Boolean function. Set $\rho = \frac{p ( 1- \widetilde p)}{\widetilde p ( 1- p)}$. Since $p \le \widetilde p$, we have $1 - \widetilde p \le 1- p$, and $\rho \le \frac{p}{ \widetilde p} = \frac{1}{2^6 \tau}$. Hence, we may apply Theorem~\ref{theorem: global hypercontractivity} with $q = 4$ and obtain $\Vert T_\rho f \Vert_4 \le \Vert f \Vert_2$. An application of H\"older's inequality implies
    \begin{align*}
        \Stab_\rho(f) = \langle T_{\rho} f, f \rangle \le \Vert T_\rho f \Vert_{4} \Vert f \Vert_{4/3} \le \Vert f \Vert_2 \Vert f \Vert_{4/3}.
    \end{align*}
    Since $f$ is Boolean, $\Vert f \Vert_2 = \mu_p^{1/2}(f)$ and $\Vert f \Vert_{4 / 3} = \mu_p^{3/4}(f)$. Therefore, $\Stab_\rho(f) \le \mu_p^{5/4}(f)$. By our choice of $\rho$, we can apply Proposition~\ref{proposition: sharp threshold Keevash} and obtain
    \begin{align*}
        \mu_{\widetilde p}(f) \ge \frac{\mu^{2}_p(f)}{\Stab_\rho(f)} \ge \frac{\mu^2_p(f)}{\mu^{5/4}_p(f)} = \mu^{3/4}_p(f). & \qedhere
    \end{align*}
\end{proof}

The main idea is to iterate Lemma~\ref{lemma: one step sharp threshold}. Applying Lemma~\ref{lemma: one step sharp threshold} several times to a family $\ff$, one may obtain that  $\mu_{\widetilde p}(\ff)$ is large for $\widetilde p$ much greater than $p$.

\begin{corollary}
\label{corollary: upgrading the measure}
    Fix integers $m, z$ and $\tau \ge 2$. Let $\ff$ be an upward closed family such that $\mu_p(\ff) \ge \tau^{-z}$ and $p \in (0; (2^6 \tau)^{-m} / 2)$. Then, there exists a set $R$ of size at most $4 z$ such that, for $\widetilde p = (2^6 \tau)^m p$, the following holds
    \begin{align*}
        \mu_{\widetilde p}^{-R}(\ff(R)) \ge \left ( \mu_{p}(\ff) \right )^{(\frac{3}{4})^m},
    \end{align*}
    and $\ff(R)$ is $\tau$-global w.r.t the measure $\mu_{\widetilde p}^{-R}$.
\end{corollary}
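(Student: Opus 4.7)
The plan is to iterate Lemma~\ref{lemma: one step sharp threshold} $m$ times, each iteration preceded by an application of Observation~\ref{observation: tau globalness restriction} to (re)establish the $\tau$-globalness that the lemma requires. Set $p_j := (2^6\tau)^j p$ for $j = 0, \dots, m$, so $p_m = \widetilde p$. The hypothesis $p < (2^6\tau)^{-m}/2$ gives $p_j < 2^{-7}/\tau$ for every $j \le m-1$, which is exactly the validity range of Lemma~\ref{lemma: one step sharp threshold}; meanwhile $p_j < 1/2$ together with $\tau \ge 2$ forces $1/(1-p_j) < \tau$, so by the second sentence of Observation~\ref{observation: tau globalness restriction} we may always take the maximizing pair with $A = B$, producing a restriction of the clean form $\ff(\cdot)$.

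I would inductively build an increasing chain $\varnothing = S^{(0)} \subset S^{(1)} \subset \cdots \subset S^{(m+1)} =: R$ as follows: for $j = 0, 1, \dots, m$, apply Observation~\ref{observation: tau globalness restriction} to $\ff(S^{(j)})$ under $\mu_{p_j}$ on the ambient cube $2^{[n] \setminus S^{(j)}}$, obtaining a set $R_j$ such that, setting $S^{(j+1)} := S^{(j)} \cup R_j$,
\begin{align*}
    \mu_{p_j}^{-S^{(j+1)}}\bigl(\ff(S^{(j+1)})\bigr) \;\ge\; \tau^{|R_j|}\cdot\mu_{p_j}^{-S^{(j)}}\bigl(\ff(S^{(j)})\bigr),
\end{align*}
with $\ff(S^{(j+1)})$ being $\tau$-global w.r.t.\ $\mu_{p_j}^{-S^{(j+1)}}$. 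For $j \le m-1$ I would then chase this with Lemma~\ref{lemma: one step sharp threshold} (applicable because $\ff(S^{(j+1)})$ is upward closed as a restriction of the upward-closed $\ff$, is $\tau$-global at $\mu_{p_j}$, and $p_j$ lies in the required range) to upgrade the measure:
\begin{align*}
    \mu_{p_{j+1}}^{-S^{(j+1)}}\bigl(\ff(S^{(j+1)})\bigr) \;\ge\; \bigl(\mu_{p_j}^{-S^{(j+1)}}\bigl(\ff(S^{(j+1)})\bigr)\bigr)^{3/4}.
\end{align*}
By construction the final $\ff(R)$ is $\tau$-global w.r.t.\ $\mu_{\widetilde p}^{-R}$, as required.

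Composing the two displayed inequalities for each $j < m$ and dropping the harmless factor $\tau^{3|R_j|/4} \ge 1$ yields $\mu_{p_{j+1}}^{-S^{(j+1)}}(\ff(S^{(j+1)})) \ge (\mu_{p_j}^{-S^{(j)}}(\ff(S^{(j)})))^{3/4}$, so iterating $m$ times produces $\mu_{\widetilde p}^{-R}(\ff(R)) \ge \mu_p(\ff)^{(3/4)^m}$. For the size bound, set $\nu_j := -\log_\tau \mu_{p_j}^{-S^{(j)}}(\ff(S^{(j)}))$; the same two displays translate to $\nu_{j+1} \le \tfrac{3}{4}(\nu_j - |R_j|)$ for $j < m$, while the terminal observation step gives only $|R_m| \le \nu_m$. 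Unrolling, using $\nu_0 \le z$ and the fact that every coefficient of $|R_j|$ in the resulting inequality is at least $(3/4)^m$, a one-line geometric-series estimate delivers $\sum_{j=0}^m |R_j| \le z$, so in particular $|R| \le 4z$.

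The only real obstacle is careful bookkeeping: at every stage one must keep straight which biased parameter $p_j$ and which ambient cube $2^{[n] \setminus S^{(j)}}$ is currently in use. The observation step changes the ambient cube but not the parameter, whereas the lemma step does the opposite; once these are aligned correctly at each half-step, the rest reduces to routine arithmetic.
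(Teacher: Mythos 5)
Your proposal is correct and follows essentially the same route as the paper: alternate Observation~\ref{observation: tau globalness restriction} (to restore $\tau$-globalness) with Lemma~\ref{lemma: one step sharp threshold} (to upgrade the measure), then sum a geometric series for the size of $R$. Your bookkeeping via the recursion $\nu_{j+1} \le \tfrac{3}{4}(\nu_j - |R_j|)$ in fact yields the slightly sharper bound $|R| \le z$, whereas the paper bounds each $|R_i| \le z(3/4)^i$ separately and sums to $4z$; both comfortably meet the stated claim.
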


\begin{proof}
    We obtain the result by an iterative application of Lemma~\ref{lemma: one step sharp threshold}. Set $\ff_0 = \ff, p_0 = p$. Note that $\mu_{p_0}(\ff_0) \ge \tau^{-z}$. At step $i$, find $R_i$ that maximizes the quantity $\tau^{-|S|} \mu_{p_i}(\ff_i(R_i))$ and set $\ff_{i + 1} = \ff_i(R_i)$. We prove by induction that $\mu_{p_i}(\ff_i) \ge \mu_p(\ff)^{\left ( \frac{3}{4} \right )^i} \ge \tau^{-z \left ( \frac{3}{4} \right )^i}$. Clearly, the base $i = 0$ holds.
    
    Due to Observation~\ref{observation: tau globalness restriction}, $\ff_i(R_i)$ is $\tau$-global.
    By the inductive assumption and the choice of $R$, we have 
    \begin{align*}
        \tau^{-z \left ( \frac{3}{4} \right )^i} \le \mu_{p_i}(\ff_i) \le \mu_{p_i}(\ff_i(R_i)) \tau^{-|R_i|} \le \tau^{-|R_i|},
    \end{align*}
    and, therefore, the set $R_i$ has size at most $z \left ( \frac{3}{4}\right )^i$. Define $p_{i + 1} = 2^6 \tau p_i$ and $\ff_{i + 1} = \ff_i (R_i)$. Due to Lemma~\ref{lemma: one step sharp threshold}, we have
    \begin{align*}
        \mu_{p_{i + 1}}(\ff_{i + 1}) \ge \mu_{p_i}^{3/4}(\ff_i) \ge (\mu_p(\ff_0))^{(3/4)^{i + 1}},
    \end{align*}
    and the induction step is proved.
    
   Stop the procedure after $m$ steps and set $R = \bigsqcup_{i = 1}^m R_i$. We have
    \begin{align*}
        |R| = \sum_{i = 0}^m |R_i| \le z \sum_{i =0}^\infty \left ( \frac{3}{4}\right )^i = 4 z.
    \end{align*}
    We have $\ff_m = \ff(R)$, and the corollary follows.
\end{proof}

Corollary~\ref{corollary: upgrading the measure} is a counterpart of Theorem~\ref{theorem: r-spread theorem}. To obtain a statement corresponding to Proposition~\ref{proposition: coloring trick}, we need the following claim, which coincides with Claim~\ref{claim: removing intersections} up to the measure used.

\begin{claim}
\label{claim: removing intersections mu p}
    Fix numbers $\tau, p$, $p < 1 < \tau$. Let $X$ be a set of size strictly less than $1 / \tau p$. Suppose that a family $\ff \subset 2^{[n]}$ is $\tau$-global w.r.t. measure $\mu_p^{[n]}$. Then the family $\cG = \ff(\varnothing, X)$ is $\hat \tau$-global w.r.t. $\mu_p^{[n]}$ for
    \begin{align*}
        \hat{\tau} = \frac{\tau}{1 - |X| p \tau},
    \end{align*}
    and $\mu_p(\cG) \ge (1 - |X| p \tau) \mu_p(\ff)$.
\end{claim}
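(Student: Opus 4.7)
The plan is to mirror the $\tau$-homogeneous analog Claim~\ref{claim: removing intersections}: first secure the $\mu_p$-lower bound on $\cG$ via a union bound, then derive $\hat\tau$-globalness of $\cG$ by exploiting the set-family containment $\cG \subset \ff$. The key observation I would rely on is that $\cG = \ff(\varnothing, X)$, by its very definition, is the sub-family of $\ff$ consisting of sets disjoint from $X$; hence $\cG(A,B) \subset \ff(A,B)$ as families in $2^{[n]\setminus B}$ for every pair $A \subset B$. This containment lets me invoke the $\tau$-globalness of $\ff$ on the original test set $B$, rather than on the enlarged set $B \cup X$; the latter route would produce an extraneous factor $\tau^{|X|}$ that cannot be absorbed into $\hat\tau^{|B|}$ in general (it would require $\tau \le 1/(1-p)$, cf.\ Observation~\ref{observation: tau globalness restriction}).

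For the measure bound I would write $\ff \setminus \cG = \{F \in \ff : F \cap X \neq \varnothing\} \subset \bigcup_{x \in X} \{F \in \ff : x \in F\}$, union-bound the $\mu_p^{[n]}$-measure, and reduce each singleton term to $p \cdot \mu_p^{-\{x\}}(\ff(\{x\},\{x\}))$. Applying $\tau$-globalness of $\ff$ to each such single-element restriction bounds it by $p\tau\mu_p^{[n]}(\ff)$, and summing over $x \in X$ yields $\mu_p^{[n]}(\cG) \ge (1 - |X|p\tau)\mu_p^{[n]}(\ff)$. The assumption $|X| < 1/(\tau p)$ guarantees that this factor is positive, which is the second half of the statement.

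For $\hat\tau$-globalness, fix $A \subset B$. If $A \cap X \neq \varnothing$ then $\cG(A,B) = \varnothing$ and there is nothing to prove, so assume $A \cap X = \varnothing$. Combining the containment $\cG(A,B) \subset \ff(A,B)$ with $\tau$-globalness of $\ff$ and the lower bound from the previous paragraph gives
\[
\mu_p^{-B}(\cG(A,B)) \le \mu_p^{-B}(\ff(A,B)) \le \tau^{|B|}\mu_p^{[n]}(\ff) \le \frac{\tau^{|B|}}{1 - |X|p\tau}\cdot \mu_p^{[n]}(\cG).
\]
It then remains to upgrade the single denominator $1 - |X|p\tau$ to the full $(1 - |X|p\tau)^{|B|}$ demanded by $\hat\tau^{|B|}$; for $|B| \ge 1$ this is immediate from $(1 - |X|p\tau)^{|B|-1} \le 1$, and for $|B| = 0$ the inequality is trivial.

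I do not foresee a genuine obstacle: the proof is an almost mechanical $p$-biased translation of Claim~\ref{claim: removing intersections}. The only subtle point is the exponent conversion in the last step, which is precisely what dictates the chosen form $\hat\tau = \tau/(1 - |X|p\tau)$ (rather than a weaker $\tau(1 + O(|X|p\tau))$), since this is exactly what is needed to absorb the multiplicative penalty $1/(1 - |X|p\tau)$ picked up when passing from $\mu_p^{[n]}(\ff)$ to $\mu_p^{[n]}(\cG)$.
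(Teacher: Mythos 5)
Your proposal is correct and follows essentially the same route as the paper's proof: the union bound over $x\in X$ with $\mu_p^{[n]}(\ff[x])=p\,\mu_p^{-\{x\}}(\ff(x))\le p\tau\mu_p(\ff)$ for the measure bound, the containment $\cG(A,B)\subset\ff(A,B)$ combined with $\tau$-globalness of $\ff$ for the numerator, and the observation that a single factor $(1-|X|p\tau)^{-1}$ is absorbed into $\hat\tau^{|B|}$ for $|B|\ge 1$. No gaps.
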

\begin{proof}
    Let $\cG = \ff(\varnothing, X)$. Consider an arbitrary non-empty $B$ and $A \subset B$. We have
    \begin{align}
    \label{eq: excluded X mu p}
        \mu_p^{-B}(\cG(A, B)) \le \mu_p^{-B}(\ff(A, B)) \le \tau^{|B|} \mu_p^{[n]}(\ff),
    \end{align}
    since $\ff$ is $\tau$-global.
    We claim that $\mu_p^{[n]} (\ff[x]) \le p \tau \mu_p(\ff)$ for any $x \in [n]$. Indeed, we have
    \begin{align*}
        \mu_p^{[n]} (\ff[x]) = \sum_{F \in \ff(x)} p^{|F| + 1} (1 - p)^{n - |F| - 1} = p \mu_p^{-\{x\}}(\ff(x)) \le p \tau \mu_p(\ff).
    \end{align*}
    It yields $\mu_p^{[n]}(\cG) \ge \mu_p(\ff) - \sum_{x \in X} \mu_p^{[n]} (\ff[x]) \ge (1 - p \tau |X|) \mu_p(\ff)$. Substituting it into~\eqref{eq: excluded X mu p}, we get
    \begin{align*}
        \mu^{-B}_p(\cG(A, B)) \le \tau^{|B|} \mu_p(\ff) \le \left (\frac{\tau}{1 - |X| p \tau} \right)^{|B|} \mu_p^{[n]}(\cG),
    \end{align*}
    so $\cG$ is indeed $\hat \tau$-global.
\end{proof}

Finally, we obtain a counterpart of Proposition~\ref{proposition: coloring trick}. The difference is that we require the families to have not too small measure, so that we can upscale it by Corollary~\ref{corollary: upgrading the measure}.

\begin{lemma}
\label{lemma: matchings in global families}
    Let $z$ be an integer and $\tau \ge 4$ some positive number. Let $\ff_1 \subset \binom{[n]}{k_i}, \ldots, \ff_s \subset \binom{[n]}{k_s}$ be $\tau/2$-homogeneous families with $|\ff_i| \ge 8 \tau^{-z} \binom{[n]}{k_i}$. Set $p_i = k_i/n$ and $m = \left \lceil \log_{4/3} \left ( \frac{z \log \tau}{\log 2} \right ) \right \rceil$. Suppose that $s z (2^6 \tau)^m \tau p_i < 1/64$ for each $i = 1, \ldots, s$. Then, there exist pairwise disjoint sets $F_1 \in \ff_1, \ldots, F_s \in \ff_s$.
\end{lemma}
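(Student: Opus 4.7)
The plan is to mirror the proof of Proposition~\ref{proposition: coloring trick}, with the sharp-threshold bootstrap of Corollary~\ref{corollary: upgrading the measure} replacing the direct appeal to the spread lemma. As a first step, Claim~\ref{claim: p-biased measure bound on uniform measure} gives $\mu_{p_i}(\ff_i^\uparrow) \ge |\ff_i|/(4\binom{n}{k_i}) \ge 2\tau^{-z}$ for every $i$. The end goal is to construct pairwise disjoint conditioning sets $R_1,\ldots,R_s$, with $T := \bigsqcup_i R_i$, together with upward closed families $\cG_i \subset 2^{[n]\setminus T}$ whose $\mu_{\widetilde p_i}$-measure is bounded below by an absolute constant, and then to invoke Lemma~\ref{lemma: matching lemma} in the universe $2^{[n]\setminus T}$ to obtain disjoint $G_i \in \cG_i$. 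Each $G_i \cup R_i$ will then contain some $F_i \in \ff_i$, and pairwise disjointness of the $F_i$'s will follow from disjointness of the $G_i$'s and of the $R_i$'s.

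The $R_i$'s are built sequentially. Given disjoint $R_1,\ldots,R_{i-1}$ with union $T_{<i}$ of size at most $4(i-1)z$, define $\cH_i := (\ff_i(\varnothing, T_{<i}))^\uparrow \subset 2^{[n]}$. The $\tau/2$-homogeneity of $\ff_i$ yields the pointwise bound $|\ff_i(x)| \le (\tau p_i/2)|\ff_i|$, hence $|\ff_i(\varnothing, T_{<i})| \ge (1 - |T_{<i}|\tau p_i/2)|\ff_i|$. The hypothesis $sz(2^6\tau)^m\tau p_i < 1/64$ forces the factor in parentheses to exceed $1 - 1/32$, so Claim~\ref{claim: p-biased measure bound on uniform measure} applied in the universe $[n]$ delivers $\mu_{p_i}^{[n]}(\cH_i) \ge \tau^{-z}$. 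Since no $G \in \ff_i(\varnothing, T_{<i})$ meets $T_{<i}$, membership in $\cH_i$ is invariant under adding or removing elements of $T_{<i}$, so $\mu_{p_i}^{-T_{<i}}(\cH_i \cap 2^{[n]\setminus T_{<i}}) = \mu_{p_i}^{[n]}(\cH_i) \ge \tau^{-z}$. I then feed $\cH_i \cap 2^{[n]\setminus T_{<i}}$ into Corollary~\ref{corollary: upgrading the measure}, viewed in the universe $[n]\setminus T_{<i}$ with starting measure $p_i$: the output is a set $R_i \subset [n]\setminus T_{<i}$ of size at most $4z$ such that the restriction is $\tau$-global and has $\mu_{\widetilde p_i}$-measure at least $(\tau^{-z})^{(3/4)^m} \ge 1/2$, the last inequality being precisely the content of the definition $m = \lceil \log_{4/3}(z\log\tau/\log 2)\rceil$.

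Once $T = \bigsqcup_i R_i$ is assembled (disjoint by construction, with $|T|\le 4sz$), for each $i$ I apply Claim~\ref{claim: removing intersections mu p} to the $\tau$-global restriction above with $X = T\setminus R_i$: the hypothesis yields $|X|\tau\widetilde p_i < 1/16$, so the resulting upward closed family $\cG_i \subset 2^{[n]\setminus T}$ has $\mu_{\widetilde p_i}^{-T}$-measure at least $(15/16)\cdot(1/2) = 15/32$. The same hypothesis gives $\sum_i \widetilde p_i \le 1/(64z\tau) \le 1/2$ and $3s\widetilde p_i \le 3/(64z\tau) < 15/32$, so Lemma~\ref{lemma: matching lemma} applied in $2^{[n]\setminus T}$ with probabilities $\widetilde p_i$ produces disjoint $G_1 \in \cG_1,\ldots,G_s \in \cG_s$. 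By construction $G_i \cup R_i \in \cH_i$ and therefore contains some $F_i \in \ff_i(\varnothing, T_{<i}) \subset \ff_i$; pairwise disjointness of the $F_i$'s follows from $F_i \subset G_i \cup R_i$, the matching property of the $G_i$'s, the inclusion $G_i \cap R_j \subset ([n]\setminus T)\cap T = \varnothing$, and the sequentially enforced $R_i \cap R_j = \varnothing$.

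The principal subtlety is Stage 1: preserving a lower bound $\mu \ge \tau^{-z}$ through $s$ sequential restrictions. This is exactly where $\tau/2$-homogeneity (rather than a mere $\tau$-globalness of the upper closure) is used---it provides the pointwise bound $|\ff_i(x)|/|\ff_i| \le \tau p_i/2$ needed to control the loss at each step as a tiny geometric factor governed by the single scalar hypothesis $sz(2^6\tau)^m\tau p_i < 1/64$. Every other ingredient (propagation of globalness under restriction via Claim~\ref{claim: removing intersections mu p}, and the matching lemma) is a routine composition.
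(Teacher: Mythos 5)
Your proposal is correct and follows essentially the same route as the paper's proof: a sequential construction of the conditioning sets $R_i$ via Corollary~\ref{corollary: upgrading the measure}, with the $\tau/2$-homogeneity controlling the measure loss from excluding the previously chosen $R_j$'s, followed by Claim~\ref{claim: removing intersections mu p} to pass to the common universe $[n]\setminus T$ and an application of Lemma~\ref{lemma: matching lemma}. The only differences are cosmetic (slightly different constants and bookkeeping of the upper closures).
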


\begin{proof}
    We apply Corollary~\ref{corollary: upgrading the measure} to the upper closures of families $\ff_1, \ldots, \ff_s$ step by step. By our choice of $m$, we have
    \begin{align*}
        \left ( \frac{3}{4}\right )^m z \log \tau \le \log 2.
    \end{align*}
    Set $p = k_i/n$  and $\widetilde p_i = (2^6 \tau)^m p_i$. Consider the upper closure $\cQ_1 = \ff_1^\uparrow$ of the family $\ff_1$. Due to Claim~\ref{claim: p-biased measure bound on uniform measure}, we have $\mu_{p_1}(\cQ_1) \ge \mu(\ff_1) / 4 \ge \tau^{-z}$. Applying Corollary~\ref{corollary: upgrading the measure} to $\cQ_1$, we obtain a set $R_1$, $|R_1| \le 4 z$, such that
    \begin{align*}
        \mu_{\widetilde p_1}^{-R_1}(\cQ_1(R_1)) \ge (\mu_{p_1}(\cQ_1))^{(3/4)^m} \ge (\tau^{-z})^{(3/4)^m} \ge e^{-\log 2} = \frac{1}{2},
    \end{align*}
    and $\cQ_1(R_1)$ is $\tau$-global.

    Then we proceed inductively. On step $i + 1$, suppose that we have obtained disjoint sets $R_1, \ldots, R_i$. Set $R_{\le i} = \bigsqcup_{j = 1}^i R_j$, $|R_{\le i}| \le 4 i z$. By Proposition~\ref{proposition: spread families examples}, the family $\binom{[n]}{k_i}$ is $n/k_i$-spread. Put $r_i = n/k_i$. By Claim~\ref{claim: removing intersections}, the family $\ff_{i + 1}(\varnothing, R_{\le i})$ is $\hat \tau$-homogeneous for
    \begin{align*}
        \hat \tau = \frac{\tau/2}{1 - \frac{2 |R_{\le i}| \tau }{r_i}} \le \frac{\tau/2}{1 - \frac{8 s z k_i \tau}{n }} = \frac{\tau/2}{1 - 8 s z p_i \tau} \le \tau.
    \end{align*}
    and
    \begin{align*}
        \mu(\ff_{i + 1}(\varnothing, R_{\le i})) \ge\left ( 1 - \frac{2 |R_{\le i}| \tau }{r_i}\right ) \mu(\ff_{i + 1}) \ge (1 - 8 s z p_i \tau) \mu(\ff_{i + 1})  \ge 4 \tau^{-z}.
    \end{align*}
    Consider the upper closure $\cQ_{i + 1} = [\ff_{i + 1}(\varnothing, R_{\le i})]^\uparrow$ of the family $\ff_{i + 1}(\varnothing, R_{\le i})$ w.r.t $[n] \setminus R_{\le i}$. Due to Claim~\ref{claim: p-biased measure bound on uniform measure}, we have $\mu_{p_{i + 1}}^{-R_{\le i}}(\cQ_{i + 1}) \ge \tau^{-z}$. Apply Corollary~\ref{corollary: upgrading the measure} and obtain a set $R_{i + 1}, |R_{i + 1}| \le 4 z$, such that $\mu_{\widetilde p_{i + 1}}^{-(R_{\le i} \sqcup R_{i + 1})} (\cQ_{i + 1}(R_{i + 1})) \ge 1/2$ and $\cQ_{i + 1}(R_{i + 1})$ is $\tau$-global. 

    When the procedure stops, define $R = \bigsqcup_{i = 1}^s R_i$. Set $\cG_i = \cQ_{i}(R_{i}, R)$. Due to Claim~\ref{claim: removing intersections mu p}, we have
    \begin{align*}
        \mu_{\widetilde p_i}^{-R_{\le i}}(\cG_i) & \ge (1 - \widetilde p_i \tau |R|) \mu_{\widetilde p_i}^{-R_{\le i}}(\cQ_{i}(R_i)) \\
        & \ge (1 - 4 (2^6 \tau)^{m} p_i \tau s z) \mu_{\widetilde p_i}^{- R_{\le i}}(\cQ_i(R_i))\ge \frac{1}{4}, 
    \end{align*}
    with a convention $R_{\le 0} = \varnothing$.
    Since $\mu_{\widetilde p_i}^{-R}(\cG_i) \ge \mu_{\widetilde p_i}^{-R_{\le i}}(\cG_i)$, we have $\mu_{\widetilde p_i}^{-R}(\cG_i) \ge 1/4$. Since $s \widetilde p_i \le 1/16$, we have $\mu_{\widetilde p_i}^{-R}(\cG_i) \ge 3 s \widetilde p_i$. Therefore, there exist disjoint sets $G_1 \in \cG_1, \ldots, G_s \in \cG_s$ due to Lemma~\ref{lemma: matching lemma}. Thus, there exist disjoint sets $R_1 \cup G_1 \in \ff_s, \ldots, R_s \cup G_s \in \ff_s$, and the lemma follows.
\end{proof}

\section{Proof of Theorem~\ref{theorem: main theorem}}
\label{section: proof of the main result}

\begin{proof} We consider several cases depending on the behaviour of $k$ as a function of $n$. 

\textbf{Case 1: $\mathbf{k \ge 33 s t^2 \ln n}$.} Here $\ln(\cdot)$ stands for the natural logarithm. We apply the following theorem, which is proved in Section~\ref{section: proof of theorem for large uniformity}. Moreover, using  this theorem, we will derive analogs of Theorem~\ref{theorem: main theorem} for subfamilies of $\Sigma_n$ and $[n]^k$. The statement of the theorem is given in the notation of Assumptions~\ref{assumption: spreadness - small r}-\ref{assumption: weak shadow consistency - small r}.

\begin{theorem}
    \label{theorem: large uniformity sunflowers}
    Suppose that $\cA$ satisfies Assumptions~\ref{assumption: spreadness - small r}-\ref{assumption: weak shadow consistency - small r} with $ \varepsilon r \ge  s^2q^3$ for some $\varepsilon \in (0, 1/2]$ and $q = \lceil  16 \eta s t^2  \ln r \rceil$. Suppose additionally, that at least one of the following two conditions holds:
    \begin{enumerate}[label={\roman*)}]
        \item \label{condition: large spreadness} $r \ge 2^{11} s \lceil \log_2 k \rceil$;
        \item \label{condition: sharp threshold applicability} $\cA = \binom{[n]}{k}$ (so $\eta = 2, r=n/k, q = \lceil 32 s t^2 \ln \frac{n}{k} \rceil, \mu = n/k$ due to Proposition~\ref{proposition: assumptions satisified}) and $n / k \ge 2^{390}s^{43} t^{82} + 2^{23000}$.
    \end{enumerate}
    Assume that $k \ge 2 t + q$, $\mu k \ge 2 s t q $. Let $\ff \subset \cA$ be a family without a sunflower with $s$ petals and the core of size exactly $t - 1$. Then, there exists a family $\cT \subset \partial_t \cA$ of uniformity $t$ without a sunflower with $s$ petals (and the core of arbitrary size), such that
    \begin{align*}
        |\ff \setminus \cA[\cT]| \le 32 r^{-\eta t} A_t +   \phi(s, t) \cdot 4 \varepsilon A_t.
    \end{align*}
\end{theorem}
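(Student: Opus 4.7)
The plan is to prove Theorem~\ref{theorem: large uniformity sunflowers} by combining the spread/homogeneous approximation framework of Section~\ref{subection: spread approximation Kupavskii} with the simplification argument of Lemma~\ref{lemma: sunflower simplification}. The two conditions \ref{condition: large spreadness} and \ref{condition: sharp threshold applicability} correspond to two ways of producing matchings inside spread/global families: Proposition~\ref{proposition: coloring trick} for the large-spreadness regime, and Lemma~\ref{lemma: matchings in global families} (through the sharp-threshold machinery of Section~\ref{subsection: sharp threshold and matchings}) for the regime $\cA=\binom{[n]}{k}$ with $r$ only polynomial in $s,t$.

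\textbf{Step 1 (Homogeneous approximation).} Apply Lemma~\ref{lemma: spread approximation}, or its hypercontractive replacement obtained by iterating Observation~\ref{observation: homogenous restriction} together with Lemma~\ref{lemma: matchings in global families} in case \ref{condition: sharp threshold applicability}, with $\tau$-parameter chosen so that the residual satisfies $|\cR|\le 32 r^{-\eta t}A_t$; this is feasible using Assumption~\ref{assumption: large t-link} together with the choice of $q$ in the theorem. We obtain a decomposition $\ff=\cR\sqcup \bigsqcup_{S\in\cS}\ff_S\vee\{S\}$ with $\cS\subset\partial_{\le q}\cA$ and each $\ff_S\subset\cA(S)$ $\tau$-homogeneous.

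\textbf{Step 2 (Sets of $\cS$ have size at least $t$).} Suppose $|S|\le t-1$ for some $S\in\cS$. By $\tau$-homogeneity and Proposition~\ref{proposition: shadow of tau-homogenous family}, the shadow $\partial_{t-1-|S|}\ff_S$ is large. Pick $D$ in this shadow such that $\ff_S(D)$ is again homogeneous, using Proposition~\ref{proposition: most of subsets are homogeneous}; then $\ff_S(D)$ is sufficiently spread to contain $s$ pairwise disjoint sets via Proposition~\ref{proposition: coloring trick} (case \ref{condition: large spreadness}) or Lemma~\ref{lemma: matchings in global families} (case \ref{condition: sharp threshold applicability}). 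Combined with $S\cup D$, these yield a sunflower in $\ff$ with core of size exactly $t-1$, a contradiction.

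\textbf{Step 3 (No sunflower with core $\le t-1$ in $\cS$).} Suppose $S_1,\dots,S_s\in\cS$ form a sunflower with core $C$ of size $c\le t-1$. We must exhibit a sunflower in $\ff$ with core of size \emph{exactly} $t-1$. First, use Claim~\ref{claim: removing intersections} to puncture each $\ff_{S_i}$ by $\bigcup_{j\ne i}S_j$, preserving homogeneity. If $c=t-1$, apply the matching statement (Proposition~\ref{proposition: coloring trick} or Lemma~\ref{lemma: matchings in global families}) directly to extract disjoint $F_i\in\ff_{S_i}$, and the core of the resulting sunflower in $\ff$ is $C$. If $c<t-1$, use Proposition~\ref{proposition: shadow of tau-homogenous family} plus Assumption~\ref{assumption: weak shadow consistency - small r} (this is where the hypothesis $\mu k\ge 2stq$ enters) to find a common $D$ of size $t-1-c$ in $\bigcap_i\partial_{t-1-c}\ff_{S_i}$, disjoint from $\bigcup_j S_j$, such that each $\ff_{S_i}(D)$ remains homogeneous (again Proposition~\ref{proposition: most of subsets are homogeneous}). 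Then extract disjoint $F_i\in\ff_{S_i}(D)$; the sets $S_i\cup D\cup F_i$ form a sunflower in $\ff$ with core $C\cup D$ of size exactly $t-1$, a contradiction.

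\textbf{Step 4 (Simplification and bookkeeping).} Steps 2--3 verify the hypotheses of Lemma~\ref{lemma: sunflower simplification} for the family $\cS$ with our choice of $q$ and $\varepsilon$. The lemma produces $\cT\subset\partial_t\cA$ without an $s$-petal sunflower with $|\cA[\cS\setminus\cS[\cT]]|\le (2^{14}s\log_2 t)^t\cdot\tfrac{\varepsilon}{1-\varepsilon}A_t\le 2\varepsilon(2^{14}s\log_2 t)^tA_t$ for $\varepsilon\le 1/2$. Combining with the residual bound from Step 1 gives $|\ff\setminus\cA[\cT]|\le 32r^{-\eta t}A_t+2\varepsilon(2^{14}s\log_2 t)^tA_t$, as required.

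\textbf{Main obstacle.} The delicate step is Step 3: one has to locate a single $D$ that simultaneously lies in $s$ shadows, avoids $\bigcup_i S_i$, and preserves homogeneity in every $\ff_{S_i}(D)$. Quantifying the exceptional set in Proposition~\ref{proposition: most of subsets are homogeneous} against the shadow overlap bound from Assumption~\ref{assumption: weak shadow consistency - small r} is what forces the condition $\mu k\ge 2stq$ and the precise form $q=\lceil 16\eta st^2\log r\rceil$. Case \ref{condition: sharp threshold applicability} is additionally subtle because the straightforward application of Proposition~\ref{proposition: coloring trick} would introduce a spurious $\log k$ factor in the spreadness requirement; this is the reason one must switch to the sharp-threshold route via Lemma~\ref{lemma: matchings in global families}, where Theorem~\ref{theorem: global hypercontractivity} replaces the spread lemma and removes the $\log k$ at the cost of a polynomial dependence on $s,t$ in the lower bound on $n/k$.
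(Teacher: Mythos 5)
Your proposal follows essentially the same route as the paper: a $\tau$-homogeneous approximation $\cS\subset\partial_{\le q}\cA$ with residual $32r^{-\eta t}A_t$, a verification that $\cS$ contains no $s$-sunflower with core of size at most $t-1$ (splitting on whether the core already has size $t-1$ or must be augmented by a common shadow element $D$ found via Proposition~\ref{proposition: most of subsets are homogeneous}, Assumption~\ref{assumption: weak shadow consistency - small r} and a union bound over the $s$ families), and then Lemma~\ref{lemma: sunflower simplification} to produce $\cT$; you also correctly identify that condition \ref{condition: large spreadness} routes through Proposition~\ref{proposition: coloring trick} while condition \ref{condition: sharp threshold applicability} routes through Lemma~\ref{lemma: matchings in global families}. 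The only technical point you gloss over is that the approximation lemma must be strengthened to guarantee a lower bound $\mu(\ff_S)\ge 16\tau^{-q}$ on each piece (the paper's Lemma~\ref{lemma: spread approximation with large measure}), since Lemma~\ref{lemma: matchings in global families} needs this measure hypothesis; otherwise the argument is the paper's.
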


The proof of Theorem~\ref{theorem: large uniformity sunflowers} is postponed to Section~\ref{section: proof for small k theorem}.

Let us discuss the case $\mathcal A = {[n]\choose k}$. Note that $\cA = \binom{[n]}{k}$ can satisfy both~\ref{condition: large spreadness} and~\ref{condition: sharp threshold applicability}, depending on $n$ and $k$. First, we check when the conditions of the theorem are satisfied. Due to Proposition~\ref{proposition: assumptions satisified}, we have $r = n/k$, $\eta = 2$, $\mu = n/k$ and $q = \lceil 32 s t^2 \ln r\rceil \le 33 s t^2 \ln(n/k)$. Thus, the condition $\varepsilon r \ge s^2 q^3$ for some $\varepsilon \le 1/2$ is satisfied if
\begin{align*}
    n/k \ge 2s^2 (33 s t^2 \ln(n/k))^3 = 2 \cdot 33^3 s^5 t^6 \ln^3(n/k).
\end{align*}
If $n/k \ge 2^{25}$, then $\ln(n/k) \le (n/k)^{1/6}$. Therefore, it is enough to guarantee the following two inequalities
\begin{align*}
    n \ge 2^{25} k \quad \text{ and } n/k \ge (2 \cdot 33^3)^2 s^{10} t^{12}.
\end{align*}
Both of them are satisfied if $n \ge 2^{33} s^{10} t^{12} k$. Condition~\ref{condition: large spreadness} is satisfied if $n \ge 2^{11} s k \log_2 k$. Condition~\ref{condition: sharp threshold applicability} is satisfied if $n \ge (2^{390} s^{43} t^{82} + 2^{23000}) k$. The condition $\mu k \ge 2 s t q$ is satisfied, since $\mu = n/k$ and $n \ge 2 s^2 q^3 \ge 2 s t q$. Finally, we have $k \ge 33 st^2 \ln n \ge 2t + q$, and so Theorem~\ref{theorem: large uniformity sunflowers} is applicable.

% Proposition~\ref{proposition: assumptions satisified} guarantees for $n \ge 2^{51} s^4 t^4 k$ and $q = \lceil 32 s t^2 \log r \rceil$, Assumptions~\ref{assumption: spreadness - small r}-\ref{assumption: weak shadow consistency - small r} are satisfied with $\mu = n/k$, $\eta = 2$ and $r = n/2k$. Since $ r \ge 2^{23} s^2 t^2 \log r$, we have $r/2 \ge 2^{17} s q$ and $\mu k \ge 2 s t q$.

%  It enough to assume that $n \ge 2^{15} s k \lceil \log_2 k \rceil$, and Theorem~\ref{theorem: main theorem} under Condition~\ref{condition: large spreadness} follows.

% Next, we check dependencies between $n, k$ sufficient to ensure Condition~\ref{condition: sharp threshold applicability}. Note that if $n \ge 2^{339} s^{43} t^{80} + 2^{16000}$, the desired condition is satisfied and the theorem follows.

By the definition, we have $|\cT| \le \phi(s, t)$. Choosing $\varepsilon = \frac{s^2 q^3}{r}$, we obtain
\begin{align*}
    |\ff| & \le \phi(s, t) \binom{n - t}{k - t} + 32 \left ( \frac{k}{n} \right )^{2t} \binom{n - t}{k - t} + \phi(s, t) \cdot \frac{4 \cdot s^2 (33 s t^2 \ln(n/k))^3}{n/k} \cdot \binom{n - t}{k - t} \\
    & \le \left (1 + \frac{2^{18} s^5 t^6 \ln^3(n/k)}{n/k} \right ) \phi(s, t) \binom{n - t}{k - t} \le \left ( 1 + \sqrt[3]{\frac{2^{54} s^{15} t^{18}}{n/k}}\right) \phi(s, t) \binom{n - t}{k - t}.
\end{align*}

The above argument gives us the following lower bound on $n$:
\begin{align*}
    n \ge 2^{11} s k \cdot \min \left \{  \log_2 k  + 2^{22} s^9 t^{11}, 2^{379} s^{42} t^{82} + 2^{23000}\right \}.
\end{align*}
It remains to study the case $k \le 33 s t^2 \ln n$, or, equivalently, $n \ge e^{k/33 s t^2}$. 

\textbf{Case 2. We have $\mathbf{n \ge e^{k/33st^2}}$ and $\mathbf{k \ge 2^{10} s t^3 \ln k}$.} In this case, we use that fact that $n \ge s^{40 t} \cdot g_0(t)$ for some large enough function $g_0(\cdot)$ depending on $t$. For such $k$, we have $k^{20 t} \le e^{k/33 st^2}$, and thus, we may apply the following theorem.

\begin{theorem}
\label{theorem: large r theorem}
Let $\cA = \binom{[n]}{k/w}^w$ for some $w | k$. Suppose that $n \ge k^{20 t}$, $n \ge s^{40 t} \cdot g_0(t)$ and $k \ge 5t$, where $g_0(t) \ge (2^{20} t^6)^{20t}$. Let $\ff \subset \cA$ be a family without a sunflower with $s$ petals and the core of size exactly $t - 1$. Then, we have
\begin{align*}
    |\ff| \le  \phi(s, t) |\partial_{k - t} \cA| + n^{-1/3} \cdot A_t.
\end{align*}
\end{theorem}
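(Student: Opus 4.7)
The plan is to apply Lemma~\ref{lemma: spread approximation}, possibly iteratively, to decompose $\ff = \cR \sqcup \bigsqcup_{S \in \cS}\ff_S \vee \{S\}$ into a negligible remainder $\cR$ plus $\tau$-homogeneous buckets $\ff_S$ indexed by $\cS \subseteq \binom{[n]}{\le t}$, and then use the forbidden-sunflower hypothesis together with Proposition~\ref{proposition: coloring trick} to force $\cS$ to concentrate on $\binom{[n]}{t}$ and be sunflower-free there.

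Concretely, I would apply Lemma~\ref{lemma: spread approximation} with $q = t$ and $\tau$ chosen so that $|\cR| \le \tau^{-(t+1)} \binom{n}{k} \le \tfrac{1}{3} n^{-1/3}\binom{n-t}{k-t}$. Using $\binom{n}{k}/\binom{n-t}{k-t} = \binom{n}{t}/\binom{k}{t} \le (n/k)^t$, this amounts to $\tau \gtrsim (n^{t+1/3}/k^t)^{1/(t+1)}$; the hypothesis $n \ge k^{20t}$ ensures such a $\tau$ stays below the ambient spreadness $n/k$ of $\binom{[n]}{k}$, so the decomposition is non-trivial. Each $\ff_S$ is $\tau$-homogeneous in $\cA(S) = \binom{[n]\setminus S}{k-|S|}$, hence $R$-spread with $R \asymp n/(k\tau)$, and a short calculation using $n \ge k^{20t}$ and $n \ge s^{40t}$ gives $R \ge k^{(40t-3)/(3(t+1))}$, comfortably dominating $2^{13}s\lceil\log_2 k\rceil$.

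For each $S \in \cS$ with $|S| = t-1$, the family $\ff_S \subseteq \ff(S)$ is matching-free by the sunflower hypothesis with core $S$, but the spreadness combined with Proposition~\ref{proposition: coloring trick} would produce a matching, so $\ff_S = \varnothing$. For $|S| < t-1$, I pass to generic $(t-1-|S|)$-extensions $X$: by Proposition~\ref{proposition: most of subsets are homogeneous} (applied through an iterated spread approximation so that $\tau$ per step stays small enough), most $X$ leave $\ff_S(X)$ still $\hat\tau$-homogeneous, whereupon the preceding argument --- now in the forbidden regime of core $S \cup X$ of size exactly $t-1$ --- kills those restrictions, with the remaining atypical $X$ absorbed into another $\tfrac{1}{3}n^{-1/3}\binom{n-t}{k-t}$. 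Finally, if $T_1, \ldots, T_s \in \cS^{(t)}$ formed an $s$-sunflower with core $Y$ of size $c \le t-1$, I pick a generic $X$ of size $t-1-c$ disjoint from $\bigcup_j T_j$ so that each $\ff_{T_i}(X)$ is $\hat\tau$-homogeneous; excising $\bigcup_{j \ne i} T_j \setminus T_i$ via Claim~\ref{claim: removing intersections} and applying Proposition~\ref{proposition: coloring trick} then yields pairwise disjoint $G_i \in \ff_{T_i}(X)$ disjoint also from $\bigcup_j T_j \cup X$, and $\{T_i \cup X \cup G_i\}_{i=1}^s$ is a sunflower in $\ff$ with core $Y \cup X$ of size exactly $t-1$, contradicting the hypothesis. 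Hence $|\cS^{(t)}| \le \phi(s,t)$, and assembling the pieces gives $|\ff| \le \phi(s,t)\binom{n-t}{k-t} + n^{-1/3}\binom{n-t}{k-t}$.

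The main obstacle is the tension between choosing $\tau$ large enough to make $\cR$ negligible and small enough for Proposition~\ref{proposition: most of subsets are homogeneous} to apply --- the latter requires $\tau \le (1 - \alpha\rho)^{-1/h}$, which for large $h$ is very close to $1$. Resolving this requires an iterated spread-approximation argument in which Lemma~\ref{lemma: spread approximation} is applied multiple times with small $\tau$ per step, so that the cumulative spreadness is large while each individual application stays within the precondition of Proposition~\ref{proposition: most of subsets are homogeneous}.
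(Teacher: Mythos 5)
Your high-level plan is essentially the paper's argument for the \emph{large}-$k$ regime (Theorem~\ref{theorem: large uniformity sunflowers}), but Theorem~\ref{theorem: large r theorem} is invoked precisely where that argument breaks down, and the breakdown is quantitative, not cosmetic. The problem is the trade-off you flag at the end, and your proposed resolution does not work. To make $|\cR|\le \tau^{-(q+1)}\binom{n}{k}$ smaller than $n^{-1/3}\binom{n-t}{k-t}$ you need $\tau^{q+1}\gtrsim (n/k)^t\, n^{1/3}$. With $q=t$ this forces $\tau\gtrsim (n/k)^{t/(t+1)}n^{1/(3(t+1))}$, and then every extension step (your ``generic $(t-1-|S|)$-extensions'' and the core-$\le t-2$ sunflower case) requires the restricted families to remain spread after a homogeneity loss of order $\tau^{t}$ (this is what Proposition~\ref{proposition: most of subsets are homogeneous} costs once $\rho$ is pushed close to $1$ to accommodate $\tau\gg 1$, and the same $\tau^t$ loss appears in Lemma~\ref{lemma: homogenous subfamily}). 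But $\tau^{t}\ge (n/k)^{t^2/(t+1)}\ge n/k$ for every $t\ge 2$, so the ambient spreadness is already exhausted and Proposition~\ref{proposition: coloring trick} cannot be applied. Iterating Lemma~\ref{lemma: spread approximation} with small $\tau$ per step does not escape this: the remainder of any application is governed by $\tau^{-(q+1)}$ with $q\le k$, and in this theorem $k$ can be as small as $5t$ (and is always $\le 33st^2\log n$ where the theorem is used), so one can never reach $\tau^{q+1}\gtrsim (n/k)^t n^{1/3}$ with $\tau$ close enough to $1$ for the averaging step. Only cores of size exactly $t-1$ (where no extension is needed) survive your argument; cores of size $\le t-2$ and sets $S$ with $|S|\le t-2$ do not, so you cannot conclude $|\cS^{(t)}|\le\phi(s,t)$.

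The paper's actual proof is structurally different and circumvents both obstacles. First (Section~\ref{section: simplification argument ii}) it runs an iterative simplification reducing $\ff$, up to a small error, to $\ff[\cT]$ for a family $\cT$ of uniformity $2t$ or $2t+1$; controlling the error crucially uses the external Tur\'an bound of Brada\v{c} et al.\ (Theorem~\ref{theorem: bucic upper bound}) to count the intermediate low-uniformity families. Spread approximation with $\tau=n^{1/(2t)}$ is then applied \emph{inside each link} $\ff_T\subset\cA(T)$, where the ambient family is already at scale $\binom{n-2t}{k-2t}$, so the remainder is affordable and $\tau^t=\sqrt{n}\ll n/k$ keeps the extension arguments alive. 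The structure extracted is much weaker than yours: a family $\cC^*\subset\binom{[n]}{t}$ of size up to $42t\phi(s,t)\sqrt{n}\log n$ (not $\phi(s,t)$), obtained via the $(\cS,s,t)$-system clustering of Proposition~\ref{proposition: clustering of (S, s, t)-system into t-sets}, which extends by subsets $P$ of actual petals (pruned via Lemma~\ref{lemma: homogenous subfamily}) rather than by generic sets. The main term $\phi(s,t)\binom{n-t}{k-t}$ is then recovered by a Frankl--F\"uredi Delta-system-style count (Section~\ref{subsection: proof of large r thereom}): for each $(k-t)$-set $F$, the family $\cC_F=\{C\in\cC^*: F\in\ff_C^*\}$ of cores is sunflower-free, hence of size at most $\phi(s,t)$. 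Your proposal contains no substitute for the pre-reduction to uniformity $2t$, no use of Theorem~\ref{theorem: bucic upper bound}, and no mechanism to obtain the $\phi(s,t)$ factor once $|\cS^{(t)}|\le\phi(s,t)$ is out of reach, so the gap is essential.
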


The proof of Theorem~\ref{theorem: large r theorem} is postponed to Section~\ref{section: proof for small k theorem}.

For the ambient family $\cA = \binom{[n]}{k}$, we have $|\partial_{k - t} \cA| = \binom{n}{k - t}$. Therefore, the family $\ff$ as in Theorem~\ref{theorem: large r theorem} has size at most
\begin{align*}
    |\ff| \le \phi(s, t) \binom{n}{k - t} + n^{-1/3} \binom{n - t}{k - t}.
\end{align*}
Bounding 
\begin{align*}
    \frac{\binom{n}{k - t}}{\binom{n - t}{k - t}} \le \frac{n^{k - t}}{(n - k)^{k - t}} = \left (1 + \frac{k}{n - k} \right )^{k - t} \le 1+\frac {2k^2} {n - k},
\end{align*}
we obtain
\begin{align*}
    |\ff| \le \left (1 + \frac{k^2}{n - k} + n^{-1/3} \right ) \phi(s, t) \binom{n - t}{k - t} \le \left (1 + \sqrt[3]{\frac{2^{54} s^{15} t^{18}}{n/k}} \right ) \phi(s, t) \binom{n - t}{k - t}.
\end{align*}

Cases 1 and 2 together cover the regime
\begin{align*}
     n \ge 2^{11} s k \cdot \min \left \{  \log_2 k  + 2^{22} s^9 t^{11}, 2^{379} s^{42} t^{82} + 2^{23000}\right \}, \quad n \ge s^{40t} \cdot g_0(t),
\end{align*}
and $k \ge 2^{10} s t^3 \log k$. It remains to consider the case $2t + 1 \le k \le 2^{13} s t^3 \log k$.

\textbf{Case 3. We have $\mathbf{5t \le k \le 2^{10} s t^3 \log k}$.} In particular, we have $k \le 2^{20} s^2 t^6$, and, since $n \ge s^{40t} \cdot g_0(t) \ge k^{20 t}$, we may again apply Theorem~\ref{theorem: large r theorem} and obtain
\begin{align*}
    |\ff| \le  \left (1 + \sqrt[3]{\frac{2^{54} s^{15} t^{18}}{n/k}} \right ) \phi(s, t) \binom{n - t}{k - t} 
\end{align*}
as before.

\textbf{Case 4. We have $\mathbf{2t + 1\le k < 5t}$.} In this case, we apply Theorem~\ref{theorem: delta-system solution}, and, obtain
\begin{align*}
    |\ff| \le \phi(s, t) \binom{n}{k - t} + \frac{(5st)^{2^{Ct}}}{n} \binom{n}{k - t}
\end{align*}
for some absolute constant $C$. The desired bound holds provided $n \ge n_0(s, t)$. 
\end{proof}

\section{Erd\H{o}s--Duke problem for spread domains}
\label{section: erdos-duke for spread domains}

Before we move on to proofs of Theorems~\ref{theorem: large uniformity sunflowers},\ref{theorem: large r theorem}, let us discuss how results of Section~\ref{section: proof of the main result} can be generalized to other domains.

\subsection{Permutations}
Let us show that Theorem~\ref{theorem: large uniformity sunflowers} can be applied to families of permutations. Due to Proposition~\ref{proposition: permutations satisfy assumptions}, the family $\mathfrak{S}_n$ satisfies Assumption~\ref{assumption: spreadness - small r}-\ref{assumption: weak shadow consistency - small r} with $k = n$, $r = n/4$, $\eta = 2$ and $\mu = 4$. Hence, the condition $\varepsilon r \ge s^2 q^3$ for $\varepsilon \le 0.5$ of Theorem~\ref{theorem: large uniformity sunflowers} requires $n/4 \ge 2 s^2 q^3 = 2 s^2 \lceil 32 s t^2 \ln(n/4)\rceil^3$. This inequality is fulfilled if $n \ge 2^7 s^5 t^6 \ln^3 n$. Next, condition $r \ge 2^{11} s \lceil \log_2 k \rceil$ is implied by $n/4 \ge 2^{12} s \ln(n/4) / \ln(2)$. Thus, if $n \ge 2^{15} s^5 t^6 \ln^3 n$, both conditions are satisfied. Under this assumption, the inequalities $k = n \ge 2t + q$ and $\mu k = n/2 \ge 2 s t q$ hold, so we can deduce the following corollary of Theorem~\ref{theorem: large uniformity sunflowers} with $\varepsilon = 4s^2 q^3/n$.

\begin{corollary}
\label{corollary: erdos-duke for permutations}
    Suppose that $n \ge 2^{15}s^5 t^6 \ln^3 n$. Let $\ff \subset \Sigma_n$ be a family of permutations that does not contain a sunflower with $s$ petals and the core of size $t -1$ in the sense of Definition~\ref{definition: sunflower of permutation}. Then, we have
    \begin{align*}
        |\ff| \le \left (1 + \frac{2^{21} s^5 t^6 \ln^3 n}{n} \right )\phi(s, t) (n - t)!.
    \end{align*}
\end{corollary}
\begin{proof}
    Consider the subfamily $\mathfrak{F} \subset \mathfrak{S}_n$ of sets corresponding to the family $\ff \subset \Sigma_n$, see Section~\ref{subsection: spread domains}. Then, $\mathfrak{F}$ meets conditions of Theorem~\ref{theorem: large uniformity sunflowers} by the reasoning above. It implies
    \begin{align*}
        |\mathfrak{F}| & \le \phi(s, t) \cdot (n - t)! + 32 \cdot (n/4)^{-2t} (n - t)! + \frac{16 s^2 (33  s t^2 \ln(n/4))^3}{n} \cdot (n - t)! \\
        & \le \left ( 1 + \frac{2^{21} s^5 t^6 \ln^3 n}{n}\right ) \phi(s, t) \cdot (n - t)!. & \qedhere
    \end{align*}
\end{proof}

One can construct an example similar to Example~\ref{example: basic example}, which shows that Corollary~\ref{corollary: erdos-duke for permutations} is tight up to second-order terms.
\begin{proposition}
    Let $n >t \phi(s, t) + t$. Then, there exists a family $\ff \subset \Sigma_n$ without a sunflower with $s$ petals and the core of size $t - 1$, such that
    \begin{align*}
        |\ff| \ge \left (1 - \frac{t \phi(s, t)}{n - t} \right )\cdot \phi(s,t) (n - t)!.
    \end{align*}
\end{proposition}
\begin{proof}
Let $\cG$, $|\cG| = \phi(s, t)$, be an extremal family of uniformity $t$ without a sunflower with $s$ petals. Assume that $n \ge t^2 \phi^2(s, t) \ge |\support \cG|^2$ and $\support \cG \subset [n]$. We will construct a family $\ff \subset \Sigma_n$ without a sunflower with $s$ petals as the family of all permutations $\sigma$ for which there exists $G \in \cG$ with the following properties:
\begin{enumerate}
    \item for any $x \in G$, we have $\sigma(x) = x$;
    \item for any $x \in (\support \cG) \setminus G$, we have $\sigma(x) \neq x$.
\end{enumerate}
The number of permutations such that for some set $Y$ we have $\sigma(y) = y$ for any $y \in Y$, and for some set $X$ we have $\sigma(x) \neq x$ for any $x \in X$, is at least
\begin{align*}
    (n - |Y|)! - |X| \cdot (n - |Y| - 1)! = (n - |Y|)! \left (1 - \frac{|X|}{n - |Y|} \right ).
\end{align*}
Hence, we can bound the size of $\ff$ as follows:
\begin{align*}
    |\ff| \ge \left (1 - \frac{t \phi(s, t)}{n - t} \right )\cdot \phi(s,t) (n - t)!.
\end{align*}
It remains to show that $\ff$ does not contain a sunflower with $s$. Suppose that $\sigma_1, \ldots, \sigma_s$ form a sunflower with the core of size $t - 1$. Define $G_i$ as the set of all $x \in \support \cG$ such that $\sigma_i(x) = x$. Note that by the construction of $\ff$, we have $G_i \in \cG$. We claim that $G_1, \ldots, G_s$ form a sunflower with the core of size at most $t - 1$. Indeed, if $x \in G_i \cap G_j$, then $\sigma_i(x) = \sigma_j(x) = x$. Then, we have $\sigma_1(x) = \sigma_2(x) = \ldots = \sigma_s(x) = x$ by Definition~\ref{definition: sunflower of permutation} of sunflowers for permutations. By the construction of the family $\ff$, we have $x \in G_\ell$ for all $\ell \in [s]$, and so $G_i \cap G_j = \cap_\ell G_\ell$. Thus, $G_1, \ldots, G_s$ form a sunflower with $s$ petals and the core of size at most $t - 1$, contradicting the definition of $\cG$.
\end{proof}

\subsection{Subfamilies of product domains}

In contrast to the case of permutations, one should study the regime when $k \le \ln n$ to obtain a counterpart of Theorem~\ref{theorem: main theorem} for $\cA = \binom{[n]}{k/w}^w$. Theorem~\ref{theorem: large r theorem} gives suboptimal bounds. For example, for $\cA = [n]^k$, we have $|\partial_{k - t} \cA| = \binom{k}{t} n^{k - t}$ and so, for an extremal family $\ff$, Theorem~\ref{theorem: large r theorem} implies $|\ff| \le (1 + o(1)) \binom{k}{t} \phi(s, t) n^{k - t}$ as $n$ tends to infinity. It turns out that with additional effort, this bound can be improved to $|\ff| \le \left (1 + O(\sqrt{t/k}) \right ) \phi(s, t) n^{k - t}$.  The next two theorems show that the result of Theorem~\ref{theorem: large r theorem} can be made essentially sharp for subfamilies of $\binom{[n]}{k/w}^w$.
\begin{theorem}
\label{theorem: product case - k-th power}
Let $\ff \subset [n]^k$ be a family without a sunflower with $s$ petals and the core of size $t - 1$. Suppose that $k \ge 5t$, $n \ge s^{80t} \cdot \tilde g_0(t)$ for some large enough function $\tilde g_0(t)$, and
\begin{align*}
    n  \ge 2^{33} s^{10} t^{12} + 2^{11} s \log_2 k.
\end{align*}
Then, we have
\begin{align*}
    |\ff| \le (1 + \zeta) \phi(s, t) \cdot n^{k - t}, \quad \text{where} \quad 
    \zeta = \max \left \{7\sqrt{\frac{t}{k}}, \frac{2^{18} s^5 t^6\ln^3 n}{n} \right \}.
\end{align*}
\end{theorem}

\begin{theorem}
\label{theorem: product case - w-th power}\
Consider an ambient family $\cA = \binom{[n]}{k/w}^w$ for $k \ge 2^{12} w^6 t$. Suppose that $n \ge s^{60t} \cdot \tilde g_0(t)$ for large enough function $\tilde g_0(t)$, and
\begin{align*}
    wn \ge (2^{33} s^{10} t^{12} + 2^{11} s \log_2 k) \cdot k.
\end{align*}
Then, for any family $\ff \subset \cA$ that does not contain a sunflower with $s$ petals and the core of size exactly $t - 1$, we have
\begin{align*}
    |\ff| \le \left (1 + \zeta' \right ) \phi(s, t) \cdot A_t, \quad \text{where} \quad \zeta' = \max \left \{ 7 \sqrt{\frac{t}{k}}, \frac{2^{18} s^5 t^6 \ln^3(wn/k)}{wn/k} \right \}.
\end{align*}
\end{theorem}

We prove both theorems simultaneously. Here $\cA$ stands for the ambient family, which is $[n]^k$ for Theorem~\ref{theorem: product case - k-th power} and $\binom{[n]}{k/w}^w$ for Theorem~\ref{theorem: product case - w-th power}.

\begin{proof}
As in the proof of Theorem~\ref{theorem: main theorem}, we consider several cases.

\textbf{Case 1: $\mathbf{k \ge 33 ws t^2 \ln n}$ if $\cA = \binom{[n]}{k/w}^w$ and $\mathbf {k \ge 33st^2 \ln n}$ if $\mathbf{\cA = [n]^k}$. }  In this case, we apply  Theorem~\ref{theorem: large uniformity sunflowers}. We check its  conditions for both families. 

\begin{itemize}
    \item If $\cA = \binom{[n]}{k/w}^w$ for $k \ge33 ws t^2 \ln n$, then $\cA$ satisfies Assumptions~\ref{assumption: spreadness - small r}-\ref{assumption: weak shadow consistency - small r} with $r = \frac{wn}{k}$, $\eta = 2$, $\mu = n/k$ and $q = \lceil  32 st ^2 \ln r \rceil$ due to Proposition~\ref{proposition: assumptions satisified}. The condition $\varepsilon r \ge s^2 q^3$ for some $\varepsilon \le 1/2$ is satisfied if
    \begin{align*}
        \frac{wn}{k} \ge 2s^2 (33 s t^2 \ln(n/k))^3 = 2 \cdot 33^2 s^5 t^6 \ln^3 \left (\frac{wn}{k} \right).
    \end{align*}
    If $\frac{wn}{k} \ge 2^{25}$, then $\ln \left ( \frac{wn}{k} \right ) \le \left ( \frac{wn}{k} \right )^{1/6}$. Thus, it is enough to guarantee the following two inequalities:
    \begin{align*}
        \frac{wn}{k} \ge 2^{25} \quad \text{and} \quad \frac{wn}{k} \ge (2 \cdot 33^3)^2 s^{10} t^{12}.
    \end{align*}
    Both are satisfied if $wn \ge 2^{33} s^{10} t^{12}k$. The condition $r \ge 2^{11} s \lceil \log_2 k \rceil$ holds if $wn \ge 2^{12} s k \log_2 k$. Assumptions $k \ge 2t + q$ and $n = \mu k \ge 2 st q$ are satisfied provided $k \ge 33 s w t^2n \ln n$ and $n \ge \frac{wn}{k} \ge 2 s^2 q^3$. The conclusion of Theorem~\ref{theorem: large uniformity sunflowers} yields
    \begin{align*}
        |\ff| & \le \phi(s, t) A_t + 32 \left ( \frac{wn}{k} \right )^{-2t} A_t+ \frac{4 s^2 (33 s t^2 \ln (wn/k))^3}{wn/k} \phi(s, t) A_t \\
        & \le \left (1 + \frac{2^{18}s^5 t^6 \ln^3 (wn/k)}{wn/k} \right ) \phi(s, t) \cdot A_t.
    \end{align*}
    We summarize observations above in the following lemma.
    \begin{lemma}
    \label{lemma: suflower large uniformity for w-th power}
        Let $\cA = \binom{[n]}{k/w}^w$ for $k \ge 33 w st^2 \ln n$ and $wn \ge (2^{33} s^{10} t^{12}  + 2^{12} s \log_2 k) \cdot k$. Then, for any family $\ff \subset \cA$ without a sunflower with $s$ petals and the core of size $t - 1$, we have
        \begin{align*}
            |\ff| \le \left (1 + \frac{2^{18}s^5 t^6 \ln^3 (wn/k)}{wn/k} \right ) \phi(s, t) \cdot A_t.
        \end{align*}
    \end{lemma}
    \item If $\cA = [n]^k$ for $k \ge 2^8 s^2 t^3 \ln n$, then $\cA$ satisfies Assumptions~\ref{assumption: spreadness - small r}-\ref{assumption: weak shadow consistency - small r} with $r = n$, $\eta = 2$, $\mu = 1/2$ and $q = \lceil 32 s t^2 \ln n \rceil$ due to Proposition~\ref{proposition: assumptions satisified}. The condition $\varepsilon r \ge s^2 q^3$ for some $\varepsilon \le 1/2$ is satisfied if
    \begin{align*}
        n \ge 2 s^2 (33 s t^2 \ln n)^3 = 2 \cdot 33^3 s^5 t^6 \ln^3 n.
    \end{align*}
    If $n \ge 2^{25}$, then $\ln n \le n^{1/6}$. Thus, it is enough to guarantee the following two inequalities:
    \begin{align*}
        n \ge 2^{25} \quad \text{and} \quad n \ge (2 \cdot 33^3)^2 s^{10} t^{12}.
    \end{align*}
    Both are satisfied if $n \ge 2^{33} s^{10} t^{12}$. The condition $r \ge 2^{11} s \lceil \log_2 k \rceil$ holds if $n \ge 2^{12} s \log_2 k$. Assumptions $k \ge 2t + q$ and $k/2 = \mu k \ge 2 s t q$ are satisfied since $k \ge 2^8 s^2 t^3 \ln n \ge 4 s t q $. The conclusion of Theorem~\ref{theorem: large uniformity sunflowers} implies
    \begin{align*}
        |\ff| & \le \phi(s, t) n^{k - t} + 32 n^{-2t} n^{k - t} + \frac{4 s^2 (33 s t^2 \ln n)^3}{n} \phi(s, t) n^{k - t} \\
        & \le \left (1 + \frac{2^{18} s^5 t^6 \ln^3 n}{n} \right ) \phi(s, t) \cdot n^{k - t}.
    \end{align*}
    We summarize the above analysis in the following lemma.
    \begin{lemma}
    \label{lemma: sunflower large uniformity for k-th power}
        Let $n, k, s, t$ be integers such that $n \ge 2^{33} s^{10} t^{12} + 2^{12} s \log_2 k$ and $k \ge 2^8 s^2 t^3 \ln n$. Let  $\ff \subset [n]^k$ be a family without a sunflower with $s$ petals and the core of size $t - 1$. Then, we have
        \begin{align*}
            |\ff| \le \left (1 + \frac{2^{18} s^5 t^6 \ln^3 n}{n} \right ) \phi(s, t) \cdot n^{k - t}.
        \end{align*}
    \end{lemma}
    We conjecture that the lower bound $n \ge 2^{33} s^{10} t^{12} + 2^{12} s \log_2 k$ in the above lemma can be replaced with $n \ge \poly(s, t)$, using the semigroup technique introduced in~\cite{keevash2023forbidden}.
\end{itemize}

Thus, the case is resolved by Lemmas~\ref{lemma: suflower large uniformity for w-th power} and~\ref{lemma: sunflower large uniformity for k-th power}.\\

\textbf{Case 2: $\mathbf{\max\{2^{10} w s t^3 \ln k, 2^{12} w^6 t\} \le k \le 33w st^2 \ln n}$ for $\mathbf{\cA = \binom{[n]}{k/w}^w}$  and \newline $\mathbf{2^{13} s^2 t^4 \ln k \le k \le 2^8 s^2 t^3 \ln n}$ for $\mathbf{\cA = [n]^k}$. } In this case, we apply the following theorem.
\begin{theorem}
\label{theorem: product case - small k uniformity}
    Assume that one of the two following conditions hold:
    \begin{enumerate}
        \item $\cA = \binom{[n]}{k/w}^w$ for $k \ge 2^{12} w^6 t$;
        \item $\cA = [n]^k$.
    \end{enumerate}
    Suppose that $n \ge k^{20t}$, $n \ge s^{40t} \cdot g_0(t)$ and $k \ge 5t$, where $g_0(t) \ge (2^{20}t^6)^{20t}$.
    Let $\ff \subset \cA$ be a family without a sunflower with $s$ petals and the core of size exactly $t - 1$. Then, we have
    \begin{align*}
        |\ff| \le \left (1 +7 \sqrt{\frac{t}{k}} \right ) \phi(s,t) \cdot A_t.
    \end{align*}
\end{theorem}
The proof of Theorem~\ref{theorem: product case - small k uniformity} is sketched in Section~\ref{section: proof sketch for product case -- small k uniformity}.
\begin{itemize}
    \item If $\cA = \binom{[n]}{k/w}^w$ and $k \le 33w s t^2 \ln n$, then $n \ge e^{k/33 st^2}$. Since $k \ge 2^{10} w s t^3 \ln k$, $n \ge k^{20t}$ and since $k \ge 2^{12} w^6 t$, we can apply Theorem~\ref{theorem: product case - small k uniformity}, and obtain
    \begin{align*}
        |\ff| \le \left ( 1 + 7 \sqrt{\frac{t}{k}}\right ) \phi(s, t) A_t,
    \end{align*}
    provided $n \ge s^{40t} \cdot g_0(t)$.
    \item If $\cA = [n]^k$ and $k \le 2^{8} s^2 t^3 \ln n$, then $n \ge e^{k/2^{8} s^{2} t^{3}}$. Since $k \ge 2^{13} s t^4 \ln k$, we have $n \ge k^{20t}$, so we can apply Theorem~\ref{theorem: product case - small k uniformity}, and obtain
    \begin{align*}
        |\ff| \le \left ( 1 + 7 \sqrt{\frac{t}{k}}\right ) \phi(s, t) A_t,
    \end{align*}
    provided $n \ge s^{40t} g_0(t)$.
\end{itemize}

\textbf{Case 3. We have $\mathbf{2^{12}w^6 t \le k \le 2^{10} w s t^3 \ln k}$ if $\mathbf{\cA = \binom{[n]}{k/w}^w}$ and $\mathbf{5t \le k \le 2^{13} s^2 t^4 \ln k}$ if $\mathbf{\cA = [n]^k}$.} Consider two cases:
\begin{itemize}
    \item If $\cA = \binom{[n]}{k/w}^w$ and $2^{12} w^6 t \le k \le 2^{10} w s t^3 \ln k$, then
    \begin{align*}
        k & \le 2^{10} \left ( \frac{k}{2^{12} t} \right )^{1/6} s t^3 \ln k \le 2^8 k^{1/6} s t^3 \ln k \\
        & \le 2^8 s t^3 k^{2/3},
    \end{align*}
    so $k \le 2^{24} s^3 t^9$. Since $n \ge s^{60t} \cdot \tilde  g_0(t)$ for large enough $\tilde g_0(t)$, we have $n \ge k^{20t}$. Thus, we can apply Theorem~\ref{theorem: product case - small k uniformity}, and obtain Theorem~\ref{theorem: product case - w-th power}.
    \item If $\cA = [n]^k$ and $5t \le k \le 2^{10} s^2 t^4 \ln k$. In particular, we have $k \le 2^{20} s^4 t^8$. Since $n \ge s^{80t} \cdot \tilde g_0(t)$ for large enough function $\tilde g_0(t)$, we have $n \ge k^{20t}$. Thus, we can apply Theorem~\ref{theorem: product case - small k uniformity}, and obtain Theorem~\ref{theorem: product case - k-th power}.
\end{itemize}
\end{proof}

\section{Proof of Theorem~\ref{theorem: large uniformity sunflowers}}
\label{section: proof of theorem for large uniformity}

We start by presenting the proof sketch. First, using Lemma~\ref{lemma: spread approximation} with $\tau = e^{1/8st}$, we find a family $\cS \subset \partial_{\le q} \cA$, such that
\begin{align*}
    |\ff \setminus \cA[\cS]| \le 32 \left ( e^{1/(8st)}\right )^{-q} |\cA| \le r^{-\eta t} A_t.
\end{align*}
We prove that $\cS$ admits two following properties:
\begin{itemize}
    \item it does not contain a sunflower with $s$ petals and the core of size at most $t - 1$;
    \item it consists of sets of cardinality at least $t$.
\end{itemize}
This is done in Lemma~\ref{lemma: sunflower spread approximation}.

Next, using Lemma~\ref{lemma: sunflower simplification}, we obtain a simplification of $\cS$, i.e. a family $\cT$ that possesses the same properties as $\cS$, consists of sets of size exactly $t$ and such that
\begin{align*}
    |\cA[\cS] \setminus \cA[\cT]| \le \phi(s, t) \cdot \frac{2\varepsilon}{1 - \varepsilon} A_t \overset{{\color{teal} (\varepsilon \le 0.5)}}{\le}  \phi(s, t) \cdot 4 \varepsilon A_t.
\end{align*}

While $\cS$ does not contain a sunflower with the core of size at most $t - 1$,  $\cT$ does not contain a sunflower with an arbitrary core. Thus, the theorem follows.

\subsection{Spread approximation}
\label{subsection: spread approximation}

\begin{lemma}
    \label{lemma: sunflower spread approximation}
    Let $\ff, \cA, q$ be from the statement of Theorem~\ref{theorem: large uniformity sunflowers}. Then, there exists a family $\cS$ such that
    \begin{enumerate}
        \item \label{property: sunflower spread approximation, no sunflower <t core} 
            it does not contain a sunflower with $s$ petals and the core of size at most $t - 1$;
        \item \label{property: size of sets at least t}
            it consists of sets of size at least $t$;
        \item \label{property: remainder term}
            the following inequality holds:
            \begin{align*}
                |\ff \setminus \cA[\cS]| \le 32 \cdot r^{-\eta t} A_t.
            \end{align*}
    \end{enumerate}
\end{lemma}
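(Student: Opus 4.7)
The plan is to apply Lemma~\ref{lemma: spread approximation} to $\ff$ with homogeneity parameter $\tau := e^{1/(8st)}$ and the value of $q$ provided by Theorem~\ref{theorem: large uniformity sunflowers}, producing a decomposition
\begin{align*}
    \ff \;=\; \cR \;\sqcup\; \bigsqcup_{S \in \cS} \ff_S \vee \{S\},
\end{align*}
with $\cS \subseteq \partial_{\le q}\cA$, each $\ff_S \subseteq \cA(S)$ being $\tau$-homogeneous, and $|\cR| \le \tau^{-(q+1)}|\cA|$. Since $\ff_S \vee \{S\} \subseteq \cA[\cS]$, we have $\ff \setminus \cA[\cS] \subseteq \cR$. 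The choice $q \ge 16\eta st^2 \log r$ gives $\tau^{-q} \le e^{-2\eta t \log r} = r^{-2\eta t}$, and combined with Assumption~\ref{assumption: large t-link} (which gives $|\cA| \le r^{\eta t} A_t$) this yields property~\ref{property: remainder term} with ample room for a trimming step explained at the end.

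For properties~\ref{property: sunflower spread approximation, no sunflower <t core} and~\ref{property: size of sets at least t}, I would argue both by contradiction in a unified way. Property~\ref{property: size of sets at least t} is the degenerate case $S_1 = \cdots = S_s = S$ (a sunflower with empty petals) of property~\ref{property: sunflower spread approximation, no sunflower <t core}, so suppose $S_1, \ldots, S_s \in \cS$ form a sunflower with core $C$ of size at most $t-1$. The goal is to lift it to a sunflower $S_1 \cup F_1, \ldots, S_s \cup F_s$ in $\ff$ with the same core $C$, contradicting the hypothesis on $\ff$. Let $X_i := \bigcup_{j \ne i}(S_j \setminus S_i)$; the condition $\varepsilon r \ge 2^{17} sq$ ensures $|X_i| \le (s-1)q < r/\tau$, so Claim~\ref{claim: removing intersections}, applied inside the $r$-spread ambient $\cA(S_i)$ (which is $r$-spread by Assumption~\ref{assumption: spreadness - small r}), produces $\cG_i := \ff_{S_i}(\varnothing, X_i)$ that is $\hat\tau$-homogeneous for a $\hat\tau$ only marginally larger than $\tau$. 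Any pairwise disjoint petals $F_i \in \cG_i$ then yield the required sunflower, since $F_i \cap S_j = \varnothing$ for all $j$ by construction.

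The matching is found by splitting on the two alternatives of Theorem~\ref{theorem: large uniformity sunflowers}. Under Condition~\ref{condition: large spreadness}, each $\cG_i$ is $(r/\hat\tau)$-spread with $r/\hat\tau > 2^{13} s \lceil\log_2 k\rceil$, and Proposition~\ref{proposition: coloring trick} immediately produces the matching. Under Condition~\ref{condition: sharp threshold applicability}, where $\cA = \binom{[n]}{k}$ and the spreadness $r = n/(2k)$ may be too small to feed Proposition~\ref{proposition: coloring trick}, I would instead invoke Lemma~\ref{lemma: matchings in global families} with the uniform families $\cG_i \subseteq \binom{[n] \setminus (S_i \cup X_i)}{k - |S_i|}$ and parameter $z$ of order $st^2\log(n/k)$ chosen to match $q$. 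The threshold $n/(2k) \ge 2^{338}s^{43}t^{80} + 2^{15999}$ is precisely what renders the smallness hypothesis $sz(2^6\tau)^m \tau p_i < 1/64$ of that lemma satisfied with slack.

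The main technical obstacle lies in this second case: Lemma~\ref{lemma: matchings in global families} additionally requires a lower bound $|\cG_i| \ge 8\tau^{-z}\binom{n}{k_i}$ on the measure of each family. I would arrange this by a trimming step preceding the sunflower argument — discard from $\cS$ every $S$ with $|\ff_S| < \tau^{-z}|\cA(S)|$, folding those sets into $\cR$. A union bound, using that $|\cS| \le |\partial_{\le q}\cA|$ is at most polynomial in $r$ while $\tau^{-z}$ is of order $r^{-\Omega(t)}$, shows that the mass transferred is $o(r^{-\eta t} A_t)$ and is absorbed into the constant $32$ of property~\ref{property: remainder term}. After trimming, the quantitative part of Claim~\ref{claim: removing intersections} (the $(1 - |X|\tau/r)$ factor) propagates the lower bound from $\ff_{S_i}$ to $\cG_i$ intact, and the argument closes.
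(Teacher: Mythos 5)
Your proposal has a genuine gap in the contradiction step. Theorem~\ref{theorem: large uniformity sunflowers} assumes only that $\ff$ contains no sunflower with $s$ petals and core of size \emph{exactly} $t-1$, whereas the lemma must rule out sunflowers in $\cS$ with core of \emph{any} size at most $t-1$. When the sunflower $S_1,\ldots,S_s\in\cS$ has core $C$ with $|C|=t-1-h$ for some $h>0$ (this includes the degenerate case $S_1=\cdots=S_s=S$ with $|S|<t-1$ underlying property~\ref{property: size of sets at least t}), lifting it to $S_1\cup F_1,\ldots,S_s\cup F_s$ with pairwise disjoint petals produces a sunflower in $\ff$ whose core is still $C$, of size strictly less than $t-1$ --- and such a sunflower is \emph{not} forbidden, so there is no contradiction. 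The paper closes this by enlarging the core: using Proposition~\ref{proposition: most of subsets are homogeneous} (which rests on Assumption~\ref{assumption: regularity}) together with Assumption~\ref{assumption: weak shadow consistency - small r}, it finds a single set $H\in\partial_h\cA$ lying in the shadow of all $s$ restricted families simultaneously, such that each $\cG_i(H)$ retains homogeneity and measure; the matching is then found inside the $\cG_i(H)$, yielding a sunflower with core $C\cup H$ of size exactly $t-1$. Your argument never uses Assumptions~\ref{assumption: regularity} and~\ref{assumption: weak shadow consistency - small r} and cannot succeed without some version of this step.

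A secondary problem is the trimming step. Discarding every $S$ with $|\ff_S|<\tau^{-z}|\cA(S)|$ and union-bounding costs at most $\tau^{-z}\sum_{S\ \mathrm{discarded}}|\cA(S)|$, and the number of discarded sets is bounded only by $|\cS|\le|\partial_{\le q}\cA|$, which for $\cA=\binom{[n]}{k}$ is of order $\binom{n}{q}$ --- vastly larger than any power of $r$ --- so the claim that the transferred mass is $o(r^{-\eta t}A_t)$ does not follow. The paper avoids this by building the measure floor into the greedy construction itself (Lemma~\ref{lemma: spread approximation with large measure}): the process stops the first time $\mu(\ff_i(S_i))<16\tau^{-q}$, so the low-measure mass is a single tail family $\cR$ with one clean bound $\mu(\cR)\le 16\tau^{-q}$, rather than a sum over many discarded pieces. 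The remaining parts of your outline (the choice $\tau=e^{1/(8st)}$, Claim~\ref{claim: removing intersections} to strip off $\bigcup_j S_j$, and the case split between Proposition~\ref{proposition: coloring trick} and Lemma~\ref{lemma: matchings in global families}) do match the paper's argument.
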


Note that property~\ref{property: S does not contain a set of size less than t} follows from property~\ref{property: S is s, t sunflower avoiding}, but we state it explicitly. Indeed, as we have already mentioned, $s$ copies of a set of size at most $t - 1$ form a sunflower with $s$ empty petals and the core of size at most $t - 1$.

\begin{proof}[Proof of Lemma~\ref{lemma: sunflower spread approximation}]
    For our purposes, we modify Lemma~\ref{lemma: spread approximation} a bit. 
    \begin{lemma}
        \label{lemma: spread approximation with large measure}
        Fix an integer $q$ and a real number $\tau > 1$. Let $\cA$ be a $k$-uniform $r$-spread family. Let $\ff \subset \cA$. Then exists a family $\cS \subset \partial_{\le q} \cA$ and a partition of $\ff$ into families $\cR$ and $\ff_S \subset \cA(S), S \in \cS$:
        \begin{align*}
            \ff = \cR \sqcup \bigsqcup_{S \in \cS} \ff_S \vee  \{S\},
        \end{align*}
        such that each $\ff_S$ is $\tau$-homogeneous as a subfamily of $\cA(S)$, $\mu(\ff_S) = \frac{|\ff_S|}{|\cA(S)|} \ge 32 \tau^{-q}$ and $|\cR| \le 32 \tau^{- q} |\cA|$.
    \end{lemma}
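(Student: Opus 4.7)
The plan is to mimic the iterative peeling used in Lemma~\ref{lemma: spread approximation}, but with a single inclusion-maximality rule whose threshold simultaneously secures the density lower bound $\mu(\ff_S) \ge 16 \tau^{-q}$ and the $\tau$-homogeneity of each $\ff_S$ in one maximality calculation. Observation~\ref{observation: homogenous restriction} is the natural model, but its relative threshold $\tau^{-|S|} \mu(\ff)$ does not control the \emph{absolute} density of $\ff_S$; I therefore replace it with the single threshold $16\tau^{|S|-q}$, which does both jobs at once.

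\textbf{Construction.} Set $\ff_0 := \ff$ and $\cS := \varnothing$. At step $i$, search for an inclusion-maximal set $S_i$ satisfying
\[
\mu(\ff_i(S_i)) \ge 16\,\tau^{|S_i| - q}.
\]
If such a set exists, add $S_i$ to $\cS$, put $\ff_{S_i} := \ff_i(S_i)$, and set $\ff_{i+1} := \ff_i \setminus \ff_i[S_i]$. Otherwise halt and put $\cR := \ff_i$. Each successful step strictly decreases $|\ff_i|$ (because $\mu(\ff_i(S_i)) > 0$ forces $|\ff_i[S_i]| \ge 1$), so the process terminates and yields the required partition $\ff = \cR \sqcup \bigsqcup_{S \in \cS} \ff_S \vee \{S\}$.

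\textbf{Verifications.} At halt, no $S$ satisfies the inequality, so in particular $S = \varnothing$ fails, giving $\mu(\ff_i) < 16\tau^{-q}$ and thus $|\cR| < 16\tau^{-q} |\cA| \le 32\tau^{-q} |\cA|$. Whenever an $S_i$ is produced, the defining inequality gives $\mu(\ff_{S_i}) \ge 16\tau^{|S_i|-q} \ge 16\tau^{-q}$; and $\mu(\ff_{S_i}) \le 1$ forces $16\tau^{|S_i|-q} \le 1$, i.e.\ $|S_i| \le q - \log_\tau 16 < q$, so $S_i \in \partial_{\le q}\cA$. For $\tau$-homogeneity, let $Y$ be non-empty and disjoint from $S_i$; inclusion-maximality of $S_i$ gives $\mu(\ff_i(S_i \cup Y)) < 16\tau^{|S_i|+|Y|-q}$, and dividing through by $\mu(\ff_i(S_i)) \ge 16\tau^{|S_i|-q}$ produces
\[
\frac{\mu(\ff_{S_i}(Y))}{\mu(\ff_{S_i})} \;=\; \frac{\mu(\ff_i(S_i \cup Y))}{\mu(\ff_i(S_i))} \;<\; \tau^{|Y|},
\]
which is the required $\tau$-homogeneity of $\ff_{S_i}$ inside $\cA(S_i)$ (for $Y$ meeting $S_i$, the restriction $\ff_{S_i}(Y)$ is trivially empty).

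\textbf{Main obstacle.} The only real conceptual point is the calibration of this threshold: the $\tau^{-q}$ factor anchors the absolute density floor, while the $\tau^{|S_i|}$ factor is exactly what cancels in the maximality quotient to yield the clean $\tau^{|Y|}$ of $\tau$-homogeneity. A pure absolute threshold $16\tau^{-q}$ would break $\tau$-homogeneity once $|S_i|+|Y|$ exceeds $q$, while the purely relative threshold $\tau^{-|S|} \mu(\ff_i)$ of Observation~\ref{observation: homogenous restriction} would not enforce any absolute density bound. Worth noting: the $r$-spreadness of $\cA$ hypothesized in the statement is not actually invoked in the argument; it is inherited from Lemma~\ref{lemma: spread approximation} but plays no role in this variant.
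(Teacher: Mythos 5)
Your proof is correct, and it delivers the stated conclusions (in fact with the slightly better remainder bound $|\cR|\le 16\tau^{-q}|\cA|$). The overall skeleton — greedy peeling of dense restrictions — is the same as the paper's, but your threshold rule is genuinely different. The paper keeps the relative threshold of Observation~\ref{observation: homogenous restriction}, i.e.\ it takes the maximal $S$ with $\mu(\ff_i(S))>\tau^{-|S|}\mu(\ff_i)$ (so $\tau$-homogeneity is imported from that observation), and then enforces the two extra requirements of the lemma through a two-pronged stopping rule: halt if $|S_i|$ exceeds $q$ or if $\mu(\ff_i(S_i))$ drops below $16\tau^{-q}$, with a corresponding two-case analysis for the remainder. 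You instead fold everything into the single absolute threshold $16\tau^{|S|-q}$: the $\tau^{|S|}$ factor cancels in the maximality quotient to give $\tau$-homogeneity directly (so Observation~\ref{observation: homogenous restriction} is not needed), the $16\tau^{-q}$ anchor gives the density floor for free, the constraint $\mu\le 1$ forces $|S_i|<q$ automatically, and the remainder bound reduces to the single observation that $\varnothing$ fails the test at termination. This is a cleaner calibration that avoids the paper's case split; the only trade-off is that your $\cS$ need not coincide with the one produced by the standard spread-approximation machinery, which is immaterial here since later applications only use the three stated properties. Your closing remark is also accurate: the $r$-spreadness of $\cA$ is not used in either proof of this particular lemma.
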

    \begin{proof}
        We construct the spread approximation $\cS$ iteratively. Set $\ff_0 = \ff$ and $\cS = \varnothing$. Given a family $\ff_i$, we construct a family $\ff_{i + 1}$ as follows. Let $S_i$ be the maximal set such that $\mu(\ff_i(S_i)) > \tau^{|S_i|} \mu(\ff_i)$ holds. If $|S_i| \ge q + 1$ or $\mu(\ff_i(S_i)) < 32 \tau^{-q}$, we stop and put $\cR = \ff_i$. Otherwise, we add $S_i$ to $\cS$, set $\ff_{S_i} = \ff_i(S_i)$ and put $\ff_{i + 1} = \ff_i \setminus \ff_i[S_i]$. Note that $\ff_{S_i}$ is $\tau$-homogeneous by Observation~\ref{observation: homogeneous restriction}.

        Suppose that the procedure stops at step $m$. Then, we have $|S_m| \ge q + 1$ or $\mu(\ff_m(S_m)) \le 32 \tau^{-q}$. In the former case, we have
        \begin{align*}
            \mu(\cR) \le \tau^{- (q + 1)} \mu(\ff_m(S_m)) \le 32 \tau^{-q}.
        \end{align*}
        In the later case, we have
        \begin{align*}
            \mu(\cR) \le \tau^{-|S_m|} \mu(\ff_m(S_m)) \le 32 \tau^{-q}.
        \end{align*}
        In either case, we obtain the desired bound on $\mu(\cR)$. This completes the proof.
    \end{proof}

    Applying Lemma~\ref{lemma: spread approximation with large measure} with $\tau = e^{1 / (8 s t)}$ and $q = \lceil 16 \eta s t^2 \ln r\rceil $, we obtain a family $\cS \subset \partial_{\le q} \cA$ and families $\ff_S$ such that
    \begin{align}
        \label{eq: sunflower spread approximation lemma, decomposition}
        \ff = \cR \sqcup \bigsqcup_{S \in \cS} \ff_S \vee \{S \},
    \end{align}
    and $\ff_S$ are $\tau$-homogeneous. 

    First, we have
    \begin{align*}
        |\cR| & \le 32 \tau^{-q} |\cA| \le 32 e^{-q/8 st} |\cA| \le 32 \exp \left \{-\frac{16\eta s t^2 \ln r}{8 st } \right \} |\cA| \\
        & = 32 r^{-2 \eta t} |\cA| \overset{\text{Assumption~\ref{assumption: spreadness - small r}}}{\le} 32 r^{-\eta t} A_t.
    \end{align*}
    Since each $\ff_S$ is a subset of $\cA(S)$, it implies that $|\ff \setminus \cA[S]| \le |\cR| \le 32 r^{- \eta t} A_t$, and property~\ref{property: remainder term} holds.

    Next, we  check property~\ref{property: sunflower spread approximation, no sunflower <t core}. Suppose that $S_1, \ldots, S_s \in \cS$ form a sunflower with the core of size $t - 1 - h$, where $h \le t - 1$. Each family $\ff_{S_i}$ is $\tau$-homogeneous. Define $\cG_k = \ff_{S_k} (\varnothing, \bigcup_{i = 1}^s S_i)$.  According to Claim~\ref{claim: removing intersections}, they are $\tilde \tau$-homogeneous for 
    \begin{align*}
        \tilde{\tau} & = \tau / \left ( 1 - \frac{\tau}{r} \sum_{i = 1}^s |S_i| \right ) \le \tau / \left ( 1 - \frac{s q \tau}{r}\right )  \\
        & = e^{1/8st}/ \left ( 1 - \frac{s q \tau}{r}\right ) \overset{{\color{teal} (\tau \le 2)}}{\le} \exp \left ( \frac{1}{8 s t} + \frac{4 s q}{r} \right ) \le e^{1/(4 st)},
    \end{align*}
    where we use $- \log (1 - x) \le 2 x$ for any $x < 0.5$ and $r \ge 2s^2 q^3 \ge 32 s^2 t q$. Moreover, we have
    \begin{align}
    \label{eq: proof of large uniformity result - cG cardinality lower bound}
        |\cG_i| \ge \left (1 - \frac{\tau |\cup_i S_i|}{r} \right ) |\ff_{S_i}| \ge \left (1 - \frac{s \tau q}{r} \right ) |\ff_{S_i}|\ge \frac 12 |\mathcal F_{S_i}| \ge 16 \tau^{-q} \cdot |\cA(S_i)|,
    \end{align}
    where we used the property $\mu(\ff_{S_i}) \ge 32 \tau^{-q}$ guaranteed by Lemma~\ref{lemma: spread approximation with large measure}.
    
    Consider two cases.
    
    \textbf{Case 1. We have $\mathbf{h = 0}$.} Consider two cases again. 
    
    \textbf{Case 1a. Condition~\ref{condition: large spreadness} of Theorem~\ref{theorem: large uniformity sunflowers} holds.} Subfamilies $\cA(S)$ are $r$-spread, so $\cG_i \subset \cA(S_i), i = 1, \ldots, s,$ are $r / \tilde \tau$-spread. Since $r \ge 2^{8} s \lceil \log_2 k \rceil \ge \tilde{\tau} \cdot 2^{7} s  \lceil \log_2 k \rceil$, Proposition~\ref{proposition: coloring trick} ensures us that there are $s$ disjoint sets $F_1 \in \cG_1, \ldots, F_s \in \cG_s$ that do not intersect $\bigcup_{i = 1}^s S_i$ by the definition of $\cG_i$. From decomposition~\eqref{eq: sunflower spread approximation lemma, decomposition}, $\ff$ contains a sunflower $S_1 \cup F_1, \ldots, S_s \cup F_s$ with the core of size exactly $t-1$, a contradiction.

    \textbf{Case 1b. Condition~\ref{condition: sharp threshold applicability} of Theorem~\ref{theorem: large uniformity sunflowers} holds.}  To prove that families $\cG_i$, $i = 1, \ldots, s$, contain disjoint sets $F_i \in \cG_i$, we are going to use Lemma~\ref{lemma: matchings in global families}. First, we check that conditions of the lemma are satisfied.

    \begin{claim}
    \label{claim: satisfied conditions of matching lemma}
        Suppose that $n / k \ge 2^{390}s^{43} t^{82} + 2^{23000}$. Fix numbers $n' \ge n/2$, $k_i \le k$, $i = 1, \ldots, s$ and $\tau \le 16$. Consider arbitrary $\tau$-homogeneous families $\ff_1 \subset \binom{[n']}{k_1}, \ldots, \ff_s \subset \binom{[n']}{k_s}$, $\mu(\ff_i) \ge 8 \tau^{-q}$. Then, conditions of Lemma~\ref{lemma: matchings in global families} hold with $z = q$ and $n'$ in place of $n$.
    \end{claim}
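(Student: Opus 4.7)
The plan is to verify the hypotheses of Lemma~\ref{lemma: matchings in global families} by instantiating its parameter (also called $\tau$) as $\tau' = 32$, together with $z = q$ and ground set $[n']$. Three of the four hypotheses follow almost immediately: the inequality $\tau' \ge 4$ is trivial; since the claim's $\tau$ is at most $16 = \tau'/2$, every $\tau$-homogeneous family is automatically $\tau'/2$-homogeneous; and the density hypothesis $\mu(\ff_i) \ge 8\tau^{-q} \ge 8(\tau')^{-q}$ holds because $\tau \le \tau'$.

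The only non-trivial condition to verify is the quantitative bound $sq(2^6\tau')^{m}\tau' p_i < 1/64$, where $m = \lceil \log_{4/3}(q\log\tau'/\log 2)\rceil = \lceil \log_{4/3}(5q)\rceil$. Since $p_i = k_i/n' \le k/(n/2) = L^{-1}$ with $L := n/(2k)$, this rearranges to $L > 2^{11}sq(2^{11})^{m}$. I would bound $(2^{11})^{m} \le 2^{11}(5q)^{11/\log_2(4/3)} \le 2^{11}(5q)^{27}$ and then substitute the estimate $q \le 33\,st^{2}\log L$ coming from Condition~\ref{condition: sharp threshold applicability} of Theorem~\ref{theorem: large uniformity sunflowers}. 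This reduces the task to an inequality of the form
\begin{equation*}
L \;>\; 2^{C_1}\,s^{29}\,t^{56}\,(\log L)^{28}
\end{equation*}
for an explicit numerical constant $C_1$ absorbing $5^{27}$, $33^{28}$, and the accumulated powers of $2$.

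To finish, I would verify this reduced inequality from the hypothesis $L \ge 2^{338}s^{43}t^{80} + 2^{15999}$ by splitting into two regimes matching the two summands. The purely numerical term $2^{15999}$ handles the small-$s,t$ regime: then $\log L$ is bounded by a fixed number (at worst around $2^{14}$), so $(\log L)^{28}$ is a bounded constant (at worst $2^{392}$) that is easily swallowed by the slack $2^{15999 - C_1}$. The polynomial term $2^{338}s^{43}t^{80}$ handles the large-$s,t$ regime: there the excess powers $s^{14}t^{24}$ together with the numerical slack $2^{338 - C_1}$ dominate the slowly growing $(\log L)^{28}$ with plenty of room. Adding the two bounds covers both regimes simultaneously, so the claim follows.

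The main obstacle is bookkeeping rather than a genuine difficulty: each individual step is a routine estimate, but the iterated hypercontractivity built into Lemma~\ref{lemma: matchings in global families} produces an exponential in $m \sim \log q$, and tracing the constants cleanly through the chain to match the stated thresholds $2^{338}$, $s^{43}$, $t^{80}$, $2^{15999}$ requires patience and careful choice of intermediate rounding rather than a new idea.
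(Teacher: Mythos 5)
Your overall route is the same as the paper's: instantiate Lemma~\ref{lemma: matchings in global families} with its $\tau$ equal to $32$ and $z=q$, observe that the homogeneity and density hypotheses are inherited because the claim's $\tau$ is at most $16$, bound $(2^6\cdot 32)^m$ by a polynomial in $q$ (your exponent $27$ is in fact a tighter version of the paper's $39$, since $11/\log_2(4/3)\approx 26.5$), substitute $q=O(st^2\log L)$ with $L=n/2k$, and reduce everything to an inequality of the shape $L>2^{C_1}s^{29}t^{56}(\log L)^{28}$. Up to that point the proposal is sound.

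The gap is in the final verification. You write that in the small-$s,t$ regime ``$\log L$ is bounded by a fixed number (at worst around $2^{14}$)''; this is false, because the hypothesis only bounds $L$ from \emph{below} --- $n/2k$ can be arbitrarily large even for $s=t=2$, so $(\log L)^{28}$ is unbounded in both of your regimes. Likewise, in the large-$s,t$ regime you compare $(\log L)^{28}$ against the excess powers $s^{14}t^{24}$, but $\log L$ is a function of $L$, not of $s$ and $t$, so this comparison is not available without first tying $\log L$ back to $L$. The standard repair, and the one the paper uses, is to dominate the logarithm by a fractional power of $L$: for $L\ge 2^{15999}$ one has $(\log L)^{28}\le L^{1/40}$ (the paper's ``numerically checked'' step, with exponent $39$), after which the condition becomes $L^{39/40}>2^{C_1}s^{29}t^{56}$, which does follow from $L\ge 2^{338}s^{43}t^{80}$. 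Alternatively one may note that $L/(\log L)^{28}$ is increasing for $L>e^{28}$ and so it suffices to check the inequality at $L_0=2^{338}s^{43}t^{80}+2^{15999}$; but even then the case split must be organized by explicit thresholds on $s$ and $t$ (with inequalities such as $(\ln s)^{28}\le s^{13}$), not by which summand of the lower bound dominates, and the bookkeeping is noticeably more delicate than ``adding the two bounds.'' As written, the last step would not go through.
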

    \begin{proof}
        In terms of Lemma~\ref{lemma: matchings in global families}, we have $\tau \le 32$. Therefore, it holds that
    \begin{align*}
        (2^6 \tau)^{m} \le 2^{11 (1 + \log_{4/3}(5z))} = 2^{11}\cdot 2^{11 \log_{4/3}(5 q)} = 2^{11} (5 q)^{\frac{11}{\log_2 (4/3)}} \le 2^{11} (5q)^{39} \le 2^{128} q^{39},
    \end{align*}
    where $m = \left \lceil \log_{4/3} \left ( \frac{z \log \tau}{\log 2} \right ) \right \rceil$ by the condition of Lemma~\ref{lemma: matchings in global families}.
    Thus, it is enough to ensure that
    \begin{align*}
        s q 2^{128} q^{39} \cdot 32 p_i = 2^{133} q^{40}sp_i  < 1/64,
    \end{align*}
    where $p_i = n'/k_i \ge n/2k$.
        Since $q = 2^5 s t^2 \ln \frac{n}{k} \le 2^6 s t^2 \ln \frac{1}{p_i}$, the above is implied by $2^{139} s (2^6 s t^2)^{40} p_i \cdot \ln^{40} \frac{1}{p_i} < 1$. Numerically, we checked that provided $p_i \le 2^{-22999}$, we have $\ln^{40} \frac{1}{p_i} \le p_i^{-1/41}$. Therefore, it is enough to ensure $2^{379} s^{41} t^{80} p_i^{40/41} < 1$ and $p_i \le 2^{-22999}$, or $n' \ge 2^{389} t^{82} s^{43} k$ and $n' \ge 2^{22999} k$. Both inequalities are satisfied, since $n' \ge n/2$ and $n / k \ge 2^{390} s^{43} t^{82} + 2^{23000}$.
    \end{proof}

    It is easy to check that $\cG_1, \ldots, \cG_s$ meet the conditions of Claim~\ref{claim: satisfied conditions of matching lemma} with $n' = n - |\cup_i S_i|$ and $k_i = k - |S_i|$. Thus, there are disjoint sets $G_1 \in \cG_1, \ldots, G_s \in \cG_s$, and so $\ff$ contains a sunflower $G_1 \cup S_1, \ldots, G_s \cup S_s$ with the core of size exactly $t - 1$, a contradiction.
    
    \textbf{Case 2. We have $\mathbf{h > 0}$.} Set $\alpha = 1/2s$ and $\rho = 1/2$. Recall that $\cG_1, \ldots, \cG_s$ are $\tilde \tau$-homogeneous with $\tilde \tau = e^{1/4st}$. We have
    \begin{align*}
        \tilde{\tau} \le \left (1 - \frac{1}{4 s} \right )^{- 1/ h} =\left ( 1 - \alpha \rho \right )^{-1/h},
    \end{align*}
    thus, due to Proposition~\ref{proposition: most of subsets are homogeneous}, for each $S \in \cS$ and for all but $\alpha |\partial_{h} (\cA(S))|$ sets $H$ families $\cG_i(H)$ are $2 \tilde{\tau}$-homogeneous and $\mu(\cG_i(H)) \ge \mu(\cG_i) / 2  \ge 8 \cdot (2 \tilde{\tau})^{-q}$, where we used the lower bound~\eqref{eq: proof of large uniformity result - cG cardinality lower bound}. Thus, for a random $\bH$ uniformly distributed on $\partial_{h} \cA$, we have
    \begin{align*}
        & \PP \left (\cG_i(\bH) \text{ is $2 \tilde\tau$-homogeneous and } \mu(\cG_i(\bH)) \ge 8 \cdot (2 \tilde \tau)^{-q} \; | \; \bH \in \partial_{h} (\cA(S)) \right )  \ge 1 - \alpha, \\
        & \PP \left (\cG_i(\bH) \text{ is $2 \tilde\tau$-homogeneous and } \mu(\cG_i(\bH)) \ge 8 \cdot (2 \tilde \tau)^{-q} \right ) \ge (1 - \alpha ) \PP \left [ \bH \in \partial_{h} (\cA(S)) \right ],
    \end{align*}
    where $\mu(\cG_i(\bH))$ is defined w.r.t. $\cA(S_i \cup \bH, \cup_i S_i \cup \bH)$, i.e. $\mu(\cG_i(\bH)) = \frac{|\cG_i(\bH)|}{|\cA(S_i \cup \bH, \cup_i S_i \cup \bH)|}$.
    Due to Assumption~\ref{assumption: weak shadow consistency - small r} and the condition $\mu k \ge 2 st q$, we have
    \begin{align*}
        \PP \left [ \bH  \in \partial_{h} (\cA(S)) \right ] \ge \left (1 - \frac{q}{\mu k} \right)^h \ge 1 - \frac{1}{2 s}.
    \end{align*}
    Hence, we have
    \begin{align*}
        \PP \left (\cG_i(\bH) \text{ is $2 \tilde\tau$-homogeneous and } \mu(\cG_i(\bH)) \ge 8 \cdot (2 \tilde \tau)^{-q} \right ) & \ge (1 - \alpha ) (1 - \frac{1}{2 s}) \\
        & > 1 - \alpha - \frac{1}{2 s} \\
        & = 1 - \frac{1}{s}, 
    \end{align*}
    which by the union bound implies
    \begin{align*}
    % \label{eq: event of all H exitence}
        \PP \left ( \forall i \in [s] \; \cG_i(\bH) \text{ is $2 \tilde\tau$-homogeneous and } \mu(\cG_i(\bH)) \ge 8 \cdot (2 \tilde \tau)^{-q} \right ) & > 0.
    \end{align*}
Here we have $2 \tilde \tau \le 4$. Therefore, there exists a set $H \in \partial_h \cA$ such that all $\cG_i(H)$, $i = 1, \ldots, s,$ are $4$-homogeneous and $\mu(\cG_i(H)) \ge 8 \cdot 4^{-q}$. Again, we consider two different cases.

\textbf{Case 2a. Condition~\ref{condition: large spreadness} of Theorem~\ref{theorem: large uniformity sunflowers} holds.} Since each $\cG_i(H)$ is $4$-homogeneous, they are $r/4$-spread due to Assumption~\ref{assumption: spreadness - small r}.  Proposition~\ref{proposition: coloring trick} implies that there exist disjoint sets $F_1 \in \cG_i(H), \ldots, F_s \in \cG_i(H)$. These sets do not intersect $H \cup \bigcup_{i = 1}^s S_i$, so decomposition~\eqref{eq: sunflower spread approximation lemma, decomposition} implies that $\ff$ contains a sunflower $H \cup S_1 \cup F_1, \ldots, H \cup S_s \cup F_s$ with $s$ petals and the core $H \cup \bigcap_{i = 1}^s S_i$ of size exactly $t - 1$, a contradiction.

\textbf{Case 2b. Condition~\ref{condition: sharp threshold applicability} of Theorem~\ref{theorem: large uniformity sunflowers} holds.} Since $\cG_i(H)$ are 4-homogeneous and $\mu(\cG_i(H)) \ge 8 \cdot 4^{-q}$, they meet the conditions of Claim~\ref{claim: satisfied conditions of matching lemma} with $n' = n - |\cup_i S_i| - |H|$ and $k_i = k - |S_i| - |H|$, and, therefore, there are disjoint sets $G_1 \in \cG_1(H), \ldots, G_s \in \cG_s(H)$. Thus, $\ff$ contains a sunflower $S_1 \cup H \cup G_1, \ldots, S_s \cup H \cup G_s$ with the core of size exactly $t - 1$, a contradiction.
\end{proof}

% \subsection{Simplification argument}

\subsection{Proof of Theorem~\ref{theorem: large uniformity sunflowers}}

\begin{proof}[Proof of Theorem~\ref{theorem: large uniformity sunflowers}]
Combining Lemma~\ref{lemma: sunflower spread approximation} and Lemma~\ref{lemma: sunflower simplification}, we get
\begin{align*}
    |\ff \setminus \cA[\cT]| & \le |\ff \setminus \cA[\cS]| + |\cA[\cS] \setminus \cA[\cT]| \\ 
    & \le 32 \cdot r^{-\eta t} A_t + \phi(s, t) \cdot \frac{2\varepsilon}{1 - \varepsilon} A_t \cdot \\
    & \overset{{\color{teal} (\varepsilon \le 0.5)}}{\le} 32 \cdot r^{-\eta t} A_t + \phi(s, t) \cdot 4 \varepsilon A_t \cdot \qedhere
\end{align*}
\end{proof}

\section{Proof of Theorem~\ref{theorem: large r theorem}}
\label{section: proof for small k theorem}

% Because we use Theorem~\ref{theorem: delta-system solution} in the proof, we do not obtain such a general result as Theorem~\ref{theorem: large uniformity sunflowers}.

We divide the proof into two parts, the first one, Section~\ref{section: intersection reduction}, can be applied for $\cA$ that satisfies Assumptions~\ref{assumption: spreadness - small r},\ref{assumption: regularity}, and the second one (Sections~\ref{section: simplification argument ii}-\ref{subsection: proof of large r thereom}) is specific for $\cA = \binom{[n]}{k/w}^w$, where $w$ is an arbitrary integer such that $w | k$.

The reason why the case of small $k$ requires a different approach is that it is much harder, if even possible, to obtain stability results, i.e., find a family $\mathcal T$ of uniformity $t$ and with no $s$-sunflower, such that most $\ff$ is contained in $\mathcal A[\mathcal T]$. In Section~\ref{section: proof sketch for product case -- small k uniformity}, we will see that a direct approach based on the Kruskal--Katona theorem gives the desired result up to a remainder that depends on $k$ instead of $n$. Thus, there is little chance to obtain a result like Theorem~\ref{theorem: large uniformity sunflowers} by our methods in general. However,  for $k \ge 5t$, we can get some structural information on the extremal example. As we saw in Section~\ref{section: delta-system method}, the case $k < 5t$ can be resolved by Theorem~\ref{theorem: delta-system solution}.

Section~\ref{section: intersection reduction} develops tools to obtain the aforementioned structural results. Applying them with a number of tricks, we show in Section~\ref{section: simplification argument ii} that there exists a family $\cC^* \in \partial_t \cA$, such that $|\cC^*| \le 70 t 2^t \phi(s, t) \sqrt{n} \log n$ and
\begin{align*}
    |\ff \setminus \ff[\cC^*]| \le \frac{C_t s^t\log n}{\sqrt{n}} \cdot A_t,
\end{align*}
where $C_t$ is some constant depending on $t$ only.

The rest of the proof is pretty simple and simlar to what Frankl and F\"uredi did in~\cite{Frankl1987}. One can apply to each $\ff(C)$, $C \in \cC^*$ a technique similar 
 to Lemma~\ref{lemma: delta-system method}, and then repeat the proof of Theorem~\ref{theorem: delta-system solution}.

\subsection{Intersection reduction}
\label{section: intersection reduction}

When $k$ is much smaller than $\log r$, we cannot obtain $\cS$ with properties as strong as in Lemma~\ref{lemma: spread approximation}. Fortunately, $\cS$ inherits restrictions imposed on $\ff$, namely, it does not contain a sunflower with $s$ petals and the core of size $t - 1$. Additionally, all sets of $\cS$ has cardinality at least $t$.

\begin{proposition}
    \label{proposition: spread approximation, small uniformity case}
    Suppose that Assumptions~\ref{assumption: spreadness - small r},\ref{assumption: regularity} hold for some $q$ and $r$. Let $\cS \subset \partial_{\le q} \cA$ be a $\tau$-homogeneous {\Sname} of a family $\ff \subset \cA$, $\ff$ does not contain a sunflower with $s$ petals and the core of size $t - 1$. Assume that $r > 2^{10} \tau^t s \lceil \log_2 k \rceil$ and $r \ge 2 q s \tau$. Then, it holds that
    \begin{enumerate}
        \item \label{property: S is s, t sunflower avoiding} $\cS$ does not a sunflower with $s$ petals and the core of size $t - 1$;
        \item \label{property: S does not contain a set of size less than t} each $S \in \cS$ has cardinality at least $t$. 
    \end{enumerate}
\end{proposition}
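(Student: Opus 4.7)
The plan is to prove both conclusions by contradiction, using the decomposition $\ff = \bigsqcup_{S \in \cS} \ff_S \vee \{S\}$ with each $\ff_S \subset \cA(S)$ being $\tau$-homogeneous: any violation in $\cS$ will be lifted into a forbidden sunflower inside $\ff$ whose core has size exactly $t-1$. The argument closely parallels the proof of Lemma~\ref{lemma: sunflower spread approximation}, only now $\cS$ is an arbitrary $\tau$-homogeneous approximation rather than the specific one produced there.

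For property~\ref{property: S is s, t sunflower avoiding}, I would assume $S_1, \ldots, S_s \in \cS$ form a sunflower with core $C$ of size $t-1$ and pairwise disjoint petals $P_i = S_i \setminus C$, set $X_i = \bigcup_{j \ne i} P_j$, and define $\cG_i := \ff_{S_i}(\varnothing, X_i)$. Since $\cA(S_i)$ is $r$-spread by Assumption~\ref{assumption: spreadness - small r} and $\ff_{S_i}$ is $\tau$-homogeneous inside it, Claim~\ref{claim: removing intersections} shows that $\cG_i$ is $\tau'$-homogeneous with $\tau' = \tau/(1 - |X_i|\tau/r) \le 2\tau$ (using $|X_i| \le sq \ll r/\tau$, which follows from the hypothesis on $r$). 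Thus $\cG_i$ is $r/\tau'$-spread with $r/\tau' \ge r/(2\tau) > 2^{13} s \lceil \log_2 k \rceil$, and Proposition~\ref{proposition: coloring trick} yields pairwise disjoint $F_i \in \cG_i$. By construction each $F_i$ is disjoint from $\bigcup_j S_j$, and a routine check shows $\{S_i \cup F_i\}_{i=1}^s \subset \ff$ is a sunflower with core $C$ of size $t-1$, contradicting the hypothesis on $\ff$.

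For property~\ref{property: S does not contain a set of size less than t}, suppose $S \in \cS$ has $|S| = t - 1 - h$ for some $h \ge 0$. If $h = 0$, then $\ff_S$ is $r/\tau$-spread in $\cA(S)$ and Proposition~\ref{proposition: coloring trick} produces $s$ disjoint $F_1, \ldots, F_s \in \ff_S$; the sets $S \cup F_1, \ldots, S \cup F_s$ form a forbidden sunflower in $\ff$ with core $S$ of size $t-1$. If $h \ge 1$, I would first enlarge the core to size exactly $t-1$ by restricting $\ff_S$ to an auxiliary $h$-set. Taking $\alpha = 1/2$ and $\rho = 2(1 - \tau^{-h})$ (so that the condition $\tau \le (1 - \alpha\rho)^{-1/h}$ of Proposition~\ref{proposition: most of subsets are homogeneous} holds), that proposition guarantees the existence of $H \in \partial_h(\cA(S))$ for which $\ff_S(H) \subset \cA(S \cup H)$ is non-empty and $\hat\tau$-homogeneous with $\hat\tau = \tau/(1-\rho) \le 2\tau^{h+1} \le 2\tau^t$. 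Then $\ff_S(H)$ is $r/\hat\tau$-spread with $r/\hat\tau > 2^{13} s \lceil\log_2 k\rceil$ by the main hypothesis, so Proposition~\ref{proposition: coloring trick} produces disjoint $F_1, \ldots, F_s \in \ff_S(H)$; the sets $S \cup H \cup F_i \in \ff$ form a sunflower with core $S \cup H$ of size $t-1$, a contradiction.

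The technical heart of the argument is the second step: restricting by an $h$-set $H$ with $h$ as large as $t-1$ inflates the homogeneity constant from $\tau$ to roughly $\tau^{h+1}$, which is precisely why the hypothesis on $r$ must scale as $\tau^t$ rather than merely $\tau$. The main obstacle is choosing $\alpha$ and $\rho$ in Proposition~\ref{proposition: most of subsets are homogeneous} so that both the admissibility condition $\tau \le (1-\alpha\rho)^{-1/h}$ holds and the resulting $\hat\tau$ remains uniformly well-controlled as $h$ ranges over $\{1, \ldots, t-1\}$; the explicit choice above manages this trade-off.
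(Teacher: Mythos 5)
Your overall strategy coincides with the paper's: lift any violation in $\cS$ to a forbidden sunflower in $\ff$ via Claim~\ref{claim: removing intersections} and Proposition~\ref{proposition: coloring trick}, and, when $|S| < t$, first pad the core to size exactly $t-1$ with an $h$-set $H$ supplied by Proposition~\ref{proposition: most of subsets are homogeneous}. Property~\ref{property: S is s, t sunflower avoiding} and the case $h = 0$ of property~\ref{property: S does not contain a set of size less than t} are handled correctly.

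The gap is precisely in the step you single out as the technical heart. Your parameters $\alpha = 1/2$, $\rho = 2(1-\tau^{-h})$ do satisfy the admissibility condition $\tau \le (1-\alpha\rho)^{-1/h}$, but $\rho \ge 1$ as soon as $\tau^{h} \ge 2$, and then $\hat\tau = \tau/(1-\rho)$ is undefined or negative, so Proposition~\ref{proposition: most of subsets are homogeneous} yields nothing. Even in the narrow range where $1-\rho>0$, the bound $\tau/(2\tau^{-h}-1) \le 2\tau^{h+1}$ you assert forces $\tau^{h} \le 3/2$. The proposition must hold for large $\tau$ — the hypothesis $r > 2^{15}\tau^{t} s \lceil \log_2 k\rceil$ is calibrated for that, and in the paper it is invoked with $\tau = n^{1/(2t)}$ — so your choice fails exactly in the regime that matters. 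The repair is to place the slack in $\rho$ uniformly over $h \in \{1,\dots,t-1\}$: take $\alpha = \rho = 1 - \tau^{-(t-1)}/2$, so that $1-\alpha\rho \le \tau^{-(t-1)} \le \tau^{-h}$ (admissibility holds for every $h$) while $1-\rho = \tau^{-(t-1)}/2 > 0$ and $\hat\tau = \tau/(1-\rho) = 2\tau^{t}$, which is exactly what the lower bound on $r$ is designed to absorb.
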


\begin{proof}
    Suppose that $\cS$ contains a sunflower $S_1, \ldots, S_s$ with the core of size $t - 1$. For each $\ff_{S_i}$, define $\cG_i = \ff_{S_i}\left (\varnothing, \bigcup_{p = 1}^s S_p \right )$. Since $\ff_{S_i}$ is $\tau$-homogeneous, $\cG_i$ is $\tau'$-homogeneous for
    \begin{align*}
        \tau' = \frac{\tau}{1 - \frac{q s \tau}{r}} \le 2 \tau
    \end{align*}
    due to Claim~\ref{claim: removing intersections}.
    Hence, $\cG_i, i \in [s],$ are $R$-spread for
    \begin{align*}
        R = \frac{r}{2 \tau} > 2^{7} s \lceil \log_2 k \rceil.
    \end{align*}
    Proposition~\ref{proposition: coloring trick} guarantees that there exist disjoint $G_1 \in \cG_1, \ldots, G_s \in \cG_s$. By the definition of $\cG_i$, they do not intersect $\bigcup_{p = 1}^s S_p$. Hence, $S_1 \cup G_1, \ldots, S_s \cup G_s$ form a sunflower with the core of size $t - 1$, a contradiction. That proves property~\ref{property: S is s, t sunflower avoiding}.

    Next, assume that there exists $S \in \cS$ of size at most $t - 1$. Set $h = t - 1 - |S|$ and consider two cases.

    \textbf{Case 1. We have $\mathbf{h = 0}$.} Since $\ff_S$ is $\tau$-homogeneous, it is $R$-spread for $R = r / \tau > 2^{7} s \lceil \log_2 k \rceil$. Due to Proposition~\ref{proposition: coloring trick}, it contains a matching $F_1, \ldots, F_s$. Thus, $\ff$ contains a sunflower $S \cup F_1, \ldots, S \cup F_s$, a contradiction.

    \textbf{Case 2. We have $\mathbf{h > 0}$.}  We are going to apply Proposition~\ref{proposition: most of subsets are homogeneous} with $\alpha= \rho = 1 - \tau^{-(t - 1)} / 2$. Let us check that the conditions of the proposition hold. We have
    \begin{align*}
        (1 - \alpha \rho)^{-1/h} = (1 - (1 - \tau^{t - 1}/2)^2)^{-1/h} = (\tau^{-(t - 1)} - \tau^{-2t - 2}/4)^{-1/h} \ge \tau^{(t - 1)/h} \ge \tau,
    \end{align*}
    and so there exist $H \in \partial_h \cA(S)$ such that $\ff_S(H)$ is $2 \tau^{t}$-homogeneous. In particular, it is $R$-spread for
    \begin{align*}
        R = \frac{r}{2 \tau^t} > 2^{7} s \lceil \log_2 k \rceil.
    \end{align*}
    Thus, $\ff_S(H)$ contains $s$ disjoint sets $F_1, \ldots, F_s$, and $F_1 \cup S \cup H, \ldots, F_s \cup S \cup H$ belong to $\ff$ and form a sunflower with the core of size $t - 1$, a contradiction.

    Thus, property~\ref{property: S does not contain a set of size less than t} holds.
\end{proof}

We can impose some additional restrictions on $\cS$. The key element for doing that is the following simple lemma.

\begin{lemma}
\label{lemma: homogeneous subfamily}
    Let $\ff \subset \cA$ be $\tau$-homogeneous family. Then, for arbitrary $\alpha \le 1/2k$, there exists a subfamily $\cG \subset \ff$, such that
    \begin{itemize}
        \item $|\cG| \ge (1 - 2 \alpha k ) |\ff|$;
        \item for any $P$ of size at most $t - 1$, if $\cG(P)$ is not empty, then $\ff(P)$ is $\alpha (\tau / \alpha)^t$-homogeneous.
    \end{itemize}
\end{lemma}

\begin{proof}
    Define
    \begin{align*}
        \cP = \left \{P \in \partial_{\le t - 1}\cA \mid \mu(\ff(P)) < \alpha^{|P|} \mu(\ff) \right \}.
    \end{align*}
    Then, let $\cG = \ff \setminus \ff[\cP]$. We have
    \begin{align*}
        |\cG| \ge |\ff| - \sum_{P \in \cP} \alpha^{|P|} \frac{|\cA(P)|}{|\cA|} |\ff| \ge |\ff| \left (1 - \sum_{p = 1}^{t - 1} \alpha^p \binom{k}{p} \right ).
    \end{align*}
    Since 
    \begin{align*}
        \sum_{p = 1}^{t - 1} \alpha^p \binom{k}{p} \le \sum_{p = 1}^{t - 1} (\alpha k)^{p} \le \frac{\alpha k}{1 - \alpha k} \le 2 \alpha k,
    \end{align*}
    we have $|\cG| \ge (1 - 2 \alpha k)|\ff|$.
    
    Clearly, if $\cG(P)$ is not empty, then $\mu(\ff(P)) \ge \alpha^{|P|} \mu(\ff)$. Then, it is $\alpha (\tau / \alpha)^{t}$-homogeneous, since overwise we get
    \begin{align*}
        \mu(\ff) \tau^{|P| + |S|} \ge \mu(\ff(P\cup S)) > \alpha^{|S|} (\tau / \alpha)^{t |S|} \mu(\ff(P)) \ge \alpha^{|P| + |S|} (\tau / \alpha)^{t |S|} \mu(\ff), \\
        (\tau / \alpha)^{|P|} > (\tau / \alpha)^{(t - 1)|S|}
    \end{align*}
    for some non-empty $S$ and $P$ of size $\le t - 1$, a contradiction.
\end{proof}

The idea is to apply Lemma~\ref{lemma: homogeneous subfamily} to each $\ff_S, S \in \cS$, and obtain a large and well-structured subfamily of $\ff$. The established property is somewhat similar to the one given in Lemma~\ref{lemma: spread approximation}, but much weaker.

\begin{lemma}
    \label{lemma: intersection graph structure}
Suppose that Assumptions~\ref{assumption: spreadness - small r},\ref{assumption: regularity} hold for some $q$ and $r$, $k \ge q + (t - 1)$. Let $\ff$ be a family that does not contain a sunflower with $s$ petals and the core of size $t - 1$. Let $\cS \subset \partial_{\le q} \cA$ be its $\tau$-homogeneous {\Sname}.  Assume that $r > \max \{ 2^{10} \lceil \log_2 k \rceil, 2 q\} \cdot s \alpha^{1 - t} \tau^t$ for some $\alpha \in (0, (4 k)^{-1}]$. Then there exist families $\cU_S \subset \ff_S, S \in \cS$, such that
    \begin{enumerate}
        \item $|\cU_S| \ge (1 - 2 \alpha k) |\ff_S|$ for each $S \in \mathcal{S}$,
        \item $\cS$ does not contain a sunflower with $s$ petals and the core of size $t - 1$; if $S_1, \ldots, S_s \in \mathcal{S}$ form a sunflower with $s$ petals and the core of size at most $t - 2$, then for any $F_1 \in \cU_{S_1}, \ldots, F_s \in \cU_{S_s}$ we have $|\bigcap_{p = 1}^s F_p| \le t - |\bigcap_{p = 1}^s S_s| - 2$,
        \item let $\tau' \le \tau$ be the minimal homogeneity of $\ff_S$, then $|\partial_{h} (\cU_S)| \ge \left ( \frac{\tau'}{1 - 2 \alpha k} \right )^{-h}|\partial_{h} \cA(S)|$ for any $h < k$.
    \end{enumerate}
\end{lemma}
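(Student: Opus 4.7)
The strategy is to define $\cU_S$ for each $S \in \cS$ by applying Lemma~\ref{lemma: homogenous subfamily} to the $\tau$-homogeneous family $\ff_S \subset \cA(S)$ with parameter $\alpha$. This yields property 1 together with a crucial local statement: whenever $|P| \le t - 1$ and $\cU_S(P) \ne \varnothing$, the family $\ff_S(P)$ is $\alpha(\tau/\alpha)^t$-homogeneous inside $\cA(S \cup P)$. The first clause of property 2 --- that $\cS$ itself avoids sunflowers with $s$ petals and core of size $t - 1$ --- is not new and is exactly Proposition~\ref{proposition: spread approximation, small uniformity case}, whose hypothesis $r > 2^{15}\tau^t s\lceil\log_2 k\rceil$ is subsumed by ours.

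The second clause of property 2 is the heart of the lemma, which I would establish by contradiction. Suppose $S_1, \ldots, S_s \in \cS$ form a sunflower with core $C$, $|C| \le t - 2$, and there exist $F_i \in \cU_{S_i}$ whose common intersection $P := \bigcap_p F_p$ has $|P| \ge t - |C| - 1$. Fix any $P' \subset P$ with $|P'| = t - |C| - 1 \le t - 1$, noting $P'$ is disjoint from $\bigcup_p S_p$ since each $F_i \in \cA(S_i)$. Since $F_i \setminus P' \in \cU_{S_i}(P')$, the local structure forces $\ff_{S_i}(P')$ to be $\alpha(\tau/\alpha)^t$-homogeneous, hence $R$-spread for $R = r\alpha^{t-1}/\tau^t$. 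With $X := \bigcup_p S_p$ (so $|X| \le sq$), Claim~\ref{claim: spread removing intersections} produces $\cG_i := \ff_{S_i}(P')(\varnothing, X)$ of spreadness at least $R/2$, using $|X|/R \le 1/2$ from the assumption $r \ge 2sq\alpha^{1-t}\tau^t$. The other half of the assumption gives $R/2 > 2^{13}s\lceil\log_2 k\rceil$, so Proposition~\ref{proposition: coloring trick} produces pairwise disjoint $G_i \in \cG_i$, each disjoint from $\bigcup_p S_p$. A routine check of pairwise intersections (using $S_p \cap S_q = C$, $P' \cap S_p = \varnothing$, $G_p \cap S_q = \varnothing$, $P' \cap G_p = \varnothing$, $G_p \cap G_q = \varnothing$) then verifies that $S_p \cup P' \cup G_p$, $p = 1,\dots,s$, form an $s$-sunflower in $\ff$ with core $C \cup P'$ of size exactly $|C| + |P'| = t-1$, contradicting the hypothesis on $\ff$.

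Property 3 is a routine adaptation of Proposition~\ref{proposition: shadow of tau-homogenous family}. The inequalities $|\cU_S(H)| \le |\ff_S(H)|$ and $|\cU_S| \ge (1 - 2\alpha k)|\ff_S|$, combined with the $\tau'$-homogeneity of $\ff_S$, yield $\mu(\cU_S(H)) \le \tau'^h \mu(\cU_S)/(1-2\alpha k)$ for any $|H| = h$. Plugging this into the regularity identity of Assumption~\ref{assumption: regularity} applied to $\cU_S \subset \cA(S)$ gives $|\partial_h \cU_S|/|\partial_h \cA(S)| \ge (1 - 2\alpha k)\tau'^{-h}$, which is at least $(\tau'/(1 - 2\alpha k))^{-h}$ since $(1 - 2\alpha k)^h \le 1 - 2\alpha k$ for $h \ge 1$.

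The main obstacle is the contradiction argument for property 2: the choice of $P'$ as a subset of $P$ of exactly the right size $t - |C| - 1$ (rather than the full intersection) is essential, since the forbidden sunflower in $\ff$ has core of exactly size $t - 1$; were we to use all of $P$, the resulting sunflower could have a larger core and the hypothesis on $\ff$ would fail to yield a contradiction. One also has to handle the minor technicality that converting the $\alpha(\tau/\alpha)^t$-homogeneity of $\ff_{S_i}(P')$ into $R$-spreadness uses that $\cA(S_i \cup P')$ is itself $r$-spread, which implicitly requires that the range of Assumption~\ref{assumption: spreadness - small r} accommodates shadow sets of size up to $q + t - 1$.
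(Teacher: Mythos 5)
Your proposal is correct and follows essentially the same route as the paper: define $\cU_S$ via Lemma~\ref{lemma: homogenous subfamily}, get the first clause of property 2 from Proposition~\ref{proposition: spread approximation, small uniformity case}, and derive the second clause by picking a subset $P'$ of the common intersection of size exactly $t-|C|-1$, upgrading $\ff_{S_i}(P')$ to an $\alpha^{1-t}\tau^t$-homogeneous (hence suitably spread) family, removing $\bigcup_p S_p$, and applying Proposition~\ref{proposition: coloring trick} to build a forbidden sunflower with core of size exactly $t-1$. The only cosmetic difference is that you invoke Claim~\ref{claim: spread removing intersections} on the spread form where the paper uses Claim~\ref{claim: removing intersections} on the homogeneous form; your treatment of property 3 likewise reproduces the argument of Proposition~\ref{proposition: shadow of tau-homogenous family}.
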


\begin{proof}
    We have
    \begin{align*}
        \ff = \bigsqcup_{S \in \cS} \ff_S \vee \{S\},
    \end{align*}
    and for each $S \in \mathcal{S}$, $\ff_S$ is $\tau$-homogeneous. Define $\cU_S$ as a family $\mathcal G$ obtained via Lemma~\ref{lemma: homogeneous subfamily} for $\ff_S$. If $\tau'$ is the minimal homogeneity of $\ff_S$,  then $\cU_S$ is $\tau' / (1 - 2 \alpha k)$-homogeneous, and so $\mu(\partial_h \cU_S) \ge \left ( \frac{\tau'}{1 - 2 \alpha k} \right )^{-h}$ for any $h$ due to Proposition~\ref{proposition: shadow of tau-homogeneous family}.

    Due to Proposition~\ref{proposition: spread approximation, small uniformity case}, $\cS$ does  not contain a sunflower with $s$ petals and the core of size $t - 1$. Consider $s$ sets $S_1, \ldots, S_s \in \mathcal{S}$ that form a sunflower with the core $C$ of size at most $t - 1$. Suppose that there exist $F_i \in \cU_{S_i},i \in [s]$ such that $\left |\bigcap_{p = 1}^s F_p \right | \ge  t - |C| - 1$. Choose $P \subset \bigcap_{p = 1}^s F_p$ of size exactly $t - |C| - 1$. Lemma~\ref{lemma: homogeneous subfamily} implies that $\ff_{S_i}(P), i \in [s],$ are $\alpha^{1 - t} \tau^t$-homogeneous. Note that we need the assumption $k \ge q + (t - 1)$ to ensure that $\ff_{S_i}(P)$ is not empty. Due to Claim~\ref{claim: removing intersections}, $\ff_{S_i} \left (P, P \cup \bigcup_{p = 1}^s S_p \right )$ is $\tau'$-homogeneous for
    \begin{align*}
        \tau' = \frac{\alpha^{1 - t} \tau^t}{1 - \frac{s q \alpha^{1 - t} \tau^t}{r}} \le 2 \alpha^{1 - t} \tau^t.
    \end{align*}
    Thus, each $\cG_i = \ff_{S_i}\left (P, P \cup \bigcup_{p = 1}^s S_p \right )$ is $R$-spread, for
    \begin{align*}
        R \ge \frac{r}{2 \tau^t} \alpha^{t - 1} > 2^{7} s \lceil \log_2 k\rceil .
    \end{align*}
    Proposition~\ref{proposition: coloring trick} implies that there are $s$ disjoint sets $G_1 \in \cG_1, \ldots, G_s \in \cG_s$. By the definition of $\cG_i$, they do not intersect $P \cup \bigcup_{p = 1}^s S_p$, hence, sets $P \cup S_1 \cup G_1, \ldots, P \cup S_s \cup G_s$ are contained in $\ff$ and form a sunflower with $s$ petals and the core $C \cup P$ of size $t - 1$, a contradiction.
\end{proof}

\begin{definition}
\label{definition: Sst system}
    Let $\mathcal{S}\subset \bigcup_{p = t}^q \partial_p \cA$ be a family that does not contain a sunflower with $s$ petals and the core of size $t - 1$. We say that a family $\cB \subset \cA$ is {\em an $(\mathcal{S}, s, t)$-system}, if $\cB$ can be decomposed into disjoint families $\cB_S \vee \{S\}$, $S\in \mathcal S$ and  $\cB_S \subset \cA(S)$, such that for any $s$ sets $S_1, \ldots, S_s \in \mathcal{S}$ the following properties hold.
    \begin{itemize}
        \item $S_1,\ldots, S_s$  do not form a sunflower with the core of size exactly $t - 1$.
        \item if $S_1,\ldots, S_s$ form a sunflower with the core of size at most $t - 2$, then for any $F_i \in \cB_{S_i}, i \in [s]$ we have $\left |\bigcap_{i = 1}^s F_i \right | \le t - |S_1 \cap S_2| - 2$.
    \end{itemize}
\end{definition}

Given a $\tau$-homogeneous {\Sname} $\cS$ of a family $\ff$ without a sunflower with $s$ petals and the core of size $t - 1$, define
\begin{align*}
    \cU = \bigsqcup_{S \in \cS} \cU_S \vee \{S\},
\end{align*}
where $\cU_S, S \in \cS,$ are families obtained from Lemma~\ref{lemma: intersection graph structure}. Then, Proposition~\ref{proposition: spread approximation, small uniformity case} guarantees that the family $\cS$ does not contain a sunflower with $s$ petals and the core of size $t - 1$, and each its set has size at least $t$. Applying Lemma~\ref{lemma: intersection graph structure}, we infer that $\cU$ is an $(\cS, s, t)$-system.

We shall use reduction to $(\cS, s, t)$-systems  to prove that $\ff$ admits some nice decomposition up to a polynomial-size remainder.
\begin{proposition}
\label{proposition: clustering of (S, s, t)-system into t-sets}
    Let $\cA$ be a $k$-uniform $(r, t)$-spread family and $\cU$ be an $(\cS, s, t)$-system such that $|\partial_{t - 1} \cU_S| \ge \lambda |\partial_{t - 1} \cA|$ for every $S \in \cS \subset \partial_{\le q} \cA$. Suppose that $r \ge 2 s^2 q^3$. Then, there exists a family $\hat{\cT} \subset \partial_{t} \cA$, such that
    \begin{align}
    \label{eq: number of clusters}
        |\hat{\cT}| \le \frac{1}{\lambda} \left (1 +  2 \phi(s, t) \ln(\lambda |\partial_{\le q} \cA|) \right )
    \end{align}
    and
    \begin{align}
    \label{eq: size of the reminder of clustering}
        \sum_{S \in \cS \setminus \cS[\hat{\cT}]} |\cU_S| 
        & \le \frac{8\phi(s, t) s^2 q^3}{\lambda r}  \cdot  \ln (\lambda |\partial_{\le q} \cA|) \cdot A_t .
    \end{align}
\end{proposition}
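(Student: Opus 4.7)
The plan is to build $\hat{\cT}$ by an iterative procedure that combines an averaging argument on the $(t-1)$-shadows with Lemma~\ref{lemma: sunflower simplification}. Initialize $\cS_0 := \cS$, $\hat{\cT} := \varnothing$, and write $W_i := \sum_{S \in \cS_i}|\cU_S|$ for the residual weight. At each round $i$, first locate a $(t-1)$-set $P_i$ by double-counting: the hypothesis $|\partial_{t-1}\cU_S| \ge \lambda|\partial_{t-1}\cA|$ gives
\begin{align*}
\sum_{P \in \partial_{t-1}\cA}\;\sum_{S \in \cS_i \,:\, P \in \partial_{t-1}\cU_S}|\cU_S| \;=\; \sum_{S \in \cS_i}|\cU_S| \cdot |\partial_{t-1}\cU_S| \;\ge\; \lambda|\partial_{t-1}\cA| W_i,
\end{align*}
so some $P_i$ satisfies $\sum_{S \in A_i}|\cU_S| \ge \lambda W_i$ where $A_i := \{S \in \cS_i \mid P_i \in \partial_{t-1}\cU_S\}$.

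The key structural observation is that $A_i$ contains no $s$-sunflower with core of size at most $t-1$. Indeed, each $S \in A_i$ comes with some $F_S \in \cU_S$ containing $P_i$. If $S_1,\ldots,S_s \in A_i$ formed a sunflower with core $C$ of size at most $t-1$, then $|C| = t-1$ is forbidden by the first $(\cS,s,t)$-system axiom, while $|C| \le t-2$ combined with the second axiom gives $|\bigcap_{j=1}^s F_{S_j}| \le t-|C|-2$, contradicting $P_i \subset \bigcap_j F_{S_j}$, which forces this intersection to have size at least $t-1$. Since $A_i \subset \partial_{\le q}\cA$ consists of sets of size at least $t$, Lemma~\ref{lemma: sunflower simplification} applies to $A_i$ with $\varepsilon := 2^{18}sq/r$, yielding $\cT_{P_i} \subset \partial_t\cA$ of size at most $(2^{14}s\log_2 t)^t$ such that $|\cA[A_i \setminus A_i[\cT_{P_i}]]| \le 2\cdot(2^{14}s\log_2 t)^t \varepsilon A_t$. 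Disjointness of the families $\cU_S \vee \{S\}$ inside $\cA$ transfers this bound to the $|\cU|$-weighted uncovered mass in $A_i$.

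Adjoining $\cT_{P_i}$ to $\hat{\cT}$ and setting $\cS_{i+1} := \cS_i \setminus \cS_i[\cT_{P_i}]$ yields the recursion $W_{i+1} \le (1-\lambda) W_i + E$ with $E := 2^{19}(2^{14}s\log_2 t)^t sq/r \cdot A_t$; after $m$ rounds this unrolls to $W_m \le (1-\lambda)^m W_0 + E/\lambda$. Taking $m := \lceil 2\log(\lambda|\partial_{\le q}\cA|)/\lambda\rceil$ and using $W_0 \le |\partial_{\le q}\cA| A_t$ drives the geometric tail below $A_t/(\lambda r)$, and in the parameter regime where $\log(\lambda|\partial_{\le q}\cA|) \gtrsim sq$ the additive part $E/\lambda$ fits into the target $\tfrac{(2^{14}s\log_2 t)^t}{\lambda r} \cdot 4\log(\lambda|\partial_{\le q}\cA|) A_t$. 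Since each round adjoins at most $(2^{14}s\log_2 t)^t$ sets to $\hat{\cT}$, the cardinality bound $|\hat{\cT}| \le m \cdot (2^{14}s\log_2 t)^t$ matches the target $\tfrac{1}{\lambda}(1 + 2\phi(s,t)\log(\lambda|\partial_{\le q}\cA|))$ up to absorbed constants.

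The main obstacle will be calibrating $\varepsilon$: Lemma~\ref{lemma: sunflower simplification}'s hard lower bound $\varepsilon r > 2^{17}sq$ inflates the per-round error $E$ by a factor of $sq$, and to absorb this into the target factor $\log(\lambda|\partial_{\le q}\cA|)$ requires the regime assumption $sq \lesssim \log(\lambda|\partial_{\le q}\cA|)$, which is where the dependence on the global parameters of Theorem~\ref{theorem: large r theorem} enters.
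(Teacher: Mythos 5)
Your overall strategy is the paper's: iteratively double-count on $(t-1)$-shadows to find a popular $(t-1)$-set, use the two $(\cS,s,t)$-system axioms to show the popular subfamily contains no $s$-sunflower with core of size at most $t-1$ (your verification of this step is correct and is exactly what the paper leaves implicit), clean it with Lemma~\ref{lemma: sunflower simplification}, and let a geometric decay finish. There are, however, two genuine problems.

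First, a missing step at the end of the recursion. Your remainder bound retains the tail $(1-\lambda)^m W_0$, which with your choice of $m$ is only $\le A_t/(\lambda^2|\partial_{\le q}\cA|)$. This term carries no factor $1/r$ and cannot be absorbed into \eqref{eq: size of the reminder of clustering} under the stated hypotheses; your auxiliary assumption $\lambda|\partial_{\le q}\cA|\gtrsim r$ is not part of the proposition, and enlarging $m$ to kill the tail would violate \eqref{eq: number of clusters}. The paper avoids the tail entirely: it counts \emph{sets} rather than $|\cU_S|$-weight, so that $|\cS^{i+1}|\le(1-\lambda)|\cS^i|$, stops as soon as $|\cS^i|\le 1/\lambda$, and then covers each of the at most $1/\lambda$ surviving $S$ by an arbitrary $t$-subset. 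This endgame contributes the additive $1/\lambda$ in \eqref{eq: number of clusters} and makes the final leftover exactly the union of the per-round Lemma~\ref{lemma: sunflower simplification} remainders, with no geometric tail. You need this (or an equivalent device), and your weighted double-counting, while valid, does not control $|\cS_i|$ and so does not directly support it.

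Second, your calibration of $\varepsilon$. You correctly observe that the smallest $\varepsilon$ admitted by Lemma~\ref{lemma: sunflower simplification} is of order $2^{17}sq/r$, so each round leaks roughly $\frac{sq}{r}(2^{14}s\log_2 t)^tA_t$; but your patch, assuming $sq\lesssim\log(\lambda|\partial_{\le q}\cA|)$, is again not among the hypotheses and is not guaranteed where the proposition is used (in Lemma~\ref{lemma: small packing of F -- small uniformity case} one has $\log(\lambda|\partial_{\le q}\cA|)\asymp t\log n$ while $sq\asymp st$, and $\log n$ need not dominate $s$). To be fair, the paper's own display \eqref{eq: bound on the reminder of T_i} asserts a per-round loss of $\frac{2}{r}(2^{14}s\log_2 t)^tA_t$, which corresponds to $\varepsilon\approx 2/r$ and silently ignores the constraint $\varepsilon r>2^{17}sq$; you have put your finger on a real soft spot, but your fix does not close it within the stated hypotheses. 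Finally, a small point: bound $|\cT_{P_i}|$ by $\phi(s,t)$ directly (the output of Lemma~\ref{lemma: sunflower simplification} is $t$-uniform and sunflower-free); replacing it by $(2^{14}s\log_2 t)^t$ as you do would overshoot \eqref{eq: number of clusters}, which is stated in terms of $\phi(s,t)$.
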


\begin{proof}
    % First, we obtain $\hat{\cT}$. 

    The set $\hat{\cT}$ is formed by the following procedure. Set $\cS^0 = \cS$. For a defined $\cS^i$, consider two cases. If $|\cS^i| \le \frac{1}{\lambda}$, then put $m: = i$ and stop. Otherwise, we claim that there exists $H_i \in \partial_{t - 1} \cA$, such that
    \begin{align*}
        \cS^i_{H_i} = \{S \in \cS^i \mid H_i \in \partial_{t - 1} \cU_S \}
    \end{align*}
    has size at least $\lambda |\cS^i|$. This follows from a double-counting argument. Consider a bipartite graph with vertices $\cS^i$ and $\partial_{t - 1} \cA$, and connect $S \in \cS^i$ and $H \in \partial_{t - 1} \cA$, if $H \in \partial_{t - 1} (\cU_S)$. The assumptions of the lemma imply that the number of edges is at least $\sum_{S \in \cS^i} |\partial_{t - 1} \cU_S| \ge \lambda |\partial_{t - 1} \cA| |\cS^i|$. Thus, there exists $H_i \in \partial_{t - 1} \cA$ that is connected to at least $\lambda |\partial_{t - 1} \cA| |\cS^i| / |\partial_{t - 1} \cA| = \lambda |\cS^i| > 1$ indices $S$ in $\cS^i$.

    By Definition~\ref{definition: Sst system} of an $(\cS^i, s, t)$-system,  the family $\cS^i_{H_i}$ does not contain a sunflower with $s$ petals and the core of size at most $t - 1$. Due to Lemma~\ref{lemma: sunflower simplification}, there exists a family $\cT_i$ of uniformity $t$ such that
    \begin{align}
    \label{eq: bound on the reminder of T_i}
       \sum_{S \in \cS^i_{H_i} \setminus \cS^i_{H_i}[\cT_i]} | \cU_S | \le |\cA[\cS^i_{H_i} \setminus \cS^i_{H_i}[\cT_i]]| \le \frac{4\phi(s,t) s^2 q^3}{r} A_t,
    \end{align}
    and $\cT_i$ does not contain a sunflower with $s$ petals. Define $\cS^{i + 1} = \cS^i \setminus \cS^i_{H_i}$ and repeat the procedure. 

    Once the procedure stops, put $\cT_m$ to be the smallest family of uniformity $t$ such that $\cS^m = \cS^m[\cT_m] $. Clearly, $|\cT_m| \le |\cS^m| \le \frac{1}{\lambda}$. Then, define
    \begin{align*}
        \hat{\cT} = \bigcup_{i = 1}^{m} \cT_i.
    \end{align*}
    
    Next, we bound the number of steps $m$. Since $|\cS^i_H| \ge \lambda |\cS^i|$, we have $|\cS^{i + 1}| \le (1 - \lambda) |\cS^i|$. 
    If $m > 0$ and  so $\lambda |\cS| > 1$, let $\tilde{m}$ be defined as the least integer such that $e^{-\lambda \tilde{m}} |\cS| \le \frac{1}{\lambda}$, that is $\tilde{m} = \lceil \frac{1}{\lambda} \ln (\lambda |\cS|) \rceil $. We claim that $m \le \tilde m$. Otherwise, we have  $\tilde m < m$ and, therefore, $\cS^{\tilde m}$ is defined, $|\cS^{\tilde m}| > \frac{1}{\lambda}$ and
    \begin{align*}
        |\cS^{\tilde m}| \le (1 - \lambda)^{\tilde m} |\cS| \le e^{-\lambda \tilde m} |\cS| \le \frac{1}{\lambda},
    \end{align*}
    a contradiction. Thus, we have $$m \le \frac{2}{\lambda} \ln(\lambda |\cS|) \indicator\{\lambda |\cS| > 1\} \le \frac{2}{\lambda} \ln (\lambda |\partial_{\le q} \cA|),$$ and $|\hat{\cT}| \le m \phi(s, t) + \frac{1}{\lambda} \le \frac{1}{\lambda} \left (1 +  2 \phi(s, t) \ln(\lambda |\partial_{\le q} \cA|) \right )$, which proves~\eqref{eq: number of clusters}.

    Finally, we bound the sum $\sum_{S \in \cS \setminus \cS[\hat{\cT}]} |\cU_S|$:
    \begin{align*}
        \sum_{S \in \cS \setminus \cS[\hat{\cT}]} |\cU_S| & \le  \sum_{i = 0}^{m - 1} \sum_{S \in \cS^{i}_H \setminus \cS^i_H[\cT_i]} |\cU_S|  \overset{\eqref{eq: bound on the reminder of T_i}}{\le} m \cdot \frac{4\phi(s, t) s^2 q^3}{r} A_t \\
        & \le \frac{8 s^2 q^3\ln(\lambda |\partial_{\le q} \cA|)}{\lambda r} \cdot \phi(s, t)  \cdot A_t. \qedhere
    \end{align*}
    % Hence, we may bound
    % \begin{align*}
    %     \sum_{S \in \cS \setminus \cS[\hat{\cT}]} |\cU_S| & \le \frac{A_t}{\lambda r} \bigg \{ 1 +   (2 st )^t \cdot 4 \indicator\{\lambda |\cS| > 1\} \ln (\lambda |\cS|) \bigg \} \\
    %     & \le \frac{A_t}{\lambda r} \bigg \{ 1 +   (2 st)^t \cdot 4  \ln (\lambda |\partial_{\le q} \cA|) \bigg \}. \qedhere
    % \end{align*}
\end{proof}

\subsection{Simplification argument II}
\label{section: simplification argument ii}

The main result of this section is the following lemma:
\begin{lemma}
\label{lemma: small packing of F -- small uniformity case}
    Assume that $\cA = \binom{[n]}{k/w}^w$ for some $w | k$. Suppose that $n \ge  2^{30}(s k^2)^{4t + 2}$ and $k \ge 5 t$. Then exists a family $\cC^* \subset \partial_{t} \cA$, such that $|\cC^*| \le 70 t \cdot 2^t \phi(s, t) \sqrt{n} \ln n$ and
\begin{align*}
    |\ff \setminus \ff[\cC^*]| \le \frac{C_t s^t k \ln n}{\sqrt{n}} \cdot A_t,
\end{align*}
where $C_t$ is some constant depending on $t$ only.
\end{lemma}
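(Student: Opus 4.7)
The plan is to combine the three machines from the preceding sections: the spread approximation of Lemma~\ref{lemma: spread approximation}, the intersection reduction of Lemma~\ref{lemma: intersection graph structure}, and the clustering of an $(\cS,s,t)$-system into a small family of $t$-sets via Proposition~\ref{proposition: clustering of (S, s, t)-system into t-sets}. Because we are in the small-$k$ regime where $\log r \gg k$, we cannot argue (as in Theorem~\ref{theorem: large uniformity sunflowers}) that the eventual cover is itself sunflower-free with arbitrary core; we only control the size of $\cC^*$ and the measure of the uncovered part of $\ff$. The $t$-set family $\hat\cT$ produced by the clustering step will be $\cC^*$.

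Set $q:=2t^2$ and $\alpha:=1/(4k)$, and choose $\tau$ so that three conditions hold simultaneously: (i) $\tau^{q+1}\ge\sqrt n\,(2n/k)^t$, which ensures the spread-approximation remainder $|\cR|\le\tau^{-(q+1)}\binom{n}{k}$ is absorbed in $(1/\sqrt n)\binom{n-t}{k-t}$; (ii) $r>\max\{2^{15}\lceil\log_2 k\rceil,\,2q\}\cdot s\alpha^{1-t}\tau^t$, the hypothesis of Lemma~\ref{lemma: intersection graph structure} (which also implies the hypothesis $r>2^{15}\tau^t s\lceil\log_2 k\rceil$ of Proposition~\ref{proposition: spread approximation, small uniformity case}); (iii) $(2\tau)^{-(t-1)}$ is at least of order $k/\sqrt n$, so that the clustering parameter $\lambda:=(2\tau)^{-(t-1)}\cdot|\partial_{t-1}\cA(S)|/|\partial_{t-1}\cA|$ satisfies $\lambda\asymp k/\sqrt n$. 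The regime $n\ge 2^{32}(2sk^2)^{4t+2}$ is designed precisely so that such a $\tau$ exists uniformly for every $k\ge 5t$.

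With these parameters, Lemma~\ref{lemma: spread approximation} produces a decomposition $\ff=\cR\sqcup\bigsqcup_{S\in\cS}\ff_S\vee\{S\}$ with $|\cR|\le\tau^{-(q+1)}\binom{n}{k}$ and each $\ff_S$ a $\tau$-homogeneous subfamily of $\cA(S)$. Proposition~\ref{proposition: spread approximation, small uniformity case} then guarantees that $\cS$ contains no $s$-sunflower with core of size $t-1$ and that every $S\in\cS$ has $|S|\ge t$. Lemma~\ref{lemma: intersection graph structure} with our $\alpha=1/(4k)$ supplies families $\cU_S\subset\ff_S$ with $|\cU_S|\ge|\ff_S|/2$ (since $2\alpha k=1/2$) and $|\partial_{t-1}\cU_S|\ge(2\tau)^{-(t-1)}|\partial_{t-1}\cA(S)|$; the union $\cU$ is an $(\cS,s,t)$-system. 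Since $|S|\le q\ll n$ we have $|\partial_{t-1}\cA(S)|\ge\tfrac{1}{2}|\partial_{t-1}\cA|$, so the shadow bound holds with $\lambda\asymp k/\sqrt n$. Proposition~\ref{proposition: clustering of (S, s, t)-system into t-sets} then yields $\hat\cT\subset\partial_t\cA$ sunflower-free with $|\hat\cT|\le\tfrac{1}{\lambda}(1+2\phi(s,t)\log(\lambda|\partial_{\le q}\cA|))$ and $\sum_{S\in\cS\setminus\cS[\hat\cT]}|\cU_S|\le\tfrac{(2^{14}s\log_2 t)^t\cdot 4\log(\lambda|\partial_{\le q}\cA|)}{\lambda r}A_t$. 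Setting $\cC^*:=\hat\cT$, the inclusion $\ff\setminus\ff[\cC^*]\subset\cR\sqcup\bigsqcup_{S\in\cS\setminus\cS[\hat\cT]}\ff_S\vee\{S\}$ together with $|\ff_S|\le 2|\cU_S|$ transfers these bounds to $\ff$; plugging in $\lambda\asymp k/\sqrt n$, $r=n/k$, and $\log(\lambda|\partial_{\le q}\cA|)\le q\log n=2t^2\log n$, we obtain both $|\cC^*|\le 42\,t\,\phi(s,t)\sqrt n\log n$ (using $k\ge 5t$ to kill the factor $t^2/k$) and $|\ff\setminus\ff[\cC^*]|\le(C_t s^t\log n/\sqrt n)\binom{n-t}{k-t}$ for some $C_t$ depending only on $t$ (the $k$ in the denominator of $1/(\lambda r)=k/(\lambda n)$ cancels the $k$ in $1/r$, removing any $k$-dependence from the remainder).

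The main obstacle is the three-way tension between $\tau$, $q$, and $\alpha$: making $\tau$ larger helps (i) and (iii) but tightens (ii). This balance is most delicate for small $t$ (in particular $t=2$, where the exponents in (ii) and (iii) nearly collide), which is why the hypothesis on $n$ grows like $k^{8t+4}$; this power is exactly what is needed to make all three conditions hold uniformly over the admissible range of $k\ge 5t$. A secondary bookkeeping obstacle is that Proposition~\ref{proposition: clustering of (S, s, t)-system into t-sets} only controls the uncovered mass $\sum_S|\cU_S|$, not $\sum_S|\ff_S|$; choosing $\alpha=1/(4k)$ so that $|\cU_S|\ge|\ff_S|/2$ is precisely what converts one into the other at the cost of a factor $2$.
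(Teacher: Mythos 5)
Your second half (intersection reduction with $\alpha=1/(4k)$, clustering via Proposition~\ref{proposition: clustering of (S, s, t)-system into t-sets}, and setting $\cC^*=\hat\cT$) matches the paper, but the first half has a genuine gap: you apply Lemma~\ref{lemma: spread approximation} \emph{once, globally, with $q=2t^2$}, and this cannot work under the stated hypotheses. The sets $S\in\cS$ may then have size up to $2t^2$, and every subsequent step --- the $(t-1)$-shadows $\partial_{t-1}\cU_S$ fed into the clustering, property 2 of Lemma~\ref{lemma: intersection graph structure} (which extracts a set $P$ of size $t-|C|-1$ from $\bigcap F_p$ with $F_p\in\cU_{S_p}\subset\cA(S_p)$) --- requires the members of $\cU_S$ to have size $k-|S|\ge t-1$, i.e.\ $k\ge q+t-1=2t^2+t-1$. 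The lemma only assumes $k\ge 5t$, so for every $t\ge 3$ there is a nonempty range $5t\le k<2t^2+t-1$ where your parameters are vacuous. You cannot repair this by shrinking $q$ to $O(t)$ either: your condition (i) then forces $\tau^{O(t)}\ge\sqrt n\,(n/k)^t$, hence $\tau\gtrsim (n/k)^{\Omega(1)}$, which destroys conditions (ii) and (iii) (e.g.\ $\tau^{t-1}\le\sqrt n/k$ fails). And even for $k\ge 2t^2+t$, balancing (i) against (iii) with $q=2t^2$ requires roughly $n\ge 2^{ct^2}k^{2t}$, which is \emph{not} implied by $n\ge 2^{32}(2sk^2)^{4t+2}$ when $k=O(t)$ and $t$ is large; your claim that the hypothesis on $n$ ``is exactly what is needed'' does not survive this computation.

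The paper resolves exactly this tension with a preliminary \emph{truncation} stage that your proposal omits entirely: an iterative covering-number argument (the unnamed lemma preceding the proof, with $\alpha=sk$-spread restrictions), combined crucially with Theorem~\ref{theorem: bucic upper bound} to bound the intermediate layers, produces a sunflower-free $\cT\subset\binom{[n]}{2t}\cup\binom{[n]}{2t+1}$ with $|\ff\setminus\ff[\cT]|\le \tilde C_t s^t n^{-1/2}\binom{n-t}{k-t}$. Only afterwards is Lemma~\ref{lemma: spread approximation} applied, separately to each $\ff_T\subset\cA(T)$, with the small parameters $q=2t$ and $\tau=n^{1/(2t)}$; this keeps the total depth $|T|+q+(t-1)\le 5t\le k$, and the per-$T$ remainders $\tau^{-(q+1)}|\cA(T)|$ are summable because $|\cA(T)|$ is already a $(k/n)^t$-fraction of $\binom{n}{k-t}$ and the number of $T$'s is again controlled by Theorem~\ref{theorem: bucic upper bound}. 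Without this stage (and without any use of Theorem~\ref{theorem: bucic upper bound}, which your proposal never invokes), the argument does not close.
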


We will construct $\cC^*$ via iterative simplification of $\ff$. First, we find a family $\cT$ that has small uniformity and covers $\ff$ almost completely. 

\begin{lemma}
\label{lemma: T packing without monotonicity}
    Assume that $\cA = \binom{[n]}{k/w}^w$ for some $w | k$. Suppose that $w n \ge 2 (s k^2)^{4t + 2}$ and $k \ge 2t + 1$. Let $\ff \subset \binom{[n]}{k/w}^w$ be a family without a sunflower with $s$ petals and the core of size $t - 1$. Then, there exists a family $\cT \subset \partial_{2t} \cA \cup \partial_{2t + 1} \cA$ such that $\cT$ does not contain a sunflower with $s$ petals and the core of size $t - 1$, and
    \begin{align*}
        |\ff \setminus \ff[\cT]| \le\frac{\tilde C_t s^{t - 1}}{\sqrt{n}} \cdot A_t,
    \end{align*}
    where $\tilde C_t$ is some constant depending on $t$ only.
\end{lemma}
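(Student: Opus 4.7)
The plan is to combine the spread approximation tool of Lemma~\ref{lemma: spread approximation with large measure} with a parametrized version of the simplification argument from Lemma~\ref{lemma: sunflower simplification}.

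First, I apply Lemma~\ref{lemma: spread approximation with large measure} to $\ff$ with $\tau = e^{1/(8st)}$ and $q$ of order $st^2\log n$, chosen so that the remainder $|\cR| = |\ff \setminus \cA[\cS]| \le 32\tau^{-q}\binom{n}{k}$ is at most $\tfrac{1}{2}\cdot \tfrac{\tilde{C}_t s^t}{\sqrt{n}}\binom{n-t}{k-t}$. Since $\binom{n}{k}/\binom{n-t}{k-t} \le (2n/k)^t$ and the hypothesis $n \ge (2sk^2)^{4t+2}$ makes $\log n$ comfortably large, such a $q$ exists. Proposition~\ref{proposition: spread approximation, small uniformity case} then guarantees that the resulting $\tau$-homogeneous approximation $\cS \subset \partial_{\le q}\cA$ avoids sunflowers with $s$ petals and core of size exactly $t-1$, with each $S \in \cS$ having $|S| \ge t$; the spreadness hypothesis $r > 2^{15}\tau^t s\lceil\log_2 k\rceil$ follows immediately from our lower bound on $n$.

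Second, I adapt Procedure~\ref{algo: simplification procedure I} and the analysis in Lemma~\ref{lemma: sunflower simplification} to target the uniformity window $\{2t, 2t+1\}$ rather than exactly $t$. Concretely, the iteration peels off a $\beta$-spread subfamily at each uniformity level as before, but terminates once the peeled layer reaches size $2t+1$; sets from $\cS$ of size already in $\{2t, 2t+1\}$ are kept verbatim, and sets $S \in \cS$ with $|S| \in [t, 2t-1]$ are expanded by one more local application of Proposition~\ref{proposition: coloring trick} inside a $\beta$-spread subfamily of $\ff_S$ to a $(2t)$-superset. The geometric sum from the proof of Lemma~\ref{lemma: sunflower simplification} bounds the total loss by $O((s\log t)^t/r)\cdot A_t$, which is dominated by $\tfrac{\tilde{C}_t s^t}{\sqrt{n}}\binom{n-t}{k-t}/2$ since $r = n/k$ vastly exceeds $\sqrt{n}$ under our hypothesis.

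The main obstacle is to preserve the weaker ``core of size exactly $t-1$'' sunflower-avoidance in the output family $\cT$, since Lemma~\ref{lemma: sunflower simplification} originally assumes the stronger ``core of size $\le t-1$'' hypothesis on its input. I would handle this by an argument analogous to Claim~\ref{claim: simlification consistency} and Proposition~\ref{proposition: spread approximation, small uniformity case}: any would-be sunflower $T_1, \ldots, T_s \in \cT$ with core of size exactly $t-1$ comes packaged with the $\beta$-spread auxiliary families constructed during the simplification, and Claim~\ref{claim: spread removing intersections} followed by Proposition~\ref{proposition: coloring trick} lifts the configuration to a sunflower in $\ff$ with the same core of size $t-1$, contradicting the hypothesis on $\ff$.
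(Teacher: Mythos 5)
Your proposal takes a genuinely different route from the paper, and it has a gap at its quantitative heart. The paper does \emph{not} first build a low-uniformity spread approximation and then simplify it; instead it runs the peeling procedure directly on $\ff$, descending from uniformity $k$ to $2t+1$, keeping the extracted cores of size $\ge 2t$ as $\cT$ and collecting the extracted cores of size $\le 2t-1$ into auxiliary families $\cU_i$. The crucial point is how the leftovers are bounded. In Lemma~\ref{lemma: sunflower simplification}, the bound on $|\cW_i|$ (Claim~\ref{claim: low layers bound in simplification argument}) hinges on showing that the maximal $\beta$-spread restriction $X$ of the leftover satisfies $|X|\ge t$, and that step uses the hypothesis that the input avoids sunflowers with core of size \emph{at most} $t-1$: if $|X|\le t-1$ one produces a matching in $\cW_i(X)$ and hence a forbidden sunflower with core $X$. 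Under the hypothesis actually available here --- no sunflower with core of size \emph{exactly} $t-1$ --- a sunflower with core $X$, $|X|<t-1$, is not forbidden, so this argument collapses. Your third paragraph only addresses the qualitative sunflower-freeness of the output $\cT$ (where lifting petals by disjoint sets indeed preserves the core size); it does not repair the size bound on the intermediate leftovers. The only known way to upgrade a core of size $<t-1$ to one of size exactly $t-1$ is the core-extension trick of Lemma~\ref{lemma: sunflower spread approximation} (finding a common $H$ in the links via Proposition~\ref{proposition: most of subsets are homogeneous}), and that requires $k\ge 2t+q$; in the regime of this lemma one has $q\sim st^2\log n\gg k$ (indeed $n\ge k^{20t}$ is allowed), so the approximating sets $S$ can have size comparable to $k$ and there is no room for the extension. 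This is precisely why the paper abandons that strategy here.

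The ingredient your proposal is missing is Theorem~\ref{theorem: bucic upper bound} (Brada\v{c} et al.): the paper bounds the top-level leftover trivially by $|\cW_i|\le\alpha^{k-i}$ with $\alpha=sk$, and bounds the low-uniformity extracted cores $\cU^{(j)}$, $t\le j\le 2t-1$, by $C_j n^{t-1}s^{j-t+1}$ using that theorem, which applies exactly to families avoiding a sunflower with core of size exactly $t-1$. The resulting polynomial-in-$n$ losses are then absorbed by the strong hypothesis $n\ge(2sk^2)^{4t+2}$, yielding the $n^{-1/2}$ error term. Without this external input (or some substitute for Claim~\ref{claim: low layers bound in simplification argument} valid under the ``exactly $t-1$'' hypothesis), your plan does not close.
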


% In what follows, we assume that $\cA = \binom{[n]}{k}$.

\begin{proof}
    The simplification argument is a modification of that of Lemma~\ref{lemma: sunflower simplification}. We will obtain $\cT$ at the end of the following iterative process. Set $\cT_0 = \ff$. Given a family $\cT_{i}$, we will construct a family $\cT_{i + 1}$ by the following procedure:
    \begin{algorithmic}[1]
        % \State Set $\cW_{i + 1, >3t} \leftarrow \cW_{i, 3t}$;
        \State Set $\cW_i \leftarrow \cT_i^{(k - i)}$, $\alpha = sk$, $\cU_{i + 1} \leftarrow \varnothing$;
        \While{there exists a set $T$, such that $|T| \le k - i - 1$ and $\cW_{i}(T)$ is $\alpha$-spread}
            \State Find a maximal set $T$ such that $|T| \le k - i - 1$ and $\cW_i(T)$ is $\alpha$-spread;
            \State Define $\cT_{i + 1, T} := \cW_{i}(T)$;
            \State Remove sets containing $T$ from $\cW_{i}$: $$\cW_{i} \leftarrow \cW_{i } \setminus \cW_{i}[T];$$
            \If{$|T| > 2t - 1$}
                \State Add $T$ to $\cT_{i + 1}$: $$\cT_{i + 1} \leftarrow \cT_{i + 1} \cup \{T\};$$
            \Else
                \State Add $T$ to $\cU_{i + 1}$: $$\cU_{i + 1} \leftarrow \cU_{i + 1} \cup \{T\};$$
            \EndIf
        \EndWhile
    \end{algorithmic}

    We define $\cT$ as $\cT_{k - (2t + 1)}$. Since any set from $\cT_i$ has cardinality between $k - i$ and $2t$ by the construction, $\cT$ consists of sets of size either $2t$ or $2t + 1$.

    First, we describe several properties of the obtained system.

    \begin{claim}
    \label{claim: absence of sunflower in simplification argument ii}
        The family $\bigcup_{i = 0}^{k - (2 t + 1)} (\cT_i \cup \cU_i)$ does not contain a sunflower with $s$ petals and the core of size $t - 1$.
    \end{claim}

    \begin{proof}
        Let $T_{1} \in \cT_{i_ 1} \cup \cU_{i_1}, \ldots, T_s \in \cT_{i_s} \cup \cU_{i_s}$ be such sunflower that $(i_1, \ldots, i_s)$ is minimal in the lexicographical order. If $i_1, \ldots, i_s = 0$, then $T_1, \ldots, T_s$ belongs to the initial family $\ff$, a contradiction. Thus, there is at least one $\ell \in [s], i_\ell > 0$. It means that for some $j< i_{\ell}$, $\cT_{i_\ell, T_{\ell}} \subset \cT_{j}(T_{\ell})$ is $\alpha$-spread. Due to Claim~\ref{claim: covering number}, the covering number of $\cT_{j}(T_{\ell})$ is at least $\alpha = sk$. Thus, we may find $F \in \cT_{j}(T_{i_\ell})$ that does not intersect $\bigcup_{p = 1}^s T_p$. Note that $T_1 \in \cT_{i_1} \cup \cU_{i_1}, \ldots, F \cup T_{\ell} \in \cT_{j}, \ldots, T_s \in \cT_{i_s} \cup \cU_{i_s}$ forms a sunflower with $s$ petals and the core of size $t - 1$, while $(i_1, \ldots, i_{\ell - 1}, j, i_{\ell + 1}, \ldots, i_s)$ is strictly less than $(i_1, \ldots, i_s)$ in the lexicographical order, contradicting with the minimality of the latter tuple.
    \end{proof}

    Thus, $\cT$ does not contain such a sunflower, as stated. The second property we establish allows us to bound the size of $\cW_{i}$ and $\cT_i[\cU_{i + 1}]$.
    \begin{claim}
        We have $|\cW_i| \le \alpha^{k - i}$ and
        \begin{align*}
            |\cT_i[\cU_{i + 1}]| \le \tilde{C}_t \alpha^{k - i} w^t n^t s^{t -1},
        \end{align*}
        where a constant $\tilde{C}_t$ depends on $t$ only.
    \end{claim}
    \begin{proof}
        In order to bound $|\cW_i|$, note that there is no set $T$ such that $|T| \le k - i - 1$ such that $\cW_i(T)$ is $\alpha$-spread. In this case, choose maximal $X$ such that $|\cW_i(X)| \ge \alpha^{|X|} |\cW_i|$. We have $|X| = k - i$, and so $|\cW_i| \le \alpha^{k - i}$.

        Next, we bound $|\cT_i[\cU_{i + 1}]|$. For some $j \in [2t - 1]$, consider the layer $\cU_{i + 1}^{(j)}$:
        \begin{align*}
            \cU^{(j)}_{i + 1} = \{F \in \cU_{i + 1} \mid |F| = j\}.
        \end{align*}
        Define the upper shadow $\partial^u \cU_{i + 1}^{(j)}$ of the family $\cU^{(j)}_{i + 1}$ as follows:
        \begin{align*}
            \partial^u \cU_{i + 1}^{(j)}  = \{F \in \partial_{j + 1} \cA \mid \exists F' \in \cU_{i + 1}^{(j)} \text{ such that } F' \subset F\}.
        \end{align*}
        Consider a set $T'$ that belongs to the upper shadow  and some set $T \in \cU_{i + 1}^{(j)}$ such that $T \subset T'$. Due to the third line of the procedure above, the family $\cT_{i + 1, T}(T' \setminus T)$ is not $\alpha$-spread. Choose a maximal $X$ disjoint from $T'$ such that $|\cT_{i + 1, T}((T' \setminus T) \cup X)| \ge \alpha^{|X|} |\cT_{i + 1, T}(T' \setminus T)|$. Since $\cT_{i + 1, T}(T' \setminus T)$ is not $\alpha$-spread, $X$ is not empty. It implies that $|X| = k - i - |T'|$, since otherwise, $\cT_{i+1, T}(T' \cup X)$ is $\alpha$-spread due to Observation~\ref{observation: R-spread restriction}, and we may choose $X \cup T'$ in the third line of the procedure instead of $T$. Therefore, $|\cT_{i + 1, T}(T' \setminus T)| \le \alpha^{k - i - (j+1)}$.

        Hence, we may bound $|\cT_i[\cU_{i+1}^{(j)}]|$. Indeed, it admits a decomposition
        \begin{align}
        \label{eq: Uj above bound}
            |\cT_i[\cU_{i + 1}^{(j)}]| = \sum_{T \in \cU^{(j)}_{i + 1}} |\cT_{i + 1, T}| \le \sum_{T \in \cU^{(j)}_{i+1}} \sum_{x \in \support(\cA) \setminus T} |\cT_{i + 1, T}(x)| \le \alpha^{k - i - (j + 1)} \cdot wn \cdot |\cU^{(j)}_{i + 1}|.
        \end{align}
        If $j < t$, then we can bound $|\cU^{(j)}_{i + 1}| \le \binom{wn}{t - 1} \le w^{t - 1} n^{t - 1}$. Otherwise, we use Theorem~\ref{theorem: bucic upper bound}. Claim~\ref{claim: absence of sunflower in simplification argument ii} ensures that $\cU^{(j)}_{i + 1}$ does not contain a sunflower with $s$ petals and the core of size $t - 1$. We consider $\cU^{(j)}_{i + 1}$ as a subfamily of $\binom{[wn]}{j}$,  and apply Theorem~\ref{theorem: bucic upper bound}, which implies $|\cU^{(j)}| \le C_{j} w^{t - 1} n^{t - 1} s^{j - t + 1}$.

        Thus, in either case, we have $|\cU^{(j)}_{i + 1}| \le C_t w^{t - 1} n^{t - 1} s^{t}$, since $C_t$ is increasing in $t$. Substituting this bound into~\eqref{eq: Uj above bound}, we get
        \begin{align*}
            |\cT_i[\cU_{i + 1}]| \le \alpha^{k - i - 1} (2t - 1) \cdot C_t n^t w^t s^{t} \sum_{j = 0}^{2t - 2} \alpha^{-j} \le \tilde{C}_t w^{t} \alpha^{k - i} n^t s^{t},
        \end{align*}
        where  we introduced a constant $\tilde C_t$ that depends on $t$ only.
    \end{proof}

    To bound $|\ff \setminus \ff[\cT]|$, note that for each $F \in \ff \setminus \ff[\cT]$ there exists $i \le k - (2t + 1)$ such that some subset $F' \subset F$ of size $k - i$ belongs to $\cT_i[\cU_{i + 1}] \cup \cW_i$. It yields
    \begin{align*}
        |\ff \setminus \ff[\cT]| & \le \sum_{i = 0}^{k - (2t + 1)} \left (|\cT_i[\cU_{i + 1}]| + |\cW_i| \right ) \cdot A_{k - i} \\
        & \le 2 \tilde{C}_t s^{t} \left ( \sum_{i = 0}^{k - (2t + 1)} \alpha^{k - i} w^t  n^t  \left (\frac{wn}{k} \right )^{-(k - i - t)} \right ) \cdot A_t,
    \end{align*}
    where we used $(wn/k, t)$-spreadness of $\cA=\binom{[n]}{k/w}^w$ due to Proposition~\ref{proposition: spread families examples}.
    Next, we bound the sum. We have
    \begin{align*}
        \sum_{i = 0}^{k - (2t + 1)} \alpha^{k - i} w^t  n^t \left ( \frac{wn}{k}\right )^{-(k - i - t)} \le n^{2t} w^{2t}\sum_{i = 0}^{k - (2t + 1)} \left ( \frac{\alpha k}{wn} \right )^{k - i} \le 2 n^{2t} w^{2t} \left ( \frac{\alpha k }{wn} \right )^{2t + 1}.
    \end{align*}
    Recall that $\alpha = sk$.  Since $w n \ge (\alpha k)^{4t + 2}$ by assumptions of the lemma, the above is at most $2 / \sqrt{n}$. Thus, we have
    \begin{align*}
        |\ff \setminus \ff[\cT]| \le \frac{4 \tilde{C}_t s^{t}}{\sqrt{n}} A_t. & \qedhere
    \end{align*}
\end{proof}

Now we ready to prove Lemma~\ref{lemma: small packing of F -- small uniformity case}.

\begin{proof}[Proof of Lemma~\ref{lemma: small packing of F -- small uniformity case}]

Take $\cT$ as in Lemma~\ref{lemma: T packing without monotonicity}. Consider an arbitrary partition of $\ff[\cT]$ into $|\cT|$ families $\ff_T \subset \ff(T)$, $T \in \cT$, so
\begin{align*}
    \ff[\cT] = \bigsqcup_{T \in \cT} \ff_T \vee \{T\}.
\end{align*}

Choose $\tau = n^{1/2t}$, $q = 2t$, and for each $T \in \cT$ apply Lemma~\ref{lemma: spread approximation} to $\ff_T$. We obtain a family $\cS_T$, a remainder $\cR_T \subset \cA[T]$ and a decomposition:
\begin{align*}
    \ff_T \setminus \cR_T = \bigsqcup_{S \in \cS_T} \ff_{T \sqcup S} \vee \{S\},
\end{align*}
such that each $\ff_{T \sqcup S}$ is $\tau$-homogeneous and $|\cR_T| \le n^{-1} A_{|T|}$. Then, we have
\begin{align*}
    \sum_{T \in \cT} |\cR_T| \le n^{-1} \sum_{j = 2t}^{2t + 1} \sum_{T \in \cT^{(j)}} A_{j}.
\end{align*}
Due to Theorem~\ref{theorem: bucic upper bound}, for each layer $j = 2t, 2t + 1$, we have
\begin{align*}
    |\cT^{(j)}| \le C_j s^t \binom{wn}{j - t}.
\end{align*}
Using $(wn/k, t)$-spreadness of $\cA = \binom{[n]}{k/w}^w$, guaranteed by Proposition~\ref{proposition: spread families examples}, we obtain
\begin{align*}
    \sum_{T \in \cT} |\cR_T| & \le \frac{C_{2 t + 1} s^t }{n} \sum_{j = 2t}^{2 + 1} \binom{w n}{j - t} A_{j}\\
    & \le \frac{C'_t s^t}{n}\sum_{j = 2t}^{2t+1} \left ( wn\right )^{j - t} \left ( \frac{wn}{k} \right )^{-(j - t)} A_t \\
    & \le \frac{2C'_t s^t}{n} \cdot k^{t + 1} A_t  \le \frac{2 C_{2t+1} k^{t + 1} s^t}{n} \cdot A_t.
\end{align*}

Define
\begin{align*}
    \cS^* = \left \{ \{T \} \vee \cS_T \mid T \in \cT \right \}.
\end{align*}
Then, $\cS^*$ is $\tau$-homogeneous {\Sname} of $\ff[\cT] \setminus \bigsqcup_{T \in \cT} (\{T\} \vee \cR_T)$. Since $k \ge 5t \ge |T| + q + (t - 1)$ and $wn/k \ge  \sqrt{2^{30}(s k^2)^{4t + 2}} / k  \cdot \sqrt{n} \ge 2^{10} (4k)^t s \sqrt{n}$, we may apply Lemma~\ref{lemma: intersection graph structure} with $\alpha = 1/4k$, $r = wn/k$, $q = 4t + 1$ to $\cS^*$, and obtain\footnote{Note that $q$ here is the parameter of Lemma~\ref{lemma: intersection graph structure} instead of $q = 2t$ used in the spread approximation} $(\cS^*, s, t)$ system $\cU$, such that for each $V \in \cS^*$
\begin{align*}
    |\cU_V| \ge \frac{1}{2} |\ff_V| \text{ and } |\partial_{h}(\cU_V)| \ge (\tau/2)^{-h} |\cA(V)|\ge 2^{-h} n^{-h/2t} |\cA(V)|
\end{align*}
for any $h < k - |V|$. Applying Proposition~\ref{proposition: clustering of (S, s, t)-system into t-sets} with $\lambda = 2^{-t} n^{-1/2}$ and $r = wn/k$, we infer that there exists a family $\hat{\cT}$ such that
\begin{align*}
    |\hat{\cT}| & \le 2^t \sqrt{n} \left \{1 + 2 \phi(s, t) \ln \sum_{j = 0}^q \binom{wn}{q} \right \}  \le 2^t \sqrt{n} \left \{ 1 + 2 \phi(s, t) \ln \sum_{j = 0}^q (wn)^{2q} \right \} \\
    & \le 2^t \sqrt{n}\cdot 7q \phi(s, t) \ln (wn) 
     = 35 t \cdot 2^t \phi(s, t) \sqrt{n} \ln (wn) \le 70 t 2^t \phi(s, t) \sqrt{n} \ln n,
\end{align*}
and
\begin{align*}
    \sum_{\cS^* \setminus \cS^*[\hat{\cT}]} |\cU_S| & \le \frac{8 \cdot 2^t s^2 (5t)^3k}{w\sqrt{n}} \cdot  \phi(s, t) \ln\left ( \sum_{j = 0}^q \binom{wn}{j}\right ) \cdot A_t \\
    & \le \frac{2^{14 + t} s^{2} t^4 k \ln (wn)}{w \sqrt{n}}  \cdot  \phi(s, t) \cdot A_t \le \frac{2^{15 + t} s^{2} t^4 k \ln n}{w \sqrt{n}}  \cdot  \phi(s, t) \cdot A_t
\end{align*}
Employing bound from Claim~\ref{claim: sunflower bound}, we conclude that there is some constant $C_t'$ depending on $t$ only such that the following holds:
\begin{align*}
    |\ff \setminus \ff[\hat{\cT}]| & \le  |\ff \setminus \ff[\cT]|  + \sum_{T \in \cT} |\cR_T| + \sum_{V \in \cS^* \setminus \cS^*[\hat{\cT}]} |\ff_V|   \\
    & \le |\ff \setminus \ff[\cT]|  + \sum_{T \in \cT} |\cR_T| + 2 \sum_{V \in \cS^* \setminus \cS^*[\hat{\cT}]} |\cU_V| \\
    & \le \frac{C'_t s^t}{n} \cdot A_t + \frac{C'_t s^t k^{t + 1}}{n} \cdot A_t  + \frac{C'_t s^t k\ln n}{\sqrt{n}} \cdot A_t \\
    & \le \frac{\hat{C}_t s^t k \ln n}{\sqrt{n}}\cdot A_t,
\end{align*}
where $\hat{C}_t$ is some constant depending on $t$ only.
Setting $\cC^* = \hat{\cT}$, we obtain the lemma.
\end{proof}

\subsection{Proof of Theorem~\ref{theorem: large r theorem}}
\label{subsection: proof of large r thereom}

\begin{proof} We have $n \ge k^{20 t} \cdot s^{10t} \sqrt{g_0(t)}$, so conditions of Lemmma~\ref{lemma: small packing of F -- small uniformity case} are satisfied, provided $g_0(t)$ is a large enough function of $t$. Due to Lemma~\ref{lemma: small packing of F -- small uniformity case}, we have a family $\cC^* \in \binom{n}{t}$ of size at most $70 t 2^t \phi(s, t) \sqrt{n} \ln n$, such that
\begin{align}
    |\ff \setminus \ff[\cC^*]| \le \frac{C_t s^t k \ln n}{\sqrt{n}}\cdot A_t. \label{eq: not covered by small packing C*}
\end{align}
Consider an arbitrary partition of $\ff[\cC^*]$ into $|\cC^*|$ families $\ff_C$, $C \in \cC^*,$ such that $\ff_C \subset \ff(C)$ and
\begin{align*}
    \ff[\cC^*] = \bigsqcup_{C \in \cC^*} \ff_C \vee \{C\}.
\end{align*}

For each $\ff_C$, we construct a family $\ff^*_C$ as follows:
\begin{align*}
    \ff^*_C = \left \{ F \in \ff_C \mid \forall X \in \binom{F}{\le t - 1} \text{ the  covering number of } \ff_C(X) \text{ is at least $sk$}\right \}.
\end{align*}
Then, using $(wn/k, t)$-spreadness of $\cA$ from Proposition~\ref{proposition: spread families examples}, we obtain
\begin{align}
    |\ff_C \setminus \ff^*_C| 
    & \le \sum_{j = 0}^{t - 1} \sum_{X \in \partial_j \cA \mid \tau(\ff_C(X)) < sk} \coveringNumber(\ff_C(X)) \cdot A_{|C| + j + 1} \nonumber \\
    & \le s k \sum_{j = 0}^{t - 1} \binom{wn}{j} \left ( \frac{wn}{k} \right )^{-(j + 1)} A_t \nonumber \\
    & = \frac{sk}{wn} \sum_{j = 0}^{t - 1} k^j \cdot A_t = \frac{sk}{wn} \cdot \frac{k^t - 1}{k - 1} \overset{{\color{teal}(k \ge 5)}}{\le} \frac{2 sk^t}{n} \cdot A_t, \label{eq: small covering number subsets}
\end{align}
where $\tau(\ff_C(X))$ stands for the covering number of $\ff_C(X)$.
Finally, we consider the decomposition
\begin{align*}
    \sum_{C \in \cC^*} |\ff_C^*| = \sum_{F \in \partial_{k - t} \cA} |\{C \in \cC^* \mid F \in \ff_C^*\}|.
\end{align*}
Given $F \in \partial_{k-t}\mathcal A$, we denote $\cC_F = \{C \in \cC^* \mid F \in \ff_C^*\}$. We claim that $\cC_F$ does not contain a sunflower with $s$ petals and the core of arbitrary size. Indeed, let $C_1, \ldots, C_s \in \cC_F$ be such a sunflower. Take any $X \subset F$ of size $t - 1 - \left |\cap_{\ell = 1}^s C_\ell \right |$, and note that $X$ can be empty. Then, for each $\ell = 1, \ldots, s$, $\ff_{C_\ell}(X)$ has covering number at least $sk$. Take the maximum number of pairwise disjoint sets $F_1 \in \ff_{C_1}(X), \ldots, F_m \in \ff_{C_m}(X)$, that do not intersect $\cup_{\ell = 1}^m C_\ell$. To avoid contradiction, we should assume that $m < s$, and so consider $\ff_{C_{m + 1}}(X)$. But since the covering number of $\ff_{C_{m + 1}}(X)$ is at least $sk$, there is a set $F_{m + 1}$ that does not intersect $\cup_{\ell = 1}^m C_\ell \cup F_\ell$, contradicting  the maximality of $m$. Hence, we have
\begin{align*}
    \sum_{C \in \cC^*} |\ff_C^*| = \sum_{F \in \partial_{k - t} \cA} |\cC_F| \le \phi(s, t) |\partial_{k - t} \cA|.
\end{align*}
Combining the above, \eqref{eq: small covering number subsets} and \eqref{eq: not covered by small packing C*}, we obtain
\begin{align*}
    |\ff| & \le \phi(s, t) |\partial_{k - t} \cA| + \frac{2 s k^t |\cC^*|}{n} \cdot A_t + \frac{C_t s^t k \ln n}{\sqrt{n}} \cdot A_t.
\end{align*}
Since $|\cC^*| \le 14 t \cdot 2^t \phi(s, t) \sqrt{n} \ln n$, using Claim~\ref{claim: sunflower bound}, we get  that $|\cC^*| \le C'_t s^t \sqrt{n} \ln n$ for some constant depending on $t$ only.
Therefore, we have
\begin{align*}
    |\ff|  \le \phi(s, t) \cdot \left |\partial_{k - t}\cA \right | + \frac{\tilde C_t s^t k^t \ln n}{\sqrt{n}} \cdot A_t.
\end{align*}
for some constant $\tilde C_t$ depending on $t$ only.
 By the conditions of the theorem, $n \ge k^{20 t}$, so $k^t \le n^{1/20}$. Similarly, we have that $s^t \le n^{1/40}$ and $C'_t \ln n \le n^{1/40}$, provided $g_0(t)$ is large enough. Therefore, we have
\begin{align*}
    C'_t s^t k^t \ln n \le n^{1/10},
\end{align*}
and so 
\begin{align*}
    |\ff| \le \phi(s, t) \cdot |\partial_{k - t} \cA| + n^{-1/3} \cdot A_t. & \qedhere
\end{align*}
\end{proof}

\section{Proof sketch of Theorem~\ref{theorem: product case - small k uniformity}}
\label{section: proof sketch for product case -- small k uniformity}

We start by applying Lemma~\ref{lemma: small packing of F -- small uniformity case} to $\ff$. Then, we obtain a family $\cC^* \subset \partial_t \cA$, $|\cC^*| \le 70 t \cdot 2^t \phi(s, t) \sqrt{n} \ln n$, such that
\begin{align*}
    |\ff \setminus \ff[\cC^*]| \le \frac{C_t s^t k \ln n}{\sqrt{n}} \cdot A_t.
\end{align*}
Consider an arbitrary partition of $\ff[\cC^*]$ into $|\cC^*|$ families $\ff_C, C \in \cC^*,$ such that $\ff_C \subset \ff(C)$ and 
\begin{align*}
    \ff[\cC^*] = \bigsqcup_{C \in \cC^*} \ff_C \vee \{C\}.
\end{align*}
As in the proof of Theorem~\ref{theorem: large r theorem}, we define
\begin{align*}
    \ff^*_C = \left \{ F \in \ff_C \mid \forall X \in \binom{F}{\le t - 1} \text{ the  covering number of } \ff_C(X) \text{ is at least $sk$}\right \}.
\end{align*}
Due to~\eqref{eq: small covering number subsets}, we have
\begin{equation}\label{eqtwostars}
    |\ff_C \setminus \ff^*_C| \le \frac{2 s k^t}{n} \cdot A_t.
\end{equation}
The idea of the proof is to apply a Kruskal--Katona--type result to show that the number of $C \in \cC^*$, such that the shadow of $\ff_C$ is large, is at most $\left ( 1 + e^{-O(k/t)} \right )\phi(s, t)$, and if the shadow of $\ff_C$ is small, then the impact of $\ff_C$ to $|\ff|$ is negligible. Although the exact counterpart of Kruskal--Katona theorem for $[n]^{k}$ is known~\cite{frankl_shadows_1988}, we would like to highlight that a similar but less accurate result can be deduced from Assumptions~\ref{assumption: regularity},\ref{assumption: weak shadow consistency - small r}.

\begin{proposition}
\label{proposition: kruskal--katona}
Suppose that a $k$-uniform $\cA$ satisfies Assumptions~\ref{assumption: regularity},\ref{assumption: weak shadow consistency - small r} with $q = \gamma k$ for some $\gamma < \min\{1, \mu\}$. Let $\ff \subset \cA$ be any family such that $\mu(\partial_h \ff) \le x$. Then, we have
\begin{align*}
    \mu(\ff) \le \left (1 - \frac{\gamma}{\mu} \right )^{-\gamma k} x^{\gamma k / h}.
\end{align*}
\end{proposition}
\begin{proof}
    We assume that $x^{1/h} < \left (1 - \frac{\gamma}{\mu} \right )$, since otherwise there is nothing to prove. Consider an arbitrary $\tau$ such that $1 < \tau < x^{-1/h} (1 - \gamma/\mu)$. Consider the largest $R$ such that $\mu(\ff(R)) \ge \tau^{|R|}\mu(\ff)$. In particular, $\ff(R) \subset \cA(R)$ is $\tau$-homogeneous and $\mu(\ff) \le \tau^{-|R|}$.

    If $|R| \le \gamma k = q$, then by Proposition~\ref{proposition: shadow of tau-homogeneous family}, we have
    \begin{align*}
        \mu(\partial_h(\ff(R))) \ge \tau^{-h}.
    \end{align*}
    Since $|\partial_h(\ff(R))| \le |\partial_h\ff|$, the measure $\mu(\partial_h(\ff(R)))$ is at most
    \begin{align*}
        \mu(\partial_h(\ff(R))) = \frac{|\partial_h(\ff(R))|}{|\partial_h (\cA(R))|} \le \frac{|\partial_h \ff|}{|\partial_h \cA|} \cdot \frac{|\partial_h \cA|}{|\partial_h(\cA(R))|} = \mu(\partial_h \ff) \cdot \frac{|\partial_h \cA|}{|\partial_h (\cA(R))|}.
    \end{align*}
    We bound the above using Assumption~\ref{assumption: weak shadow consistency - small r}, and obtain
    \begin{align*}
        \tau^{-h} \le x \cdot \left (1 - \frac{\gamma}{\mu} \right )^{-h},
    \quad
    \text{which implies}
    \quad
        \tau \ge x^{-1/h} \left (1 - \frac{\gamma}{\mu} \right ),
    \end{align*}
    contradicting the definition of $\tau$. Thus, $|R| > \gamma k$, and we have
    \begin{align*}
        \mu(\ff) \le \tau^{-|R|} \le \tau^{-\gamma k}.
    \end{align*}
    Since $\tau$ is an arbitrary number less than $ x^{-1/h} (1 - \gamma/\mu)$, the displayed chain of inequalities above implies the proposition.
\end{proof}

Next, we show that Proposition~\ref{proposition: kruskal--katona} can be applied to $\cA(C)$, $C \in \cC^*$. 
\begin{claim}
\label{claim: kruskal-katona to w-th power}
    Suppose that $\cA = \binom{[n]}{k/w}^w$ for some $w | k$ such that $k \ge 4w t$. Then, for any $C \in \partial_t \cA$ and $\ff \subset \cA(C)$ such that $\mu(\partial_h\ff) \le x$, we have
    \begin{align*}
        \mu(\ff) \le \left ( 1 - \frac{3k}{2wn}\right )^{-k/20} x^{(k - t)/2wh}.
    \end{align*}
\end{claim}
\begin{proof}
    We are going to show that Assumptions~\ref{assumption: regularity},\ref{assumption: weak shadow consistency - small r} are satisfied for $\cA(C)$ with $q = (k - t) /2w$. Assumption~\ref{assumption: regularity} is satisfied due to Claim~\ref{claim: regularity for general families}. It remains to show that for any $R$ of size at most $q = (k - t)/2w$, we have
    \begin{align*}
        \frac{|\partial_h (\cA(C \cup R))|}{|\partial_h (\cA(C))|} \ge \left (1 - \frac{q}{\mu'(k - t)} \right )^h = \left ( 1 - \frac{1}{2w \mu'} \right )^{\textcolor{red}{h}}, \quad \text{where } \mu' = 2 \mu/3,
    \end{align*}
    so the condition of Proposition~\ref{proposition: kruskal--katona} holds with $\gamma = 1/2w$ and $\mu = \mu'$, $k - t$ in place of $k$. Note that 
    \begin{align*}
        |R \sqcup C| \le \frac{k - t}{2w} + t \le \frac{k + 2wt}{2w} \le \frac{3k}{4w},
    \end{align*}
    where we used $k \ge 4wt$, so due to Proposition~\ref{proposition: assumptions satisified} applied with $q = 3k/4w$, we have 
    \begin{align*}
        \frac{|\partial_h (\cA(C \cup R))|}{|\partial_h (\cA(C))|} \ge \frac{|\partial_h (\cA(C \cup R))|}{|\partial_h \cA|} \ge \left (1 - \frac{3k}{4 w n} \right )^h = \left (1 - \frac{1}{2w \mu'} \right )^h.
    \end{align*}
    Then, Proposition~\ref{proposition: kruskal--katona} implies the desired bound on $\mu(\ff)$.
\end{proof}

Given a real number $\delta \in (0, 1)$, define a family $\cC^*_\delta$ as follows:
\begin{align*}
    \cC^*_\delta = \left \{ C \in \cC^* \mid \mu(\partial_{t - 1}\ff^*_C) \ge \delta \right \}.
\end{align*}
Let us show that $|\cC^*_\delta| \le \phi(s, t)/ \delta$. First, by  double-counting, there exists $X \in \partial_{t - 1} \cA$ and a family $\cC' \subset \cC^*_{\delta}$ of size at least $\delta |\cC^*_{\delta}|$ such that $X \in \partial_{t - 1} \ff^*_C$ for any $C \in \cC'$. We claim that $\cC'$ does not contain a sunflower with $s$ petals. Arguing indirectly, let $C_1, \ldots, C_s$ be a sunflower in $\cC'$. Choose $Y \subset X$ of size $t - 1 - |\cap_{j \in [s]} C_j|$. Then, $C_1 \cup Y, \ldots, C_s \cup Y$ form a sunflower with $s$ petals and the core of size $t - 1$. Since the covering number of $\ff_{C_i}(Y)$ is at least $sk$ for each $i = 1, \ldots, s$, we can find a sunflower with the core of size $t - 1$ in $\ff$, a contradiction. Thus, $|\cC'| \le \phi(s, t)$, and $|\cC^*_\delta|\le \phi(s,t) / \delta$. 

Fix $\delta = \left ( 1 - \sqrt{\frac{t}{k}} \right )$. Then, we decompose the sum $\sum_{C \in \cC^*} |\ff_C|$ as follows:
\begin{align}
    \sum_{C \in \cC^*} |\ff_C^*| & = \sum_{C \in \cC^*_{\delta}} |\ff_C^*| + \sum_{j = 1}^{\infty} \sum_{C \in \cC^*_{\delta^{j + 1}} \setminus \cC^*_{\delta^{j}}} |\ff_C^*|. \label{eq: diadic decomposition of f with large covering number}
\end{align}
Consider two cases. 

\textbf{Case 1. We have $\mathbf{\cA = \binom{[n]}{k/w}^w}$ for $\mathbf{k \ge 2^6 w^6 t}$.} For each $j$ and $C \in \cC^*_{\delta^{j + 1}} \setminus \cC_{\delta^j}$, we have
\begin{align*}
    |\cC^*_{\delta^{j + 1}} \setminus \cC^*_{\delta^j}| \le |\cC^*_{\delta^{j + 1}}| \le \delta^{-j} \cdot \phi(s, t) \quad \text{ and } \quad |\ff_C^*| \le A_t \cdot \left (1 - \frac{3k}{2 wn} \right )^{-k/20} \delta^{\frac{j (k - t)}{2 w (t - 1)}},
\end{align*}
where the second fact follows from Claim~\ref{claim: kruskal-katona to w-th power}.
We have
{\small \begin{align}
    \sum_{C \in \cC^*} |\ff_C^*| & \le \frac{\phi(s, t) A_t}{1 - \sqrt{\frac{t}{k}}} + \left (1 - \frac{3k}{wn} \right )^{-k/20}  \phi(s, t)A_t  \cdot \sum_{j = 1}^{\infty} \left ( 1 - \sqrt{\frac{t}{k}}\right )^{j \cdot \left (\frac{k - t}{2w(t - 1)} - 1 \right )} \nonumber \\
    & \le \frac{\phi(s, t)}{1 - \sqrt{\frac{t}{k}}} + \left (1 - \frac{3k}{wn} \right )^{-k/20} \phi(s, t) A_t \cdot \sum_{j = 1}^{\infty} \exp \left ( - j \cdot \left ( \frac{k - t}{2w(t - 1)} - 1 \right ) \cdot \sqrt{\frac{t}{k}} \right ). \label{eq: bound of w-th power shadow decomposition}
\end{align}}
Next, we bound
\begin{align*}
    \left ( \frac{k - t}{2w(t - 1)} - 1 \right ) \cdot \sqrt{\frac{t}{k}} = \frac{k - t - 2w(t - 1)}{2w(t - 1)} \cdot \sqrt{\frac{t}{k}} \ge \frac{k}{4wt} \cdot \sqrt{\frac{t}{k}} = \frac{1}{4w} \cdot \sqrt{\frac{k}{t}},
\end{align*}
where we used $k \ge 2^{12} w^6 t \ge 2(t + 2w(t - 1))$. We have $k \ge 2^{12} w^6 t \ge 2^{18} t \ge e^5 t$, so $\left ( \frac{k}{t} \right )^{1/3} \ge \frac{1}{2} \ln \frac{k}{t} $, and
\begin{align*}
    \sqrt{\frac{k}{t}} \ge \left ( \frac{k}{t} \right )^{1/6} \ln \sqrt{\frac{k}{t}}.
\end{align*}
Since $k \ge 2^{12} w^6 t$, we have
\begin{align*}
    \left ( \frac{k - t}{2w(t - 1)} - 1 \right )\sqrt{\frac kt} \ge \frac{1}{4w} \left ( \frac{k}{t} \right )^{1/6} \ln \sqrt{\frac{k}{t}} \ge \ln \sqrt{\frac{k}{t}}.
\end{align*}
Using the displayed inequality, we may continue~\eqref{eq: bound of w-th power shadow decomposition} as follows.
\begin{align}
   \notag \sum_{C \in \cC^*} |\ff_C^*| &\le \frac{\phi(s, t) A_t}{1 - \sqrt{\frac{t}{k}}} + \left (1 - \frac{3k}{wn} \right )^{-k/20} \phi(s, t) A_t \sum_{j = 1}^\infty \left ( \frac{t}{k}\right )^{j/2}\\
   &= \frac{1 + \left (1 - \frac{3k}{wn} \right )^{-k/20} \sqrt{\frac{t}{k}}}{1 - \sqrt{\frac{t}{k}}} \cdot \phi(s, t) A_t. \label{eq: simpliefied bound of 2-th power shadow decompositiom}
\end{align}

Using $k \ge 2^{12} w^6 t \ge 4 t$, we bound
\begin{align*}
    \left (1 - \sqrt{\frac{t}{k}} \right )^{-1} = 1 + \left (1 - \sqrt{\frac{t}{k}} \right )^{-1} \sqrt{\frac{t}{k}} \le 1 + 2 \sqrt{\frac{t}{k}}.
\end{align*}
Similarly, since $wn \ge k^{20t} \ge 5^{18} k^2 t$, we bound
\begin{align*}
    \left (1 - \frac{3k}{wn} \right )^{-k/20} \le \left (1 + \frac{9k}{wn} \right )^{k/20} \le 3.
\end{align*}
Combining the above and~\eqref{eq: simpliefied bound of 2-th power shadow decompositiom}, we obtain
\begin{align}
    \sum_{C \in \cC^*} |\ff_C^*| \le \left ( 1 + 3 \sqrt{\frac{t}{k}} \right )\left ( 1 + 2 \sqrt{\frac{t}{k}} \right ) \phi(s, t) A_t\le \left ( 1 + 6 \sqrt{\frac{t}{k}} \right ) \phi(s, t) A_t. \label{eq: bound w-th power on large cov number families}
\end{align}

\textbf{Case 2. We have $\cA = [n]^k$.} In this case, instead of Claim~\ref{claim: kruskal-katona to w-th power}, we will use the following theorem due to Frankl, F\"uredi and Kalai.
\begin{theorem}[Theorem 5.1 of~\cite{frankl_shadows_1988}]
    Let $\ff \subset [n]^k$, such that $|\ff| = y^k$ for some real $y$. Then, for any $h \in [k]$, we have
    \begin{align*}
        |\partial_h \ff| \ge \binom{k}{h} y^h.
    \end{align*}
\end{theorem}
Since $|\partial_h [n]^k| = \binom{k}{h} n^h$, this theorem implies the following corollary:
\begin{corollary}
\label{corollary: kruskal-katona for k power of n}
    Consider a family of sets $\ff \subset [n]^k$, and suppose that $\mu(\partial_h \ff) \le x$. Then, we have $\mu(\ff) \le x^{k/h}$.
\end{corollary}
Hence, for each $j \ge 1$ and $C \in \cC^*_{\delta^{j + 1}} \setminus \cC^*_{\delta^j}$, we have
\begin{align*}
    |\cC^*_{\delta^{j + 1}} \setminus \cC^*_{\delta^j} | \le \delta^{-j} \cdot \phi(s, t) \quad \text{and} \quad |\ff^*_C| \le \delta^{\frac{j(k - t)}{t - 1}} A_t.
\end{align*}
Substituting the above inequalities into~\eqref{eq: diadic decomposition of f with large covering number}, we obtain
\begin{align*}
    \sum_{C \in \cC^*} |\ff_C^*| & \le \frac{\phi(s, t) \cdot A_t}{1 - \sqrt{\frac{t}{k}}} + \phi(s, t) A_t \cdot \sum_{j = 1}^{\infty} \left ( 1 - \sqrt{\frac{t}{k}}\right )^{j \cdot \left ( \frac{k - t}{t - 1} - 1 \right )} \\
    & \le \frac{\phi(s, t) A_t}{1 - \sqrt{\frac{t}{k}}} + \phi(s, t) A_t \cdot \sum_{j = 1}^{\infty} \exp \left ( - j \sqrt{\frac{t}{k}} \left ( \frac{k - t}{t - 1} - 1 \right )\right ).
\end{align*}
Since $k \ge 5t$ and $\sqrt{y} \ge \ln y$ for any positive $y$,we can bound
\begin{align*}
    \sqrt{\frac{t}{k}} \left ( \frac{k - t}{t - 1} - 1\right ) \ge \sqrt{\frac{t}{k}} \cdot \frac{k - 2t}{t} \ge \frac{1}{2} \cdot \sqrt{\frac{k}{t}} \ge \frac{1}{2} \ln \left (\frac{k}{t} \right ),
\end{align*}
and obtain
\begin{align*}
    \sum_{C \in \cC^*} |\ff_C^*| \le \frac{1 + \sqrt{\frac{t}{k}}}{1 - \sqrt{\frac{t}{k}}} \cdot \phi(s, t) \cdot A_t \le \left ( 1 + 6 \sqrt{\frac{t}{k}}\right ) \phi(s, t) \cdot A_t.
\end{align*}
\vskip+0.2cm

Thus, the above bound holds for both $\binom{n}{k/w}^w$, $k \ge 2^{12}w^6 t$, and $[n]^k$. Using~\eqref{eqtwostars}, we get
\begin{align*}
    \sum_{C \in \cC^*} |\ff_C| \le \left (1 + 6 \sqrt{\frac{t}{k}} \right ) \phi(s,t) A_t + |\cC^*| \cdot \frac{2sk^t}{n} \cdot A_t \le \left ( 1 + 9 \sqrt{\frac{t}{k}} + \frac{70 s t2^tk^t \ln n}{\sqrt n}\right ) \phi(s, t) \cdot A_t,
\end{align*}
where we use the bound on $|\cC^*|$ form Lemma~\ref{lemma: small packing of F -- small uniformity case}. Using the bound on $|\ff \setminus \ff[\cC^*]|$ provided by this lemma, we get
\begin{align*}
    |\ff| \le \left (1 + 6 \sqrt{\frac{t}{k}} + \frac{70 st2^tk^t \ln n}{\sqrt n} \right ) \phi(s, t) A_t + \frac{C_t s^t \ln n}{\sqrt{n}} A_t.
\end{align*}
Since $n \ge k^{20t}$ and $n \ge s^{40t} \cdot g_0(t)$, for large enough $g_0(t)$, we have
\begin{align*}
    |\ff| \le \left (1 + 7 \sqrt{\frac{t}{k}} \right ) \phi(s, t) A_t.
\end{align*}

\section{Conclusion}

In the present paper, we provide new bounds for the problem of Erd\H{o}s--Duke on forbidden sunflowers with prescribed core size. In particular, we extend the result of Frankl and F\"uredi~\cite{Frankl1987} to the case when $n$ is allowed to depend linearly on $k$ with the factor polynomial in $s, t$. To obtain this result, we combine the hypercontractivity technique of Keller, Lifshizt, Keevash et al.~\cite{keevash2021global, keller_sharp_2023}, the spread approximation technique of Kupavskii and Zakharov~\cite{kupavskii_spread_2022}, the result on forbidden sunflowers of Brada\v{c} et al.~\cite{bradavc2023turan} and some ideas from the original proof of Frankl and F\"uredi~\cite{Frankl1987}. As a byproduct, we answer the question of Kalai~\cite{Poly} and Brada\v{c} et al.~\cite{bradavc2023turan} about the maximal size of families without sunflowers with the core of size at most $t - 1$ in the regime when $s \le (t\ln (n/k))^{-3} \sqrt{n/k}$ and $n \ge 2^{11} s k \log_2 k$.

It turns out that our methods are quite general and can be applied to obtain similar results for subfamilies of permutations and $\binom{[n]}{k/w}^w$ for certain ranges of values $w$.

When looking through the proof of Theorem~\ref{theorem: main theorem}, one can ask the following question: could we employ modern techniques to improve dependencies in the Delta-system method? In our problem, $k \ge 5t$ is enough to significantly decrease the lower bound on $n$. Could we go further and avoid double-exponential dependence on $t$? Could we improve similar lower bounds for other Tur\'an-type problems?

For abstract hypergraph Tur\'an-type problems, it seems that modern techniques can do this when $k$ is at least constant times the uniformity of a forbidden hypergraph. Moreover, Alweiss claimed that his result on set system blowups~\cite{alweiss2020set} can significantly improve the key lemma~\cite{furedi1983finite} of the Delta-system method. However, his claim has not been verified. Meanwhile, we have other approaches like spread approximations~\cite{kupavskii_spread_2022}, the method of Lifshitz and coauthors based on Boolean analysis~\cite{keevash2021global, keller_sharp_2023}, including the junta method~\cite{keller2021junta}, and encoding techniques used in papers of Brada\v{c} et al.~\cite{bradavc2023turan}, Alweiss~\cite{alweiss2021improved} and Rao~\cite{rao_coding_2020}, that seem to be general enough to replace the Delta-system method. We hope that a framework alternative to the latter and without double-exponential dependencies will be designed in the near future.

\printbibliography

@article{kupavskii_spread_2022,
      title={Spread approximations for forbidden intersections problems},
      author={Kupavskii, Andrey and Zakharov, Dmitrii},
      journal={Advances in Mathematics},
      volume={445},
      pages={109653},
      year={2024},
      publisher={Elsevier},
      doi = {10.1016/j.aim.2024.109653}
}

@misc{keller_sharp_2023,
  title={Sharp hypercontractivity for global functions},
  author={Keller, Nathan and Lifshitz, Noam and Marcus, Omri},
  note={arXiv preprint},
  doi={10.48550/arXiv.2307.01356},
  year={2023}
}

@article{frankl_erdos-ko-rado_1999,
	title = {The Erd\H{o}s-Ko-Rado Theorem for Integer Sequences},
	volume = {19},
	issn = {0209-9683, 1439-6912},
	doi = {10.1007/s004930050045},
	pages = {55--63},
	number = {1},
	journaltitle = {Combinatorica},
	shortjournal = {Combinatorica},
	author = {Frankl, Peter and Tokushige, Norihide},
	date = {1999-01-01},
}

@article{frankl_shadows_1988,
  title={Shadows of colored complexes},
  author={Frankl, Peter and F{\"u}redi, Zolt{\'a}n and Kalai, Gil},
  journal={Mathematica scandinavica},
  pages={169--178},
  year={1988},
  publisher={JSTOR},
    url = {https://www.jstor.org/stable/24492632}
}

@misc{keevash2021global,
      title={Global hypercontractivity and its applications},
      author={Keevash, Peter and Lifshitz, Noam and Long, Eoin and Minzer, Dor},
      note={arXiv preprint},
      doi={10.48550/arXiv.2103.04604},
      year={2021}
}

@article{Frankl1987,
title={Exact solution of some Tur{\'a}n-type problems},
  author={Frankl, Peter and F{\"u}redi, Zolt{\'a}n},
  journal={Journal of Combinatorial Theory, Series A},
  volume={45},
  number={2},
  pages={226--262},
  year={1987},
  publisher={Elsevier},
    doi={10.1016/0097-3165(87)90016-1}
}

@article{alweiss2021improved,
  title={Improved bounds for the sunflower lemma},
  author={Alweiss, Ryan and Lovett, Shachar and Wu, Kewen and Zhang, Jiapeng},
  journal={Annals of Mathematics},
  volume={194},
  number={3},
  pages={795--815},
  year={2021},
  publisher={Department of Mathematics, Princeton University Princeton, New Jersey, USA},
    doi={10.4007/annals.2021.194.3.5}
}

@book{arora_computational_2009,
	edition = {1},
	title = {Computational {Complexity}: {A} {Modern} {Approach}},
	copyright = {https://www.cambridge.org/core/terms},
	isbn = {978-0-521-42426-4},
	shorttitle = {Computational {Complexity}},
	publisher = {Cambridge University Press},
	author = {Arora, Sanjeev and Barak, Boaz},
	month = apr,
	year = {2009},
	doi = {10.1017/CBO9780511804090}
}

@article{bell_note_2021,
	title = {Note on sunflowers},
	volume = {344},
	issn = {0012365X},
	doi = {10.1016/j.disc.2021.112367},
	language = {en},
	number = {7},
	journal = {Discrete Mathematics},
	author = {Bell, Tolson and Chueluecha, Suchakree and Warnke, Lutz},
	month = jul,
	year = {2021},
	pages = {112367},
}

@misc{Poly,
	title = {The Erd\H os-Rado Delta System Conjecture},
	author = {Polymath~10},
        url = {https://gilkalai.wordpress.com/2015/11/03/polymath10-the-erdos-rado-delta-system-conjecture},
        urldate = {2024-04-07}
}

@article{frankl_improved_2013,
	title = {Improved bounds for {Erd\H{o}s} {Matching} {Conjecture}},
	volume = {120},
	doi = {10.1016/j.jcta.2013.01.008},
	language = {en},
	number = {5},
	journal = {Journal of Combinatorial Theory, Series A},
	author = {Frankl, Peter},
	month = jul,
	year = {2013},
	pages = {1068--1072},
}

@incollection{Fur1991,
	edition = {1},
	title = {{Tur\'an} {Type} {Problems}},
	language = {en},
	booktitle = {Surveys in {Combinatorics}, 1991},
	publisher = {Cambridge University Press},
	author = {Füredi, Zoltán},
	editor = {Keedwell, A. D.},
	month = aug,
	year = {1991},
	doi = {10.1017/CBO9780511666216.010},
	pages = {253--300},
	file = {Füredi - 1991 - “Turán Type Problems”.pdf:C\:\\Users\\Андрей\\Zotero\\storage\\IWS4UEI4\\Füredi - 1991 - “Turán Type Problems”.pdf:application/pdf},
}

@misc{stoeckl_lecture_nodate,
	title = {Lecture notes on recent improvements for the sunflower lemma},
	url = {https://mstoeckl.com/notes/research/sunflower_notes.html},
	author = {Stoeckl, Manuel},
        urldate = {2024-04-07}
}

@misc{tao_sunflower_2020,
	title = {The sunflower lemma via {Shannon} entropy},
	url = {https://terrytao.wordpress.com/2020/07/20/the-sunflower-lemma-via-shannon-entropy/},
	language = {en},
	urldate = {2024-06-19},
	journal = {What`s new},
	author = {Tao, Terrence},
	month = jul,
}

@misc{rao_coding_2020,
	title = {Coding for {Sunflowers}},
	doi = {10.48550/arXiv.1909.04774},
	publisher = {arXiv},
	author = {Rao, Anup},
	year = {2020},
	note = {arXiv preprint},
}

@misc{hu_entropy_2021,
	title = {Entropy {Estimation} via {Two} {Chains}: {Streamlining} the {Proof} of the {Sunflower} {Lemma}},
	shorttitle = {Entropy {Estimation} via {Two} {Chains}},
	url = {https://theorydish.blog/2021/05/19/entropy-estimation-via-two-chains-streamlining-the-proof-of-the-sunflower-lemma/},
	language = {en},
	urldate = {2024-06-19},
	journal = {Theory Dish},
	author = {Hu, Lunjia},
	month = may,
	year = {2021},
}

@book{o2014analysis,
  title={Analysis of Boolean functions},
  author={O'Donnell, Ryan},
  year={2014},
  publisher={Cambridge University Press},
    doi = {10.1017/CBO9781139814782}
}

@book{bakry_analysis_2014,
	series = {Grundlehren der mathematischen {Wissenschaften}},
	title = {Analysis and {Geometry} of {Markov} {Diffusion} {Operators}},
	volume = {348},
	isbn = {978-3-319-00227-9},
	language = {en},
	publisher = {Springer International Publishing},
	author = {Bakry, Dominique and Gentil, Ivan and Ledoux, Michel},
	year = {2014},
	doi = {10.1007/978-3-319-00227-9},
}

@article{keller_t-intersecting_2024,
	title = {On \textit{t}-intersecting families of permutations},
	volume = {445},
	issn = {0001-8708},
	doi = {10.1016/j.aim.2024.109650},
	urldate = {2024-07-02},
	journal = {Advances in Mathematics},
	author = {Keller, Nathan and Lifshitz, Noam and Minzer, Dor and Sheinfeld, Ohad},
	month = may,
	year = {2024},
	pages = {109650},
}

@article{erdos1960intersection,
  title={Intersection theorems for systems of sets},
  author={Erd\H{o}s, Paul and Rado, Richard},
  journal={Journal of the London Mathematical Society},
  volume={1},
  number={1},
  pages={85--90},
  year={1960},
  publisher={Wiley Online Library},
    doi = {10.1112/jlms/s1-35.1.85}
}

@inproceedings{duke1977systems,
  title={Systems of finite sets having a common intersection},
  author={Duke, Richard A. and Erd{\H{o}}s, Paul},
  booktitle={Proceedings, 8th SE Conference on Combinatorics, Graph Theory and Computing},
  pages={247--252},
  url = {https://api.semanticscholar.org/CorpusID:601822},
  year={1977}
}

@article{park2024proof,
  title={A proof of the Kahn--Kalai conjecture},
  author={Park, Jinyoung and Pham, Huy},
  journal={Journal of the American Mathematical Society},
  volume={37},
  number={1},
  pages={235--243},
  year={2024},
    doi = {10.1090/jams/1028}
}

@article{keller2021junta,
  title={The junta method for hypergraphs and the Erd{\H{o}}s-Chv{\'a}tal simplex conjecture},
  author={Keller, Nathan and Lifshitz, Noam},
  journal={Advances in Mathematics},
  volume={392},
  pages={107991},
  year={2021},
  publisher={Elsevier},
    doi = {10.1016/j.aim.2021.107991}
}

@article{keller2023improved,
  title={Improved covering results for conjugacy classes of symmetric groups via hypercontractivity},
  author={Keller, Nathan and Lifshitz, Noam and Sheinfeld, Ohad},
  journal={Forum of Mathematics, Sigma},
  volume={12},
  pages={e85},
  year={2024},
    doi={10.1017/fms.2024.95},
  organization={Cambridge University Press}
}

@misc{kupavskii2023intersection,
  title={Intersection theorems for uniform subfamilies of hereditary families},
  author={Kupavskii, Andrey},
  note={arXiv preprint},
  doi={10.48550/arXiv.2311.02246},
  year={2023}
}

@misc{kupavskii2023erd,
  title={Erd\H{o}s--Ko--Rado type results for partitions via spread approximations},
  doi={10.48550/arXiv.2309.00097},
  author={Kupavskii, Andrey},
  note={arXiv preprint},
  year={2023}
}

@misc{kupavskii2024almost,
  title={An almost complete $ t $-intersection theorem for permutations},
  author={Kupavskii, Andrey},
  doi={10.48550/arXiv.2405.07843},
  note={arXiv preprint},
  year={2024}
}

@article{mubayi2016survey,
  title={A survey of Tur{\'a}n problems for expansions},
  author={Mubayi, Dhruv and Verstra{\"e}te, Jacques},
  journal={Recent trends in combinatorics},
  pages={117--143},
  year={2016},
  publisher={Springer},
    doi = {10.1007/978-3-319-24298-9_5}
}

@inproceedings{erdos1975problems,
  title={Problems and results in graph theory and combinatorial analysis},
  author={Erd{\H{o}}s, Paul},
  booktitle={Proceedings of the Fifth British Combinatorial Conference},
  pages={169--192},
  year={1975},
    url = {https://www.renyi.hu/~p_erdos/1976-36.pdf},
    urldate = {2024-04-07}
}

@article{frankl_forbidding_1985,
	title = {Forbidding just one intersection},
	volume = {39},
	issn = {0097-3165},
	number = {2},
	urldate = {2024-06-25},
	journal = {Journal of Combinatorial Theory, Series A},
	author = {Frankl, Peter and Füredi, Zoltán},
	month = jul,
	year = {1985},
	pages = {160--176},
        doi = {10.1016/0097-3165(85)90035-4}
}

@article{frankl1983extremal,
  title={An extremal set theoretical characterization of some Steiner systems},
  author={Frankl, Peter},
  journal={Combinatorica},
  volume={3},
  pages={193--199},
  year={1983},
  publisher={Springer}
}

@article{ellis2024stability,
  title={Stability for the complete intersection theorem, and the forbidden intersection problem of Erd{\H{o}}s and S{\'o}s},
  author={Ellis, David and Keller, Nathan and Lifshitz, Noam},
  journal={Journal of the European Mathematical Society},
  volume={26},
  number={5},
  pages={1611--1654},
  year={2024},
    doi = {10.4171/JEMS/144}
}

@article{frankl1990partition,
  title={A partition property of simplices in Euclidean space},
  author={Frankl, Peter and R{\"o}dl, Vojtech},
  journal={Journal of the American Mathematical Society},
  volume={3},
  number={1},
  pages={1--7},
  year={1990},
    doi = {10.2307/1990982}
}

@article{sgall1999bounds,
  title={Bounds on pairs of families with restricted intersections},
  author={Sgall,  Ji{\v{r}}í},
  journal={Combinatorica},
  volume={19},
  number={4},
  pages={555--566},
  year={1999},
  publisher={Springer},
    doi = {10.1007/s004939970007}
}

@inproceedings{buhrman1998quantum,
  title={Quantum vs. classical communication and computation},
  author={Buhrman, Harry and Cleve, Richard and Wigderson, Avi},
  booktitle={Proceedings of the thirtieth annual ACM symposium on Theory of computing},
  pages={63--68},
  year={1998},
    doi = {10.1145/276698.276713}
}

@article{erdos1965problem,
  title={A problem on independent r-tuples},
  author={Erd\H{o}s, Paul},
  journal={Annales Universitatis Scientiarium Budapestinensis de Rolando E\v{o}t\v{o}s Nominatae Sectio Mathematica},
  volume={8},
  number={93-95},
  pages={2},
  year={1965},
    url = {https://www.renyi.hu/~p_erdos/1965-01.pdf},
    urldate = {2024-04-07}
}

@article{bollob1976sets,
  title={Sets of independent edges of a hypergraph},
  author={Bollob\'as, B\'ela and Daykin, David E. and Erd\H{o}s, Paul},
  journal={The Quarterly Journal of Mathematics},
  volume={27},
  number={1},
  pages={25--32},
  year={1976},
  publisher={Oxford University Press},
    doi={10.1093/qmath/27.1.25}
}

@article{huang2012size,
  title={The size of a hypergraph and its matching number},
  author={Huang, Hao and Loh, Po-Shen and Sudakov, Benny},
  journal={Combinatorics, Probability and Computing},
  volume={21},
  number={3},
  pages={442--450},
  year={2012},
  publisher={Cambridge University Press},
    doi={10.1017/S096354831100068X}
}

@article{frankl2012matchings,
  title={On matchings in hypergraphs},
  author={Frankl, Peter and {\L}uczak, Tomasz and Mieczkowska, Katarzyna},
  journal={The Electronic Journal of Combinatorics},
  volume={19},
  number={2},
  pages={P42},
  year={2012},
    doi={10.37236/2176}
}

@article{frankl2022erdHos,
  title={The Erd{\H{o}}s matching conjecture and concentration inequalities},
  author={Frankl, Peter and Kupavskii, Andrey},
  journal={Journal of Combinatorial Theory, Series B},
  volume={157},
  pages={366--400},
  year={2022},
  publisher={Elsevier},
    doi={10.1016/j.jctb.2022.08.002}
}

@article{frankl1978extremal,
  title={An extremal problem for 3-graphs},
  author={Frankl, Peter},
  journal={Acta Mathematica Academiae Scientiarum Hungarica},
  volume={32},
  pages={157--160},
  year={1978},
  publisher={Springer},
    doi = {10.1007/BF01902209}
}

@article{chung1987maximum,
  title={The maximum number of edges in a 3-graph not containing a given star},
  author={Chung, Fan R. K. and Frankl, Peter},
  journal={Graphs and Combinatorics},
  volume={3},
  pages={111--126},
  year={1987},
  publisher={Springer},
    doi={10.1007/BF01788535}
}

@article{chung1983unavoidable,
  title={Unavoidable stars in 3-graphs},
  author={Chung, Fan R. K.},
  journal={Journal of Combinatorial Theory, Series A},
  volume={35},
  number={3},
  pages={252--262},
  year={1983},
  publisher={Elsevier},
    doi={10.1016/0097-3165(83)90011-0}
}

@article{bucic2021unavoidable,
  title={Unavoidable hypergraphs},
  author={Buci{\'c}, Matija and Dragani{\'c}, Nemanja and Sudakov, Benny and Tran, Tuan},
  journal={Journal of Combinatorial Theory, Series B},
  volume={151},
  pages={307--338},
  year={2021},
  publisher={Elsevier},
    doi = {10.1016/j.jctb.2021.06.010}
}

@article{bradavc2023turan,
  title={Tur{\'a}n numbers of sunflowers},
  author={Brada{\v{c}}, Domagoj and Buci{\'c}, Matija and Sudakov, Benny},
  journal={Proceedings of the American Mathematical Society},
  volume={151},
  number={03},
  pages={961--975},
  year={2023},
    doi = {DOI: https://doi.org/}
}

@article{erdos1961intersection,
  title={Intersection theorems for systems of finite sets},
  author={Erd\H{o}s, Paul and Ko, Chao and Rado, Richard},
  journal={Quarterly Journal of Mathematics},
  volume={12},
  number={2},
  pages={313--320},
  year={1961},
    doi = {10.1093/qmath/12.1.313}

}

@article{frankl1986erdos,
  title={The Erd\H{o}s-Ko-Rado theorem for vector spaces},
  author={Frankl, Peter and Wilson, Richard M},
  journal={Journal of Combinatorial Theory, Series A},
  volume={43},
  number={2},
  pages={228--236},
  year={1986},
  publisher={Elsevier},
    doi = {10.1016/0097-3165(86)90063-4}
}

@article{frankl1977maximum,
  title={On the maximum number of permutations with given maximal or minimal distance},
  author={Frankl, Peter and Deza, Mikhail},
  journal={Journal of Combinatorial Theory, Series A},
  volume={22},
  number={3},
  pages={352--360},
  year={1977},
  publisher={Elsevier},
    doi = {10.1016/0097-3165(77)90009-7}
}

@article{keevash2023forbidden,
  title={Forbidden intersections for codes},
  author={Keevash, Peter and Lifshitz, Noam and Long, Eoin and Minzer, Dor},
  journal={Journal of the London Mathematical Society},
  volume={108},
  number={5},
  pages={2037--2083},
  year={2023},
  publisher={Wiley Online Library},
    doi = {10.1112/jlms.12801}
}

@article{ellis2023forbidden,
  title={Forbidden intersection problems for families of linear maps},
  author={Ellis, David and Kindler, Guy and Lifshitz, Noam},
  journal={Discrete Analysis},
  volume={19},
  year={2023},
    doi = {10.19086/da.90718}
}

@misc{alweiss2020set,
  title={Set System Blowups},
  doi={10.48550/arXiv.2003.11202},
  author={Alweiss, Ryan},
  note={arXiv preprint},
  year={2020}
}

@article{furedi1983finite,
  title={On finite set-systems whose every intersection is a kernel of a star},
  author={F{\"u}redi, Zolt{\'a}n},
  journal={Discrete mathematics},
  volume={47},
  pages={129--132},
  year={1983},
  publisher={Elsevier},
    doi = {10.1016/0012-365X(83)90081-X}
}

@article{borg2009extremal,
  title={Extremal t-intersecting sub-families of hereditary families},
  author={Borg, Peter},
  journal={Journal of the London Mathematical Society},
  volume={79},
  number={1},
  pages={167--185},
  year={2009},
  publisher={Oxford University Press},
    doi = {10.1112/jlms/jdn062}
}

@article{dinur_intersecting_2009,
	title = {Intersecting {Families} are {Essentially} {Contained} in {Juntas}},
	volume = {18},
	issn = {1469-2163, 0963-5483},
	doi = {10.1017/S0963548308009309},
	language = {en},
	number = {1-2},
	urldate = {2024-10-03},
	journal = {Combinatorics, Probability and Computing},
	author = {Dinur, Irit and Friedgut, Ehud},
	month = mar,
	year = {2009},
	pages = {107--122},
}

@article{kolupaev2023erdHos,
  title={Erd{\H{o}}s matching conjecture for almost perfect matchings},
  author={Kolupaev, Dmitriy and Kupavskii, Andrey},
  journal={Discrete Mathematics},
  volume={346},
  number={4},
  pages={113304},
  year={2023},
  publisher={Elsevier},
    doi = {10.1016/j.disc.2022.113304}
}

@book{babai_linear_1992,
	title = {Linear {Algebra} {Methods} in {Combinatorics}, with {Applications} to {Geometry} and {Computer} {Science}},
	language = {en},
	publisher = {University of Chicago, Department of Computer Science},
	author = {Babai, Laszlo and Frankl, Peter},
	year = {1992},
	file = {Babai and Frankl - 1988 - LINEAR ALGEBRA METHODS IN COMBINATORICS.pdf:/Users/fedornoskov/Zotero/storage/IGX4QIBB/Babai and Frankl - 1988 - LINEAR ALGEBRA METHODS IN COMBINATORICS.pdf:application/pdf},
        url = {https://people.cs.uchicago.edu/~laci/babai-frankl-book2022.pdf}
}

@article{deza_intersection_1978,
  title={Intersection properties of systems of finite sets},
  author={Deza, Michel and Erd\H{o}s, Paul and Frankl, P{\'e}ter},
  journal={Proceedings of the London Mathematical Society},
  volume={3},
  number={2},
  pages={369--384},
  year={1978},
  publisher={Oxford University Press},
    doi = {10.1112/plms/s3-36.2.369}
}

@article{frankston2021thresholds,
  title={Thresholds versus fractional expectation-thresholds},
  author={Frankston, Keith and Kahn, Jeff and Narayanan, Bhargav and Park, Jinyoung},
  journal={Annals of Mathematics},
  volume={194},
  number={2},
  pages={475--495},
  year={2021},
  publisher={Department of Mathematics, Princeton University Princeton, New Jersey, USA},
    doi = {10.4007/annals.2021.194.2.2}
}

@misc{kupavskii_noskov_followup,
    author = {Kupavskii, Andrey and Noskov, Fedor},
    title = {Turan-type problems under fully polynomial dependencies},
    note = {To appear},
    year = {2025}
}

@article{rao2023sunflowers,
  title={Sunflowers: from soil to oil},
  author={Rao, Anup},
  journal={Bulletin of the American Mathematical Society},
  volume={60},
  number={1},
  pages={29--38},
  year={2023},
    doi = {10.1090/bull/1777}
}

\end{document}